\documentclass{article}
\usepackage[utf8x]{inputenc}
\usepackage{color}
\usepackage[left=1in,top=1in,right=1in]{geometry}

%%%%% MACROS PAUL

\usepackage{
amsmath,
amsfonts,
latexsym, 
amsrefs,
amssymb,
amsthm,
tocloft,
hyperref,
ulem
}

\usepackage{mathrsfs}

\usepackage{bm}

\usepackage{tikz}

\usetikzlibrary{decorations.markings}

\usepackage{dsfont}

\usepackage{wrapfig}

\usepackage{enumerate}

\newcommand{\nc}{\color{black}}

\newcommand{\rd}{{\rm d}}

\newcommand{\e}{{\varepsilon}}
\renewcommand{\epsilon}{{\varepsilon}}

\newcommand{\ba}{\bm{\alpha}}
\newcommand{\bb}{\bm{\beta}}
\newcommand{\bx}{\bm{\xi}}
\newcommand{\bh}{\bm{h}}
\DeclareMathOperator{\U}{U}
\newcommand{\Tr}{ {\rm Tr} }
\newcommand{\ii}{\mathrm{i}} 
\newcommand{\N}{\mathbb N}

\newcommand{\1}{\mathds{1}}

\renewcommand{\P}{\mathbb{P}}
\newcommand{\Q}{\mathbb{Q}}
\newcommand{\E}{\mathbb{E}}
\renewcommand{\Re}{{\rm Re}}

\DeclareMathOperator{\OO}{O}
\DeclareMathOperator{\oo}{o}

\def\author#1{\par
    {\centering{\authorfont#1}\par\vspace*{0.05in}}
}

\def\titlefont{\fontsize{13}{15}\bfseries\boldmath\selectfont\centering{}}
\def\authorfont{\fontsize{13}{15}}
\def\abstractfont{\fontsize{8}{10}}

\let\affiliationfont\rhfont

\def\address#1{\par
    {\centering{\affiliationfont#1\par}}\par\vspace*{11pt}
}

\def\body{
\setcounter{footnote}{0}
\def\thefootnote{\alph{footnote}}
\def\@makefnmark{{$^{\rm \@thefnmark}$}}
}

\def\title#1{
    \thispagestyle{plain}
    \vspace*{-14pt}
    \vskip 79pt
    {\centering{\titlefont #1\par}}%
    \vskip 1em
}

\makeatletter
\renewcommand{\subsection}{\@startsection
{subsection}%                   % the name
{2}%                         % the level
{0mm}%                       % the indent
{-\baselineskip}%            % the before skip
{0\baselineskip}%          % the after skip
{\normalfont\itshape}} % the style
\makeatother

\setlength\cftparskip{2pt}
\setlength\cftbeforesecskip{4pt}
\setlength\cftaftertoctitleskip{10pt}

\renewenvironment{abstract}{\par%
    \vspace*{6pt}\noindent %{\bf Abstract}
    \abstractfont
    \noindent\leftskip18pt\rightskip18pt
}{%
  \par}

\numberwithin{equation}{section}

%%%%% END MACROS PAUL

\newtheorem{thm}{Theorem}[section]
\newtheorem{lem}[thm]{Lemma}
\newtheorem{prop}[thm]{Proposition}
\newtheorem{cor}[thm]{Corollary}

\newtheorem{conj}[thm]{Conjecture}

\makeatletter
\renewcommand{\section}{\@startsection
{section}%                   % the name
{1}%                         % the level
{0mm}%                       % the indent
{-2\baselineskip}%            % the before skip
{1\baselineskip}%          % the after skip
{\normalfont\large\scshape\centering}} % the style
\makeatother

\begin{document}

\title{Maximum of the characteristic polynomial of random unitary matrices}

\vspace{1.2cm}

\noindent\begin{minipage}[b]{0.38\textwidth}

 \author{Louis-Pierre Arguin}

\address{ Department of Mathematics, Baruch College\\
 Graduate Center, City University of New York\\
  louis-pierre.arguin@baruch.cuny.edu}
\end{minipage}
\noindent\begin{minipage}[b]{0.29\textwidth}
\author{David Belius}

\address{ Courant Institute,\\ New York University\\
  david.belius@cantab.net}
 \end{minipage}
\begin{minipage}[b]{0.29\textwidth}

 \author{Paul Bourgade}

\address{Courant Institute,\\ New York University\\
   bourgade@cims.nyu.edu}

 \end{minipage}

\begin{abstract}
It was recently conjectured by Fyodorov, Hiary and Keating that the maximum of the characteristic polynomial on the unit circle of a $N\times N$ random unitary matrix sampled from the Haar measure grows like $CN/(\log N)^{3/4}$ for some random variable $C$.
In this paper, we verify the leading order of this conjecture, that is, we prove that with high probability the maximum lies in the range $[N^{1 - \varepsilon},N^{1 + \varepsilon}]$,
for arbitrarily small $\varepsilon$. 

The method is based on identifying an approximate branching random walk in the Fourier decomposition of the characteristic polynomial,
and uses techniques developed to describe the extremes of branching random walks and of other log-correlated random fields.
A key technical input is the asymptotic analysis of Toeplitz determinants with dimension-dependent symbols.

The original argument for these asymptotics followed the general idea that the statistical mechanics of $1/f$-noise random energy models is governed by a freezing transition. We also prove the conjectured freezing of the free energy for random unitary matrices.

\end{abstract}

\setcounter{tocdepth}{1}

\tableofcontents

\section{Introduction}
For $N\in\N$, consider a random matrix  ${\rm U}_N$ sampled from the group of $N\times N$ unitary matrices with Haar measure.
This distribution is also known as the {\it Circular Unitary Ensemble} (CUE).
This paper studies the extreme values of the characteristic polynomial ${\rm P}_N$ of ${\rm U}_N$, on the unit circle, as $N\to\infty$.
The main result concerns the asymptotics of
$$
 \max_{h\in[0,2\pi]} |{\rm P}_N(e^{\ii h})|=\max_{h\in[0,2\pi]}|\det(e^{\ii h}-{\rm U}_N)|.
$$
It was shown by Keating and Snaith \cite{KeaSna2000} that for a fixed $h$, $\log|{\rm P}_N(e^{\ii h})|$, converges to a standard Gaussian variable when normalized by $(\frac{1}{2}\log N)^{1/2}$.
A recent conjecture of Fyodorov, Hiary and Keating makes a precise prediction for  the large values of the logarithm of the characteristic polynomial.
\begin{conj}[Fyodorov-Hiary-Keating \cite{FyoHiaKea12,FyoKea14}]
\label{conj: fhk}
For $N\in \N$, let $U_N$ be a random matrix sampled uniformly from the group of $N\times N$ unitary matrices.
Write ${\rm P}_N(z)$, $z\in \mathbb C$, for its characteristic polynomial.
Then
\begin{equation}\label{eqn: fhk}
\max_{h\in[0,2\pi]} \log |{\rm P}_N(e^{\ii h})|
=\log N -\frac{3}{4}\log\log N + \mathcal M_N,
\end{equation}
where $(\mathcal M_N, N\in\N)$ is a sequence of random variables that converges in distribution.
\end{conj}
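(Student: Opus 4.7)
The plan is to identify $\log|P_N(e^{\ii h})|$ as an approximate one-dimensional log-correlated Gaussian field and import the sharp extreme-value theory developed for branching random walks (BRW) and the two-dimensional Gaussian free field, leveraging the paper's Toeplitz-determinant asymptotics for quantitative control. Starting from the Fourier expansion
\[
\log P_N(e^{\ii h}) = -\sum_{k\ge1}\frac{\Tr U_N^k}{k}\, e^{-\ii k h},
\]
Diaconis--Shahshahani's theorem says that $\Tr U_N^k$, $k\ll N$, are asymptotically independent centered complex Gaussians of variance $k$. Grouping Fourier modes dyadically, set
\[
X_j(h) := -\Re\sum_{2^{j-1}<k\le 2^j}\frac{\Tr U_N^k}{k}\,e^{-\ii k h}, \qquad S_j(h) := \sum_{\ell\le j} X_\ell(h),
\]
for $j=1,\ldots,J$ with $2^J\asymp N$. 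The $X_j$ are approximately independent across $j$, Gaussian at fixed $h$ with variance $\tfrac12\log 2+\oo(1)$, and decorrelate on spatial scale $2^{-j}$. Hence $(S_j(h))$ behaves like a binary BRW of depth $J\sim\log_2 N$ with total variance $\tfrac12\log N$ at the leaves, and the refined theory of log-correlated fields predicts $\max_h S_J(h)=\log N-\tfrac34\log\log N+O_\P(1)$, matching the Fyodorov--Hiary--Keating prediction.

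For the upper bound I would use a modified first-moment argument with an entropic envelope, in the spirit of Bramson and Ding--Roy--Zeitouni. Given $y\in\mathbb R$ and a mesh of $h$-points with spacing $1/N$, estimate the expected number of $h$ such that $S_J(h)\ge\log N-\tfrac34\log\log N+y$ \emph{and} $S_j(h)\le\gamma_j+y$ for every intermediate scale $j$, with $\gamma_j$ a suitable concave barrier interpolating between $0$ at $j=0$ and $\log N-\tfrac34\log\log N$ at $j=J$. The multi-point Toeplitz asymptotics developed in the paper, extended to encode the joint law of $(S_j(h))_j$ across scales, supply the requisite Gaussian densities; a ballot estimate for the Brownian-bridge approximation of $j\mapsto S_j(h)$ then yields the $e^{-\Theta(y)}$ decay. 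A separate Gaussian tail bound combined with a union bound over $j$ handles the event that some trajectory overshoots the envelope.

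The matching lower bound comes from a truncated second-moment argument on the same barrier event. This requires the two-point Toeplitz asymptotic of $\E[|P_N(e^{\ii h})P_N(e^{\ii h'})|^\beta]$ uniformly in the separation $|h-h'|$ down to scale $1/N$ and in $\beta$ near the critical value; the barrier truncation keeps the second moment comparable to the square of the first, so Paley--Zygmund delivers the claimed bound. Combined with the upper bound this establishes tightness of $\mathcal M_N$.

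The hardest step is the distributional convergence of $\mathcal M_N$. Following A\"\i d\'ekon for BRW and Biskup--Louidor for the 2D GFF, I would construct the critical derivative martingale
\[
D_N := \frac{1}{N}\int_{0}^{2\pi}\bigl(\log N-\log|P_N(e^{\ii h})|\bigr)\,|P_N(e^{\ii h})|^{2}\,\rd h,
\]
prove its almost-sure convergence to an a.s.-positive limit $D_\infty$ defined on the same probability space (via a Gaussian-multiplicative-chaos construction for the limiting log-correlated field), and identify the limit law of $\mathcal M_N$ as an independent Gumbel variable shifted by $\tfrac12\log D_\infty$ plus a deterministic constant. The main obstacle is a uniform multiscale coupling of $\log|P_N|$ to an ideal Gaussian log-correlated field: the Diaconis--Shahshahani approximation for $\Tr U_N^k$ deteriorates for $k\gtrsim N$, which is exactly the regime governing the finest scales, and the non-Gaussian corrections there must be controlled with enough precision to pin down the limit law rather than merely tightness. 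Closing this step likely requires input beyond the Toeplitz asymptotics used in the paper---for instance, sharp CLTs for $\Tr U_N^k$ in a $k$-window extending beyond $N$, combined with a Slepian-type comparison at every dyadic scale simultaneously.
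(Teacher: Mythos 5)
There is a fundamental mismatch here: the statement you were asked to prove is precisely the Fyodorov--Hiary--Keating \emph{conjecture}, which the paper does not prove and does not claim to prove. The paper establishes only the leading order, $\max_h \log|{\rm P}_N(e^{\ii h})| = (1+\oo(1))\log N$ in probability (Theorem \ref{thm: main}), together with the high-points measure and freezing of the free energy; the subleading $-\tfrac34\log\log N$ correction and, above all, the convergence in distribution of $\mathcal M_N$ are left open, and the introduction explicitly says that technical difficulties remain to verify these finer predictions. So there is no paper proof to compare against, and your proposal cannot be judged as matching or deviating from one.

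As a proof attempt of the conjecture itself, your text is a research program rather than a proof, and the gaps are exactly at the points where the program is hardest. The multiscale decomposition, the barrier/ballot heuristic with envelope $\gamma_j$, and the truncated second moment are the standard BRW roadmap (and indeed the paper's own outline sketches the same heuristic for the $-\tfrac34\log\log N$ term via the count $\tilde{\mathcal Z}_B$ in \eqref{eq: heur Z}), but making them rigorous requires quantitative inputs that neither the paper nor your sketch supplies: (i) Gaussian approximation of the joint law of $(S_j(h))_j$, with ballot-type estimates for a non-Gaussian approximate walk, at $\OO(1)$ precision rather than at the large-deviation precision of Proposition \ref{thm:FourierRH}, which moreover only covers traces of powers below $N$; (ii) two-point exponential moments $\E\bigl(|{\rm P}_N(e^{\ii h}){\rm P}_N(e^{\ii h'})|^{\beta}\bigr)$ uniformly down to separation $1/N$ at the critical $\beta$, i.e.\ Fisher--Hartwig asymptotics with two merging singularities in the critical regime, which is a substantial open analytic problem beyond the single-singularity analysis of Section \ref{sec: RH}; (iii) the fine scales $k\gtrsim N$, where the Diaconis--Shahshahani decorrelation degenerates and where the paper controls only leading-order exponential moments (Proposition \ref{prop:tail} with fixed $\alpha$); and (iv) the almost-sure convergence and positivity of your derivative-martingale object $D_N$, which amounts to constructing the critical Gaussian multiplicative chaos for $|{\rm P}_N|$ and coupling it to the matrix model --- again well beyond the tools in the paper. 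You acknowledge point (iv) yourself, which is honest, but it means the proposal does not establish the statement; at best it identifies the correct expected strategy while leaving every step that distinguishes the conjecture from the paper's Theorem \ref{thm: main} unproved.
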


The main result of this paper is a rigorous verification of the prediction for the leading order.
\begin{thm}
\label{thm: main}
For $N\in \N$, let ${\rm U}_N$ be a random matrix sampled uniformly from the group of $N\times N$ unitary matrices.
Write ${\rm P}_N(z)$, $z\in \mathbb C$, for its characteristic polynomial.
Then
\begin{equation}
\lim_{N\to\infty}\frac{\underset{h\in[0,2\pi]}{\max} \log |{\rm P}_N(e^{\ii h})|}{\log N}=1 \qquad \text{in probability.}
\end{equation}
\end{thm}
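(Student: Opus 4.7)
\emph{Proof plan.} I would split into matching upper and lower bounds on $\max_h\log|P_N(e^{\ii h})|$, the lower bound being the substantially harder half. For the upper bound, i.e.\ $\max_h\log|P_N(e^{\ii h})|\leq (1+\varepsilon)\log N$ with high probability, a straightforward first-moment argument should suffice. The Heine identity gives $\E|P_N(e^{\ii h})|^2=N+1$ (independent of $h$ by rotation invariance), so Markov's inequality yields
\[
\P\bigl(|P_N(e^{\ii h})|>N^{1+\varepsilon}\bigr)\leq (N+1)N^{-2-2\varepsilon}.
\]
Since $P_N$ is a polynomial of degree $N$, Bernstein's inequality gives $\|P_N'\|_\infty\leq N\|P_N\|_\infty$, so discretizing $[0,2\pi]$ at mesh $N^{-1-\varepsilon/2}$ and a short bootstrap show that it suffices to control $P_N$ at the discretization points; a union bound then finishes.

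For the lower bound the plan is to exploit the log-correlated, approximate branching random walk structure lurking in the Fourier expansion
\[
\log|P_N(e^{\ii h})| = -\sum_{k\geq 1}\frac{1}{k}\Re\!\left(e^{-\ii kh}\Tr U_N^k\right),
\]
valid pointwise away from eigenangles. By Diaconis--Shahshahani, for moderate $k$ the traces $\Tr U_N^k$ are close to independent centered complex Gaussians of variance $k$, so the truncations $X_K(h)=-\sum_{k=1}^K k^{-1}\Re(e^{-\ii k h}\Tr U_N^k)$ behave like centered Gaussian fields with variance $\tfrac12\log K$ and covariance $\approx -\tfrac12\log|h-h'|$ as soon as $|h-h'|\gg 1/K$. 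In this analogy the index $\log K$ plays the role of BRW time over a tree of depth $\log N$, and this branching structure is what makes extreme value analysis tractable.

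The lower bound would then be extracted by a modified second moment method applied to
\[
S = \int_0^{2\pi} \1\!\left\{\log|P_N(e^{\ii h})|\geq (1-\varepsilon)\log N,\ X_K(h)\in\mathcal B_K\text{ for all }1\leq K\leq N\right\}\rd h,
\]
where the barrier $\mathcal B_K$ confines $K\mapsto X_K(h)$ to a narrow corridor around the ballistic trajectory reaching $(1-\varepsilon)\log N$ at time $K=N$. The barrier is introduced precisely to suppress the atypical configurations that would otherwise inflate $\E[S^2]$; the goal is to prove $\E[S^2]=(1+\oo(1))\E[S]^2$, after which Paley--Zygmund delivers $\P(S>0)\to 1$ and hence the desired lower bound.

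\emph{Main expected obstacle.} The crucial two-point estimate requires joint asymptotics of $(\log|P_N(e^{\ii h})|,X_K(h),\log|P_N(e^{\ii h'})|,X_K(h'))$ for angles as close as $|h-h'|\sim 1/N$. By Heine's identity these reduce to Toeplitz determinants whose Fisher--Hartwig symbols have singularities separated by an $N$-dependent distance, a regime outside the scope of classical Szegő and Widom asymptotics. Developing the asymptotics for these dimension-dependent Toeplitz determinants, uniform as the singularities approach each other, is the central technical input the paper needs to provide.
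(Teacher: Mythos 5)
Your upper bound is genuinely different from the paper's, and simpler: using $\E|P_N(e^{\ii h})|^2=N+1$, Markov, Bernstein's inequality for degree-$N$ trigonometric polynomials, and a union bound over a mesh of size $N^{-1-\varepsilon/2}$ is a clean, correct route to $\max_h\log|P_N(e^{\ii h})|\le(1+\varepsilon)\log N$ with high probability, and it avoids all of the truncation, chaining, tail-estimate, and small-gap machinery the paper deploys for the upper half. (The paper's heavier upper-bound path is what they reuse for the high-points Theorem \ref{thm: high points}, so the extra machinery is not wasted there, but for Theorem \ref{thm: main} alone your approach is more economical.) Your identification of the approximate branching structure in the Fourier decomposition of $\log|P_N|$ and the use of a truncated second moment method on a thinned set of ``good'' angles is also the same conceptual skeleton the paper builds on.

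Where your lower-bound plan diverges, and where it would run into trouble, is in the choice of the counting random variable $S$. You condition both on the terminal event $\{\log|P_N(e^{\ii h})|\ge(1-\varepsilon)\log N\}$ for the \emph{full} (untruncated) field and on a pathwise barrier $X_K(h)\in\mathcal B_K$ for all $K$. To compute $\E[S^2]$ you then need joint asymptotics of $(\log|P_N(e^{\ii h})|,\log|P_N(e^{\ii h'})|)$ for $|h-h'|$ as small as $1/N$, and by Heine's identity this is a Toeplitz determinant with \emph{two} Fisher--Hartwig singularities at $N$-dependent separation --- precisely the hard merging-singularity regime, which is not provided by the single-singularity analysis the paper develops. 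The paper sidesteps this entirely: the second moment is computed only for the truncated sums $Y_m(h)=\sum_{N^{(m-1)/K}<j\le N^{m/K}}-\Re(e^{-\ii jh}\Tr\U_N^j)/j$, which have analytic symbols (no Fisher--Hartwig singularity at all; this is Proposition \ref{thm:FourierRH}, where the Riemann--Hilbert problem has $\alpha=0$). The Fisher--Hartwig analysis with a singularity is only needed in the \emph{one-point} tail bound (Proposition \ref{prop:tail}), where there is a single singularity at $z=1$ against an $N$-dependent but smooth external potential $V$, supported on powers $j<N^{1-\delta}$. So you have correctly identified that dimension-dependent Toeplitz asymptotics are the central technical input, but you have placed them in the two-point second moment, where the paper deliberately arranges not to need them. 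Second, the full pathwise barrier $\mathcal B_K$ is a more refined condition than the paper uses; at leading order the paper only imposes Kistler's $K$-level coarse-graining condition $\{Y_m(h)\ge x,\ m=2,\dots,K-1\}$, which is a weak barrier, and this simplifies the Fourier-inversion estimates considerably. The full barrier is what one would want for the subleading $\tfrac34\log\log N$ correction, not the leading order. To close the gap in your plan, you should replace the terminal event on $\log|P_N|$ by one on a truncated sum (say $\sum_{j<N^{1-\delta}}$), and defer the contribution of high powers $j\ge N^{1-\delta}$ to a separate one-point tail bound, as the paper does; and at leading order you can drop the barrier in favor of the coarse-grained increment condition.
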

It is known that the random field $\big((\log N)^{-1/2} \log |{\rm P}_N(e^{\ii h})|, h\in[0,2\pi]\big)$ 
converges in the sense of finite-dimensional distribution to a Gaussian field,  with independent values at macroscopically separated evaluation points \cite{HugKeaOco2001}. On mesoscopic scales,
the covariance between  two  points $ h_1$ and $ h_2$ at distance $\Delta = |e^{\ii h_1}-e^{\ii h_2}|$ behaves like $\frac{\log (1/\Delta)}{\log N}$ when
$\Delta$ is at least $1/N$, and approaches $1$ for smaller distances \cite{BourgadeMesoscopic}.
This kind of decay of correlations is the defining characteristic of a {\it $\log$-correlated random field}. 
The extrema of such fields have recently attracted much attention, cf. Section \ref{sec: prev work}.

The almost perfect correlations below scale $1/N$ suggest that, to first approximation, one can think of the maximum over $[0,2\pi]$ as
a maximum over $N$ random variables with strong correlations on mesoscopic scales.
Strikingly, the leading order prediction of Conjecture \ref{conj: fhk} is that the maximum is close to that
of $N$ centered independent Gaussian random variables of variance $\frac{1}{2}\log N$, which would lie around $\log N-\frac{1}{4}\log\log N$. 
In other words, despite strong correlations between the values of $\log |{\rm P}_N(e^{\ii h})|$ for different $h$,  an analogy with independent Gaussian random variables 
correctly predicts the leading order of the 
maximum. The constant in front of the subleading correction $\log \log N$, however, differs. But, as we will explain below, it is exactly the constant expected for a log-correlated Gaussian field.

The conjecture was derived from precise computations of the moments of a suitable partition function
and of the measure of high points, 
using statistical mechanics techniques developed for describing the extreme value statistics of disordered systems \cite{FyoBou08,FyoLedRos09, FyoLedRos12}.
It is also supported by strong numerical evidence. A precise form for the distribution of the limiting fluctuations, which is consistent with those of log-correlated fields,  is also predicted. We point out that $\log|{\rm P}_N(e^{\ii h})|$ is believed to be a good model for the local behavior of the Riemann zeta function on the critical line. 
In particular, the authors conjecture a similar behavior for the extremes of the Riemann zeta function on an interval $[T,T+2\pi]$ of the critical line with $N$ replaced by $\log T$, see \cite{FyoHiaKea12,FyoKea14} for details
and \cite{ArgBelHar15,Har13} for rigorous proofs for a different random model of the zeta function.

\begin{figure}
\begin{center}
\vspace{-0.9cm}
\label{fig: unitary}
\includegraphics[width=0.496\linewidth]{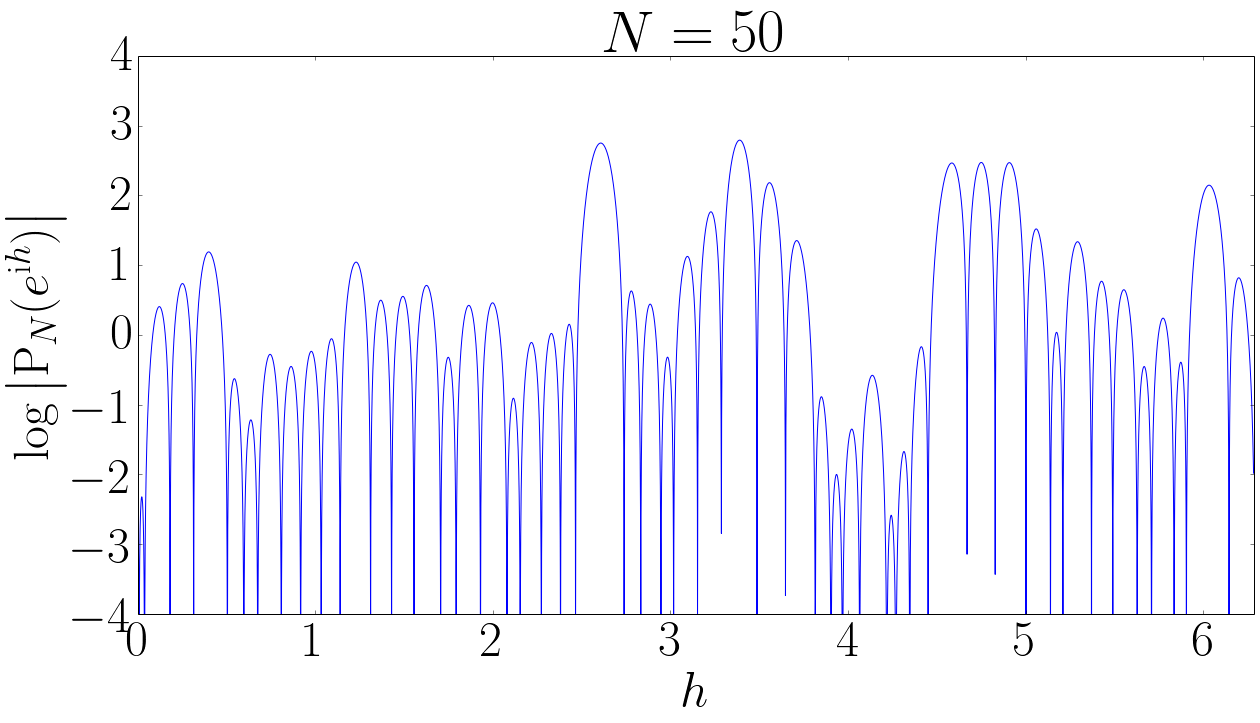}
\includegraphics[width=0.496\linewidth]{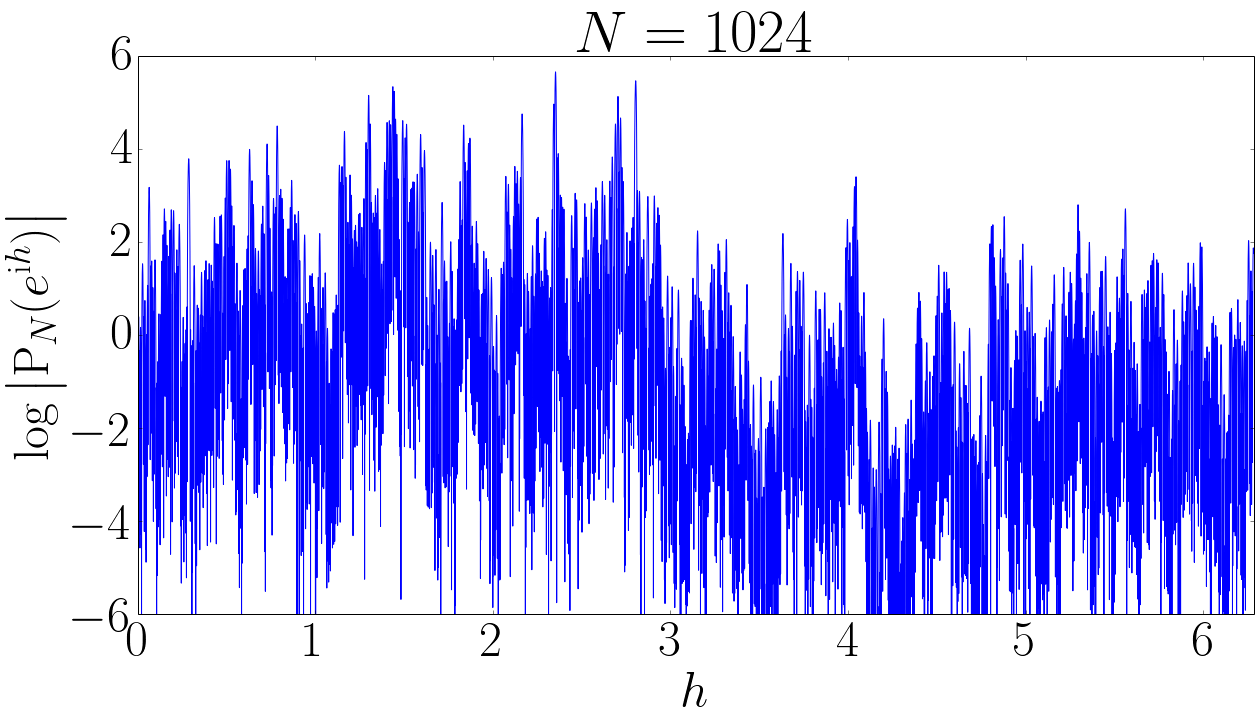}
\caption{Realizations of $\log |{\rm P}_N(e^{\ii h})|$, $0 \le h<2\pi$, for $N=50$ and $N=1024$.
 At microscopic scales, the field is smooth away from the eigenvalues, 
in contrast with the rugged landscape at mesoscopic and macroscopic scales.}
\end{center}
\end{figure}

The proof of Theorem \ref{thm: main} is outlined in Section \ref{subsec: outline} below.
The key conceptual idea is the identification of an approximate branching random walk, or hierarchical field, in the Fourier decomposition of the characteristic polynomial.
This is inspired by a branching structure in the Euler product of the Riemann Zeta function employed in \cite{ArgBelHar15}.
In Section \ref{subsec: outline}, it is explained how branching random walk heuristics provide an alternative justification of Conjecture \ref{conj: fhk}. 
Furthermore, these heuristics can be made rigorous for the leading order, thanks to a robust approach introduced by Kistler in \cite{Kis15}.
Technical difficulties remain to rigorously verify the finer predictions of the conjecture.

It is straightforward to adapt the approach to get information about the measure of {\it high points} for $\gamma \in (0,1)$:
\begin{equation}\label{eq: high points def}
  \mathcal{L}_N(\gamma) = \{ h \in [0,2\pi] : \log | {\rm P}_{N} (e^{\ii h}) | \ge \gamma \log N \}\ .
\end{equation}
We show that with high probability the Lebesgue measure of $\mathcal{L}_N(\gamma)$ is close to $N^{-\gamma^2}$:
\begin{thm}\label{thm: high points}
For $\gamma \in (0,1)$,
\begin{equation}\label{eq: high points}
 \lim_{N \to \infty} \frac{ \log \mbox{\rm Leb}(\mathcal{L}_N(\gamma))}{ \log N} = -\gamma^2 \mbox{, in probability,}
\end{equation}
where $\mbox{\rm Leb}(\cdot)$ denotes the Lebesgue measure on $[0,2\pi]$.
\end{thm}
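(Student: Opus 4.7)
\textbf{Proof proposal for Theorem \ref{thm: high points}.}
The statement reduces to matching upper and lower bounds on $\mbox{\rm Leb}(\mathcal{L}_N(\gamma))$ at the exponential scale $N^{-\gamma^2}$. I would handle the two sides separately, with the upper bound a direct first-moment calculation and the lower bound a truncated second-moment argument built on the approximate branching random walk of Section \ref{subsec: outline}.

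\emph{Upper bound.} By Fubini and rotational invariance of Haar measure,
\begin{equation*}
\E\big[\mbox{\rm Leb}(\mathcal{L}_N(\gamma))\big] \;=\; 2\pi\,\P\!\left(\log|{\rm P}_N(1)| \geq \gamma \log N\right).
\end{equation*}
The Keating--Snaith/Selberg integral formula provides $\E[|{\rm P}_N(1)|^{2s}]$ in closed form, and exponential Chebyshev optimized at $s = \gamma$ yields $\P(\log|{\rm P}_N(1)|\geq \gamma \log N) \leq N^{-\gamma^2 + \oo(1)}$. Markov's inequality then gives, for every $\epsilon>0$,
\begin{equation*}
\P\!\left(\mbox{\rm Leb}(\mathcal{L}_N(\gamma)) \geq N^{-\gamma^2+\epsilon}\right) \leq N^{-\epsilon + \oo(1)},
\end{equation*}
settling the upper half.

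\emph{Lower bound: strategy.} I would mimic the lower bound in Theorem \ref{thm: main}, replacing ``some $h$ attains the barrier level'' by ``the set of such $h$ has measure at least $N^{-\gamma^2-\epsilon}$.'' With the dyadic Fourier truncations
\begin{equation*}
X_h^{(k)} \;:=\; -\Re\sum_{j=1}^{2^k}\frac{\Tr{\rm U}_N^j}{j}\,e^{\ii j h},\qquad 0\leq k\leq K:=\lfloor\log_2 N\rfloor,
\end{equation*}
the process $(X_h^{(k)})_k$ is, to leading order, a random walk with independent Gaussian increments of variance $\tfrac{1}{2}\log 2$, and $X_h^{(K)}$ differs from $\log|{\rm P}_N(e^{\ii h})|$ by a controllable error. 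Introduce the slanted-barrier event
\begin{equation*}
A_h \;:=\; \big\{\,X_h^{(k)} \geq \tfrac{\gamma k}{K}\log N - k^{2/3} \ \text{ for all } 1\leq k \leq K\,\big\},
\end{equation*}
so that $A_h$ implies $h \in \mathcal{L}_N(\gamma - \oo(1))$. A standard ballot estimate for the BRW gives $\P(A_h) = N^{-\gamma^2 + \oo(1)}$, hence $\E[\mbox{\rm Leb}\{h:A_h\}] = N^{-\gamma^2 + \oo(1)}$.

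\emph{Main obstacle: the second moment.} The crux is the bound
\begin{equation*}
\iint \P(A_{h_1}\cap A_{h_2})\,\rd h_1\,\rd h_2 \;\leq\; (\log N)^{C}\left(\int \P(A_h)\,\rd h\right)^{2}.
\end{equation*}
Once established, Paley--Zygmund produces $\mbox{\rm Leb}\{h:A_h\}\geq N^{-\gamma^2-\epsilon}$ with probability bounded away from zero, and the concentration ingredients developed for Theorem \ref{thm: main} upgrade this to high probability. To control $\P(A_{h_1}\cap A_{h_2})$, one splits on the decorrelation scale $k^\star$ defined by $|e^{\ii h_1}-e^{\ii h_2}|\asymp 2^{-k^\star}$: for $k\leq k^\star$ the two walks are essentially identical, and for $k > k^\star$ they decouple. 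The Gaussian BRW computation then gives $\P(A_{h_1}\cap A_{h_2}) \lesssim N^{\gamma^2(k^\star/K - 2) + \oo(1)}$, and the $\rd h_1\,\rd h_2$ integral at that scale contributes a factor $2^{-k^\star} = N^{-k^\star/K}$; summation over dyadic $k^\star$ converges for $\gamma<1$ to $N^{-2\gamma^2+\oo(1)}$, which matches the square of the first moment. Making this heuristic rigorous requires the asymptotics of Toeplitz determinants with two dimension-dependent Fisher--Hartwig singularities of strength $\gamma$ whose separation shrinks at polynomial-in-$N$ rates --- the main technical input cited in the abstract --- and this is where I expect the principal effort; given those asymptotics, the rest of the lower bound is a routine Gaussian BRW computation.
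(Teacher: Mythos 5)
Your upper bound is exactly the paper's: Fubini, rotational invariance, the Selberg-integral moment (Lemma \ref{lem: Full sum exp mom}), exponential Chebyshev, and Markov.

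For the lower bound, the broad strategy (second moment for a set-measure, Paley--Zygmund, decorrelation at the branching scale) is correct, but the specific implementation is genuinely different and, as written, has gaps. The paper does \emph{not} use a slanted-barrier event $A_h$ with a $k^{2/3}$ correction; that barrier machinery is what the authors explicitly reserve for the subleading order (see the discussion around \eqref{eq: heur Z}). Instead the paper simply recycles the $K$-level Kistler coarse graining from Section 2: it uses the events $J_x(h) = \{Y_m(h)\geq x,\ m=2,\dots,K-1\}$ with $x = \tfrac{\gamma}{K}(1+\tfrac{\varepsilon}{3})\log N$, and applies the already-proved one-point bound (Proposition \ref{prop: one point precise bound}) and two-point bounds (Propositions \ref{prop: two point decoupling rough} and \ref{prop: two point bound rough}) to the double integral over $[0,2\pi]^2$, split by the branching scale $h_1\wedge h_2$. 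Crucially, by dropping the first increment $Y_1$ and using Fourier inversion at the precision of Proposition \ref{thm:FourierRH}, the paper obtains $\E[\mathrm{Leb}(\mathcal{L}_N^K)^2] = (1+\oo_K(1))\E[\mathrm{Leb}(\mathcal{L}_N^K)]^2$, so Paley--Zygmund gives the lower bound with probability tending to $1$ directly. Your barrier version would at best yield a second moment comparable to the first moment squared up to a constant (the spine-decomposition factor), hence a probability bounded away from zero but not $\to 1$; you flag that an ``upgrade'' step is needed but do not supply one, which is a real gap.

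A second concrete error: you assert that making the two-point estimates rigorous ``requires the asymptotics of Toeplitz determinants with two dimension-dependent Fisher--Hartwig singularities of strength $\gamma$ whose separation shrinks at polynomial-in-$N$ rates.'' This is precisely what the paper is engineered to avoid. By truncating to powers $j < N^{1-\delta}$ when forming the coarse increments $Y_m$, the symbols in the relevant two-point Laplace transforms have \emph{no} singularity at all, only an $N$-dependent analytic external potential; these are handled by the singularity-free Riemann--Hilbert estimate of Proposition \ref{thm:FourierRH}. The single Fisher--Hartwig singularity (Proposition \ref{prop:tail}) is only needed to control the tail $\sum_{j\ge N^{1-\delta}}$ at a \emph{single} point, by rotational invariance, and never for two merging singularities. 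Your route would require genuinely new Toeplitz asymptotics with two merging dimension-dependent singularities, a technically much harder input than what the theorem actually needs.
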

This was conjectured by Fyodorov \& Keating, see Section 2.4 in \cite{FyoKea14}.
In fact, a more precise expression for the measure of high points was instrumental for their prediction of the subleading order in Conjecture \ref{conj: fhk},
following the ideas of \cite{FyoLedRos12}.
The theorem can be used to obtain the limit of the {\it free energy} 
\begin{equation}
\label{eqn: free energy}
\frac{1}{\log N}\log \left(\frac{N}{2\pi}\int_0^{2\pi} |{\rm P}_{N}(h)|^{\beta} \rd h \right)
\end{equation}
of the random field $\log |{\rm P}_N( e^{\ii h})|$. 
In particular, it is proposed in Section 2.2 of \cite{FyoKea14} that the free energy exhibits {\it freezing}, i.e.~that above a critical temperature $\beta_c$,
the free energy \eqref{eqn: free energy} divided by the inverse temperature $\beta$ becomes constant in the limit. The following, which is essentially
an immediate consequence of Theorem \ref{thm: high points}, proves the conjecture.
\begin{cor}\label{cor: free energy}
For $\beta \ge 0$,
\begin{equation}
 \lim_{N\to \infty} \frac{1}{\log N}\log \left(\frac{N}{2\pi}\int_0^{2\pi} |{\rm P}_{N}(h)|^{\beta} \rd h\right)
=
\begin{cases}
1+\frac{\beta^2}{4}\ & \text{ if $\beta< 2$},\\
\beta & \text{ if $\beta\geq 2$},
\end{cases}
\ \text{ in probability }.
\end{equation}
\end{cor}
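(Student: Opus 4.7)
Define $I_N(\beta) := \frac{N}{2\pi}\int_0^{2\pi}|{\rm P}_N(e^{\ii h})|^\beta\rd h$. Because $|{\rm P}_N|^\beta$ is a Laplace functional of $\log|{\rm P}_N|$, the log of $I_N(\beta)$ should be governed by the variational formula
$$\phi(\beta) := 1 + \max_{\gamma\in[0,1]}(\beta\gamma-\gamma^2),$$
where the $1$ comes from the prefactor $N/(2\pi)$, the factor $N^{\beta\gamma}$ is the typical size of the integrand on $\mathcal L_N(\gamma)$, and $N^{-\gamma^2}$ is the measure of that level set from Theorem \ref{thm: high points}. Elementary calculus yields $\phi(\beta)=1+\beta^2/4$ for $\beta\le 2$ (attained at $\gamma=\beta/2$) and $\phi(\beta)=\beta$ for $\beta\ge 2$ (attained at $\gamma=1$). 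The plan is a standard slicing: upper-bound $I_N(\beta)$ by summing contributions from a finite grid of level sets, and lower-bound it by the single dominant level set.

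Fix $\epsilon>0$, set $\gamma_k := k\epsilon$ for $k=0,\ldots,K := \lfloor(1-\epsilon)/\epsilon\rfloor$, and introduce the event
$$\mathcal E_N := \Big\{\max_h\log|{\rm P}_N(e^{\ii h})|\le(1+\epsilon)\log N\Big\}\cap\bigcap_{k=1}^K\Big\{N^{-\gamma_k^2-\epsilon}\le \mbox{Leb}(\mathcal L_N(\gamma_k))\le N^{-\gamma_k^2+\epsilon}\Big\},$$
which has probability tending to $1$ by Theorems \ref{thm: main} and \ref{thm: high points} together with a finite union bound. On $\mathcal E_N$, decompose $[0,2\pi]$ into $\{h:\log|{\rm P}_N(e^{\ii h})|<0\}$, the annuli $\mathcal L_N(\gamma_k)\setminus\mathcal L_N(\gamma_{k+1})$ for $k=0,\ldots,K-1$, and $\mathcal L_N(\gamma_K)$. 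Since $|{\rm P}_N|^\beta$ is bounded by $1$ on the first piece, by $N^{\beta\gamma_{k+1}}$ on the $k$-th annulus, and by $N^{\beta(1+\epsilon)}$ on $\mathcal L_N(\gamma_K)$, we obtain
$$I_N(\beta)\;\le\; N \;+\; N\sum_{k=0}^{K-1}N^{-\gamma_k^2+\epsilon+\beta\gamma_{k+1}} \;+\; N^{\,1-\gamma_K^2+\epsilon+\beta(1+\epsilon)}.$$
Each exponent is bounded by $\phi(\beta)+C_\beta\epsilon$, and the sum has only $\OO(1/\epsilon)$ terms, so $\log I_N(\beta)/\log N\le\phi(\beta)+C_\beta\epsilon+\oo(1)$ on $\mathcal E_N$.

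For the matching lower bound, pick the grid point $\gamma_{k^\ast}$ closest to $\min(\beta/2,\,1-\epsilon)$, so that $\beta\gamma_{k^\ast}-\gamma_{k^\ast}^2$ is within $C_\beta\epsilon$ of $\phi(\beta)-1$. On $\mathcal E_N$, restricting the integral to $\mathcal L_N(\gamma_{k^\ast})$ and using the pointwise bound $|{\rm P}_N(e^{\ii h})|^\beta\ge N^{\beta\gamma_{k^\ast}}$ there gives
$$I_N(\beta)\;\ge\;\frac{N}{2\pi}\cdot\mbox{Leb}(\mathcal L_N(\gamma_{k^\ast}))\cdot N^{\beta\gamma_{k^\ast}}\;\ge\;\frac{1}{2\pi}\,N^{\,1+\beta\gamma_{k^\ast}-\gamma_{k^\ast}^2-\epsilon},$$
hence $\log I_N(\beta)/\log N\ge\phi(\beta)-C_\beta\epsilon-\oo(1)$. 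Letting $\epsilon\to 0$ yields the result in probability. The only delicate point is the regime $\beta\ge 2$, where the optimizer $\gamma^\ast=1$ is outside the range of Theorem \ref{thm: high points}; this is handled because, for the upper bound, the tail contribution from $\mathcal L_N(\gamma_K)$ is bounded using Theorem \ref{thm: main} on the maximum, and for the lower bound, $\gamma_K\to 1$ as $\epsilon\to 0$ gives exponent approaching $\beta$.
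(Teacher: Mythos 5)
Your proof is correct and follows essentially the same Laplace/slicing strategy as the paper: decompose the integral over a finite grid of level sets of $\log|{\rm P}_N|$, restrict to a high-probability event furnished by Theorems \ref{thm: main} and \ref{thm: high points}, bound the integrand on each slice by its value at the grid endpoints, and let the grid get finer. The only cosmetic differences are that the paper lets its grid extend slightly above $1$ and invokes $\text{Leb}(\mathcal{L}_N(\gamma_M))=0$ directly, whereas you cap the grid strictly below $1$ and control the residual slice via the explicit upper bound on the maximum, and you derive the lower bound from the single dominant level set rather than retaining the full sum of slice contributions.
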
 

\noindent The work \cite{FyoKea14} contains other interesting conjectures on statistics of characteristic polynomials. One of them, a transition for the second moment of the partition function, was proved in \cite{ClaKra2014}.

\subsection{Relations to Previous Works.\ }\label{sec: prev work}
This paper is part of the current research effort to develop a theory of extreme value statistics of log-correlated fields.
There have been many rigorous works on the subject in recent years, and we give here a non-exhaustive list.
In the physics literature,  most predictions on the extreme value statistics of log-correlated fields can be found in\ \cite{CarLed2001}.
In mathematics, the leading order  of the two-dimensional Gaussian Free Field, was determined in \cite{BolDeuGia01}. 
In a series of impressive work, the form of the subleading correction as well as convergence of the fluctuations have been obtained
\cite{BramZei12, DinZei14, BramDinZei13, BisLou13}.
The approach (with the exception of \cite{BisLou13}) follows closely the one used for branching random walks. This started with the seminal work of Bramson \cite{Bra78} for branching Brownian motion and was later extended to general branching random walks \cite{Bac00, AddRee09, BraZei09, Aid13, BraDinZei14}.
Log-correlated models are closely related to Gaussian Multiplicative chaos, see \cite{RhoVar13} for a review.
In particular, convergence of the maximum of a related model of log-correlated Gaussian field was proved in \cite{Mad14}.
We also refer to \cite{Web15} for connections between the characteristic polynomial of unitary matrices and Gaussian Multiplicative chaos.
From the perspective of spin glasses, Corollary \ref{cor: free energy} suggests that the model exihibits a {\it one-step replica symmetry breaking}.
This was proved for Gaussian log-correlated fields in \cite{DerSpo88,BovKur04,ArgZin14, ArgZin15}.
A general theorem for the convergence of the maximum of log-correlated Gaussian fields was proved in \cite{DinRoyZei15}.
%The heuristics for the high values of log-correlated fields are applicable to non-Gaussian fields.
A unifying point of view including non-Gaussian log-correlated fields and their hierarchical structure is developed in \cite{Kis15}.
Important non-Gaussian examples include cover times of the two-dimensional random walk on the torus whose leading order was determined in \cite{DemPerRosZeo04} and subleading order in \cite{BelKis14}. 
Also, the leading and subleading order of the Fyodorov-Hiary-Keating Conjecture are known for a random model of the Riemann zeta function other than CUE \cite{Har13,ArgBelHar15}.
Finally, the analogue conjecture is expected to hold for other random matrix ensembles such as the {\it Gaussian Unitary Ensemble} \cite{FyoSim2015}.\\

\noindent{\it Notation.} Throughout this paper, we use the notation ${\rm O(1)}$ (resp. ${\rm o}(1)$) for a quantity uniformly bounded in $N$ (resp. going to $0$ with $N$). The constants $c$ and $C$ denote universal constants varrying from line to line. The notation $a_N\lesssim b_N$ means that $a_N\leq C b_N$ for some $C$ independent of $N$.\\
 
\subsection{Outline of the Proof: Connection to Branching Random Walk.\ }\label{subsec: outline}
Let $e^{\ii\theta_1}, \ldots, e^{\ii\theta_N}$ be the eigenvalues of ${\rm U}_N$. We are interested in
$$
\log|{\rm P}_N(e^{\ii h})| = \sum_{k=1}^N \log| 1 - e^{\ii(\theta_k-h)} |.
$$
Recall that an integrable $2\pi$-periodic function has a Fourier series which converges pointwise wherever
the function is differentiable (see e.g. \cite[Theorem 2.1]{SteSha03}).
Since the $2\pi$-periodic integrable function $h \mapsto \Re  \log(1-e^{\ii h})$ has Fourier series $- \sum_{j=1}^\infty \frac{\Re\, e^{-\ii j h}}{j}$
we have
\begin{equation}
\label{eqn: fourier}
\log|{\rm P}_N(e^{-\ii h})| = \sum_{k=1}^N \sum_{j=1}^\infty -\frac{\Re (e^{\ii j(\theta_k-h}))}{j}
= \sum_{j=1}^\infty -\frac{\Re (e^{-\ii j h}\Tr {\rm U}_N^j)}{j}, \ \ h\in\mathbb{R}, 
\end{equation}
where $\Tr$ stands for the trace and both right and left-hand sides are interpreted as $-\infty$ if $h$ equals an eigenangle.
The starting point of the approach is to treat the above expansion as a {\it multiscale decomposition} for the process.

Though the traces of powers of ${\rm U}_N$ are not independent, it was shown in \cite{DiaSha1994,DiaEva01} that they
are {\it uncorrelated}
\begin{equation}
\label{eqn: diaconis-shah}
\mathbb{E}\left(\Tr {\rm U}_N^j\ \overline{\Tr {\rm U}_N^k}\right)=\delta_{kj} \min(k,N),
\end{equation}
where $\mathbb{E}$ is the expectation under the Haar measure $\mathbb{P}$ (by rotational invariance also $\mathbb{E}(\Tr {\rm U}_N^j \Tr {\rm U}_N^k)=0$).
At a heuristic level, the covariance structure of the traces explains the asymptotic Gaussianity of $\log |{\rm P}_N(e^{\ii h})|$
as well as the correlation structure for different angles $h_1,h_2$,  see \eqref{eqn: branching 1} below.
It is also the starting point of the connection to branching random walk.

Because of \eqref{eqn: diaconis-shah}, one expects that the contribution to $\log |{\rm P}_N(e^{\ii h})|$ of traces of powers $N$ or greater should be of order $1$ since
$\sum_{j \ge N} \frac{N}{j^2}=\OO(1)$. Moreover, the variance of the powers less than $N$ becomes
$$
\mathbb{E}\left( \left(\sum_{j=1}^{N-1} -\frac{\Re (e^{-\ii jh}\Tr {\rm U}_N^j)}{j}\right)^2\right) = \frac{1}{2}\sum_{j < N}\frac{1}{j}= \frac{1}{2} \log N + \OO(1).
$$
The key idea is to divide the truncated sum into increments
\begin{equation}\label{eq: W incr def}
W_\ell(h)=\sum_{e^{\ell-1} \le j < e^\ell} -\frac{\Re (e^{-\ii j h}\Tr {\rm U}_N^j)}{j} \ \text{ for $\ell=1,\dots, \log N$,}
\end{equation}
which, thanks to \eqref{eqn: diaconis-shah}, are uncorrelated and have variance $\frac{1}{2}\sum_{e^{\ell-1} \le j < e^\ell}\frac{1}{j} \approx \frac{1}{2}$. 
Thus, at a heuristic level, one may think of the partial sums
\begin{equation}\label{eqn: X brw}
X_\ell(h) = \sum_{\ell'=1}^{\ell} W_{\ell'}(h), \ \ell=0,\ldots, \log N,
\end{equation}
as a Gaussian random walk with increments of variance $\frac{1}{2}$, for any fixed $h$. 
Furthermore, for each $\ell$, the collection $(X_{\ell}(h),h\in[0,2\pi])$, defines a random field on the unit circle which can be thought
of as a sequence of regularizations of $\log |{\rm P}_N(e^{ih})|$.
It turns out that for $h$ and $h'$ in $[0,2\pi]$ the corresponding partial sums $X_\ell(h)$ and $X_\ell(h')$ exhibit an approximate {\it branching structure}.
To see this, define the {\it branching scale} of $h$ and $h'$ 
as
\begin{equation}
\label{eqn: branching scale}
h\wedge h'=-\log \|h-h'\|\ ,
\end{equation}
where  $\|h-h'\|=\min\{|h-h'|, (2\pi-|h-h'|)\}$ is the distance on the circle.
Equation \eqref{eqn: diaconis-shah} implies
\begin{equation}
\label{eqn: branching 1}
\mathbb{E}(W_\ell(h)W_\ell(h'))=\sum_{e^{\ell-1} \le j < e^\ell}\frac{\cos(j\|h-h'\|)}{2j}.
\end{equation}
For $j$ where $j \|h-h'\|$ is small the cosine is essentially $1$, and for $j$ such that $j \|h-h'\|$
is large it oscillates, causing cancellation. 
In fact, by expanding the cosine in the first case and by using summation by parts in the other, 
it is not hard to see that
\begin{equation}
\label{eqn: branching 2}
\mathbb{E}(W_\ell(h)W_\ell(h'))=
\begin{cases}
\frac{1}{2} +\OO(e^{\ell-h\wedge h'}) \ &\text{if $\ell\leq h\wedge h'$,}\\
\OO(e^{-2(\ell-h\wedge h')}) 			\ &\text{if $\ell> h\wedge h'$.}\\
\end{cases}
\end{equation}
In other words, the increments $W_\ell(h)$ and $W_\ell(h')$ are almost perfectly correlated for $\ell$ before the branching scale and almost perfectly uncorrelated after the branching scales.
We conclude from \eqref{eqn: branching 2} that, if we restrict the field to the discrete set of $N$ points
\begin{equation}\label{eqn: H_N}
  \mathcal{H}_N = \left\{0, \frac{2\pi}{N}, 2\frac{2\pi}{N}, \dots, (N-1) \frac{2\pi}{N}\right\}\ ,
\end{equation}
the process $\big((X_\ell(h), l\leq \log N), h\in \mathcal H_N\big)$ defined by \eqref{eqn: X brw} is an {\it approximate branching random walk}.
Namely, the random walks $X_\ell(h)$ and $X_\ell(h')$ are almost identical before the branching scale $\ell = h \wedge h'$,
and continue almost independently after that scale, akin to a particle of a branching random walk that splits into two independent walks at time $h \wedge h'$, see Figure \ref{fig: splitting}.
Moreover, if $\| h - h' \| \le e^{-\ell}$, the variance of the difference $X_{\ell}(h)-X_{\ell}(h')$ is of order one. 
Thus, the
%variations of $(X_{\ell}(h), h\in[0,2\pi])$ are
``variation'' of $(X_{\ell}(h), h\in[0,2\pi])$ is % *****************
effectively captured by the values at $e^{\ell}$ equally spaced points for each $\ell$.
This is reminiscent of a branching random walk where the mean number of offspring of a particle is $e$ and the average number of particles at time $\ell$ is $e^{\ell}$, see Figure \ref{fig: tree}.

Keeping the connection with branching random walk in mind, the proof of Theorem \ref{thm: main}  is carried out in two steps.
First, we obtain upper and lower bounds for truncated sums restricted to the discrete set $\mathcal H_N$.
For this, we follow  a multiscale refinement of the second method proposed by Kistler \cite{Kis15}.
The second step is to derive from these upper and lower bounds for the entire sum (including large powers of the matrix) over the whole continuous interval $[0,2\pi]$.\\

\begin{figure}
\begin{center}
\vspace{-0.9cm}
\includegraphics[width=0.5\linewidth]{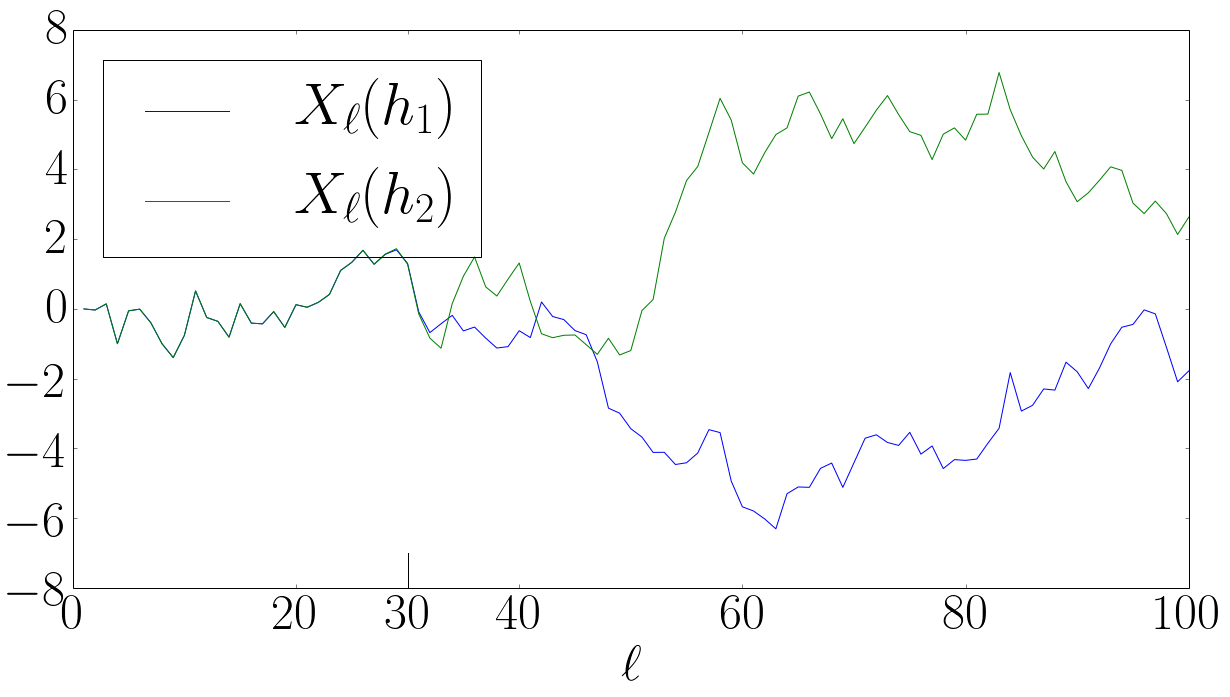}
\end{center}
\caption{Illustration of the processes $X_{\ell}(h_1)$ and $X_{\ell}(h_2)$ for $h_1$ and $h_2$ at distance roughly $e^{-30}$, for a random matrix ${\rm U}_N$ with side-length $N = e^{100}$.
The processes decorrelate at roughly scale $\ell = 30$.  }
\label{fig: splitting}
\end{figure}

\begin{figure}[h]
\begin{center}
\includegraphics[height=4cm]{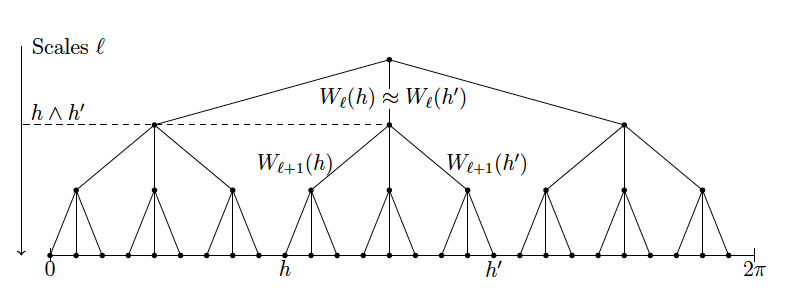}
\caption{Illustration of the approximate branching random walk structure of the multiscale decomposition with $\approx e^\ell$ distinct values at each scale $\ell$.}
\label{fig: tree}
\end{center}
\end{figure}

\noindent{\it First step: the truncated sum on a discrete set.} 
The first result is an upper and lower bound for the maximum of the truncated sum for powers slightly smaller than $N$:
$$
X_{(1-\delta)\log N}(h)=\sum_{ 1 \le \ell < (1-\delta)\log N }W_\ell(h)=\sum_{j < N^{1-\delta}} - j^{-1}\Re (e^{-\ii j h}\Tr {\rm U_N^j}), \ h\in \mathcal H_N\ ,
$$
where the parameter $\delta$ will be taken small enough in terms of $\varepsilon>0$. 
For a given $\varepsilon$, we prove
\begin{equation}\label{eq: UB LB}
\lim_{N\to\infty} \mathbb{P}\left((1-\varepsilon)\log N \le \max_{h \in \mathcal H_N} \sum_{1 \le \ell < (1-\delta)\log N}W_\ell(h) \le \log N \right) = 1\ .
\end{equation}
%The reason to restrict to powers smaller than $N$ is to ensure sharp large deviation estimates.
The point of restricting to powers smaller than $N$ is that for such sums we can obtain sharp large deviation estimates. % **************************

The upper estimate \eqref{eq: UB LB} follows by a union bound
\begin{equation}
\label{eqn: union}
\mathbb{P}\left( \max_{h\in\mathcal H_N}X_{(1-\delta)\log N}(h) > \log N \right)\leq N\ \mathbb{P}\left( X_{(1-\delta)\log N}(h) > \log N \right)
%\leq\E(\mathcal Z)\ ,
\leq\E(\mathcal Z(0))\ , %**************
\end{equation}
where
%$\mathcal Z$
$\mathcal Z(\varepsilon)$ %*********************
is the number of exceedances
\begin{equation}\label{eq: exceedances intro}
%\mathcal Z=\#\{h\in\mathcal H_N: X_{(1-\delta)\log N}(h)\geq (1-\varepsilon)\log N\}\ .
\mathcal Z(\varepsilon)=\#\{h\in\mathcal H_N: X_{(1-\delta)\log N}(h) > (1-\varepsilon)\log N\}\ . %**************************
\end{equation}
%Its expectation
The expectation $\E(\mathcal Z(0))$ %***************************
goes to zero if $X_{(1-\delta)\log N}(h)$, whose variance is approximatively $(1-\delta)\log N$, admits Gaussian large deviations. 
This is proved by computing the exponential moments of $X_\ell(h)$ using a Riemann-Hilbert approach,
see Proposition \ref{thm:FourierRH}. The Riemann-Hilbert approach to
%prove
compute the %**************
Fourier transform of linear statistics is an idea from \cite{Dei1999}.

The lower bound in \eqref{eq: UB LB}
%could
would %*****************
follow from Chebyshev's inequality (or Paley-Zygmund inequality) if one could show %************
%$\E(\mathcal Z^2) = (1+\oo(1)) \E(\mathcal Z)^2$.
$\E(\mathcal Z(\varepsilon)^2) = (1+\oo(1)) \E(\mathcal Z(\varepsilon))^2$.  %**********************
However, as for branching random walk, the second moment
%$\E(\mathcal Z^2)$
$\E(\mathcal Z(\varepsilon)^2)$ %*************'
is in fact exponentially larger than
%$\E(\mathcal Z)^2$
$\E(\mathcal Z(\varepsilon))^2$ %***********
, due to rare events.
A way around this is to modify the count by introducing a condition which takes into account the branching structure.
At the level of the leading order, this can be achieved by a {\it $K$-level coarse graining} as explained in \cite{Kis15}.
More precisely, for $K\in\N$ and $\delta=K^{-1}$, consider $K$ large increments of the ``random walk" $X_\ell(h)$:
\begin{equation}
\label{eqn: Y}
  Y_m(h) = \sum_{ \frac{m-1}{K}\log N<\ell\leq \frac{m}{K}\log N} W_\ell(h) = \sum_{ N^{ \frac{m-1}{K} } < j \le N^{ \frac{m}{K} } } -\frac{\Re( e^{-\ii j h}\Tr( {\rm U}_N^j))}{j}\ ,
\end{equation}
for $m=1,\dots, K$, so that $X_{(1-\delta)\log N}(h)=Y_1(h)+\dots+Y_{K-1}(h)$.
It follows from \eqref{eqn: branching 2} that each $Y_m(h)$ has variance roughly $\frac{1}{2K}\log N$. 
Moreover, $Y_m(h)$ and $Y_m(h')$  decorrelate for $\|h-h'\|\gg N^{-(m-1)/K}$.
Because of the self-similar nature of branching random walk, the leading order should be linear in the scales $\ell$.
In particular, for $h$ such that $X_{(1-\delta)\log N}(h)$ is at least $(1-\delta)\log N$, one expects each $Y_m(h)$ to contribute $\frac{1-\delta}{K}\log N$ to the maximum.
Following \cite{Kis15}, this leads us to consider the modified count
\begin{equation}
\label{eqn: Z1}
%\tilde{\mathcal{Z}} = \#\{h\in \mathcal H_N:Y_m(h)\geq \frac{(1-\varepsilon)}{K}\log N, m=2,\dots, K-1 \}\ .
\tilde{\mathcal{Z}}(\varepsilon) = \#\{h\in \mathcal H_N:Y_m(h)\geq \frac{(1-\varepsilon)}{K}\log N, m=2,\dots, K-1 \}\ . %******************************
\end{equation}
Note that the first increment $Y_1(h)$ is omitted. This is crucial since for
\begin{equation}\label{eq: fm sm intro}
%\E(\mathcal{\tilde{Z}}^2) = (1+\oo(1))\E(\mathcal{\tilde{Z}})^2\ ,
\E(\mathcal{\tilde{Z}}(\varepsilon)^2) = (1+\oo(1))\E(\mathcal{\tilde{Z}}(\varepsilon))^2\ , %*********************
\end{equation}
to hold a sufficiently large proportion of all pairs $h_1$ and $h_2$ must have decorrelated increments,
and the first increments $Y_1(h_1)$ and $Y_1(h_2)$ only decorrelate for macroscopically separated pairs, cf.~\eqref{eqn: branching 2}.
Dropping the first increments ensures enough independence between most pairs $h_1,h_2$ for the moments to match.
Moreover, a separate first moment argument shows that this comes at negligible cost if $K$ is chosen large enough depending on $\varepsilon$.
To prove the estimate \eqref{eq: fm sm intro}, one also needs precise large deviation estimates for the vector $(Y_2(h),\ldots, Y_{K-1}(h))$
for one $h$ and jointly for two different $h$'s. 
Again, such precise estimates on the exponential moments are obtained through the Riemann-Hilbert approach, see  Proposition \ref{thm:FourierRH}.

At this point, a few words on the prediction for the subleading correction are in order.
At that level of precision, the expected number of points exceeding the predicted level $m_N=\log N - 3/4 \log \log N$ diverges in contrast to \eqref{eqn: union}.
However, thanks to the seminal work of Bramson on branching Brownian motion \cite{Bra78}, it is known that
the first moment is inflated by rare events: with large probability, the random walks $X_\ell(h)$ must remain below a linear barrier for the terminal value to be maximal.
To capture this, one is led to consider exceedances with a barrier along the scales
\begin{equation}\label{eq: heur Z}
\tilde{\mathcal{Z}}_B = \#\{ h \in \mathcal H_N: X_{\log N}(h) \ge m_N,\ X_{\ell}(h) \leq \ell  + B \text{ for $\ell=1,\dots, \log N$}\},
\end{equation}
for some suitable $B$. If $(X_\ell(h), \ell\leq \log N)$ were an exact random walk, then by the {\it ballot theorem} (see e.g.~\cite{AddRee08}),
the additional barrier requirement would balance out the divergence so that $\E(\tilde{\mathcal{Z}}_B)=\OO(1)$.
In fact, the restriction on the increments in \eqref{eqn: Z1} is a weak version of this barrier condition:
if a random walk $X_{\ell}(h)$ goes above the linear barrier for small $\ell$, 
its ``descendants'' at subsequent times cannot take advantage of this
%to **************
by rising at a slower rate, since
to be counted the increments $Y_2,\ldots,Y_{K-1}$ must all make equally large jumps.\\

\noindent{\it Second step: extension to the full sum and the continuous interval. }
Once the result \eqref{eq: UB LB} is established for the truncated sum $X_{(1-\delta)\log N}(h)$ on the discrete set $\mathcal H_N$, it remains to include
traces of high powers to the sum and to consider the full interval $h\in[0,2\pi]$.

For the lower bound, the extension to the full interval is direct.
Controlling the contribution of high powers is harder: it requires large deviation
bounds for $-\Re\sum_{j \ge N^{1-\delta}}\frac{{\rm Tr}(e^{-\ii j h}U_N^j)}{j}$. These come from exponential moment estimates
$$
\mathbb{E}\left(\exp\left(-\alpha\Re\sum_{j \ge N^{1-\delta}}\frac{{\rm Tr}(e^{-\ii j h}U_N^j)}{j}\right)\right)
=
\mathbb{E}\left(\exp\left(\sum_{k=1}^N v(e^{\ii\theta_k})\right)\right),\  {\rm where\ } v(e^{\ii\theta})=-\alpha\Re\sum_{j \ge N^{1-\delta}}\frac{e^{\ii j(\theta-h)}}{j}.
$$
Obtaining a
precise approximation of the above expectation is equivalent to
deriving asymptotics of Toeplitz determinants with a singularity of Fisher-Hartwig type. 
Indeed,
by Heine's formula \cite{Hei1878} we have
\begin{equation}\label{eqn:Heine}
\mathbb{E}\left(\prod_{k=1}^N f(e^{\ii\theta_k})\right)
=
\det((\hat f_{i-j})_{0\leq i,j\leq N})\ {\rm with\ }
f(z)=e^{V(z)}|z-1|^{\alpha},\ V(z)=\alpha\Re \sum_{1 \le k < N^{1-\delta}}\frac{z^k}{k}.
\end{equation}

In other words, we study the distribution of high powers of unitary matrices by superposition of 
a logarithmic singularity with smooth linear statistics.
For a fixed analytic external potential $V$ the Riemann-Hilbert technique from \cite{DeiItsKra2011,DeiItsKra2014}
is a robust method to estimate asymptotics of the above Toeplitz determinant. 
We follow the same approach with an additional technicality: in our regime of interest, $V$ depends on $N$
 with an emerging singularity {\it  of arbitrarily small mesoscopic scale} $N^{-1+\delta}$ (we note that the Riemann-Hilbert method was also used in \cite{FyoKhoSim16} to evaluate determinants associated with merging singularities, in the context of the Gaussian Unitary Ensemble).
The resulting Laplace transform of traces of high powers of unitary matrices, Proposition \ref{lem: mgf ub tail}, may be of independent interest.

For the upper bound, the extension to the full sum and interval is obtained as follows.
%First, we obtain an upper bound on the truncated sum on a denser discrete set using a dyadic chaining argument,
First, we obtain an upper bound on the truncated sum over all of $[0,2\pi]$ using a dyadic chaining argument.
This relies on a large deviation control on the difference of values of the truncated sum between two close points (at distance less than $N^{-1}$) obtained from exponential moments via Proposition \ref{thm:FourierRH}.
Second, we control the large values of the traces of high powers 
on a dense discrete set in the same way as for the lower bound (using the aforementioned Proposition \ref{lem: mgf ub tail}).
Combining these, we obtain an upper bound for the full sum on the dense discrete set.
%The dense discretization is needed to apply
In the last step a bound on small gaps between eigenvalues \cite{BenBou2013} is used,
which gives an appropriate control of the derivative of $\log |{\rm P}_N(e^{\ii h})|$ on the circle, and allows us to conclude
that the maximum over $[0,2\pi]$ is close to the maximum over the dense discrete set.

\subsection{The imaginary part.\ }\label{subsec:imaginary} Theorem \ref{thm: main} has a natural analogue concerning the imaginary part of the logarithm of the characteristic polynomial. More precisely,  if we choose the principal branch 
so that $-\pi/2<{\rm Im}\log (1-e^{\ii \theta})<\pi/2 $, then 
\begin{equation}\label{eqn:imaginary}
\lim_{N\to\infty}\frac{\underset{h\in[0,2\pi]}{\max} {\rm Im} \sum_{k=1}^N\log (1-e^{\ii (\theta_k-h)})}{\log N}=1 \qquad \text{in probability.}
\end{equation}
To illustrate the meaning of this result, let $\mathscr{N}_h({\rm U}_N)=\sum_{k=1}^N\mathds{1}_{(0,h)}(\theta_k)$
denote the number of eigenangles in the range $(0,h)$. %***********
Following \cite{HugKeaOco2001} for any $-\pi<s<t<\pi$ we have the identity %************
$
\mathds{1}_{(s,t)}(\theta)=\frac{t-s}{2\pi}+\frac{1}{\pi}{\rm Im}\log (1-e^{\ii(\theta-t)})-\frac{1}{\pi}{\rm Im}\log (1-e^{\ii(\theta-s)})
$, hence (\ref{eqn:imaginary}) yields
$$
\lim_{N\to\infty}\frac{\underset{h\in[0,2\pi]}{\max}(\mathscr{N}_h({\rm U}_N)-\frac{N h}{2\pi})}{\log N}=\frac{1}{\pi}%.
\qquad \text{in probability.} %************
$$
This easily implies the following optimal rigidity bound for all eigenangles. Note that, in the context of Wigner matrices,
the best known rigidity estimates (for all eigenvalues simultaneously) in the bulk of the spectrum are of type $(\log N)^C/N$ for some non optimal constant $C$ (see e.g. \cite{ErdYauYin2012Rig}, \cite{CacMalSch2015}).  The following gives both the optimal logarithmic exponent and constant
%, for CUE.
for the CUE. %**************

\begin{thm}
We label the eigenangles of a Haar-distributed ${\rm U}_N$ so that $0\leq \theta_1\leq\dots\leq \theta_N<2\pi$.
Then, for any $\e>0$, we have
$$
\mathbb{P}\left((2-\e)\frac{\log N}{ N}< \sup_{1\leq k\leq N}\left|\theta_k-\frac{2\pi k}{N}\right|< (2+\e)\frac{\log N}{ N}\right)\underset{N\to\infty}{\to}1.
$$
\end{thm}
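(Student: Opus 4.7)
The plan is to transfer the asymptotics \eqref{eqn:imaginary} for the imaginary part of $\log {\rm P}_N$ into a two-sided control of the eigenvalue counting function $\mathscr{N}_h({\rm U}_N)$ on $[0,2\pi]$, and then convert this control into rigidity for the individual eigenangles $\theta_k$.

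The first step is to obtain the two-sided bound for the counting function. I set $f(h):=\tfrac{1}{\pi}{\rm Im}\sum_{k=1}^N\log(1-e^{\ii(\theta_k-h)})$. Specializing the identity recalled in Section~\ref{subsec:imaginary} to $s=0$, $t=h$ and summing over $k$ yields
$$ \mathscr{N}_h({\rm U}_N)-\frac{Nh}{2\pi}\;=\;f(h)-f(0). $$
By definition \eqref{eqn:imaginary} gives $\max_h f(h)/\log N\to 1/\pi$ in probability. The complementary lower bound $\min_h f(h)/\log N\to -1/\pi$ follows by applying the same statement to $\overline{{\rm U}_N}$, which has the same law as ${\rm U}_N$ but flips the sign of $f$ via $\theta_k\mapsto-\theta_k$. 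The quantity $f(0)$ is a single linear statistic of a bounded function with a jump on the circle, hence has variance $\OO(\log N)$ and is asymptotically Gaussian of order $\sqrt{\log N}$ (cf.~\cite{KeaSna2000,HugKeaOco2001}); in particular $f(0)=\oo(\log N)$ in probability. Combining these three inputs gives
$$ \frac{\sup_{h\in[0,2\pi]}\bigl|\mathscr{N}_h({\rm U}_N)-Nh/(2\pi)\bigr|}{\log N}\;\longrightarrow\;\frac{1}{\pi}\quad\text{in probability.} $$

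The upper rigidity bound is obtained by evaluating the discrepancy just below each $h=\theta_k$: the left limit $\mathscr{N}_{\theta_k^-}=k-1$ combined with the two-sided bound above forces $|k-1-N\theta_k/(2\pi)|\le(1/\pi+\e/4)\log N$ uniformly in $k$ with probability tending to $1$, and multiplying by $2\pi/N$ yields $\sup_k|\theta_k-2\pi k/N|<(2+\e)\log N/N$. For the lower bound, the same two-sided bound produces, with probability tending to $1$, a point $h^*\in[0,2\pi]$ where $|\mathscr{N}_{h^*}-Nh^*/(2\pi)|\ge(1/\pi-\e/4)\log N$. Setting $k=\mathscr{N}_{h^*}$, so that $\theta_k\le h^*\le\theta_{k+1}$, I split on the sign of the discrepancy: a positive discrepancy forces $\theta_k\le h^*\le 2\pi k/N-(2-\e/2)\log N/N$, whereas a negative one forces $\theta_{k+1}\ge h^*\ge 2\pi k/N+(2-\e/2)\log N/N$, so that after absorbing the $\OO(1/N)$ shift $|\theta_{k+1}-2\pi(k+1)/N|\ge(2-\e)\log N/N$. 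In either case $\sup_k|\theta_k-2\pi k/N|>(2-\e)\log N/N$.

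The main obstacle is the first step, which rests entirely on the analogue \eqref{eqn:imaginary} of Theorem~\ref{thm: main} for the imaginary part of $\log{\rm P}_N$. Its proof requires adapting to ${\rm Im}\log{\rm P}_N$ the full machinery developed in the paper for $\log|{\rm P}_N|$: the approximate branching random walk in the Fourier decomposition, the Riemann--Hilbert asymptotics of Toeplitz determinants with mesoscopic Fisher--Hartwig singularities, and the Kistler-type coarse-grained second moment. Once \eqref{eqn:imaginary} is available, the conversion to gap rigidity reduces to the elementary bookkeeping around the unit jumps of $\mathscr{N}_h$ described above.
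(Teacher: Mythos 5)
Your proof is correct and matches the paper's terse sketch: convert \eqref{eqn:imaginary} into two-sided control of $\mathscr{N}_h({\rm U}_N)-Nh/(2\pi)$ via the \cite{HugKeaOco2001} identity, then read off eigenangle rigidity from the unit jumps of the counting function. You usefully make explicit the steps the paper leaves implicit, namely the conjugation symmetry $\theta_k\mapsto-\theta_k$ giving the $\min_h$ bound, the $f(0)=\oo(\log N)$ estimate, and the case analysis converting a large discrepancy into a large $|\theta_k-2\pi k/N|$; you also correctly identify that the only substantive input is \eqref{eqn:imaginary}, whose adaptation from the real-part analysis is supplied by Proposition~\ref{prop:tailIm}.
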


For the proof of (\ref{eqn:imaginary}), note that the above choice of principal branch for the logarithm coincides with the Fourier expansion: ${\rm Im}\log (1-e^{\ii \theta})=-{\rm Im}\sum_{k\geq 1}\frac{e^{\ii k\theta}}{k}$, so that our branching technique applies. All elements of the proof for the real part only require notational changes when considering the imaginary part, except Proposition \ref{prop:tail} which bounds the tail of the Fourier series. We omit the straightforward proof adjustments, and we
give in Section \ref{sec: RH} the proof of the tail estimate, for the imaginary part of the series, i.e. the counterpart of Proposition \ref{prop:tail} (see Proposition \ref{prop:tailIm}).

\vspace{0.2cm}

\noindent{\bf Acknowledgements.} The authors thank Nicola Kistler for insightful discussions on the subject of log-correlated fields. L.-P. A. 
is partially supported by NSF grant DMS-1513441, PSC-CUNY Research Award 68784-00 46 and a Eugene M. Lang Junior Faculty Research Fellowship. P. B. is partially supported by NSF grants DMS-1208859 and DMS-1513587. 

\section{Maximum of the truncated sum on a discrete set}
As mentioned in the introduction, we first study the
maximum over the set $\mathcal{H}_N$ (see \eqref{eqn: H_N}) of cardinality $N$.
Precise large deviation bounds are a crucial input for the method.
We have good control on the { large deviations} of the (untruncated) sum
\begin{equation}\label{eq: single sum}
\sum_{j=1}^{\infty} -\frac{\Re(e^{-\ii jh}\Tr{\rm U}_N^j)}{j},\mbox{ for }h \in \mathbb{R},
\end{equation}
since its { characteristic function} can be explicitly computed using the Selberg integral { (cf. Lemma \ref{lem:Selberg})}.
However, the proof of the lower bound, being a second moment calculation, requires precise joint large deviation bounds
for sums at two different $h$'s. Computing exponential moments of two such sums, 
which include traces of powers close to or larger than $N$, is difficult due to the singularity of the logarithm.
If the sum only contains traces with powers less than $N$,
the Riemann-Hilbert techniques of Section \ref{sec: RH} give the needed { characteristic function (and exponential moment)} bounds.
The upper and lower bounds in this section are therefore given for the maximum of the truncated sum
\begin{equation}\label{eq: single sum trunc}
\sum_{\ell=1}^{(1-\delta)\log N} W_\ell(h) = \sum_{1 \le j < N^{1-\delta}} -\frac{\Re(e^{-\ii jh}\Tr{\rm U}_N^j)}{j},
\end{equation}
for small $\delta>0$, where the ``random walk" increments $W_\ell(h)$ are defined in \eqref{eq: W incr def}.
In Section \ref{sec: chaining etc} we extend these to bounds where $\mathcal{H}_N$ is replaced with the full interval $[0,2\pi]$,
and the sum \eqref{eq: single sum trunc} is replaced by the full sum.

\subsection{Upper bound.\ }
An upper bound for the maximum of the truncated sum \eqref{eq: single sum trunc} on $\mathcal{H}_N$ is obtained by a straightforward union bound.
We use the following ``Gaussian'' exponential moment bound for the sum \eqref{eq: single sum trunc},
which is proved in Section \ref{sec: RH} using Riemann-Hilbert techniques.
\begin{lem}
\label{lem: gaussian UB}
  For any $\delta \in (0, 1)$, there is a constant $C$ such that for $h,\xi \in \mathbb{R}$ with $|\xi| \le N^{\delta/10}$, then
  \begin{equation}\label{eq: exp mom of trunc sum}
    \E \left( \exp\left( \xi \sum_{\ell=1}^{(1-\delta) \log N} W_\ell(h) \right) \right)
    \le C\exp \left( \frac{1}{2} \xi^2 \sigma^2 \right),\  \text{ where $\sigma^2 = \frac{1}{2}\sum_{j=1}^{N^{1-\delta}} \frac{1}{j}$.}
  \end{equation}
\end{lem}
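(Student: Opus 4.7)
The plan is to rewrite the exponential moment as a Toeplitz determinant through Heine's identity, and then read off the Gaussian bound from the Riemann--Hilbert asymptotics established later as Proposition \ref{thm:FourierRH}.

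The first step is the algebraic setup. One has
$$
\xi \sum_{\ell=1}^{(1-\delta) \log N} W_\ell(h)
= -\xi \sum_{k=1}^N \sum_{1 \le j < N^{1-\delta}} \frac{\cos(j(\theta_k-h))}{j}
= \sum_{k=1}^N v(\theta_k),
$$
where $v(\theta) = -\xi\sum_{1\le j < N^{1-\delta}} \cos(j(\theta-h))/j$ is a real trigonometric polynomial of degree strictly less than $N^{1-\delta}$, with Fourier coefficients $\hat v_0 = 0$ and $\hat v_{\pm j} = -\xi e^{\mp \ii j h}/(2j)$ for $1 \le j < N^{1-\delta}$. The symbol $f=e^v$ is then smooth, positive and $2\pi$-periodic, with \emph{no} Fisher--Hartwig singularity, and Heine's identity \eqref{eqn:Heine} gives
$$
\E\left( \exp\bigl( \xi \textstyle\sum_\ell W_\ell(h) \bigr) \right)
= \E\Bigl( \prod_{k=1}^N f(e^{\ii\theta_k}) \Bigr)
= \det\bigl( (\hat f_{i-j})_{0\le i,j \le N-1} \bigr).
$$

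The second step is to apply Proposition \ref{thm:FourierRH}, whose Riemann--Hilbert steepest descent produces, for smooth $N$-dependent symbols of the form $e^v$, the strong Szegő-type asymptotic
$$
\det\bigl( (\hat f_{i-j}) \bigr)
= \exp\Bigl( N \hat v_0 + \sum_{k \ge 1} k\, \hat v_k\, \hat v_{-k} \Bigr) \cdot \bigl( 1 + \mathrm{err}(N,\xi,h) \bigr).
$$
Using the Fourier coefficients of $v$ computed above, the exponent becomes
$$
N \hat v_0 + \sum_{k\ge 1} k\,\hat v_k\,\hat v_{-k}
= 0 + \sum_{1 \le j < N^{1-\delta}} j \cdot \frac{\xi^2}{4 j^2}
= \frac{\xi^2}{4} \sum_{1 \le j < N^{1-\delta}} \frac{1}{j}
= \frac{1}{2}\xi^2 \sigma^2,
$$
matching the claimed Gaussian leading term exactly.

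The remaining task, and the main obstacle, is verifying that $1+\mathrm{err}$ stays bounded by a constant $C$ independent of $N$, uniformly for $h\in\mathbb R$ and $|\xi|\le N^{\delta/10}$. The difficulty is that $v$ depends on $N$ through both the cutoff $N^{1-\delta}$ and the amplitude $\xi$, so classical Strong Szegő statements (Ibragimov / Golinskii--Ibragimov) do not apply out of the box, and the analytic norms of $v$ grow with $N$: roughly $\|v\|_{H^{1/2}}^2 \sim \xi^2 \log N$. The saving feature, which I expect is what the Riemann--Hilbert proof of Proposition \ref{thm:FourierRH} exploits, is the factor-$N^{\delta}$ gap between the Fourier support of $v$ and the matrix dimension $N$; combined with the assumption $|\xi|\le N^{\delta/10}$, this leaves enough room to bound $\mathrm{err}$ by a quantity that is at worst polylogarithmic in $N$ and can be absorbed into the multiplicative constant $C$. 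Once that uniform bound is in hand, the lemma follows immediately.
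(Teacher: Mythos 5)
Your approach is exactly the paper's: you rewrite the exponential moment as $\E\bigl(\prod_k e^{V(e^{\ii\theta_k})}\bigr)$ for a smooth, nonsingular, $N$-dependent symbol $V$ (the paper's $V(z) = -\frac{\xi}{2}\sum_{1\le j<N^{1-\delta}}\frac{e^{-\ii jh}z^j+e^{\ii jh}z^{-j}}{j}$, i.e.\ your $v$), invoke Proposition~\ref{thm:FourierRH}, and read off the exponent $\sigma^2(V)=\sum_{j\ge1} jV_jV_{-j}=\frac{1}{2}\xi^2\sigma^2$, which matches the claimed Gaussian rate.

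The one point that does not hold up as written is the treatment of the error. You say the error is ``at worst polylogarithmic in $N$ and can be absorbed into the multiplicative constant $C$.'' That is internally inconsistent: a factor $1+(\log N)^k$ is not bounded by a constant, so a polylogarithmic multiplicative error would \emph{not} yield the lemma. The actual content of Proposition~\ref{thm:FourierRH} is that, under the hypothesis that $V$ is analytic and satisfies $|V(z)|<N^{\delta-\e}$ on the annulus $\bigl||z|-1\bigr|<N^{-1+\delta}$, the error is $\OO(e^{-cN^{\delta}})$, i.e.\ exponentially small, which makes the constant $C$ available. You should also check this hypothesis explicitly: since $V$ is a trigonometric polynomial of degree $<N^{1-\delta}$, one has $|V(z)|\le C|\xi|\log N$ on that annulus, and the restriction $|\xi|\le N^{\delta/10}$ gives $|V(z)|<N^{\delta/2}$ for large $N$, so the hypothesis holds with $\e=\delta/2$. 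With those two corrections your proposal coincides with the paper's proof.
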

This implies a ``Gaussian'' tail bound using the exponential Chebyshev inequality with $\xi=\pm x/\sigma^2$.
\begin{equation}\label{eq: one point trunc tail bound}
 \P\left( \Big| \sum_{\ell=1}^{(1-\delta) \log N} W_\ell(h)\Big| \ge x \right) \le C\exp\left( -\frac{x^2}{2\sigma^2} \right),\ \text{for all $x \le C \sigma^2 $}.
\end{equation}
We thus arrive at the following:
\begin{prop}[Upper bound for truncated sum over discrete set]\label{prop: ub disc trunc}
 For any $\delta>0$,
  \begin{equation}\label{eq: upper bound discrete} 
    \lim_{N \to \infty} \P\left(\max_{ h \in \mathcal{H}_N } \sum_{\ell=1}^{(1-\delta) \log N} W_\ell(h) \le \log N \right) = 1. 
  \end{equation}
\end{prop}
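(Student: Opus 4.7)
The plan is to apply a straightforward union bound over the $N$ points of $\mathcal{H}_N$ and reduce the question to the one-point Gaussian tail estimate \eqref{eq: one point trunc tail bound}. Specifically, writing $S(h) = \sum_{\ell=1}^{(1-\delta)\log N} W_\ell(h)$, the first step is
\[
\P\left( \max_{h \in \mathcal{H}_N} S(h) > \log N \right) \le \sum_{h \in \mathcal{H}_N} \P\big( S(h) > \log N \big) = N \cdot \P\big( S(0) > \log N \big),
\]
where the last equality uses rotational invariance of the Haar measure, which makes the distribution of $S(h)$ independent of $h$.

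Next, I would apply \eqref{eq: one point trunc tail bound} with $x = \log N$. The variance appearing there is $\sigma^2 = \tfrac{1}{2} \sum_{j=1}^{N^{1-\delta}} j^{-1} = \tfrac{1-\delta}{2} \log N + \OO(1)$, so that
\[
\frac{x^2}{2 \sigma^2} = \frac{(\log N)^2}{(1-\delta)\log N + \OO(1)} = \frac{1}{1-\delta} \log N + \oo(\log N).
\]
Before invoking \eqref{eq: one point trunc tail bound} I should confirm the validity range $x \le C \sigma^2$: since $x/\sigma^2 \to 2/(1-\delta)$ is bounded, this is satisfied for $N$ large enough (and the underlying optimizing parameter $\xi = x/\sigma^2$ in Lemma \ref{lem: gaussian UB} is trivially smaller than $N^{\delta/10}$). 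Combining, the tail bound yields
\[
\P\big( S(0) > \log N \big) \le C \exp\!\left( -\frac{1}{1-\delta} \log N + \oo(\log N) \right) = C \, N^{-\frac{1}{1-\delta} + \oo(1)}.
\]

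Plugging this into the union bound gives
\[
\P\left( \max_{h \in \mathcal{H}_N} S(h) > \log N \right) \le C \, N^{\,1 - \frac{1}{1-\delta} + \oo(1)} = C \, N^{-\frac{\delta}{1-\delta} + \oo(1)},
\]
which vanishes as $N \to \infty$ for every fixed $\delta > 0$. Taking complements delivers \eqref{eq: upper bound discrete}.

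There is essentially no obstacle once Lemma \ref{lem: gaussian UB} is in hand: the argument is the textbook first-moment/union-bound upper bound for a log-correlated field, and the gain $N^{-\delta/(1-\delta)}$ is precisely the slack arising from having truncated at powers below $N^{1-\delta}$ rather than at $N$. The real work has been pushed into the Riemann–Hilbert analysis underlying Lemma \ref{lem: gaussian UB}; verifying the range condition on $x$ in \eqref{eq: one point trunc tail bound} is the only minor point to check.
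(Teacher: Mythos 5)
Your proposal matches the paper's proof: a union bound over the $N$ points of $\mathcal{H}_N$ combined with the one-point Gaussian tail estimate \eqref{eq: one point trunc tail bound}, giving a polynomial decay $N^{-\delta/(1-\delta)+\oo(1)}$ (the paper records the slightly weaker but sufficient $N^{-\delta+\oo(1)}$). Your extra remark checking the range condition $x \le C\sigma^2$ is a reasonable sanity check but not a new idea.
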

\begin{proof}
Since $\sigma^2 = (1-\delta) \frac{1}{2}\log N + \OO(1)$,  a union bound using \eqref{eq: one point trunc tail bound} gives
\begin{equation}\label{eq: union bound}
\begin{aligned}
  \P\left(\max_{ h \in \mathcal{H}_N } \sum_{\ell=1}^{(1-\delta) \log N} W_\ell(h)\ge  \log N \right) 
  &\le N\ \P\left(  \displaystyle{ \sum_{\ell=1}^{(1-\delta) \log N} W_\ell(h) } \ge  \log N \right)\\
  &\le N \exp\left( - (1+\oo(1))\frac{ (\log N)^2 }{(1-\delta)\log N} \right) \le N^{ -\delta + \oo(1) } = \oo( 1)\ .
\end{aligned}
\end{equation}
\end{proof}

\subsection{Lower bound.\ }
As described in the introduction, we use a truncated second moment argument to prove th following result.
\begin{prop}[Lower bound for truncated sum over discrete set]\label{prop: lb trunc disc} 
For any $\varepsilon>0$, there exists $\delta=\delta(\varepsilon)>0$ such that
\begin{equation}\label{eq: lb trunc disc}
  \lim_{N\to\infty} \P\left( \max_{ h \in \mathcal{H}_N} \sum_{ \ell=1 }^{(1-\delta)\log N} W_\ell(h) \geq (1-\varepsilon) \log N \right)= 1\ .
\end{equation}
\end{prop}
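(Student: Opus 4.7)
The plan is to execute the multiscale second moment argument from Section 1.2. Fix $\varepsilon > 0$, pick a large integer $K$ depending on $\varepsilon$, and set $\delta = 1/K$, so that $X_{(1-\delta)\log N}(h) = Y_1(h) + \cdots + Y_{K-1}(h)$ with $Y_m$ as in \eqref{eqn: Y}. Choose an auxiliary $\varepsilon' \in (0,\varepsilon)$ and consider the Kistler-type modified count
\[
\tilde{\mathcal{Z}} = \#\Bigl\{h \in \mathcal{H}_N : Y_m(h) \ge \tfrac{1-\varepsilon'}{K} \log N \text{ for every } m = 2,\ldots, K-1 \Bigr\}.
\]
On the event $\{\tilde{\mathcal{Z}} \ge 1\}$ there exists $h^\star$ with $Y_2(h^\star) + \cdots + Y_{K-1}(h^\star) \ge \frac{(K-2)(1-\varepsilon')}{K}\log N$. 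A separate union bound, using the tail \eqref{eq: one point trunc tail bound} applied to the block $Y_1$ (of variance $\approx \log N/(2K)$), shows $\min_{h\in\mathcal{H}_N} Y_1(h) \ge -\log N/K^{1/3}$ with probability tending to one, once $K$ is large. For $K$ large and $\varepsilon'$ slightly below $\varepsilon$, these two bounds combine to give $X_{(1-\delta)\log N}(h^\star) \ge (1-\varepsilon)\log N$, reducing the proposition to showing $\P(\tilde{\mathcal{Z}} \ge 1) \to 1$ via Paley-Zygmund.

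First I would verify $\E[\tilde{\mathcal{Z}}] \to \infty$. The Riemann-Hilbert exponential moment bound underlying Lemma 2.3, in its joint form applied to the vector $(Y_2(h),\ldots,Y_{K-1}(h))$ at a single angle, combined with \eqref{eqn: diaconis-shah} (which makes its coordinates nearly orthogonal with individual variance $\approx \frac{1}{2K}\log N$), yields matching Gaussian upper and lower bounds of the form
\[
\P\Bigl(Y_m(h) \ge \tfrac{1-\varepsilon'}{K}\log N,\ m = 2,\ldots, K-1\Bigr) = N^{-(K-2)(1-\varepsilon')^2/K + o(1)}.
\]
Multiplying by $|\mathcal{H}_N| = N$ gives $\E[\tilde{\mathcal{Z}}] = N^{1 - (K-2)(1-\varepsilon')^2/K + o(1)} \to \infty$, automatic for large $K$.

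The bulk of the work is the second moment estimate $\E[\tilde{\mathcal{Z}}^2] \le (1+o(1))\E[\tilde{\mathcal{Z}}]^2$. Organize pairs $(h_1,h_2)\in \mathcal{H}_N^2$ by the index $m^\star \in \{1,\ldots,K-1\}$ such that the branching scale lies in the $m^\star$-th block, i.e.~$\frac{m^\star-1}{K}\log N \le h_1 \wedge h_2 < \frac{m^\star}{K}\log N$; the covariance estimate \eqref{eqn: branching 2} then implies $Y_m(h_1) \approx Y_m(h_2)$ for $m \le m^\star$, while $Y_m(h_1)$ and $Y_m(h_2)$ are nearly independent for $m > m^\star$. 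A two-angle Riemann-Hilbert exponential moment (the joint counterpart of Lemma 2.3, i.e.~Proposition \ref{thm:FourierRH} applied to two angles) then gives
\[
\P\Bigl(Y_m(h_i) \ge \tfrac{1-\varepsilon'}{K}\log N,\ m=2,\ldots,K-1,\ i=1,2\Bigr) \lesssim N^{-[(m^\star - 1) + 2(K-1-m^\star)](1-\varepsilon')^2/K + o(1)}.
\]
The number of pairs with branching scale in block $m^\star$ is at most $N^{2 - (m^\star-1)/K}$, so the contribution of each $m^\star \ge 2$ to $\E[\tilde{\mathcal{Z}}^2]$ is bounded by $\E[\tilde{\mathcal{Z}}]^2 \cdot N^{-(m^\star-1)(1-(1-\varepsilon')^2)/K + o(1)}$, summable in $m^\star$ and negligible. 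The $m^\star = 1$ stratum, where all of $Y_2,\ldots,Y_{K-1}$ decorrelate, produces exactly the leading term $(1+o(1))\E[\tilde{\mathcal{Z}}]^2$. Had $Y_1$ been retained in $\tilde{\mathcal{Z}}$, the block containing the branching scale for a typical pair would still see correlated first-block increments, inflating the second moment by a factor $N^{(1-\varepsilon')^2/K}$; excluding $Y_1$ is precisely what restores the matching second moment.

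The main technical obstacle is the two-angle exponential moment bound: one must show that the Laplace transform of the $2(K-2)$-dimensional vector $(Y_m(h_1), Y_m(h_2))_{m=2}^{K-1}$ is dominated by the Gaussian Laplace transform with the block-diagonal covariance dictated by \eqref{eqn: branching 2}, uniformly over the branching scale $h_1 \wedge h_2$ and over the exponential parameters needed for the tilted union bound. Once this joint estimate is available as a two-angle refinement of Proposition \ref{thm:FourierRH}, the first-moment computation and the Kistler pair accounting above become routine, and Paley-Zygmund combined with the auxiliary $Y_1$ union bound delivers \eqref{eq: lb trunc disc}.
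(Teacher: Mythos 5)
Your overall skeleton is exactly the paper's: the $K$-block Kistler coarse graining with $\delta=1/K$, the modified count that drops $Y_1$, a separate union bound to control $Y_1$ from below, and Paley--Zygmund on the remaining blocks. The stratification of pairs by the branching block $m^\star$ and the counting $N^{2-(m^\star-1)/K}$ are also the right ingredients, and your accounting of the exponent $-(m^\star-1)(1-(1-\varepsilon')^2)/K$ is correct.

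However, there is a genuine gap in the precision of the one- and two-point probability bounds. You write that the Riemann--Hilbert Laplace transform input ``yields matching Gaussian upper and lower bounds of the form $\P(\cdots)=N^{-(K-2)(1-\varepsilon')^2/K+o(1)}$,'' and similarly for the two-point probability, and you then assert that the $m^\star=1$ stratum produces the leading term $(1+o(1))\E[\tilde{\mathcal{Z}}]^2$. But exponential tilting plus Chebyshev only controls these probabilities up to a multiplicative $N^{o(1)}$ factor, and the $o(1)$'s in the exponent for the one-point lower bound and the two-point upper bound are not the same. Paley--Zygmund gives $\P(\tilde{\mathcal{Z}}\geq1)\geq\E[\tilde{\mathcal{Z}}]^2/\E[\tilde{\mathcal{Z}}^2]$, so you need $\E[\tilde{\mathcal{Z}}^2]\leq(1+\oo(1))\E[\tilde{\mathcal{Z}}]^2$ for the probability to tend to $1$, and a ratio that is only $N^{\oo(1)}$ can go to infinity. (A bound $\E[\tilde{\mathcal{Z}}^2]\leq C\E[\tilde{\mathcal{Z}}]^2$ only gives $\P(\tilde{\mathcal{Z}}\geq1)\geq c>0$, and there is no obvious zero-one upgrade here.) The missing ingredient is a local-CLT-level step: the paper obtains, via change of measure followed by quantitative Fourier inversion (Lemma~\ref{lem: Fourier Inversion}, and Propositions~\ref{prop: two point decoupling rough} and~\ref{prop: one point precise bound}), the precise asymptotics
\[
\P\left(J_x(h)\right)=\left(1+\OO_K(N^{-c})\right)e^{-\sum_{m}\frac{x^2}{2\sigma_m^2}}\prod_{m}\eta_{0,\sigma_m^2}\!\left(e^{-xy/\sigma_m^2}\1_{[0,\infty)}(y)\right),
\]
with the identical polynomial correction factor appearing squared in the two-point bound for $h_1\wedge h_2\leq\frac{1}{2K}\log N$. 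It is exactly the cancellation of these $\eta$-factors that gives the $(1+\oo(1))$ matching. Your proposal treats the large-deviation / Laplace-transform bounds as if they already give this, which they do not; you would need to add the Fourier-inversion step, carried out with the Riemann--Hilbert characteristic function estimates at complex arguments, and also split the $m^\star=1$ stratum into a ``very separated'' subregion (where the precise bound applies) and a thin boundary subregion $\frac{1}{2K}\log N<h_1\wedge h_2\leq\frac{1}{K}\log N$ handled by the rough bound plus counting.
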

To formulate the truncation, recall the coarse increments $Y_1(h),\ldots,Y_K(h)$, $K\in\mathbb N$, defined in \eqref{eqn: Y}.
Note that by \eqref{eqn: diaconis-shah} these increments have variance
\begin{equation}
 \sigma_m^2   = \E( Y_m(h)^2 ) = \frac{1}{2}\sum_{ N^{ (m-1)/K} < j \le N^{ m/K  } } \frac{1}{j} = \frac{1}{2}\frac{\log N}{K} + \OO(N^{-(m-1)/K}), \qquad \text{$\forall h$,}\\
 \label{eq: Y variance}
 \end{equation}
 and, more generally, covariance
 \begin{equation}
 \rho_m(h_1,h_2) = \E( Y_m(h_1)Y_m(h_2) ) = \frac{1}{2} \sum_{ N^{ (m-1)/K } < j \le N^{ m/K  }} \frac{\cos(j\|h_1-h_2\|)}{j},  \qquad \text{$\forall h_1,h_2$}\ .
 \label{eq: Y covariance}
 \end{equation}
Expanding $e^{-\ii jh}$ for large $h_1 \wedge h_2$ and summing by parts for small $h_1 \wedge h_2$, one arrives at the estimate
\begin{equation}\label{eq: Y covariance bound}
  \rho_m(h_1,h_2) =  
  \begin{cases}    
    \OO(N^{-(m-1)/K} e^{h_1 \wedge h_2})  	&\text{ if $ h_1 \wedge h_2 \leq (m-1)\frac{\log N}{K}$,} \\
    \sigma_m^2 + \OO \left( \left( N^{m/K}e^{-h_1\wedge h_2} \right)^2 \right) 	&\text{ if $h_1 \wedge h_2 \geq m\frac{\log N}{K}$}\ .
  \end{cases}
\end{equation}
Therefore, unless $(m-1) \frac{\log N}{K} \le h_1 \wedge h_2 \le m \frac{\log N}{K}$, the increments $Y(h_1), Y(h_2)$ are almost completely correlated or completely decorrelated.

The second moment method is applied to the counting random variable
\begin{equation}
\label{eqn: Z}
 Z = \sum_{ h \in \mathcal{H}_{N} } \1_{J_x(h)}, \qquad J_x(h)=\{Y_m(h)\geq x, m=2,\dots, K-1\}\ .
\end{equation}
The level $x$ needs to be picked appropriately. For a given $\varepsilon>0$, take
\begin{equation}
\label{eqn: x}
x=(1-\varepsilon/2)\frac{1}{K}\log N\ .
\end{equation}
Proposition \ref{prop: lb trunc disc} is a simple consequence of the following, which will be proved in the remainder of the section.
\begin{prop}\label{prop: fm sm}
For any $K \ge 3$ and $\varepsilon>0$,
\begin{equation}\label{eq: fm sm}
\E\left(Z^2 \right) = (1+\oo_K( 1)) \E(Z)^2, \mbox{ as } N\to \infty.
\end{equation}
\end{prop}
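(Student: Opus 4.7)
The plan is to compute $\E(Z)$ explicitly to leading exponential order, then to upper bound $\E(Z^2)$ by grouping pairs $(h_1,h_2) \in \mathcal{H}_N\times \mathcal{H}_N$ according to their branching scale $h_1\wedge h_2$. The combinatorics of the grouping, combined with the sharp Gaussian-type large deviation estimates behind Lemma \ref{lem: gaussian UB} (themselves consequences of Proposition \ref{thm:FourierRH}), will show that the sum is dominated by pairs that are essentially independent, yielding $\E(Z^2)/\E(Z)^2 \to 1$ as $N \to \infty$ for every fixed $K\geq 3$.

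For the first moment, rotational invariance gives $\E(Z) = N\,\P(J_x(0))$. By \eqref{eqn: diaconis-shah}, the coarse increments $Y_2(0),\dots,Y_{K-1}(0)$ are uncorrelated with variances $\sigma_m^2 = \frac{\log N}{2K} + \OO(1)$, and joint exponential tilting of the Laplace transform controlled in Section \ref{sec: RH} yields asymptotic joint Gaussianity on the deviation scale $x = (1-\varepsilon/2)\log N/K$. This gives
$$\E(Z) = N^{\,1 - (K-2)(1-\varepsilon/2)^2/K + \oo(1)}.$$

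For the second moment, write $\E(Z^2) = \sum_{h_1,h_2\in\mathcal{H}_N}\P(J_x(h_1)\cap J_x(h_2))$ and partition pairs by the index $k\in\{1,\ldots,K\}$ with $h_1\wedge h_2 \in [(k-1)\log N/K,\,k\log N/K)$. The number of pairs at scale $k$ is $\lesssim N^{2-(k-1)/K}$. By the covariance bound \eqref{eq: Y covariance bound}, on such pairs $Y_m(h_1)$ and $Y_m(h_2)$ are nearly identical for $m<k$ and nearly independent for $m>k$. Combined with the joint exponential moment estimates from Section \ref{sec: RH}, this yields
$$\P(J_x(h_1)\cap J_x(h_2)) \lesssim N^{-(2K-k-3)(1-\varepsilon/2)^2/K + \oo(1)},$$
so the contribution of scale $k$ to $\E(Z^2)$ is at most $\E(Z)^2\cdot N^{(k-1)\left((1-\varepsilon/2)^2 - 1\right)/K + \oo(1)}$. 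Since $(1-\varepsilon/2)^2 < 1$ when $\varepsilon>0$, the exponent is strictly negative for $k\geq 2$, and the $k=1$ term (essentially macroscopically separated pairs) contributes $(1+\oo(1))\E(Z)^2$. Geometric summation over $k\geq 2$ gives a total of $\oo(1)\cdot\E(Z)^2$, completing the claim.

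The main technical obstacle is producing the sharp joint Gaussian-tail bounds for $(Y_m(h_1),Y_m(h_2))_{m=2,\ldots,K-1}$ simultaneously at two distinct points, with error small enough to justify the exponent matching above. These require precise asymptotics for the joint Laplace transform of the truncated field; this is exactly the content of the Riemann-Hilbert analysis underlying Proposition \ref{thm:FourierRH}, which delivers uniform control over exponential moments when the dual variable has size up to $N^{\delta/10}$. It is also worth noting that dropping $Y_1$ from the defining event $J_x$ is essential to the argument: almost all pairs in $\mathcal{H}_N\times\mathcal{H}_N$ have branching scale below $\log N/K$, and if $Y_1(h_1),Y_1(h_2)$ were also required to exceed the same threshold, the resulting perfect correlation between them at this scale would inflate $\P(J_x(h_1)\cap J_x(h_2))$ enough to make the second moment exponentially larger than $\E(Z)^2$.
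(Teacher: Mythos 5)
Your overall strategy is correct and closely matches the paper's: group pairs $(h_1,h_2)$ by branching scale, observe that the number of pairs at scale $k$ decays like $N^{2-(k-1)/K}$, use the coupled/decoupled behavior of the increments $Y_m$ to estimate $\P(J_x(h_1)\cap J_x(h_2))$, and conclude that pairs at scale $k\geq 2$ contribute a geometrically small fraction of $\E(Z)^2$. Your exponent bookkeeping ($\text{scale-}k$ contribution $\lesssim \E(Z)^2\cdot N^{(k-1)((1-\varepsilon/2)^2-1)/K+\oo(1)}$) is the right computation. The essential remark about dropping $Y_1$ is also on target.

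However, there is a genuine gap at the crucial step. You compute $\E(Z)$ only to leading exponential order, $\E(Z)=N^{1-(K-2)(1-\varepsilon/2)^2/K+\oo(1)}$, and then assert that the scale-$1$ pairs contribute $(1+\oo(1))\E(Z)^2$. Leading exponential order cannot give this: the one-point and two-point probabilities both carry polynomial-in-$\log N$ prefactors, and a mismatch by any $(\log N)^a$ factor between the numerator and denominator would destroy the $(1+\oo_K(1))$ multiplicative conclusion. Exponential moment bounds applied via Chebyshev's inequality systematically lose exactly such prefactors, so invoking the Laplace-transform control from Proposition \ref{thm:FourierRH} and stopping there is not enough — the paper makes this explicit when it notes that exponential Chebyshev ``do not yield bounds precise enough to match $\E(Z^2)$ to $\E(Z)^2$ up to a multiplicative constant tending to one.'' What is actually needed, and what the paper does, is to tilt by $\exp(\sum\bm{\xi}_m\cdot\bm{Y}_m)$ and then perform a quantitative \emph{Fourier inversion} (Lemma \ref{lem: Fourier Inversion}) to extract the precise one- and two-point tail asymptotics, each containing the \emph{same} explicit product of Gaussian expectation factors $\prod_m\eta_{0,\sigma_m^2}(e^{-xy/\sigma_m^2}\1_{[0,\infty)}(y))$. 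It is the exact cancellation of these prefactors between \eqref{eq: one point precise bound} and \eqref{eq: two point precise bound} that produces the $(1+\oo_K(1))$ ratio; your argument contains no mechanism for this cancellation. A smaller omission: for the precise two-point comparison, the paper restricts to $h_1\wedge h_2\leq\frac{1}{2K}\log N$ (where $\sum_m\rho_m=\OO(N^{-1/(2K)})$), handles $\frac{1}{2K}\log N\leq h_1\wedge h_2\leq\frac{1}{K}\log N$ by a rough decoupling bound plus pair-counting, and only then treats $k\geq 2$ with the coupling bound; your single split at $k=1$ would need the Fourier-inversion machinery to work up to $h_1\wedge h_2=\frac{1}{K}\log N$, which is not obviously the case since the covariance there is only $\OO(1)$, not polynomially small.
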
 
Here and throughout this section a subscript $K$ on a $\oo_K \left( \cdot \right)$ or $\OO_K\left( \cdot \right)$ term denotes that all constants inside those terms may depend on $K$.
\begin{proof}[Proof of Proposition \ref{prop: lb trunc disc}]
Take $\delta=K^{-1}$. On the event $\{Z\geq 1\}$, there is an $h \in \mathcal{H}_{N}$ such that
\begin{equation}
\label{eqn: sum Y}
  \sum_{ \log N/K < \ell \le (1-1/K)\log N } W_\ell(h) \geq (1-\varepsilon/2)(1-2/K)\log N.
\end{equation}
Together with the Paley-Zygmund inequality
\begin{equation}
\label{eqn: PZ}
\P\left(Z\geq 1\right)\geq \frac{\E\left(Z\right)^2}{\E\left(Z^2\right)}\ ,
\end{equation}
and Proposition \ref{prop: fm sm}, this implies that for any $\varepsilon>0$ and $K \ge 3$,
$$
  \lim_{N\to\infty} \P\left( \max_{h\in\mathcal H_N}\sum_{\log N/K < \ell \le (1-1/K)\log N} W_\ell(h) \geq (1-\varepsilon/2)(1-2/K) \log N \right) = 1.
$$
A union bound as in \eqref{eq: union bound} for $\ell$ up to $K^{-1}\log N$ implies that for $K$ large enough
$$ \lim_{N\to\infty}\mathbb{P}\left( \sum_{\ell=1}^{\log N/K}W_\ell(h) > - \frac{\varepsilon}{3} \log N \mbox{ for all } h\in \mathcal{H}_N \right) = 1\ .$$
The result follows by taking $K$ large enough in terms of $\varepsilon$. 
\end{proof}

The rest of the section is devoted to proving \eqref{eq: fm sm}. The first and second moments can be written as
\begin{equation}
  \E(Z ) = N \P(J_x(0) ) \mbox{ and } \E( Z^2 )= \sum_{h_1,h_2 \in \mathcal{H}_N} \P(J_x(h_1)\cap J_x(h_2) ).
\end{equation}
It suffices to find a lower bound on $\P( J_x(0) )$ and an upper bound on $\P( J_x(h_1)\cap J_x(h_2) )$. 
Exponential moments of the increments $Y_m(h)$ and the 
exponential Chebyshev inequality 
do not yield bounds precise enough to match $\E( Z^2 )$ to $\E(Z)^2$ up to a multiplicative constant tending to one.
In fact, it is necessary to go beyond the level of precision of large deviations
at least for the pairs $h_1$, $h_2$ that contribute the most to $E(Z^2)$, namely points that are
close to being macroscopically separated.
This is done using characteristic function bounds together with Fourier inversion. The Riemann-Hilbert
techniques of Section \ref{sec: RH} can be used to obtain the following bounds on the characteristic function.
\begin{lem}\label{lem: two point fourier bound}
Let $K\ge1$, $h_1,h_2 \in \mathbb{R}$ and write $\bm{Y}_m=\left(Y_{m}\left(h_{1}\right),Y_{m}\left(h_{2}\right)\right)$ for $m=2,\dots, K-1$.
For all ${\bm \xi}_m \in \mathbb{C}^2, m=2,\ldots,K-1$, with $\| {\bm \xi} \| \le N^{1/(10K)}$ we have
$$
\E\left(\exp\left( \sum_{m=2}^{K-1} {\bm \xi}_m \cdot  \bm{Y}_m \right)\right)
=\left(1+\OO(e^{-N^{1/(10K)}})\right)\exp\left(\frac{1}{2} \sum_{m=2}^K \bm \xi_m\cdot \bm \Sigma_m \bm \xi_m\right)\ ,
$$
where	
\begin{equation}
\label{eqn: Sigma}
\bm \Sigma_m=\left(
\begin{array}{cc}
\sigma_m^2 & \rho_m\\
\rho_m & \sigma_m^2
\end{array}
\right) \ , m=2,\dots, K-1,
\end{equation}
for $\sigma_m^2$ and $\rho_m$ defined in \eqref{eq: Y variance}-\eqref{eq: Y covariance}.
\end{lem}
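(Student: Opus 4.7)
The plan is to rewrite the exponent as a single smooth linear spectral statistic of the eigenangles and then invoke the Riemann-Hilbert based characteristic function asymptotics of Section \ref{sec: RH}. Expanding the real parts in the definition \eqref{eqn: Y} of the $Y_m$'s, one has the identity
$\sum_{m=2}^{K-1}\bm\xi_m\cdot\bm Y_m = \sum_{k=1}^N f(\theta_k)$,
where
\[
f(\theta) \;=\; -\sum_{m=2}^{K-1}\sum_{a=1}^{2}\sum_{N^{(m-1)/K}<j\le N^{m/K}} \frac{\xi_m^{(a)}}{2j}\Bigl(e^{\ii j(\theta-h_a)}+e^{-\ii j(\theta-h_a)}\Bigr).
\]
By construction $f$ is a trigonometric polynomial with no constant mode, whose Fourier support consists of the disjoint windows $|j|\in (N^{(m-1)/K},N^{m/K}]$. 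In particular every active frequency is bounded by $N^{1-1/K}<N$, which is precisely what places the resulting Toeplitz symbol in the smooth regime (no Fisher-Hartwig singularity arises from the truncation).

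Next, Heine's identity \eqref{eqn:Heine} expresses the expectation of $\exp\sum_k f(\theta_k)$ as a Toeplitz determinant $\det T_N(e^f)$. Proposition \ref{thm:FourierRH}, the Riemann-Hilbert input proved in Section \ref{sec: RH}, delivers the Szeg\H{o}-type asymptotic
\[
\log\E\!\left(\exp\sum_{k=1}^N f(\theta_k)\right)
=N\hat f_0+\sum_{j\ge 1} j\,\hat f_j\hat f_{-j}+\OO\!\left(e^{-N^{1/(10K)}}\right),
\]
uniformly over the range $\|\bm\xi\|\le N^{1/(10K)}$. This bound on $\|\bm\xi\|$ is exactly what is needed to keep the symbol sufficiently small in the norms used in the steepest descent analysis, so that the error term is exponentially small in $N^{1/(10K)}$.

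It remains to identify the quadratic form with the target $\frac{1}{2}\sum_{m}\bm\xi_m\cdot\bm\Sigma_m\bm\xi_m$. Since $\hat f_0=0$, there is no linear contribution. For $j>0$ in the $m$-th window, the disjointness of the windows means $\hat f_j=-(2j)^{-1}\sum_a\xi_m^{(a)}e^{-\ii jh_a}$ and $\hat f_{-j}=-(2j)^{-1}\sum_a\xi_m^{(a)}e^{\ii jh_a}$, so
\[
j\,\hat f_j\hat f_{-j}=\frac{1}{4j}\Bigl[(\xi_m^{(1)})^{2}+(\xi_m^{(2)})^{2}+2\xi_m^{(1)}\xi_m^{(2)}\cos(j\|h_1-h_2\|)\Bigr].
\]
Summing $j$ over the $m$-th window and invoking the definitions \eqref{eq: Y variance}--\eqref{eq: Y covariance} of $\sigma_m^{2}$ and $\rho_m$ yields exactly $\frac{1}{2}\bm\xi_m\cdot\bm\Sigma_m\bm\xi_m$; summing over $m$ and exponentiating the exponentially small error produces the claimed multiplicative factor $1+\OO(e^{-N^{1/(10K)}})$.

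The main obstacle is therefore not the present reduction, which is essentially algebraic bookkeeping, but the Riemann-Hilbert asymptotic itself (Proposition \ref{thm:FourierRH}). The delicate feature is that $f$ depends on $N$ both through its highly oscillatory frequency range, reaching up to $N^{1-1/K}$, and through the complex parameters $\bm\xi_m$; consequently the Deift--Its--Krasovsky type analysis must be executed with uniform control over this $N$-dependent family of symbols. It is precisely this uniformity, guaranteed by the hypothesis $\|\bm\xi\|\le N^{1/(10K)}$, that must be secured in Section \ref{sec: RH}, and which this lemma then simply records.
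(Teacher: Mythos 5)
Your proof is correct and takes essentially the same route as the paper: it reduces the exponent to a smooth $N$-dependent trigonometric polynomial $V$ with frequency support below $N^{1-1/K}$ and no constant mode, checks that $\|\bm\xi\|\le N^{1/(10K)}$ puts $V$ inside the $|V(z)|<N^{\delta-\e}$ hypothesis of Proposition \ref{thm:FourierRH}, and then matches $\sum_{j\ge1} j\hat f_j\hat f_{-j}$ to $\tfrac12\sum_m \bm\xi_m\cdot\bm\Sigma_m\bm\xi_m$. The paper simply says this proof is identical to that of Lemma \ref{lem: gaussian UB}; you have made the Fourier-coefficient bookkeeping explicit, but the underlying argument is the same.
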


To quantitatively invert the Fourier transform, we use the following crude bound.
\begin{lem}
\label{lem: Fourier Inversion}Let $d\ge1$. There are constants $c=c\left(d\right)$
such that if $\mu$ and $\nu$ are probability measures on $\mathbb{R}^{d}$
with Fourier transforms $\hat{\mu}\left(\bm{t}\right)=\int e^{\ii\bm{t}\cdot\bm{x}}\mu\left({\rm d}x\right)$
and $\hat{\nu}\left(\bm{t}\right)=\int e^{\ii\bm{t}\cdot\bm{x}}\nu\left({\rm d}x\right)$,
then for any $R,T>0$ and any function $f:\mathbb{R}^{d}\to\mathbb{R}$
with Lipschitz constant $C$
\begin{equation}
\left|\mu\left(f\right)-\nu\left(f\right)\right|\le c\frac{C}{T}+\|f\|_{\infty}\left\{ c\left(RT\right)^{d}\|\1_{\left(-T,T\right)^{d}}\left(\hat{\mu}-\hat{\nu}\right)\|_{\infty}+\mu\left(([-R,R]^d)^{c}\right)+\nu\left(([-R,R]^d)^{c}\right)\right\} .\label{eq: fourier inversion bound}
\end{equation}
\end{lem}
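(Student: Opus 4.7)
The plan is to mollify $f$ against a bump whose Fourier transform has compact support, and then localize the spatial integration to the box $[-R,R]^d$. Fix once and for all an even, nonnegative Schwartz function $\psi:\mathbb{R}^d\to\mathbb{R}$ with $\int\psi=1$, $\hat\psi$ supported in $(-1,1)^d$, and $\int|z|\psi(z)\,\rd z<\infty$ (for instance a $d$-fold product of squared normalized sinc functions). Set $\psi_T(x)=T^d\psi(Tx)$, so that $\hat\psi_T$ is supported in $(-T,T)^d$ and $\int|z|\psi_T(z)\,\rd z\le c_1/T$.

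First I would decompose $\mu(f)-\nu(f)=(\mu-\nu)(f-f*\psi_T)+(\mu-\nu)(f*\psi_T)$. The Lipschitz bound gives $\|f-f*\psi_T\|_\infty\le c_1 C/T$, and combined with $\|\mu-\nu\|_{\mathrm{TV}}\le 2$ the first summand contributes at most $2c_1 C/T$, supplying the $cC/T$ piece of the bound. For the smoothed difference, Fubini together with the evenness of $\psi$ yields the duality identity
$$(\mu-\nu)(f*\psi_T)=\int f(x)h(x)\,\rd x,\qquad h:=\psi_T*(\mu-\nu),$$
and since $\hat h(t)=\hat\psi(t/T)(\hat\mu-\hat\nu)(t)$ is supported in $(-T,T)^d$, Fourier inversion gives the crucial $L^\infty$ estimate $\|h\|_\infty\le c_\psi T^d\|\1_{(-T,T)^d}(\hat\mu-\hat\nu)\|_\infty$. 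Splitting $\int fh\,\rd x$ at $[-R,R]^d$, the bulk contribution is controlled crudely by $\|f\|_\infty(2R)^d\|h\|_\infty\le c_2\|f\|_\infty(RT)^d\|\1_{(-T,T)^d}(\hat\mu-\hat\nu)\|_\infty$, producing the Fourier term of the target inequality.

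The hard part is the tail integral $\int_{([-R,R]^d)^c}fh\,\rd x$, which I expect to be the main technical obstacle. Writing $h=\mu*\psi_T-\nu*\psi_T$ as a difference of two nonnegative densities, this tail is bounded by $\|f\|_\infty$ times $(\mu*\psi_T+\nu*\psi_T)(([-R,R]^d)^c)$. For $X\sim\mu$ and $Z\sim\psi_T$ independent, $(\mu*\psi_T)(([-R,R]^d)^c)=\P(X+Z\notin[-R,R]^d)$, and the Schwartz decay of $\psi$ (giving $\int_{\|u\|_\infty>T}\psi(u)\,\rd u=\OO(1/T)$ for the sinc-squared choice) bounds this quantity by $\mu(([-R,R]^d)^c)+\OO(1/T)$, with the analogous bound for $\nu$. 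The residual $\OO(\|f\|_\infty/T)$ is then absorbed into the $d$-dependent constants of the other terms (for instance by performing the spatial split at $[-R-T^{-1},R+T^{-1}]^d$ and repackaging), reflecting the unavoidable tension that no $\psi$ can be compactly supported simultaneously in space and frequency. Tracking these constants is the delicate step; everything else is a routine mollification/Fourier-inversion argument.
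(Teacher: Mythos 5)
Your route is essentially the one the paper takes, just unpacked: the paper cites the quantitative Fourier-inversion smoothing estimate of Bhattacharya--Rao (Corollary 11.5, using the band-limited kernel of their Theorem 10.1), while you reconstruct the mechanism directly. The decomposition into a Lipschitz smoothing error, a Fourier-inversion $L^\infty$ bound on $h=\psi_T*(\mu-\nu)$ over the bulk $[-R,R]^d$, and a tail term over the complement is exactly what the paper sketches.

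There are, however, two genuine gaps. First, the squared-sinc example is not usable: $\mathrm{sinc}^2$ is not Schwartz and, more to the point, $\int|u|\,\mathrm{sinc}^2(u)\,\rd u=\infty$, so the bound $\|f-f*\psi_T\|_\infty\le cC/T$ that produces the $cC/T$ term fails for that choice. Take instead $\psi=g^2$ with $g$ real, even, Schwartz, and $\hat g$ compactly supported; then $\hat\psi=\hat g*\hat g$ has compact support, $\psi\ge0$ is Schwartz, and all moments are finite. Second, the tail step as written is false: $\P(X+Z\notin[-R,R]^d)\le\mu(([-R,R]^d)^c)+\OO(1/T)$ cannot hold uniformly in $\mu$. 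Taking $\mu=\delta_{R\bm{e}_1}$ gives a right side of order $1/T$ but a left side at least $\P(Z_1>0)=1/2$; the obstruction is the boundary layer, which you correctly sense is where the difficulty lies. Your proposed repair (split at $[-R-T^{-1},R+T^{-1}]^d$) does not recover the exact factor $(RT)^d$: the residual $\P(\|Z\|_\infty>T^{-1})=\P(\|W\|_\infty>1)$ (with $W\sim\psi$) is a fixed constant rather than $\OO(1/T)$, and enlarging by $\OO(1)$ instead gives a bulk factor $((R+1)T)^d$. What your argument actually yields is the bound with $(RT)^d$ replaced by $(RT+1)^d$, equivalently an extra $\OO(\|f\|_\infty/T)$ on the right side; this weaker form is perfectly adequate for every use of the lemma in the paper (where $RT$ is a positive power of $N$), but it is not the statement as printed, whose exact form rests on the more careful smoothing argument imported from Bhattacharya--Rao.
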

\begin{proof}
This follows from the quantitative Fourier inversion estimate (11.26) of Corollary 11.5 \cite{BatRao}.
One uses a smoothing kernel $K_\varepsilon$ whose Fourier transform
is supported on $[-c\varepsilon^{-1}, c\varepsilon^{-1}]^d$ (its
existence is guaranteed by Theorem 10.1 \cite{BatRao}),
with $\varepsilon=T^{-1}$.
The quantity in the curly braces in \eqref{eq: fourier inversion bound}
is the crude upper bound for the integral
$\int_{\mathbb{R}^{d}}\left|\left(\mu-\nu\right)\ast K_{\varepsilon}\right|\left({\rm d}x\right)$ one obtains from using
the point-wise bound $|g(x)| \le cT^{d}\|\1_{\left(-T,T\right)^{d}}\left(\hat{\mu}-\hat{\nu}\right)\|_{\infty}$
on the density $g$ of $\left(\mu-\nu\right)\ast K_{\varepsilon}$,
when $x \in [-R,R]^d$, and a trivial bound for the integral over the complement $([-R,R]^d)^c$.
\end{proof}

Pairs of points $h_1$ and $h_2$ that are macroscopically (or almost macroscopically) separated are the main contribution to $\E(Z^2)$.
For such $h_1$ and $h_2$ we expect the events $J_x(h_1)$ and $J_x(h_2)$ to be essentially independent, and the bounds
\eqref{eq: two point decoupling rough}-\eqref{eq: two point precise bound} that now follow make this quantitative.
\begin{prop}[Two-point bound; Decoupling]
\label{prop: two point decoupling rough}
Let $h_{1},h_{2}\in\mathbb{R}$ be such that $h_1 \wedge h_2\leq\frac{\log N}{K}$. 
Then for $0<x\le \log N$, we have that
\begin{equation}
\label{eq: two point decoupling rough}
\begin{aligned}
\P\left(J_x(h_1)\cap J_x(h_2)\right)\le
C\exp\left(-\sum_{m=2}^{K-1}\frac{x^2}{\sigma_{m}^{2}}\right)\ .
\end{aligned}
\end{equation} 
Furthermore, if $h_1 \wedge h_2 \le \frac{\log N}{2K}$, then we have the more precise bound
\begin{equation}\label{eq: two point precise bound}
\P\left(J_x(h_1)\cap J_x(h_2)\right)
\le 
(1+ \OO_K\left(N^{-c}\right)) e^{-\sum_{m=2}^{K-1}\frac{x^2}{\sigma_{m}^{2}}}\left(\prod_{m=2}^{K-1}\eta_{0,\sigma_{m}^{2}}\left(e^{-\frac{x y}{\sigma_m^2}}\1_{[0,\infty)}(y)\right)\right)^2\ ,
\end{equation} 
where $\eta_{0,\sigma^2}$ denotes the centered Gaussian measure on $\mathbb{R}$ with variance $\sigma^2$.
\end{prop}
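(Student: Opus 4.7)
My plan is to approximate the joint law $\mu$ of $\bm Y := (\bm Y_m)_{m=2}^{K-1} \in \mathbb R^{2(K-2)}$ by the centered Gaussian $\nu$ with the same block-diagonal covariance (blocks $\bm\Sigma_m$ from \eqref{eqn: Sigma}), then to estimate the Gaussian probability of the orthant $A := [x,\infty)^{2(K-2)}$. The inputs are the characteristic function estimate of Lemma \ref{lem: two point fourier bound}, which controls $\hat \mu - \hat \nu$ by $\OO(e^{-N^{1/(10K)}})$ on a box of side $N^{1/(10K)}$, and the quantitative Fourier inversion of Lemma \ref{lem: Fourier Inversion}.

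Since $\1_A$ is not Lipschitz, I first sandwich it between two $L$-Lipschitz functions $f_\pm$ coinciding with $\1_A$ outside an $L^{-1}$-neighborhood of $\partial A$, with $L = N^{\e_0}$ for a small $\e_0 < 1/(10K)$. Applying Lemma \ref{lem: Fourier Inversion} to $f_\pm$, $\mu$, $\nu$ with $T = N^{1/(10K)}$ and $R$ a polylogarithmic truncation, I obtain
$$
  |\mu(f_\pm) - \nu(f_\pm)| \le \frac{c L}{T} + c (RT)^d \|\1_{(-T,T)^d}(\hat\mu - \hat\nu)\|_\infty + \mu(([-R,R]^d)^c) + \nu(([-R,R]^d)^c).
$$
With $d = 2(K-2)$ fixed, the middle term is super-polynomially small thanks to Lemma \ref{lem: two point fourier bound}; the tail terms can be made $N^{-M}$ for any $M$ via the ``Gaussian'' estimate \eqref{eq: one point trunc tail bound} (applied coordinatewise to each $Y_m$) together with the explicit Gaussian density of $\nu$; and $L/T = N^{\e_0 - 1/(10K)} = N^{-c(K)}$ for some $c(K) > 0$. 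I therefore reach
$$
  \mu(A) \le \nu(A_{-L^{-1}}) + N^{-c(K)}, \qquad A_{-L^{-1}} := [x - L^{-1},\infty)^{2(K-2)}.
$$

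Because traces of different powers are uncorrelated by \eqref{eqn: diaconis-shah}, the covariance of $\nu$ is block-diagonal across $m$, so $\nu(A_{-L^{-1}}) = \prod_{m=2}^{K-1} \P(G_m^{(1)} \ge x-L^{-1},\ G_m^{(2)} \ge x-L^{-1})$. For each block I compute the bivariate Gaussian tail via the rotation $U = (G^{(1)}+G^{(2)})/\sqrt 2$, $V = (G^{(1)}-G^{(2)})/\sqrt 2$ (independent with variances $\sigma^2 \pm \rho$), which after explicit integration yields
$$
   \P(G^{(1)}\ge x, G^{(2)}\ge x) = [\eta_{0,\sigma^2}(e^{-xy/\sigma^2}\1_{[0,\infty)}(y))]^2\, e^{-x^2/\sigma^2}\, (1 + \OO(x^2 |\rho|/\sigma^4)),
$$
after using the elementary identity $[\eta_{0,\sigma^2}(e^{-xy/\sigma^2}\1_{y\ge 0})]^2 e^{-x^2/\sigma^2} = \P(G \ge x)^2$. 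The covariance bound \eqref{eq: Y covariance bound} then gives $|\rho_m|/\sigma_m^2 = \OO(K/\log N)$ in the rough regime $h_1 \wedge h_2 \le \log N/K$, so $x^2|\rho_m|/\sigma_m^4 = \OO(1)$ and the product across $m$ is bounded by $C \exp(-\sum_{m=2}^{K-1} x^2/\sigma_m^2)$, yielding \eqref{eq: two point decoupling rough}. In the precise regime $h_1\wedge h_2 \le \log N/(2K)$ the same estimate sharpens to $|\rho_m|/\sigma_m^2 = \OO(N^{-1/(2K)})$, so $x^2|\rho_m|/\sigma_m^4 = \OO_K(N^{-c})$, and the product takes the form of \eqref{eq: two point precise bound} with multiplicative error $1 + \OO_K(N^{-c})$; the effect of the Lipschitz band (replacing $x$ by $x - L^{-1}$) contributes a benign $1 + \OO(L^{-1}/\sigma_m)$ factor per block, absorbable into the same error.

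The main obstacle is the calibration of parameters in the Fourier inversion: the factor $(RT)^d$ with $d = 2(K-2)$ in Lemma \ref{lem: Fourier Inversion} is polynomial in $N$ and must be defeated by the super-polynomial smallness of the characteristic function error, which is exactly what Lemma \ref{lem: two point fourier bound} (itself a Riemann--Hilbert input, proved in Section \ref{sec: RH}) delivers. The remaining ingredients --- Lipschitz sandwiching of $\1_A$, tail bounds on individual $Y_m$'s via \eqref{eq: one point trunc tail bound}, and the explicit bivariate Gaussian integration --- are standard, but must be carried out while keeping the error exponents explicit in $K$ and $\rho_m/\sigma_m^2$ in order to recover both the rough bound \eqref{eq: two point decoupling rough} and the sharper \eqref{eq: two point precise bound} from the same Gaussian comparison.
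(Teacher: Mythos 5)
There is a genuine gap: the additive error from your Fourier inversion is far too large to control the probability you are trying to bound. The event $J_x(h_1)\cap J_x(h_2)$ has probability of order $\exp\bigl(-\sum_{m=2}^{K-1}x^2/\sigma_m^2\bigr)$, which, for $x$ of the order used in the second-moment argument (namely $x=(1-\e/2)\log N/K$), is roughly $N^{-2(1-\e/2)^2}$, and for $x$ closer to $\log N$ can be polynomially much smaller still. Your Fourier inversion with Lipschitz smoothing at scale $L=N^{\e_0}$ and truncation $T=N^{1/(10K)}$ yields an \emph{additive} error $c\,L/T=N^{\e_0-1/(10K)}$, which is at best $N^{-1/(10K)}$. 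Since $1/(10K)$ is much smaller than the exponent in the target probability, the conclusion $\mu(A)\le \nu(A_{-L^{-1}})+N^{-c(K)}$ you reach is true but vacuous: the error term $N^{-c(K)}$ dominates $\nu(A_{-L^{-1}})$, so you cannot deduce either \eqref{eq: two point decoupling rough} or \eqref{eq: two point precise bound}. No choice of the free parameters $T,R,L$ in Lemma~\ref{lem: Fourier Inversion} can save this, because $T$ is capped at $N^{1/(10K)}$ by the validity range of Lemma~\ref{lem: two point fourier bound} and the Lipschitz cost $L/T$ cannot be pushed below $N^{-1/(10K)}$.

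The missing idea is the exponential change of measure (tilting), which is how the paper's proof proceeds. One introduces the tilted law ${\rm d}\mathbb Q/{\rm d}\P = e^{\sum_m \bm\xi_m\cdot\bm Y_m}/\E(e^{\sum_m \bm\xi_m\cdot\bm Y_m})$ with $\xi_m = x/\sigma_m^2$ and writes $\P(J_x(h_1)\cap J_x(h_2))$ as a product of three factors as in \eqref{eqn: P[J]}. The first two factors, evaluated via Lemma~\ref{lem: two point fourier bound}, produce the dominant exponential $e^{-\sum_m x^2/\sigma_m^2}$ (up to a bounded or $1+\OO(N^{-c})$ factor, depending on the regime). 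The third factor, $\E_{\mathbb Q}\bigl(e^{-\sum_m\bm\xi_m\cdot(\bm Y_m-\bm x)};J_x(h_1)\cap J_x(h_2)\bigr)$, is at most $1$ trivially --- which already gives \eqref{eq: two point decoupling rough} --- and for the sharper \eqref{eq: two point precise bound} it is estimated by Fourier inversion \emph{under $\mathbb Q$}, where the quantity being approximated is of polylogarithmic size (bounded below by $c(\log N)^{-(K-2)}$ via \eqref{eqn: eta bounds}), so that the $N^{-c}$ inversion error is now subdominant. Your bivariate-Gaussian rotation identity and the covariance bookkeeping via \eqref{eq: Y covariance bound} are fine as auxiliary calculations, but the argument only closes once the tilting has reduced the rare event to a typical one before Fourier inversion is invoked.
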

Before starting the proof, we note that the Gaussian expectation in parentheses satisfies
\begin{equation}
\label{eqn: eta bounds}
C(\log N)^{-1/2} \leq \eta_{0,\sigma_{m}^{2}}\left(e^{-\frac{x y}{\sigma_m^2}}\1_{[0,\infty)}(y)\right)\leq 1\ ,
\end{equation}
because of the bound
$$
\eta_{0,\sigma_{m}^{2}}\left(e^{-\frac{x y}{\sigma_m^2}}\1_{[0,\infty)}(y)\right)
=e^{\frac{x^2}{2\sigma_m^2}}\int_{x/\sigma_m}^\infty\frac{e^{-z^2/2}}{\sqrt{2\pi}} {\rm d}z
\geq c \frac{\sigma_m}{x} \ ,
$$
the estimate \eqref{eq: Y variance} on $\sigma_m$ and the assumption  $0\leq x\leq \log N$.

\begin{proof}
Consider the probability measure $\mathbb{Q}$ constructed from $\P$ through the density
\begin{equation}
\label{eqn: Q 2 point}
\frac{{\rm d}\mathbb{Q}}{{\rm d}\P}=\frac{e^{\sum_{m=2}^{K-1}\bm{\xi}_m\cdot\bm{Y}_m}}{\E\left(e^{\sum_{m=2}^{K-1}\bm{\xi}_m\cdot\bm{Y}_m}\right)}.
\end{equation}
for $\bm{\xi}_m=\xi_m(1,1)$ to be picked later.
We write $\E_\mathbb{Q}$ for the expectation under $\mathbb{Q}$ and $\E$ for the expectation under $\P$.
For $\bm x=(x,x)$, the probability can be written as
\begin{equation}
\label{eqn: P[J]}
\P\left(J_x(h_1)\cap J_x(h_2)\right)=\E\left(e^{\sum_{m=2}^{K-1}\bm{\xi}_m\cdot\bm{Y}_m}\right)e^{-\sum_{m=2}^{K-1}\bm{\xi}_m\cdot\bm{x}}\E_\mathbb{Q}\left(e^{-\sum_{m=2}^{K-1}\bm{\xi}_m\cdot(\bm{Y}_m-\bm{x})};J_x(h_1)\cap J_x(h_2)\right) \ .
\end{equation}
The first factor is evaluated using Lemma \ref{lem: two point fourier bound} on exponential moments.
For the choice, 
\begin{equation}
\label{eqn: lambda}
\xi_{m}=\frac{x}{\sigma_{m}^{2}},m=2,\ldots,K-1,
\end{equation}
we have $0<\xi_m<2$ by the assumption on $x$, so the Lemma can be applied.
We get
\begin{equation}
\begin{aligned}
\label{eq: fourier bound two point precise}
\E\left(e^{\sum_{m=2}^{K-1}\bm{\xi}_m\cdot\bm{Y}_m}\right)  =\left(1+\OO\left(e^{-N^{1/(10K)}}\right)\right)  \exp\left(\frac{1}{2}\sum_{m=2}^{K-1} \bm \xi_m\cdot \bm \Sigma_m \bm \xi_m \right)\ .
\end{aligned}
\end{equation}
The estimate \eqref{eq: Y covariance bound} on the covariance gives  that $\sum_{m\ge2}\rho_{m}(h_1,h_2)=O\left(1\right)$ for $h_1 \wedge h_2\leq\log N/K$.
Therefore, the quadratic form reduces to
\begin{equation}
\label{eqn: quad form 1}
\begin{aligned}
\frac{1}{2}\sum_{m=2}^{K-1}\bm \xi_m\cdot \bm \Sigma_m \bm \xi_m=\sum_{m=2}^{K-1}\xi^2_m(\sigma_{m}^{2}+\rho_{m})=\sum_{m=2}^{K-1}\xi^2_m\sigma_{m}^{2}+\OO(1)\ .
\end{aligned}
\end{equation}
Putting this in \eqref{eq: fourier bound two point precise}, we get
$$
\E\left(e^{\sum_{m=2}^{K-1}\bm{\xi}_m\cdot\bm{Y}_m}\right)e^{-\sum_{m=2}^{K-1}\bm{\xi}_m\cdot\bm{x}}
=e^{-\sum_{m=2}^{K-1}\xi_m x +\OO(1)}\ .
$$
Equation \eqref{eq: two point decoupling rough} follows from this since the third factor of \eqref{eqn: P[J]} is smaller than $1$ by the definition of the event. 

A more careful analysis of \eqref{eqn: P[J]} is needed to prove \eqref{eq: two point precise bound}.
First, note that if $h_1 \wedge h_2<\log N/(2K)$, then $\sum_{m\ge2}\rho_{m}=O\left(N^{-1/(2K)}\right)$ by \eqref{eq: Y covariance bound}.
Therefore for the same choice of $\xi_m$, we have
$$
\E\left(e^{\sum_{m=2}^{K-1}\bm{\xi}_m\cdot\bm{Y}_m}\right)e^{-\sum_{m=2}^{K-1}\bm{\xi}_m\cdot\bm{x}}
=(1+\OO(N^{-1/(2K)}))e^{-\sum_{m=2}^{K-1}\xi_m x}\ .
$$
We will thus be done once we show
\begin{equation}
\label{eq: suffices to show-1}
\E_\mathbb{Q}\left(e^{-\sum_{m=2}^{K-1}\bm{\xi}_m\cdot(\bm{Y}_m-\bm{x})};J_x(h_1)\cap J_x(h_2)\right)
=\left(\prod_{m=2}^{K-1}\eta_{0,\sigma_{m}^{2}}\left(e^{-\frac{x y}{\sigma_m^2}}\1_{[0,\infty)}(y)\right)+\OO_K\left(N^{-c}\right)\right)^2\ .
\end{equation}
Note that the product in \eqref{eq: suffices to show-1} is the dominant term since it is at least $c\log N^{-(K-1)}$ by \eqref{eqn: eta bounds}.
We prove \eqref{eq: suffices to show-1} using Fourier inversion.

Let $\bm{t}_m=\left(t_{1,m}, t_{2,m}\right)$ and $t_{j,m}\in \mathbb R$ for $m=2,\dots, K-1$ and consider  $\bm{\xi}_m+\ii\bm{t}_m$.
Suppose $|t_{j,m}|<N^{1/(32K)}$ so that $|\xi_{m}+\ii t_{j,m}|<N^{1/(16K)}$.
Let $\mu$ be the law of $(\bm{Y}_m-\bm{x}; m=2,\dots,K-1)$ under $\mathbb{Q}$. Its Fourier transform $\hat{\mu}$ becomes:
\begin{equation}
\label{eqn: fourier 2 point}
\begin{aligned}
&\E_\mathbb{Q}\left(e^{\ii \sum_{m=2}^{K-1}\bm{t}_m\cdot\left(\bm{Y}_m-\bm{x}\right)}\right)
=\frac{\E\left(e^{\sum_{m=2}^{K-1}(\bm{\xi}_m+i\bm{t}_m)\cdot\bm{Y}_m}\right)s}{\E\left(e^{\sum_{m=2}^{K-1}\bm{\xi}_m\cdot\bm{Y}_m}\right)}e^{-\ii \sum_{m=2}^{K-1}\bm{t}_m\cdot \bm{x}}\ .
\end{aligned}
\end{equation}
We apply Lemma \ref{lem: two point fourier bound} with ${\bm \xi}_m+\ii{\bm t}_m$ in place of ${\bm \xi}_m$ to the numerator,
and use \eqref{eq: fourier bound two point precise} to bound the denominator.
After cancellation, we obtain that \eqref{eqn: fourier 2 point} equals
\begin{equation}\label{eqn: fourier 2 point 2}
\left(1+\OO\left(e^{-N^{1/(10K)} }\right)\right)\exp\left(-\sum_{m=2}^{K-1}\frac{1}{2} \bm{t}_m\cdot \bm{\Sigma}_m \bm{t}_m + i \sum_{m=2}^{K-1} \bm{t}_m\cdot \bm{\Sigma}_m \bm{\xi}_m-\ii \sum_{m=2}^{K-1}\bm{t}_m\cdot \bm{x}\right)\ .
\end{equation}
As in \eqref{eqn: quad form 1}, but using here $\sum_{m\ge2} \rho_m = \OO(N^{-1/(2K)})$, we have that
$$
\bm{t}_m\cdot \bm{\Sigma}_m \bm{\xi}_m
=\bm{t}_m\cdot \bm x + \OO\left(\frac{\rho_mx}{\sigma_m^2}\|\bm{t}_m\|\right)
=\bm{t}_m\cdot \bm x+\OO(N^{-1/(4K)})\ .
$$
Thus \eqref{eqn: fourier 2 point 2} in fact equals
\begin{equation}
\label{eqn: fourier 2 point 3}
\left(1+ \OO\left(N^{-1/\left(4K\right)}\right)\right)\exp\left(-\frac{1}{2}\sum_{m=2}^{K-1}(t^2_{1,m}+t^2_{2,m})\sigma_{m}^{2}\right).
\end{equation}
The exponential above is precisely the Fourier transform $\hat{\nu}$ of $\nu=\otimes_{m=2}^{k-1}\eta_{0,\sigma_{m}^{2}}$.
Thus we have shown that 
\begin{equation}
\label{eq: 2 point to show}
\hat{\mu}({\bm t}_2,\ldots, {\bm t}_{K-1})=\left(1+\OO(N^{-1/(4K)})\right)\hat{\nu}({\bm t}_2,\ldots, {\bm t}_{K-1}) ,\ \text{ when $|t_{i,m}| \le N^{1/(32K)}$.}
\end{equation}
This suggests the decoupling in \eqref{eq: suffices to show-1}.
To complete the argument, consider the function $g_{\xi}: \mathbb R\to \mathbb R$ where
\[
g_{\xi}\left(y\right)=\begin{cases}
0 & \text{ if $y\le -N^{-1/(64K^2)}$},\\
e^{-\xi y} & \text{ if $y> 0$},\\
\end{cases}
\]
and $g_\xi$ is linearly interpolated on $[-N^{-1/(64K^2)},0]$.
Note that $g_{\xi}$ is bounded by $1$ and has Lipschitz constant $N^{1/(64K^2)}$.
By definition,
\begin{equation}
\label{eq: domination with smooth approx of indic-1}
\E_\mathbb{Q}\left(e^{-\sum_{m=2}^{K-1}\bm{\xi}_m\cdot\left(\bm{Y}_m-\bm{x}\right)}; J_x(h_1) \cap J_x(h_2)  \right)
\leq \E_\mathbb{Q}\left(\prod_{m=2}^{K-1}\prod_{i=1,2}g_{\xi_m}\left(Y_m(h_i)-x\right)\right)\ .
\end{equation}
Lemma \ref{lem: Fourier Inversion} can be applied with $d=2(K-2)$, $T=N^{1/\left(32K^2\right)}$, $R=N^{1/(32K^2)}$.
The right-hand side of \eqref{eq: domination with smooth approx of indic-1} becomes
\begin{equation}
\label{eqn: estimate after fourier 2}
\begin{aligned}
&\left(\prod_{m=2}^{K-1}\eta_{0,\sigma_{m}^{2}}\left(g_{\xi_m}(y)\right)\right)^2
+ \OO\left(N^{1/\left(64K^2\right) - 1/\left(32K^2\right)}\right)\\
&+ \OO\left( N^{2(K-2)/\left(32K^2\right)} N^{2(K-2)/(32K^2)} N^{-1/(4K)}\right)\\
&+\left(\otimes_{m=2}^{K-1}\eta_{0,\sigma_{m}^{2}}\left( \left(-N^{1/\left(32K^2\right)},N^{1/\left(32K^2\right)}\right) ^{c}\right)\right)^2\\
&+\mathbb{Q}\left(\exists m: |Y_m(h_1)-x|>N^{1/\left(32K^2\right)}\text{ or }  |Y_m(h_2)-x|>N^{1/\left(32K^2\right)}\right)\ .
\end{aligned}
\end{equation}
A standard Gaussian estimate and \eqref{eq: Y variance} show that 
\begin{equation}
\label{eqn: gaussian estimate 2 point}
\otimes_{m=2}^{K-1}\eta_{0,\sigma_{m}^{2}}\left( \left(-N^{1/(32K^2)},N^{1/(32K^2)}\right) ^{c}\right)= \OO\left(Ke^{-cN^{1/(16K^2)}/\log N}\right).
\end{equation}
Lemma \ref{lem: two point fourier bound}
and the definition of $\mathbb{Q}$ imply the exponential moment $\mathbb Q(\exp( \lambda (Y_m(h) - x) ))s \le c \exp( \lambda^2 \sigma_m^2 )$, valid for all
$m$, $h$ and $ 1 \le |\lambda| \le N^{1/(10K)}$, where we have used that $\rho_m \le \sigma_m^2$. The choice $\lambda = N^{1/(32K^2)}/\sigma_m^2$ and the exponential Markov's inequality shows that for all $m$ and $h$,
\begin{equation}\label{eqn: Y estimate 2 point}
\mathbb{Q}\left(|Y_m(h)-x|>N^{1/(32K^2)}\right) \le c\exp( -cN^{1/(16K^2)}/\log N )\ .
\end{equation}
This means that last term of \eqref{eqn: estimate after fourier 2} is also bounded by the right-hand side of \eqref{eqn: gaussian estimate 2 point}.
We conclude that
\begin{equation}
\label{eqn: 2 point almost}
\E_\mathbb{Q}\left(e^{-\sum_{m=2}^{K-1}\bm{\xi}_m\cdot\left(\bm{Y}_m-\bm{x}\right)};J_x(h_1) \cap J_x(h_2) \right)
\leq \left(\prod_{m=2}^{K-1}\eta_{0,\sigma_{m}^{2}}\big(g_{\xi_m}(y)\big)\right)^2 + \OO_K(N^{-c})\ .
\end{equation}
Note that $\left|\eta_{0,\sigma_{m}^{2}}\left(g_{\xi_m}\right)-\eta_{0,\sigma_{m}^{2}}\left(e^{-\xi y}\1_{[0,\infty)}(y)\right)\right|\le N^{-1/\left(64K^2\right)}$
and recall \eqref{eqn: eta bounds}. This together with \eqref{eqn: 2 point almost} shows \eqref{eq: suffices to show-1}, and completes the proof of \eqref{eq: two point precise bound}.
\end{proof}
We now turn to bounding $P\left(J_x(h_1)\cap J_x(h_2)\right)$ when $h_1$ and $h_2$ are ``close''.
In this regime we do not need such a precise bound, so Fourier inversion is not needed.
The bound \eqref{eq: two point rough 2} reflects that if $h_1\wedge h_2 \in [(j-1)\log N/K,j\log N/K]$ the increments $Y_m(h_1),Y_m(h_2),m=2,\ldots,j-1$, are essentially
perfectly correlated, while the increments $Y_m(h_1),Y_m(h_2),m=j+1,\ldots,K-1$ are essentially independent.
The increments $Y_j(h_1),Y_j(h_2)$ are partially correlated, but we ignore this and dominate by the scenario where the
correlation is perfect. 
This leads to a loss in the bound, which turns out to be irrelevant in the second moment computation.
\begin{prop}[Two-point bound; Coupling]
\label{prop: two point bound rough}
Let $h_{1},h_{2}\in\mathbb{R}$ such that $\frac{j-1}{K}\log N< h_1 \wedge h_2\leq \frac{j}{K}\log N$ for some $j=2,\dots, K-1$. 
Then for $0<x\le \log N$, we have that
\begin{equation}
\label{eq: two point rough 2}
\begin{aligned}
&\P\left(J_x(h_1)\cap J_x(h_2)\right)
\leq
C\exp\left({-\sum_{m=2}^{j}\frac{x^2}{2\sigma_{m}^{2}}-\sum_{m=j+1}^{K-1}\frac{x^2}{\sigma_{m}^{2}}}\right)\ .
\end{aligned}
\end{equation} 
\end{prop}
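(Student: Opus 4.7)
The plan is to apply the exponential Chebyshev inequality combined with the joint exponential-moment estimate of Lemma~\ref{lem: two point fourier bound}, choosing tilting parameters $\bm{\xi}_m = (\xi_{1,m},\xi_{2,m})$ that reflect the qualitatively different correlation structure at the scales $m \le j$ (essentially correlated increments) and $m \ge j+1$ (essentially decorrelated increments).

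After applying the pointwise bound $\1_{J_x(h_1) \cap J_x(h_2)} \le \prod_m e^{\xi_{1,m}(Y_m(h_1)-x) + \xi_{2,m}(Y_m(h_2)-x)}$ valid for $\xi_{i,m}\ge 0$, taking expectations, and invoking Lemma~\ref{lem: two point fourier bound}, the task reduces to bounding the quadratic form
\[
\tfrac12\sum_{m=2}^{K-1}\bm{\xi}_m\cdot\bm{\Sigma}_m\bm{\xi}_m - \sum_{m=2}^{K-1}(\xi_{1,m}+\xi_{2,m})x.
\]
I would take $\xi_{1,m}=x/\sigma_m^2$ with $\xi_{2,m}=0$ for $m\le j$, and $\xi_{1,m}=\xi_{2,m}=x/\sigma_m^2$ for $m\ge j+1$. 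The hypothesis $\|\bm\xi\|\le N^{1/(10K)}$ of the lemma is trivial since each $\xi_{i,m}=O(K)$, using $x\le \log N$ and $\sigma_m^2\asymp (\log N)/K$.

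A direct substitution then yields per-scale contributions $-x^2/(2\sigma_m^2)$ for $m\le j$ (exactly, since $\xi_{2,m}=0$ kills the cross term) and $-x^2/\sigma_m^2+x^2\rho_m/\sigma_m^4$ for $m\ge j+1$. For $m\ge j+1$ the covariance estimate \eqref{eq: Y covariance bound} applies, since $h_1\wedge h_2\le (j/K)\log N\le ((m-1)/K)\log N$, and gives $\rho_m=O(1)$, so the error is $O_K(1)$ per scale and $O_K(1)$ in total, to be absorbed into the multiplicative constant.

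The main obstacle is the transitional scale $m=j$, where \eqref{eq: Y covariance bound} provides no useful estimate on $\rho_j$ beyond the trivial bound $|\rho_j|\le\sigma_j^2$ coming from positive semidefiniteness of $\bm{\Sigma}_j$. The trick is that taking $\xi_{2,j}=0$ amounts to dropping the constraint $Y_j(h_2)\ge x$ altogether at this scale, so $\rho_j$ never enters the quadratic form and the scale-$j$ contribution depends only on $\sigma_j^2$. This sacrifices any benefit from the (possibly substantial) decorrelation between $Y_j(h_1)$ and $Y_j(h_2)$ and yields only the perfect-correlation cost $e^{-x^2/(2\sigma_j^2)}$, but this slackness is exactly what the proposition allows.
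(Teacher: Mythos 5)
Your proof is correct, and it uses the same basic mechanism as the paper (exponential Chebyshev combined with Lemma~\ref{lem: two point fourier bound}), but the choice of tilting parameters is genuinely different and, at the scales $m\le j$, cleaner. The paper takes a symmetric tilt $\bm{\xi}_m=(\xi_m,\xi_m)$ at every scale, choosing $\xi_m=x/(2\sigma_m^2)$ for $m\le j$ and $\xi_m=x/\sigma_m^2$ for $m\ge j+1$; this forces it to estimate the covariance $\rho_m$ at every scale, using $\rho_m=\sigma_m^2+\OO(1)$ for $m\le j-1$ and the trivial bound $\rho_j\le\sigma_j^2$ at the transitional scale $j$. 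Your asymmetric tilt $\bm{\xi}_m=(x/\sigma_m^2,0)$ for $m\le j$ discards the constraint on $Y_m(h_2)$ outright, so the covariance $\rho_m$ simply does not enter the quadratic form at those scales and the per-scale contribution $-x^2/(2\sigma_m^2)$ comes out with no error term at all; the only approximation is at scales $m\ge j+1$, where you correctly invoke \eqref{eq: Y covariance bound} to get $\rho_m=\OO(N^{-(m-1-j)/K})$ and hence an absorbable $\OO_K(1)$ total error. Both routes give exactly the same exponent, and your observation that dropping a coordinate from the tilt is equivalent to discarding the corresponding constraint is precisely the right way to sidestep the transitional scale, where \eqref{eq: Y covariance bound} gives no useful control on $\rho_j$.
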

\begin{proof}
The proof is very similar to the proof of \eqref{eq: two point decoupling rough}.
As in \eqref{eqn: P[J]}, we use a change of measure $\mathbb{Q}$ 
\begin{equation}
\label{eqn: P[J] 2}
\begin{aligned}
\P\left(J_x(h_1)\cap J_x(h_2)\right)
=e^{-\sum_{m=2}^{ K-1}\bm \xi_m \cdot x}
\E\left(e^{\sum_{m=2}^{K-1}\bm \xi_m\cdot \bm Y_m} \right)
\E_\mathbb{Q}\left(e^{-\sum_{m=2}^{K-1}\bm \xi_m\cdot (\bm Y_m-\bm x)};J_x(h_1)\cap J_x(h_2)\right)\,
\end{aligned}
\end{equation}
where $\bm \xi_m=(\xi_m,\xi_m)$, $\xi_m\ge0$ and $\bm x=(x,x)$,
Note that the last factor is again smaller than $1$ by the definition of $J_x(h)$.
As in \eqref{eq: fourier bound two point precise}, we have using Lemma \ref{lem: two point fourier bound} that
$$
\E\left(e^{\sum_{m=2}^{K-1}\bm \xi_m\cdot \bm Y_m} \right)
\leq C\exp\left(\frac{1}{2} \sum_{m=2}^{K-1} \bm \xi_m\cdot \bm \Sigma_m \bm \xi_m\right)\ .
$$
By \eqref{eq: Y covariance bound} and the assumption $(j-1)\log N/K<h_1 \wedge h_2\leq j\log N/K$, we have for $m\neq j$
 \begin{equation}
\label{eqn: quad 1}
\begin{aligned}
\frac{1}{2}\bm \xi_m\cdot \bm \Sigma_m \bm \xi_m
=\xi^2_m\big(\sigma_{m}^{2}+\rho_m(h_1,h_2)\big)
=\begin{cases}
\xi^2_m\big(2\sigma_{m}^{2}+ \OO(1)\big) &\text{ if $m\leq j-1$}\\
\xi^2_m\big(\sigma_{m}^{2}+ \OO(1)\big) &\text{ if $m\geq j+1$.}
\end{cases}
\end{aligned}
\end{equation}
For $m=j$, since $\rho_m\leq \sigma_m^2$, we have that
\begin{equation}
\frac{1}{2}\bm \xi_j\cdot \bm \Sigma_j \bm \xi_j\leq 2\xi_j^2 \sigma_j\ .
\end{equation}
To optimize the bound we pick
\begin{equation}
\label{eqn: lambda split}
\xi_m=
\begin{cases}
\frac{x}{2\sigma_m^2} &\text{ if $m\leq j$,} \\
\frac{x}{\sigma_m^2} &\text{ if $m\geq j+1$.} 
\end{cases}
\end{equation}
Using \eqref{eqn: quad 1}-\eqref{eqn: lambda split} in \eqref{eqn: P[J] 2} we obtain \eqref{eq: two point rough 2}.
\end{proof}

Finally, we bound the one point probability
$\P\left(J_x(h)\right)$ from below. Here we again need a precise bound which uses Fourier inversion.
\begin{prop}[One-point bound]
\label{prop: one point precise bound}
For every $h\in \mathbb{R}$ and $0<x\le \log N$,
we have that
\begin{equation}
\label{eq: one point precise bound}
\P\left(J_x(h)\right)\ge \left(1+\OO_K\left(N^{-c}\right)\right)e^{-\sum_{m=2}^{K-1}\frac{x^2}{2\sigma_{m}^{2}}}
\prod_{m=2}^{K-1}\eta_{0,\sigma_{m}^{2}}\left( e^{-\frac{x y}{\sigma_m^2}}\1_{[0,\infty)}(y) \right)
\ ,
\end{equation}
where $\sigma^2_m$ is defined in \eqref{eq: Y variance}.
\end{prop}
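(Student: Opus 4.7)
The plan is to mirror the proof of the two-point precise bound \eqref{eq: two point precise bound}, adapted to a single $h$ and with a lower bound in place of an upper bound. Introduce the tilted measure $\mathbb{Q}$ via $\frac{{\rm d}\mathbb{Q}}{{\rm d}\P} = e^{\sum_{m=2}^{K-1} \xi_m Y_m(h)}/\E(e^{\sum_{m=2}^{K-1} \xi_m Y_m(h)})$ with the critical tilt $\xi_m = x/\sigma_m^2$, so that
\begin{equation*}
\P(J_x(h)) = e^{-\sum_m \xi_m x}\,\E\!\left(e^{\sum_m \xi_m Y_m(h)}\right)\,\E_{\mathbb{Q}}\!\left(e^{-\sum_m \xi_m(Y_m(h)-x)};\,J_x(h)\right).
\end{equation*}
Applying Lemma \ref{lem: two point fourier bound} with $\bm\xi_m = (\xi_m,0)$, which reduces it to a one-point Laplace transform and is legitimate since $|\xi_m|=\OO(1)\le N^{1/(10K)}$, the first two factors combine to $(1+\OO(e^{-N^{1/(10K)}}))\exp(-\sum_m x^2/(2\sigma_m^2))$. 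This reproduces the Gaussian prefactor in \eqref{eq: one point precise bound}.

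It then suffices to show that the third factor is bounded below by $\prod_m \eta_{0,\sigma_m^2}(e^{-\xi_m y}\1_{[0,\infty)}(y)) + \OO_K(N^{-c})$. I approximate $e^{-\xi y}\1_{[0,\infty)}(y)$ from below by a Lipschitz function $\tilde g_\xi$ obtained by setting $\tilde g_\xi(y)=0$ for $y\le N^{-1/(64K^2)}$, $\tilde g_\xi(y)=e^{-\xi y}$ for $y\ge 2 N^{-1/(64K^2)}$, and linearly interpolating in between. Then $\tilde g_\xi\le e^{-\xi y}\1_{[0,\infty)}(y)$ pointwise, the Lipschitz constant is $N^{1/(64K^2)}$, and the error in the $\eta_{0,\sigma_m^2}$-integral is at most $N^{-1/(64K^2)}$. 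Monotonicity yields
\begin{equation*}
\E_{\mathbb{Q}}\!\left(e^{-\sum_m \xi_m(Y_m(h)-x)};\,J_x(h)\right) \ge \E_{\mathbb{Q}}\!\left(\prod_{m=2}^{K-1}\tilde g_{\xi_m}(Y_m(h)-x)\right),
\end{equation*}
and the right-hand side is evaluated by Fourier inversion (Lemma \ref{lem: Fourier Inversion}) in dimension $d=K-2$ with $T=R=N^{1/(32K^2)}$. The characteristic function of the law $\mu$ of $(Y_m(h)-x)_m$ under $\mathbb{Q}$ is computed by applying Lemma \ref{lem: two point fourier bound} with $\bm\xi_m = (\xi_m+\ii t_m, 0)$, and gives $\hat\mu=(1+\OO(N^{-c}))\hat\nu$ on the box $\{\|\bm t\|_\infty\le T\}$, where $\nu=\otimes_m\eta_{0,\sigma_m^2}$. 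Tail probabilities for $\mu$ come from exponential Chebyshev using the same Laplace transform estimate (as in \eqref{eqn: Y estimate 2 point}), while tails of $\nu$ are standard Gaussian; both are of order $\exp(-cN^c/\log N)$. Combining, $\E_{\mathbb{Q}}(\prod\tilde g_{\xi_m}(Y_m-x)) = \prod_m\eta_{0,\sigma_m^2}(\tilde g_{\xi_m})+\OO_K(N^{-c})$, which after the Lipschitz approximation error gives the required lower bound.

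The main obstacle is bookkeeping rather than conceptual: since $\prod_m\eta_{0,\sigma_m^2}(e^{-\xi_m y}\1_{[0,\infty)})$ is only polynomially small in $\log N$ by \eqref{eqn: eta bounds}, while the Fourier inversion and Lipschitz approximation errors are $\OO_K(N^{-c})$, one must verify that the additive errors translate into a multiplicative $(1+\OO_K(N^{-c}))$ correction. This demands that the hierarchy of exponents $1/(10K)\gg 1/(32K^2)\gg 1/(64K^2)$ from the two-point proof be respected. The argument is otherwise strictly simpler than the two-point case: in the one-point setting, the covariance matrix $\bm\Sigma_m$ is replaced by the scalar variance $\sigma_m^2$, eliminating all considerations involving the off-diagonal $\rho_m$.
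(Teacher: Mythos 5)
Your proof is correct and follows essentially the same route as the paper's: exponential tilt with $\xi_m = x/\sigma_m^2$, evaluation of the Laplace transform via Lemma \ref{lem: two point fourier bound} with $\bm\xi_m=(\xi_m,0)$, and Fourier inversion against a Lipschitz lower bound of $e^{-\xi y}\1_{[0,\infty)}(y)$; the paper additionally invokes rotational invariance to reduce to $h=0$, but this is cosmetic since the lemma holds for all $h$. The only differences are the exact shape of the mollifier (the paper interpolates from $0$ at $y=0$, you vanish on a small right-interval) and the specific polynomial exponents chosen for $T$, $R$ and the Lipschitz constant, none of which affect the $\OO_K(N^{-c})$ error; your closing remark that \eqref{eqn: eta bounds} is what lets the additive $N^{-c}$ error be absorbed into a multiplicative $(1+\OO_K(N^{-c}))$ factor is precisely the point the paper relies on.
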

\begin{proof}
By rotational invariance, it suffices to consider the case $h=0$. 
We write $Y_m=Y_m(0)$, $m=2,\dots, K-1$ for simplicity.
The proof relies on a change of measure followed by a Fourier inversion as in the proof of Proposition \ref{prop: two point decoupling rough}.
Let $\xi_m=x/\sigma_{m}^{2}$ as in \eqref{eqn: lambda}, so that $0<\xi_m<1$ by the assumption on $x$.
Consider the probability measure $\Q$ constructed from $\P$ via the density
\[
\frac{{\rm d}\Q}{{\rm d}\P}=\frac{e^{\sum_{m=2}^{K-1} \xi_mY_m}}{\E\left(e^{\sum_{m=2}^{K-1}\xi_m Y_m}\right)}.
\]
Again, we write $\E_{\Q}$ for the expectation under $\Q$ and $\E$ for the expectation under $\P$.
We have
\begin{equation}
\label{eqn: P[J] 1}
\P\left(J_x(0)\right)=\E\left(e^{\sum_{m=2}^{K-1}\xi_mY_m}\right)e^{-\sum_{m=2}^{K-1}\xi_m x}\E_\Q\left(e^{-\sum_{m=2}^{K-1}(\xi_mY_m-x)};J_x(0)\right) \ .
\end{equation}
Lemma \ref{lem: two point fourier bound} is applied to evaluate the exponential moment (with $\bm \xi_m = (\xi_m,0)$). It yields
$$
\E\left(e^{\sum_{m=2}^{K-1}\lambda_mY_m}\right)=\left(1+\OO\left(e^{-N^{1/(10K)}}\right)\right)e^{\sum_{m=2}^{K-1}\lambda_{m}^{2}\sigma_{m}^{2}}\ .
$$
In view of \eqref{eqn: P[J] 1} and the above, it remains to show that
\begin{equation}
\label{eq: suffices to show one point}
\E_{\mathbb{Q}}\left(e^{-\sum_{m=2}^{K-1}\xi_m (Y_m-x)};J_x(0)\right)
\geq  \prod_{m=2}^{K-1}\eta_{0,\sigma_{m}^{2}}\left(e^{-\frac{x y}{\sigma_m^2}}\1_{[0,\infty)}(y)\right)+\OO_K\left(N^{-c}\right).
\end{equation}
This is done by Fourier inversion.

Let $t_{m}\in \mathbb R$ for $m=2,\dots, K-1$ with $|t_{m}|<N^{1/(32K)}$. 
Then  $|\xi_m+\ii t_m|<N^{1/(16K)}$ so that Lemma \ref{lem: two point fourier bound} can be applied with $\xi_m+\ii t_m$ in place of $\xi_m$.
The Fourier transform of $(Y_m-x; m=2,\dots,K-1)$ under $\Q$ becomes:
\begin{equation}
\label{eqn: fourier 1 point}
\begin{aligned}
&\E_\Q\left(e^{\ii \sum_{m=2}^{K-1}t_m(Y_m-x)}\right)
=\frac{\E\left(e^{\sum_{m=2}^{K-1}(\xi_m+\ii t_m) Y_m}\right)}{\E\left(e^{\sum_{m=2}^{K-1}\xi_mY_m}\right)}e^{-i\sum_{m=2}^{K-1}t_m\cdot x}\\
&=\left(1+\OO\left(e^{-N^{1/(10K)}}\right)\right) \exp\left(-\frac{1}{2}\sum_{m=2}^{K-1}t^2_{m}\sigma_{m}^{2}\right)\ .
\end{aligned}
\end{equation}
To complete the argument, consider the function $g_{\lambda}: \mathbb R\to \mathbb R$ where
\[
g_{\xi}\left(y\right)=\begin{cases}
0 & \text{ if $y\le 0$},\\
e^{-\xi y} & \text{ if $y> N^{1/(32 K)}$}.\\
\end{cases}
\]
and $g_\xi$ is linearly interpolated on $[0,N^{-1/(32K)}]$.
Note that $g_{\xi}$ is bounded by $1$ and has Lipschitz constant at most $N^{1/\left(32K\right)}$.
By definition,
\begin{equation}
\label{eq: domination 1 point}
\E_\Q\left(e^{-\sum_{m=2}^{K-1}\xi_m\left(Y_m-x\right)};J_x(0)\right)
\geq \E_\Q\left(\prod_{m=2}^{K-1}g_{\xi_m}\left(Y_m-x\right)\right).
\end{equation}
Lemma \ref{lem: Fourier Inversion} can be applied with $T=N^{1/\left(16K\right)}$, $R=N^{(1/16K)}$ and \eqref{eqn: fourier 1 point}.
The right-hand side of \eqref{eq: domination 1 point} becomes
\begin{equation}
\label{eqn: estimate after fourier}
\begin{aligned}
&\prod_{m=2}^{K-1}\eta_{0,\sigma_{m}^{2}}\left(g_{\xi_m}(y)\right) + \OO\left(N^{1/\left(32K\right)-1/\left(16K\right)}\right) + \OO\left( N^{2/\left(16K\right)} N^{2/(16K)} e^{-N^{1/(10K)}}\right)\\
&+\otimes_{m=2}^{K-1}\eta_{0,\sigma_{m}^{2}}\left(\left\{ \left(-N^{1/(16K)},N^{1/(16K)}\right)\right\}^{c}\right)+\Q\left(\exists m: |Y_m-x|>N^{1/(16K)} \right)\ .
\end{aligned}
\end{equation}
We can proceed as in \eqref{eqn: gaussian estimate 2 point} and in \eqref{eqn: Y estimate 2 point} to conclude that
\begin{equation}
\label{eqn: 1 point almost}
\E_\Q\left(e^{-\sum_{m=2}^{K-1}\xi_m(Y_m-x)};J_x(0)\right)
\geq \prod_{m=2}^{K-1}\eta_{0,\sigma_{m}^{2}}\left(g_{\xi_m}(y)\right) + \OO(N^{-1/(32K)})\ .
\end{equation}
Note that $\left|\eta_{0,\sigma_{m}^{2}}\left(g_{\xi_m}\right)-\eta_{0,\sigma_{m}^{2}}\left(e^{-\xi y}\1_{[0,\infty)}(y)\right)\right|\le N^{-1/\left(32K\right)}$.
This with \eqref{eqn: 1 point almost} shows \eqref{eq: suffices to show one point}.
\end{proof}

We now have all estimates needed to prove the second moment estimate in Proposition \ref{prop: fm sm}.

\begin{proof}[Proof of Proposition \ref{prop: fm sm}]
Linearity of expectations and Proposition \ref{prop: one point precise bound} directly imply that
\begin{equation}
\label{eqn: E[Z]^2 lower}
\begin{aligned}
\E(Z)^2 \geq N^2\left(1+\oo_K(1)\right)e^{-\sum_{m=2}^{K-1}\frac{x^2}{\sigma_{m}^{2}}}\left(\prod_{m=2}^{K-1}\eta_{0,\sigma_{m}^{2}}\left(e^{-\frac{x y}{\sigma_m^2}}\1_{[0,\infty)}(y)\right)\right)^2\ .
\end{aligned}
\end{equation}
The choice \eqref{eqn: x} of $x$ and the bounds \eqref{eqn: eta bounds} on the Gaussian probability implies
\begin{equation}
\label{eqn: E[Z] lower}
\E(Z)^2 \geq c(\log N)^{-(K-1)/2}\ N^2e^{-\sum_{m=2}^{K-1}\frac{x^2}{\sigma_{m}^{2}}}
\end{equation}
The second moment can be split in terms of $h_1 \wedge h_2$:
\begin{equation}
\label{eqn: sum split}
\E(Z^2)=
\Big(\sum_{\substack{h_1,h_2\in \mathcal H_N\\ h_1 \wedge h_2\le\frac{1}{2K}\log N}}
+ \sum_{\substack{h_1,h_2\in \mathcal H_N\\ \frac{1}{2K}\log N\leq h_1 \wedge h_2\le \frac{1}{K}\log N}} 
+ \sum_{j=2}^{K}\sum_{\substack{h_1,h_2\in \mathcal H_N\\  \frac{j-1}{K}\log N<h_1 \wedge h_2\leq \frac{j}{K}\log N}}\Big)\ \P\left(J_x(h_1)\cap J_x(h_2)\right)\ .
\end{equation}
The first sum is smaller than $(1+\oo_K(1))\E\left(Z^2\right)$ by \eqref{eq: two point precise bound} and \eqref{eqn: E[Z]^2 lower}, and the fact that there are less than $N^2$ terms in the sum.
It remains to show that the second and third terms of \eqref{eqn: sum split} are $\oo(\E\left(Z\right)^2)$. 
There are $\OO(N^{2-1/(2K)})$ terms in the second sum. We can thus apply the rough estimate \eqref{eq: two point decoupling rough} 
to get that this sum is smaller than 
$$
\OO\left( N^{2-1/(2K)}  \exp\left(-\sum_{m=2}^{K-1}\frac{x^2}{\sigma_m^2}\right) \right) \ .
$$
The estimate \eqref{eqn: E[Z] lower} then directy implies that the second sum is $\oo(\E\left(Z\right)^2)$.
As for the third term of \eqref{eqn: sum split}, for a fixed $j=2,\ldots,K-1$, there are at most $\OO(N^{2-(j-1)/K})$ pairs such that  $\frac{j-1}{K}\log N<h_1 \wedge h_2\leq \frac{j}{K}\log N$, 
Applying the two-point probability estimate \eqref{eq: two point rough 2}, one gets that the contribution of such pairs for a fixed $j$ is 
\begin{equation}\label{eq: third}
\OO\left( \left(N^2 e^{-\sum_{m=2}^{K-1}\frac{x^2}{\sigma_{m}^{2}}} \right) \ N^{-(j-1)/K} \exp\left(\sum_{m=2}^{j}\frac{x^2}{2\sigma_2^2}\right) \right)\ .
\end{equation}
The term in parenthesis is smaller than $C(\log N)^{(K-1)/2}\E(Z)^2$ by \eqref{eqn: E[Z] lower}, whereas the other terms are, by the choice of $x$
and \eqref{eq: Y variance}, of the order of 
$$
N^{-\frac{j-1}{K}}N^{(1-\varepsilon/2)^2\frac{j-1}{K}}=\OO_K(N^{-c(j-1)})\ ,
$$
for $c=c(\varepsilon)$. The sum of \eqref{eq: third} from $j=2$ to $K-1$ is therefore of order $\OO_K(N^{-c})$.
Altogether this implies that the third term of \eqref{eqn: sum split} is $\oo_K( \E\left(Z\right)^2 )$, and concludes the proof of the proposition.
\end{proof}

\section{Extension to the full sum and to the continuous interval}\label{sec: chaining etc}
\subsection{Lower bound.\ }
In this section, we strengthen the lower bound \eqref{eq: lb trunc disc} of the previous section to get:
\begin{prop}
\label{prop: lower bound}
For any $\varepsilon>0$, 
\begin{equation}\label{eq: LB}
  \lim_{N\to\infty} \P\left( \max_{h \in [0,2\pi]} \sum_{j=1}^\infty -\frac{\Re( e^{-\ii jh} \Tr{{\rm U}_N^j})}{j}  \ge (1-\varepsilon) \log N \right) = 1\ .
\end{equation}
\end{prop}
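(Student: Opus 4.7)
Since $\mathcal{H}_N\subset[0,2\pi]$, the maximum over the continuous interval dominates the maximum over the discrete set, so it suffices to work with $h\in\mathcal{H}_N$. The plan is to combine the discrete truncated lower bound already established in Proposition \ref{prop: lb trunc disc} with a uniform control, over $h\in\mathcal{H}_N$, of the tail contribution coming from traces of high powers. Write
\[
T(h) := -\sum_{j\ge N^{1-\delta}}\frac{\Re(e^{-\ii j h}\Tr \mathrm{U}_N^j)}{j},
\]
so that the full sum at $h$ equals $X_{(1-\delta)\log N}(h)+T(h)$. The goal is to show that, for $\delta$ small enough depending on $\varepsilon$, with high probability $T(h)\ge -\frac{\varepsilon}{2}\log N$ simultaneously for every $h\in\mathcal{H}_N$, while Proposition \ref{prop: lb trunc disc} (applied with $\varepsilon/2$) supplies an $h^\ast\in\mathcal{H}_N$ with $X_{(1-\delta)\log N}(h^\ast)\ge(1-\varepsilon/2)\log N$. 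Adding these two estimates at $h=h^\ast$ yields \eqref{eq: LB}.

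The key input for the tail control is Proposition \ref{lem: mgf ub tail}, which provides, via the Riemann--Hilbert asymptotics of Toeplitz determinants with emerging Fisher--Hartwig singularities, a Gaussian-type bound of the form
\[
\mathbb{E}\bigl(\exp(\alpha T(h))\bigr)\le C\exp\bigl(\tfrac{1}{2}\alpha^2\tau^2\bigr),
\qquad \tau^2\lesssim \delta\log N,
\]
for $\alpha$ in a suitable range (in particular negative $\alpha$, corresponding to bounding the left tail of $T(h)$). By rotational invariance the law of $T(h)$ does not depend on $h$, so applying the exponential Chebyshev inequality with $\alpha=-(\varepsilon/2)\log N/\tau^2$ gives
\[
\mathbb{P}\!\left(T(h)\le -\tfrac{\varepsilon}{2}\log N\right)\le C\exp\!\Bigl(-\tfrac{(\varepsilon/2)^2(\log N)^2}{2\tau^2}\Bigr)\le N^{-c\varepsilon^2/\delta},
\]
and a union bound over the $N$ points of $\mathcal{H}_N$ shows that, choosing $\delta=\delta(\varepsilon)$ small enough so that $c\varepsilon^2/\delta>2$,
\[
\mathbb{P}\!\left(\min_{h\in\mathcal{H}_N}T(h)\ge -\tfrac{\varepsilon}{2}\log N\right)\xrightarrow[N\to\infty]{}1.
\]

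Intersecting this event with the high-probability event from Proposition \ref{prop: lb trunc disc} (applied with $\varepsilon/2$ in place of $\varepsilon$ and with the same $\delta$), one obtains a point $h^\ast\in\mathcal{H}_N\subset[0,2\pi]$ with
\[
\sum_{j=1}^\infty -\frac{\Re(e^{-\ii j h^\ast}\Tr\mathrm{U}_N^{j})}{j}
= X_{(1-\delta)\log N}(h^\ast)+T(h^\ast)\ge (1-\varepsilon)\log N,
\]
completing the argument. The main technical hurdle is the exponential moment bound for $T(h)$: the high-power tail is delicate because it mixes all traces $\Tr \mathrm{U}_N^j$ with $j\gtrsim N^{1-\delta}$, where the trace norms saturate at $\sqrt{N}$ and the associated external potential $V$ in Heine's formula develops a singularity on the mesoscopic scale $N^{-1+\delta}$; this is precisely the regime treated by Proposition \ref{lem: mgf ub tail}, which we invoke as a black box here.
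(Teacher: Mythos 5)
Your proposal is correct and follows essentially the same strategy as the paper's proof: restrict to the discrete set $\mathcal{H}_N$, invoke the Riemann--Hilbert exponential moment bound (Lemma \ref{lem: mgf ub tail}) together with exponential Chebyshev and a union bound to control the left tail of the high-power contribution $T(h)$ uniformly over $\mathcal{H}_N$, then combine with Proposition \ref{prop: lb trunc disc} applied at level $\varepsilon/2$. The only cosmetic difference is how the Gaussian rate is written and the specific choice of tilt $\alpha$, which matches the paper's up to bookkeeping.
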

To prove this, we need the following exponential moment bound for the tail of the sum.
\begin{lem}
\label{lem: mgf ub tail}
For any fixed $\delta \in (0,1)$ and any $C>0$ we have for $N$ large enough that for all $h\in\mathbb{R}$ and $|\alpha| \le C$,
\begin{equation}\label{eq: mgf ub tail body}
  \E\left( \exp\left( \alpha \sum_{j \ge N^{1-\delta} } -\frac{\Re( e^{-\ii jh} \Tr \U_N^j)}{j}\right) \right)
 = \exp\left( \left(\frac{1}{4}+\oo(1)\right)\alpha^2 \delta \log N \right),
\end{equation}
\end{lem}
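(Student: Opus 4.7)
The plan is to identify the left-hand side with a Toeplitz determinant via Heine's formula, and then to apply a Riemann-Hilbert steepest descent analysis in the spirit of Deift-Its-Krasovsky, with the key novelty of handling a Fisher-Hartwig type singularity that emerges at the mesoscopic scale $N^{-(1-\delta)}$.

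First, by rotational invariance of the Haar measure I would reduce to $h=0$. Splitting $\log|{\rm P}_N(1)| = S + T$ where $S = \sum_{1 \le j < N^{1-\delta}} -j^{-1}\Re\,\Tr \U_N^j$ and $T = \sum_{j \ge N^{1-\delta}} -j^{-1}\Re\,\Tr \U_N^j$ (from the Fourier expansion \eqref{eqn: fourier}), one rewrites
\[
e^{\alpha T} \;=\; |{\rm P}_N(1)|^{\alpha}\, e^{-\alpha S} \;=\; \prod_{k=1}^{N} f(e^{\ii \theta_k}), \qquad f(e^{\ii\theta})=|1-e^{\ii\theta}|^{\alpha}\, e^{V(\theta)},
\]
where $V(\theta) = \alpha \sum_{j=1}^{N^{1-\delta}-1} j^{-1}\cos(j\theta)$. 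Heine's identity \eqref{eqn:Heine} then reduces the desired expectation to the Toeplitz determinant $\E(e^{\alpha T}) = D_N(f)$.

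Second, using $\sum_{j\ge 1} j^{-1}\cos(j\theta) = -\log|1-e^{\ii\theta}|$, the symbol admits the equivalent ``tail" representation
\[
f(e^{\ii\theta}) \;=\; \exp\!\left(-\alpha \sum_{j \ge N^{1-\delta}} j^{-1}\cos(j\theta) \right),
\]
which is smooth for $\theta\neq 0$ but behaves like $(N^{1-\delta}|\theta|)^{\alpha}$ for $|\theta| \lesssim N^{-(1-\delta)}$: a Fisher-Hartwig singularity of parameter $\alpha$ regularized at the mesoscopic scale $N^{-(1-\delta)}$, i.e.\ only ``turned on'' when the $N$ eigenvalues resolve it, at effective size $N/N^{1-\delta}=N^{\delta}$. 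The heuristic answer
\[
\sum_{k \ge 1} k\,|\widehat{\log f}_k|^2 \;=\; \frac{\alpha^2}{4}\!\!\!\sum_{N^{1-\delta}\le k \le N}\!\!\frac{1}{k} + \OO(1) \;=\; \frac{\alpha^2 \delta}{4}\log N + \OO(1),
\]
i.e.\ a Szeg\H{o} energy truncated at the natural scale, is exactly what the Riemann-Hilbert analysis rigorously produces.

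Third, I would carry out the steepest descent analysis for the orthogonal polynomials on the unit circle with weight $f$, following \cite{DeiItsKra2011, DeiItsKra2014}. The standard chain of transformations $Y\to T \to S$ reduces the problem to matching a global (outer) parametrix, built from the Szeg\H{o} function of the smooth part of $f$, with a local (inner) parametrix at $z=1$. The inner parametrix is built on a disk of radius $\asymp N^{-(1-\delta)}$; after rescaling $\zeta = N(z-1)$, it becomes a standard Fisher-Hartwig model problem at effective size $N^{\delta}$, solvable in terms of confluent hypergeometric functions. The associated Barnes $G$-function prefactor contributes $(\alpha^2/4)\log(N^{\delta})$ to $\log D_N(f)$, while the outer factors and the matching produce only additive contributions of size $\OO(1)$ uniformly in $|\alpha|\le C$ and in $\delta$ bounded away from $0$, yielding
\[
\log D_N(f) \;=\; \frac{\alpha^2 \delta}{4}\log N + \oo(\log N).
\]

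The main obstacle will be step three: classically the Fisher-Hartwig local parametrix is constructed on a disk of $N$-independent radius, whereas here the natural radius $N^{-(1-\delta)}$ shrinks with $N$ (although much more slowly than the microscopic scale $N^{-1}$). Matching this shrinking inner parametrix to the outer one uniformly in $\alpha$ and $\delta$, and verifying that all error terms in the jump matrices remain $\oo(\log N)$ after taking logarithms, is the technically delicate part. This is reminiscent of the merging-singularities Riemann-Hilbert analysis of \cite{FyoKhoSim16} in the GUE setting, adapted here to the CUE / unit-circle framework.
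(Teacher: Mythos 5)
Your high-level plan — Heine's formula to reduce to a Toeplitz determinant, then a Deift--Its--Krasovsky Riemann--Hilbert analysis with a Fisher--Hartwig singularity emerging at the mesoscopic scale $N^{-(1-\delta)}$ — is indeed the framework of the paper, and your identification of the shrinking contour as the key technical obstacle is exactly right. But your route through it differs from the paper's, and as written it has a real gap.

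The paper does not attempt a direct steepest-descent evaluation of $D_N(f)$. Instead it interpolates $f^{(t)}(z) = (1-t + te^{V(z)})\,|z-1|^{2\alpha}$ between the \emph{pure} Fisher--Hartwig symbol at $t=0$ (whose Toeplitz determinant is exactly a Selberg integral, $\log D_N(f^{(0)}) = \alpha^2\log N + \OO(1)$) and the target symbol at $t=1$, and then uses the differential identity of Deift--Its--Krasovsky to write
\[
\log D_N(f^{(1)}) - \log D_N(f^{(0)}) = NV_0 + \sum_{k\ge 1}kV_kV_{-k} - \alpha\log\bigl(b^1_+(1)b^1_-(1)\bigr) + \int_0^1 E(t)\,\rd t.
\]
The Riemann--Hilbert analysis is used \emph{only} to show $\int_0^1 E(t)\,\rd t = \OO(N^{-\delta}(\log N)^3)$; the main terms come from the explicit expression above, where $\sum kV_kV_{-k} = \alpha^2(1-\delta)\log N + \OO(1)$ and $-\alpha\log(b^1_+b^1_-)(1) = -2\alpha^2(1-\delta)\log N + \OO(1)$, combining with the Selberg base to yield $\alpha^2\delta\log N + \OO(1)$.

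The gap in your outline is the claim that ``the outer factors and the matching produce only additive contributions of size $\OO(1)$.'' This is false, and it is precisely the crux of the problem. The smooth part $e^{V}$ of the symbol is not a fixed analytic potential: $V(z) = \alpha\sum_{j<N^{1-\delta}}(z^j+z^{-j})/j$ is $N$-dependent and near-singular at $z=1$, with $V(1) = 2\alpha(1-\delta)\log N + \OO(1)$. Its Wiener--Hopf factors therefore satisfy $b_\pm(1) \sim N^{\alpha(1-\delta)}$, so the Szeg\H{o}/outer parametrix evaluated at the Fisher--Hartwig point contributes $\OO(\log N)$ to $\log D_N$, not $\OO(1)$. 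The final coefficient $\frac{\alpha^2\delta}{4}$ arises from a \emph{cancellation} among several $\OO(\log N)$ contributions — the Barnes-$G$/Selberg term, the Szeg\H{o} energy $\sum kV_kV_{-k}$, and the Wiener--Hopf evaluation $-\alpha\log(b_+b_-)(1)$ — and your heuristic happens to give the right answer without exhibiting this cancellation. A direct steepest-descent argument would have to track these terms carefully (and would still need some differential identity to pass from the $R$-RHP solution to $\log D_N$ — constructing the parametrices alone does not produce the determinant). The interpolation-in-$t$ argument of the paper is precisely the device that makes this bookkeeping systematic and isolates the RHP estimates to the error term.
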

The bound is proved in Section \ref{sec: RH} in the form of Proposition \ref{prop:tail}.

\begin{proof}[Proof of Proposition \ref{prop: lower bound}]
We show that
\begin{equation}\label{eq: LB disc}
  \lim_{N\to\infty} \P\left( \max_{h \in \mathcal{H}_N} \sum_{j=1}^{\infty} -\frac{Re(e^{-\ii jh}\Tr{{\rm U}_N^j})}{j} \ge (1-\varepsilon) \log N \right) = 1,
\end{equation}
from which \eqref{eq: LB} trivially follows.
Using \eqref{eq: mgf ub tail body} with $\alpha = -2x/(\delta \log N)$ we get that
\begin{equation}\label{eq: tail bound for tail}
 \P\left(\sum_{j\ge N^{1-\delta}}-\frac{\Re(e^{-\ii jh}\Tr{{\rm U}_N^j})}{j} \le -x \right) \le \exp( -cx^2/(\delta \log N))
, \ \text{ for all $0 \le x \le \log N$.}
\end{equation}
A simple union bound over the $N$ points of $\mathcal{H}_N$, like in \eqref{eq: union bound}, now shows that
\begin{equation}\label{eq: lb union}
\P\left(\min_{h \in \mathcal{H}_N}  \sum_{j\ge N^{1-\delta}}-\frac{\Re(e^{-\ii jh}\Tr{{\rm U}_N^j})}{j} \le -\frac{1}{2} \varepsilon \log N \right)
\le N^{1 - c\varepsilon^2/\delta}.
\end{equation}
Thus given $\varepsilon>0$, $\delta$ can be set small enough such that \eqref{eq: lb union} is $\oo(1)$, and such
that \eqref{eq: lb trunc disc}, with $\frac{1}{2}\varepsilon$ in place of $\varepsilon$, is satisfied. Combining
these implies \eqref{eq: LB disc}, and thus also \eqref{eq: LB}. 
\end{proof}

\subsection{Upper bound\ }
In this section, we strengthen the upper bound \eqref{eq: upper bound discrete} by removing the discretization and truncation, to arrive at:
\begin{prop}
\label{prop: upper bound}
 For any $\varepsilon>0$,
\begin{equation}\label{eq: UB}
  \lim_{N\to\infty} \P\left(\max_{h \in [0,2\pi]} \sum_{j=1}^\infty -\frac{\Re(e^{-\ii jh}\Tr{{\rm U}_N^j})}{j}\ge (1+\varepsilon) \log N \right) = 0\ .
\end{equation}
\end{prop}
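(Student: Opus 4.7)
I would decompose
$$\log|{\rm P}_N(e^{\ii h})| = X_{(1-\delta)\log N}(h) + R_N(h),$$
where the tail $R_N(h) = -\sum_{j \ge N^{1-\delta}} \Re(e^{-\ii jh}\Tr {\rm U}_N^j)/j$ collects the contributions of high powers, and treat the two pieces by different mechanisms before recombining. Throughout, $\delta$ will be chosen small in terms of $\varepsilon$ at the end.

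For the truncated sum, the $\mathcal{H}_N$-bound of Proposition \ref{prop: ub disc trunc} is extended to all of $[0,2\pi]$ by a dyadic chaining argument. At dyadic level $k \ge 0$, let $\mathcal{H}_N^{(k)}$ be $2^k N$ equally spaced points. For consecutive $h, h'$ in $\mathcal{H}_N^{(k)}$, the increment $W_\ell(h) - W_\ell(h')$ has variance of order $\min(|h-h'|^2 e^{2\ell}, 1)$, so exponential moment bounds from Proposition \ref{thm:FourierRH}, analogous to Lemma \ref{lem: gaussian UB}, yield Gaussian-type tails for these differences. Summing the resulting union bounds over $k \ge 0$ gives
$$\sup_{h \in [0,2\pi]} X_{(1-\delta)\log N}(h) \le \max_{h \in \mathcal{H}_N} X_{(1-\delta)\log N}(h) + \OO(1)$$
with high probability, which by Proposition \ref{prop: ub disc trunc} is at most $(1 + \oo(1))\log N$.

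The tail $R_N$ is not smooth and is controlled only on a fine discrete grid $\mathcal{H}_N' \subset [0,2\pi]$ of cardinality $N^A$ for a large fixed $A$. Lemma \ref{lem: mgf ub tail} combined with the exponential Chebyshev inequality gives $\P(R_N(h) \ge x) \le \exp(-c x^2/(\delta \log N))$ for $0 \le x \le \log N$, and a union bound yields $\max_{h \in \mathcal{H}_N'} R_N(h) \le \frac{\varepsilon}{2}\log N$ with high probability, provided $\delta$ is sufficiently small in terms of $\varepsilon$ and $A$. Combining with the chaining bound on the truncated sum produces
$$\max_{h \in \mathcal{H}_N'} \log|{\rm P}_N(e^{\ii h})| \le \left(1 + \frac{3\varepsilon}{4}\right)\log N$$
with probability tending to one.

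The remaining step passes from $\mathcal{H}_N'$ back to the continuous interval $[0,2\pi]$. For this I would invoke the small eigenvalue-gap bound of \cite{BenBou2013}, ensuring $\min_{k \ne l}\|\theta_k - \theta_l\| \ge N^{-1-\varepsilon/8}$ with high probability. On this event, a derivative estimate based on $|(\rd/\rd h)\log|{\rm P}_N(e^{\ii h})|| \le \sum_k 1/|e^{\ii h} - e^{\ii \theta_k}|$, applied between consecutive grid points of $\mathcal{H}_N'$ in the same eigengap (which is possible provided $A$ is chosen so that $N^{-A}$ is much smaller than the minimum gap), gives oscillation of size $\OO(1)$. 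Points of $[0,2\pi]$ very close to an eigenvalue contribute only large negative values of $\log|{\rm P}_N|$ and cannot approach the supremum. The main obstacle is this final step, which requires both the quantitative gap estimate from \cite{BenBou2013} and a careful handling of the singular behavior of $\log|{\rm P}_N|$ near eigenvalues; the chaining argument, though technically nontrivial due to its reliance on Proposition \ref{thm:FourierRH}, follows a standard template.
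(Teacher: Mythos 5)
Your proposal follows essentially the same route as the paper's: decompose $\log|{\rm P}_N(e^{\ii h})|$ into the truncated sum plus the high-power tail, extend the truncated-sum bound to $[0,2\pi]$ by dyadic chaining with exponential-moment control of close-point differences (this is exactly Lemma \ref{lem: exp moment continuity} / Lemma \ref{lem: UB step 2}), union-bound the tail over a fine grid of polynomial cardinality via Lemma \ref{lem: mgf ub tail} (this is Lemma \ref{lem: tail bound}), and pass to the continuum using the smallest-gap result of \cite{BenBou2013} together with a derivative estimate for $\log|{\rm P}_N|$ near a maximizer. One quantitative slip: the smallest CUE gap is of order $N^{-4/3}$, so the event $\min_{k\ne l}\|\theta_k-\theta_l\|\ge N^{-1-\varepsilon/8}$ has probability tending to zero, not one, when $\varepsilon$ is small; you should replace $N^{-1-\varepsilon/8}$ by any fixed $N^{-C}$ with $C>4/3$ (the paper uses $N^{-90}$), and then choose the grid exponent $A$ correspondingly large ($A=100$ in the paper). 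The heuristic that points very close to an eigenvalue cannot be near the supremum is correct but needs to be made quantitative; the paper supplies this by observing that the derivative vanishes at a maximizer $h^\star$, which forces $|\theta_1-h^\star|\gtrsim N^{-91}$ on the good gap event and hence bounds the derivative by $\lesssim N^{92}$ on an $N^{-100}$-neighborhood of $h^\star$. With those two adjustments your proof is complete and matches the paper's.
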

The proof is split into three steps.
The first step is to extend \eqref{eq: upper bound discrete} to a bound for the truncated sum over all of $[0,2\pi]$
using a chaining argument.
In the second step we restrict once again to a discrete set, but one containing $N^C$ equidistant points in $[0,2\pi]$
for a large $C$, and show that the largest error made
in the truncation over this denser discrete set is negligible compared to the leading order $\log N$. Thus we obtain
a bound for the full sum over the denser discrete set.
Finally we use a rough control of the derivative of the characteristic polynomial to show that
the maximum over the denser set is close to the maximum over $[0,2\pi]$.

To carry out the first step, we need a tail estimate for the difference between the truncated sum at two different but close points (at distance at most $N^{-(1-\delta)}$),
that is for
\begin{equation}\label{eq: diff}
\sum_{j=1}^{N^{1-\delta}} -\frac{ \Re( e^{-\ii jh} \Tr {\rm U}_N^j ) }{j} 
- \sum_{j=1}^{N^{1-\delta}} -\frac{\Re( \Tr{\rm U}_N^j )}{j}, \ |h|\leq N^{-1-\delta}.
\end{equation}
Using \eqref{eqn: diaconis-shah} one can compute the covariance matrix ${\bm \Sigma}$ of the two sums in \eqref{eq: diff} exactly; it turns out to be:
\begin{equation}
\label{eqn: Sigma 2}
\bm \Sigma=\left(
\begin{array}{cc}
\sigma^2 & \rho\\
\rho & \sigma^2
\end{array}
\right) \ , \mbox{ for } \sigma^2 = \frac{1}{2}  \sum_{j=1}^{N^{1-\delta}} \frac{1}{j} \mbox{ and } \rho = \frac{1}{2}  \sum_{j=1}^{N^{1-\delta}} \frac{\cos(jh)}{j}.
\end{equation}
If $|h|\le N^{-(1-\delta)}$ then $|jh|\leq 1$ for $j\leq N^{1-\delta}$, which implies that $\cos (jh)=1+\OO(j^2h^2)$ and consequently $\rho \ge \sigma^2 - cN^{2(1-\delta)}h^2$.
This reflects the fact that as $h$ decreases below scale $N^{-(1-\delta)}$ the correlation no longer behaves as the $\log$ of the inverse of $h$, but rather approaches $1$ as a quadratic,
so that the difference \eqref{eq: diff} has variance $cN^{2(1-\delta)}h^2$, decreasing quadratically in $h$.
To obtain a corresponding tail bound, we need the following
exponential moment bound (as the similar Lemma \ref{lem: two point fourier bound}, it follows from
the Riemann-Hilbert techniques of Section \ref{sec: RH}.
\begin{lem}
\label{lem: exp moment continuity}
Let $\delta>0$, $\e\in(0,\delta)$ be fixed. There exists $C>0$ such that for all $h\in \mathbb{R}$ and real $\xi$ with $|\xi h N^{1-\delta}| \le N^{\delta-\e}$, we have 
\begin{equation}\label{eq: exp moment continuity}
\begin{aligned}
    \E\left(
	\exp\left( \xi \left( \sum_{j=1}^{N^{1-\delta}} -\frac{ \Re( e^{-\ii jh} \Tr {\rm U}_N^j ) }{j} 
        - \sum_{j=1}^{N^{1-\delta}} -\frac{\Re( \Tr{\rm U}_N^j )}{j} \right) \right )   
      \right)
   \le C\exp\left(C \xi^2 |\rho-\sigma^2|\nc \right)
\end{aligned},
\end{equation}
where we used the definitions \eqref{eqn: Sigma 2}.
\end{lem}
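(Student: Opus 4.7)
The plan is to reduce the exponential moment to a Toeplitz determinant and invoke the strong Szeg\H{o}-type asymptotics supplied by the Riemann-Hilbert analysis of Section \ref{sec: RH}, namely Proposition \ref{thm:FourierRH}. Concretely, the exponent in \eqref{eq: exp moment continuity} can be rewritten as a linear statistic in the eigenangles:
\begin{equation*}
\xi\Bigl(\sum_{j=1}^{N^{1-\delta}}-\frac{\Re(e^{-\ii jh}\Tr U_N^j)}{j}-\sum_{j=1}^{N^{1-\delta}}-\frac{\Re(\Tr U_N^j)}{j}\Bigr)=\sum_{k=1}^N v(e^{\ii\theta_k}),
\end{equation*}
where
\begin{equation*}
v(e^{\ii\theta})=\xi\,\Re\sum_{j=1}^{N^{1-\delta}}\frac{(1-e^{-\ii jh})e^{\ii j\theta}}{j}.
\end{equation*}
By Heine's formula \eqref{eqn:Heine}, the exponential moment on the left of \eqref{eq: exp moment continuity} equals the $N\times N$ Toeplitz determinant with symbol $e^v$.

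The key observation is that $v$ is a real trigonometric polynomial with no logarithmic (Fisher-Hartwig) singularity, so the symbol falls squarely within the class of smooth, possibly $N$-dependent, potentials covered by Proposition \ref{thm:FourierRH}. That proposition delivers the Szeg\H{o} asymptotic
\begin{equation*}
\log\E\Bigl(\exp\bigl(\textstyle\sum_k v(e^{\ii\theta_k})\bigr)\Bigr)=N\hat v_0+\sum_{k\ge 1}k|\hat v_k|^2+\OO(1),
\end{equation*}
uniformly in the admissible range of parameters. Here $\hat v_0=0$ while $\hat v_k=\xi(1-e^{-\ii kh})/(2k)$ for $1\le k\le N^{1-\delta}$, so that
\begin{equation*}
\sum_{k\ge 1}k|\hat v_k|^2=\frac{\xi^2}{2}\sum_{k=1}^{N^{1-\delta}}\frac{1-\cos(kh)}{k}=\xi^2(\sigma^2-\rho),
\end{equation*}
with $\sigma^2,\rho$ as in \eqref{eqn: Sigma 2}. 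This reproduces exactly the exponent on the right-hand side of \eqref{eq: exp moment continuity}, with the multiplicative $C$ absorbing the $\OO(1)$ term from the Szeg\H{o} formula.

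It remains to verify that the hypothesis $|\xi hN^{1-\delta}|\le N^{\delta-\e}$ is precisely what Proposition \ref{thm:FourierRH} requires. A summation by parts, splitting at $j\asymp 1/|h|$, shows that the relevant size of $v$ entering the Riemann-Hilbert bounds grows at most like $|\xi|\min(\log N,|h|N^{1-\delta})$, and the critical quantity governing the error in the RH analysis is of order $|\xi h|N^{1-\delta}$. The assumption $|\xi hN^{1-\delta}|\le N^{\delta-\e}$ is then exactly strong enough to keep this quantity in the subpolynomial range where Proposition \ref{thm:FourierRH} applies with an $\OO(1)$ error.

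The main obstacle is purely technical: tracking how the error term in Proposition \ref{thm:FourierRH} depends jointly on $\xi$ and $h$, and confirming that it remains $\OO(1)$ throughout the allowed parameter window (rather than inflating with the $h$-dependent scale of $v$). The situation is directly analogous to the one-point estimate used in Lemma \ref{lem: gaussian UB} and the two-point estimate of Lemma \ref{lem: two point fourier bound}; the present statement is simply the analogue for the \emph{increment} of the truncated sum, and once the uniform RH error bounds are in place, the variance computation above closes the proof.
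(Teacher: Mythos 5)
Your proposal is correct and takes essentially the same route as the paper: apply Proposition \ref{thm:FourierRH} to the trigonometric polynomial $V(z)=\frac{\xi}{2}\sum_{j\le N^{1-\delta}}\frac{(1-e^{-\ii jh})z^j+(1-e^{\ii jh})z^{-j}}{j}$, note $|V(z)|\lesssim|\xi h|N^{1-\delta}\le N^{\delta-\e}$ on $||z|-1|<N^{-1+\delta}$ so the proposition's hypothesis is met, and compute $\sigma^2(V)=\xi^2(\sigma^2-\rho)$. The only overstatement is in your closing paragraph: once the pointwise bound on $V$ is verified, there is no further ``technical obstacle'' in tracking the error jointly in $\xi$ and $h$ --- Proposition \ref{thm:FourierRH} already delivers a uniform multiplicative $(1+\OO(e^{-cN^{\delta}}))$ error for every admissible $V$, so the argument is complete at that point.
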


We have $|\rho-\sigma^2|\leq C\sum_{j=1}^{N^{1-\delta}} \frac{(jh)^2}{j}\leq C N^{2-2\delta}h^2$\nc. Hence, using \eqref{eq: exp moment continuity} with 
$\xi = (h N^{1-\delta})^{-1}\nc$
and the exponential Chebyshev inequality we get that
\begin{equation}\label{eq: diff tail bound}
    \P\left(
	\sum_{j=1}^{N^{1-\delta}} -\frac{ \Re( e^{-\ii jh} \Tr {\rm U}_N^j ) }{j} 
        - \sum_{j=1}^{N^{1-\delta}} -\frac{\Re( \Tr{\rm U}_N^j )}{j} 
        \ge x h N^{1-\delta}
      \right)
   \le C \exp\left( - cx \right),
\end{equation}
for all $x \ge 1$.
The bound \eqref{eq: upper bound discrete} can now be extended to the set $[0,2\pi]$.
\begin{lem}\label{lem: UB step 2}
There is a constant $c$ such that for $0<\delta<1$
  \begin{equation}\label{eq: UB step 2}
  \lim_{N\to\infty}\P\left(
  \max_{ h \in [0,2\pi] }
  \sum_{\ell=1}^{(1-\delta)\log N} W_\ell(h)
  \ge \log N + c \right)=0\ .
  \end{equation}
\end{lem}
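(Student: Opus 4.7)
The plan is to combine Proposition \ref{prop: ub disc trunc}, which controls $X(h) := \sum_{\ell=1}^{(1-\delta) \log N} W_\ell(h)$ on the coarse grid $\mathcal{H}_N$ by $\log N$ with high probability, with a dyadic chaining argument driven by the two-point tail bound \eqref{eq: diff tail bound}. Concretely, I would introduce nested dyadic refinements $\mathcal{H}_N = \mathcal{H}_{N,0} \subset \mathcal{H}_{N,1} \subset \cdots \subset \mathcal{H}_{N,K_{\max}}$, where $\mathcal{H}_{N,k}$ consists of $2^k N$ equally spaced points in $[0,2\pi]$ with spacing $\eta_k := 2\pi/(N 2^k)$. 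For each $h \in [0,2\pi]$ I pick $h_k \in \mathcal{H}_{N,k}$ nearest to $h$, so that $|h_k - h_{k+1}| \le \eta_k$. Since $\delta > 0$, all these spacings satisfy $\eta_k \le 2\pi/N \ll N^{-(1-\delta)}$ for $N$ large, which is precisely the regime in which the derivation leading to \eqref{eq: diff tail bound} is valid.

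Next I would apply \eqref{eq: diff tail bound} at each level (using it with $\xi$ of both signs for a two-sided tail) with the choice $x_k = A(\log N + k + 1)$ for a large constant $A$, obtaining
\[ \mathbb{P}\bigl(|X(h_{k+1}) - X(h_k)| \ge x_k \eta_k N^{1-\delta}\bigr) \le C e^{-c x_k}. \]
A union bound over the $O(2^k N)$ pairs $(h_k, h_{k+1})$ arising at level $k$, summed over $0 \le k \le K_{\max}$, gives a total failure probability of order $\sum_k N^{1-cA}(2 e^{-cA})^k$, which is $o(1)$ for $A$ large. On this good event, the telescoping identity yields
\[ |X(h) - X(h_0)| \le \sum_{k=0}^{K_{\max}-1} x_k \eta_k N^{1-\delta} + |X(h) - X(h_{K_{\max}})|, \]
and since $x_k \eta_k N^{1-\delta} = 2\pi A (\log N + k + 1) N^{-\delta} 2^{-k}$, the chaining contribution is $O(N^{-\delta} \log N) = o(1)$.

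For the final residual $|X(h) - X(h_{K_{\max}})|$, I would pick $K_{\max} = C \log N$ large enough that $\eta_{K_{\max}} \le N^{-100}$, and then invoke a crude deterministic Lipschitz bound: $X$ is a trigonometric polynomial of degree at most $N^{1-\delta}$ whose Fourier coefficients satisfy $|\Tr {\rm U}_N^j|/j \le N/j$, so differentiating term by term gives $\|X'\|_\infty \le C N^{2-\delta} \log N$ deterministically, and the residual is $O(N^{-97})$. Combined with Proposition \ref{prop: ub disc trunc}, this yields $\max_{h \in [0,2\pi]} X(h) \le \log N + o(1) \le \log N + c$ with probability $1 - o(1)$, which is \eqref{eq: UB step 2}.

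The main technical point to verify is that the chosen $x_k$ stay within the range of validity of \eqref{eq: diff tail bound}: the derivation of that bound via Lemma \ref{lem: exp moment continuity} requires $x \le N^{\delta-\epsilon}$ for some small $\epsilon > 0$, and with $x_k = O(\log N)$ this is comfortably satisfied for any fixed $\delta$. Beyond this, the argument is a standard metric-entropy chaining where the spacing $\eta_k$ decays geometrically while the required confidence level $x_k$ grows only linearly in $k$, so the telescoped tails sum to something much smaller than the target error $c$.
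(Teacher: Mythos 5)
Your proof is correct and uses the same core strategy as the paper's: restrict to the coarse grid $\mathcal{H}_N$ via Proposition~\ref{prop: ub disc trunc}, then control the oscillation between grid scales by dyadic chaining fed by the increment tail bound \eqref{eq: diff tail bound}. The paper differs only in two cosmetic respects. First, it picks thresholds $\frac{k+1}{2^k}$ at level $k$ (independent of $N$) so the telescoped chaining error is merely $\mathrm{O}(1)$, matched by the ``$+\,c$'' in the statement, and in exchange gets superexponentially small failure probability $e^{-c(k+1)N^\delta}$ per level; your thresholds are $\asymp (\log N + k)/(2^k N^\delta)$, which gives the sharper $\mathrm{o}(1)$ chaining error at the cost of only a polynomially small failure probability, which is still enough. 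Second, the paper runs the dyadic chain to infinite depth and uses that $\cup_n \mathcal{H}_{N2^n}$ is dense together with continuity of $h\mapsto X(h)$ to pass to $[0,2\pi]$; you stop at depth $K_{\max}\sim\log N$ and close the gap with the deterministic Lipschitz bound $\|X'\|_\infty \le N^{2-\delta}$ (your stated bound $CN^{2-\delta}\log N$ carries an unnecessary $\log N$ factor, but this is harmless). Both endgames are valid; yours is marginally more elementary and gives the slightly stronger conclusion $\max_h X(h)\le \log N + \mathrm{o}(1)$ on the good event.
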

\begin{proof}
The proof uses a chaining argument on dyadic intervals. We actually show \eqref{eq: UB step 2}
with the maximum over $[0,2\pi]$ replaced by a maximum over the set $\cup_{n\ge 0} \mathcal H_{N2^n}$. Since
this set is dense in $[0,2\pi]$ and
$h \to \sum_{\ell = 1}^{(1-\delta)\log N } W_{\ell}(h)$
is continuous this implies \eqref{eq: UB step 2}.

For simplicity, define the random variable
$$
X(h)= \sum_{\ell=1}^{(1-\delta)\log N} W_{\ell}(h)\ .
$$
Consider 
$h \in \cup_{n\ge0} H_{N2^n}$.
For $k \ge 0$ consider the sets $\mathcal H_{N2^k}$ with $N2^k$ equidistant points on $[0,2\pi]$. 
Define the sequence 
$(h_k, k\ge0)$ 
as follows: 
if $h\in [\frac{2\pi j}{N2^k}, \frac{2\pi( j+1)}{N2^k})$ for some $j=0,\dots, N2^k-1$, then $h_k=\frac{2\pi j}{N2^k}$.
Note that $h_{k+1}-h_k$ is $0$ or $\frac{2\pi }{N2^{k+1}}$. 
It holds trivially that
\begin{equation}
\label{eqn: chain}
X(h)-X(h_0)=
\sum_{k=0}^{\infty}
\left(X(h_{k+1})-X(h_k)\right)\ .
\end{equation}
Consider the event 
$$
A=\left\{ \left|X(h')-X\left(h'+\frac{2\pi }{N2^{k+1}}\right)\right|\leq  \frac{k+1}{2^k}\  \ \forall h'\in \mathcal H_{N2^k},
\ \forall k \ge 0 \right\}\ .
$$
Since the sequence $(k+1)/2^k$ is summable, it is clear from \eqref{eqn: chain} 
that for all $h\in \cup_{ n \ge 0 } \mathcal H_{N2^n}$,
$$
X(h) = X(h_0) + \OO(1) \ , \text{ on the event $A$.}
$$
Therefore, the maximum over $\cup_{ n \ge 0 } \mathcal H_{N2^n}$ can only differ by a constant from the one on $\mathcal H_N$. 
The conclusion thus follows from Proposition \ref{prop: ub disc trunc} after it is shown that $\P(A^c)$ tends to $0$. 
A straightforward union bound yields
\begin{equation}
\label{eqn: P[A^c]}
\begin{aligned}
\P(A^c)&=
\sum_{k=0}^{\infty} 
\sum_{h' \in \mathcal H_{N2^k}} \P\left(\left|X(h')-X\left(h'+\frac{2\pi }{N2^{k+1}}\right)\right|> \frac{k+1}{2^k}\right)\ .
\end{aligned}
\end{equation}
The bound \eqref{eq: diff tail bound} is used with $h=\frac{2\pi }{N2^{k+1}}$ and $x = (\frac{k+1}{2^k})/(hN^{1-\delta}) \ge c(k+1)N^\delta$
to obtain that
$$
\P\left(\left|X(h')-X\left(h'+\frac{2\pi }{N2^{k+1}}\right)\right|> \frac{k+1}{2^k}\right)
\leq 2\exp\left( - c (k+1)N^{\delta} \right)\ .
$$
Therefore, we get from \eqref{eqn: P[A^c]}  the estimate
$$
\P(A^c) \le
\sum_{k=0}^{\infty}
N2^{k+1} \exp\left( - c (k+1)N^{\delta} \right) \leq ce^{-cN^{\delta}},
$$
which goes to $0$ as $N\to\infty$. 
\end{proof}

Next we bound the ``tail'' on a the set $\mathcal{H}_{N^{100}}$ (the choice $N^{100}$ is somewhat arbitrary,
but since our proof is based on a simple union bound we can not obtain the result over all of $[0,2\pi]$).
\begin{lem}
\label{lem: tail bound}
For every $0<\varepsilon<1$ there exists $\delta=\delta(\varepsilon)<1$ such that,
  \begin{equation}
  \lim_{N\to\infty}\P\left(\max_{h\in \mathcal H_{N^{100}}}\sum_{j \ge N^{1-\delta}}-\frac{\Re(e^{-\ii jh}\Tr{{\rm U}_N^j})}{j} \ge \varepsilon \log N \right)=0\ .
  \end{equation}
\end{lem}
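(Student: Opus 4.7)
The plan is to combine the Gaussian exponential moment bound of Lemma~\ref{lem: mgf ub tail} with a straightforward union bound over the $N^{100}$ points of $\mathcal{H}_{N^{100}}$, tuning $\delta$ small enough (as a function of $\varepsilon$) so that the resulting one-point tail bound beats the $N^{100}$ cardinality factor.

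More concretely, for fixed $h\in\mathbb{R}$, let
$$
 T_N(h) := \sum_{j \ge N^{1-\delta}}-\frac{\Re(e^{-\ii jh}\Tr{\rm U}_N^j)}{j}.
$$
Given $\varepsilon\in(0,1)$, I will choose $\delta = \delta(\varepsilon) = \varepsilon^2/(400)$ and apply Lemma~\ref{lem: mgf ub tail} with the constant $C$ taken (large but fixed) so that $C \ge 2\varepsilon/\delta = 800/\varepsilon$. This gives, for $N$ large enough and for $\alpha$ in the corresponding range,
$$
 \E\bigl(\exp(\alpha T_N(h))\bigr)\le \exp\bigl((1/4+\oo(1))\alpha^2\delta\log N\bigr).
$$
Optimizing the exponential Chebyshev inequality at the admissible choice $\alpha = 2\varepsilon/\delta$, I obtain
$$
 \P\bigl(T_N(h)\ge \varepsilon\log N\bigr)\le \exp\bigl(-(1+\oo(1))\varepsilon^2(\delta)^{-1}\log N\bigr)\le N^{-300}
$$
for all $N$ sufficiently large, uniformly in $h$.

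The conclusion then follows by a union bound: since $|\mathcal{H}_{N^{100}}|=N^{100}$,
$$
 \P\Bigl(\max_{h\in \mathcal{H}_{N^{100}}} T_N(h)\ge \varepsilon\log N\Bigr)\le N^{100}\cdot N^{-300}=N^{-200},
$$
which tends to $0$ as $N\to\infty$.

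I do not anticipate any real obstacle in this lemma: all the analytical work is already packaged into the Riemann--Hilbert-based exponential moment estimate of Lemma~\ref{lem: mgf ub tail} (proved later as Proposition~\ref{prop:tail}). The only point requiring any care is matching the admissible range of $\alpha$ in that lemma with the optimizer $\alpha\sim \varepsilon/\delta$; this forces $\delta$ to be taken small in terms of $\varepsilon$, but since $C$ in Lemma~\ref{lem: mgf ub tail} can be chosen arbitrarily large (for fixed $\delta$), there is no incompatibility. The exponent $100$ is arbitrary -- the same argument would work for any polynomial discretization scale $N^{A}$, provided $\delta$ is chosen small enough so that $\varepsilon^2/\delta > A$.
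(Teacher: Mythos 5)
Your proposal is correct and follows essentially the same route as the paper: a union bound over the $N^{100}$ grid points, the exponential Chebyshev inequality, and the exponential moment estimate of Lemma~\ref{lem: mgf ub tail}, with $\delta$ chosen small in terms of $\varepsilon$ so that the one-point tail $N^{-c\varepsilon^2/\delta}$ overwhelms the $N^{100}$ factor. The only cosmetic difference is that you optimize at $\alpha = 2\varepsilon/\delta$ while the paper uses $\alpha = \varepsilon/\delta$; both yield a bound of the required form, and your explicit choice $\delta = \varepsilon^2/400$ together with the remark on the admissible range of $\alpha$ is a fine way to spell this out.
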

\begin{proof}
By a union bound and rotational invariance, this probability is smaller than
$$
N^{100}
\P\left(\sum_{j \ge N^{1-\delta}} -\frac{\Re(\Tr{{\rm U}_N^j})}{j} \ge \varepsilon \log N \right)\ .
$$
Using the exponential Chebyshev inequality and Lemma \ref{lem: mgf ub tail} with $\alpha = \varepsilon / \delta$ this is at most
$$
N^{100} N^{- c\varepsilon^2/\delta }.
$$
This tends to zero if $\delta$ is chosen small enough, depending on $\varepsilon$.
\end{proof}

Lemmas \ref{lem: UB step 2} and \ref{lem: tail bound} combine to give us
\begin{lem}
\label{lem: UB step 3}
For every $0 < \varepsilon < 1$,
  \begin{equation}
  \lim_{N\to\infty}\P\left(\max_{h\in \mathcal H_{N^{100}}}\sum_{j=1}^{\infty} -\frac{\Re(e^{-\ii jh}\Tr{{\rm U}_N^j})}{j} \ge (1+\varepsilon) \log N \right)=0\ .
 \end{equation}
\end{lem}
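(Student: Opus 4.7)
The plan is to split the sum at the truncation scale $j = N^{1-\delta}$ and control the two pieces separately using Lemmas \ref{lem: UB step 2} and \ref{lem: tail bound}, with $\delta$ chosen small enough depending on $\varepsilon$.

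Concretely, fix $\varepsilon \in (0,1)$ and choose $\delta = \delta(\varepsilon) \in (0,1)$ small enough that the conclusion of Lemma \ref{lem: tail bound} holds with $\varepsilon/2$ in place of $\varepsilon$. For any $h$, write
\begin{equation*}
\sum_{j=1}^\infty -\frac{\Re(e^{-\ii jh}\Tr{\rm U}_N^j)}{j}
= \sum_{\ell=1}^{(1-\delta)\log N} W_\ell(h) + \sum_{j \ge N^{1-\delta}} -\frac{\Re(e^{-\ii jh}\Tr{\rm U}_N^j)}{j}.
\end{equation*}
Taking the maximum over $h \in \mathcal{H}_{N^{100}}$ and using $\max(A+B) \le \max A + \max B$ together with the trivial inclusion $\mathcal{H}_{N^{100}} \subset [0,2\pi]$ gives
\begin{equation*}
\max_{h \in \mathcal{H}_{N^{100}}} \sum_{j=1}^\infty -\frac{\Re(e^{-\ii jh}\Tr{\rm U}_N^j)}{j}
\le \max_{h \in [0,2\pi]} \sum_{\ell=1}^{(1-\delta)\log N} W_\ell(h)
+ \max_{h \in \mathcal{H}_{N^{100}}} \sum_{j \ge N^{1-\delta}} -\frac{\Re(e^{-\ii jh}\Tr{\rm U}_N^j)}{j}.
\end{equation*}

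I would then use Lemma \ref{lem: UB step 2} to bound the first term by $\log N + c$ with probability tending to $1$, and Lemma \ref{lem: tail bound} (applied with $\varepsilon/2$) to bound the second term by $\frac{\varepsilon}{2}\log N$ with probability tending to $1$. On the intersection of these two high-probability events, the left-hand side is at most $(1 + \frac{\varepsilon}{2})\log N + c$, which is below $(1+\varepsilon)\log N$ for all $N$ sufficiently large. A union bound over the complements of these two events yields the claim.

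Since this is essentially just combining two already-proven inequalities via a union bound, there is no real obstacle: all the difficulty has been absorbed into Lemmas \ref{lem: UB step 2} and \ref{lem: tail bound}, which rely on the Riemann-Hilbert exponential moment estimates and, in the case of Lemma \ref{lem: UB step 2}, on the dyadic chaining argument. The only point requiring care is the order in which quantifiers are fixed: $\delta$ must be chosen in terms of $\varepsilon$ before invoking Lemma \ref{lem: tail bound}, but then Lemma \ref{lem: UB step 2} applies for that fixed $\delta$ and costs only an additive constant $c$, which is negligible compared to $\frac{\varepsilon}{2}\log N$ as $N \to \infty$.
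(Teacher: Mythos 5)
Your proof is correct and matches the paper's approach: the paper states that Lemmas \ref{lem: UB step 2} and \ref{lem: tail bound} "combine to give" Lemma \ref{lem: UB step 3} without spelling out the details, and your argument is precisely that combination — split at $N^{1-\delta}$, dominate the truncated piece over $\mathcal{H}_{N^{100}}$ by its maximum over $[0,2\pi]$ via Lemma \ref{lem: UB step 2}, control the tail over $\mathcal{H}_{N^{100}}$ via Lemma \ref{lem: tail bound} with $\varepsilon/2$, and note that the additive constant $c$ is absorbed for large $N$. You also correctly flagged the quantifier order: $\delta$ is fixed in terms of $\varepsilon$ first, and then both lemmas apply for that fixed $\delta$.
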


It is now possible to prove the upper bound for the maximum from a crude control on the derivative.
\begin{proof}[Proof of Proposition \ref{prop: upper bound}]
To estimate the derivative in a neighborhood of a maximizer, we need to estimate how close the maximizer can be to an eigenvalue.
Let $e^{\ii\theta_1}, e^{\ii\theta_2},\ldots, e^{\ii\theta_N}$ denote the eigenvalues of the random matrix ${\rm U}_N$.
It is helpful to consider the event where the eigenvalues are not too close to each other:
$$
B=\left\{ \inf_{j\neq k} \|\theta_j-\theta_k\|> N^{-90}\right\}\ .
$$
It was shown in \cite{BenBou2013} (see Theorem 1.1) that $\lim_{N\to\infty} \P(B^c)=0$.
(In fact, the authors show that the smallest gap is of order $N^{-4/3}$).

It remains to estimate the probability restricted on the event $B$. 
We will show that on this event
\begin{equation}
\label{eqn: bound deriv}
\frac{{\rm d}}{{\rm d}h} \log |{\rm P}_N(e^{\ii h})|\leq cN^{92}, \qquad \forall h \text{ such that } |h-h^\star|\leq  N^{-100}\ ,
\end{equation}
where $h^\star$ is a maximizer of  $\log |P_N(e^{\ii h})|$. In particular,
$$
 \log |{\rm P}_N(e^{\ii h^\star})|\leq  \log |{\rm P}_N(e^{\ii h})| + cN^{-98}  \qquad \forall h \text{ such that } |h-h^\star|\leq  N^{-100}\ .
$$
Since there must be an $h\in \mathcal H_{N^{100}}$ with $|h-h^\star|\leq N^{-100}$, this implies that
$$
\P\left(\{\log |{\rm P}_N(e^{\ii h^\star})|\geq (1+\varepsilon)\log N\}\cap B\right)
\leq \P\left(\max_{h\in \mathcal H_{N^{100}}}\log |{\rm P}_N(e^{\ii h^\star})|\geq (1+\varepsilon)\log N + cN^{-98}\right)\to 0
$$
by Lemma \ref{lem: UB step 3}. 

To prove \eqref{eqn: bound deriv}, notice that the derivative is
\begin{equation}
\label{eqn: deriv}
\frac{\rd}{\rd h} \log |{\rm P}_N(e^{\ii h})|=
\Re\sum_{j=1}^N   \frac{\ii e^{\ii h}}{e^{\ii h}-e^{\ii \theta_j}}
\end{equation}
Suppose without loss of generality that $\theta_1$ is the closest eigenangle to $h^\star$. Then we must have for $j\neq 2$ that
$|\theta_j-h^\star|> \frac{1}{2}\inf_{j\neq k} |\theta_j-\theta_k|> \frac{1}{2}N^{-90}$ on the event $B$.
Since the derivative is $0$ at $h^\star$, this can be used to bound $|\theta_1-h^\star|$, namely
$$
\left| \Re  \frac{\ii e^{\ii h^\star}}{e^{\ii h^\star}-e^{\ii \theta_1}}\right|=\left|\Re\sum_{j=2}^N   \frac{\ii e^{\ii h^\star}}{e^{\ii h^\star}-e^{\ii \theta_j}}\right|\leq \sum_{j=2}^N \frac{1}{|e^{\ii h^\star}-e^{\ii \theta_j}|}
\leq cN^{91}.
$$
This shows by a Taylor expansion that $|\theta_1-h^\star|> N^{-91}$. 
This also means that for every $h$ such that $|h-h^\star|\leq  N^{-100}$, we must have $|\theta_j-h|> cN^{-91}$. 
Putting this back in \eqref{eqn: deriv} shows that 
$$
\frac{\rd}{\rd h} \log |{\rm P}_N(e^{\ii h})|\leq  cN^{92}\ ,
$$
as claimed. 
\end{proof}

To complete the proof of the main result Theorem \ref{thm: main} it now only remains to show the exponential moment/characteristic function bounds
in Lemmas \ref{lem: gaussian UB}, \ref{lem: two point fourier bound}, \ref{lem: mgf ub tail} and \ref{lem: exp moment continuity}.
These will be proved in Section \ref{sec: RH}.

\section{High points and free energy}
In this section we prove Theorem \ref{thm: high points} about the Lebesgue measure
of high points, and derive from it Corollary \ref{cor: free energy} about
the free energy. 

Recall the definition \eqref{eq: high points def} of the set $\mathcal{L}_{N}\left(\gamma\right)$
of $\gamma-$high points. The first goal is to prove Theorem \ref{thm: high points},
i.e. to show that $\mbox{Leb}\left(\mathcal{L}_{N}\left(\gamma\right)\right)=N^{-\gamma^{2}+\oo\left(1\right)}$ with high probability.
For the upper bound we will be able to work with the full sum (i.e. the logarithm of the characteristic polynomial without truncation). 
The following exponential moment bound for the full sum,
which is obtained from the Selberg integral (see Lemma \ref{lem:Selberg}),
will be used.
\begin{lem}
\label{lem: Full sum exp mom}
For any fixed $C>0$ we have uniformly for $\left|\alpha\right| \le C$ that
\begin{equation}
\mathbb{E}\left(\exp\left(\alpha\sum_{i=1}^{\infty}-\frac{\Re\left(\Tr{\rm U}_{N}^{j}\right)}{j}\right)\right)=e^{\alpha^{2}\left(\frac{1}{4}+\oo\left(1\right)\right)\log N}.\label{eq: full sum exp mom}
\end{equation}
\end{lem}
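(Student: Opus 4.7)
The plan is to recognize that, by the Fourier expansion \eqref{eqn: fourier} evaluated at $h=0$, the sum inside the exponential equals $\log|\mathrm{P}_N(1)|$ almost surely, and then to evaluate the resulting moment $\mathbb{E}(|\mathrm{P}_N(1)|^\alpha)$ exactly via the Selberg integral (Lemma \ref{lem:Selberg}) and standard Barnes $G$-function asymptotics. This is the simplest of the four exponential-moment lemmas used in the paper, because no truncation or second angle is present.

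Concretely, the first step is to observe that $h=0$ is $\P$-almost surely not an eigenangle, so that \eqref{eqn: fourier} applies pointwise and gives the identity
\[
\sum_{j=1}^{\infty} -\frac{\Re(\Tr \mathrm{U}_N^j)}{j} \;=\; \sum_{k=1}^N \log|1 - e^{\ii \theta_k}| \;=\; \log|\mathrm{P}_N(1)|,
\]
where the left-hand side is read as the $\P$-a.s.\ pointwise limit of partial sums. Consequently the expectation to evaluate is simply the moment $\mathbb{E}(|\mathrm{P}_N(1)|^\alpha) = \mathbb{E}\big(\prod_{k=1}^N |1-e^{\ii\theta_k}|^\alpha\big)$.

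The second step is to write this moment in closed form. By the Weyl integration formula and the Selberg/Morris integral (this is what Lemma \ref{lem:Selberg} provides), or equivalently by Heine's formula \eqref{eqn:Heine} applied to the pure Fisher--Hartwig symbol $f(z) = |z-1|^\alpha$, one obtains the classical Keating--Snaith product
\[
\mathbb{E}(|\mathrm{P}_N(1)|^\alpha) \;=\; \prod_{j=1}^N \frac{\Gamma(j)\,\Gamma(j+\alpha)}{\Gamma(j+\alpha/2)^2}.
\]

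The third step is a routine asymptotic analysis. Taking logarithms and using the Barnes $G$-function asymptotic $\log G(1+z) = \tfrac{z^2}{2}\log z + \ldots$ (or equivalently standard Stirling-type estimates applied termwise in the product) yields
\[
\mathbb{E}(|\mathrm{P}_N(1)|^\alpha) \;=\; \frac{G(1+\alpha/2)^2}{G(1+\alpha)}\,N^{\alpha^2/4}\,\bigl(1 + \mathrm{o}(1)\bigr),
\]
with the $\mathrm{o}(1)$ error uniform for $\alpha$ in any fixed compact subinterval on which the prefactor is continuous. Hence $\log \mathbb{E}(|\mathrm{P}_N(1)|^\alpha) = \tfrac{\alpha^2}{4}\log N + \mathrm{O}_C(1)$ uniformly for $|\alpha|\le C$, which is the asymptotic form $e^{\alpha^2(\frac{1}{4}+\mathrm{o}(1))\log N}$ claimed.

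No substantive obstacle is expected: the exact Keating--Snaith formula and the Barnes $G$-function asymptotics are classical, so the only care required is in recording that the error is uniform over the compact parameter set $|\alpha|\le C$, which follows from continuity of the prefactor (and can also be read off directly from the product representation). The work is genuinely simpler than that for Lemmas \ref{lem: gaussian UB}--\ref{lem: exp moment continuity}, since here no truncation of powers is performed and the full Selberg closed form is available.
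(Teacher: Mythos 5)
Your proof is correct and follows essentially the same route as the paper: the paper's proof of Lemma \ref{lem: Full sum exp mom} simply observes (after the proof of Lemma \ref{lem:Selberg}) that the Selberg/Keating--Snaith product identity for $\E\big(\prod_{k=1}^N|e^{\ii\theta_k}-1|^{2\alpha}\big)$ together with the Barnes $G$-function expansion \eqref{eqn:asympt2} gives the claimed $N^{\alpha^2/4}$ growth for $\E(|{\rm P}_N(1)|^\alpha)$. Your step of identifying the exponential moment with $\E(|{\rm P}_N(1)|^\alpha)$ via the Fourier expansion \eqref{eqn: fourier} at $h=0$, and then invoking the exact Gamma-product and Barnes asymptotics, is precisely the argument behind Lemma \ref{lem:Selberg} (modulo replacing $\alpha$ there by $\alpha/2$).
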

\begin{proof}[Proof of Theorem \ref{thm: high points}]
 The upper bound is direct: Fubini's theorem and rotational
invariance show that
\[
\mathbb{E}\Big(\mbox{Leb}\big(\mathcal{L}_{N}\left(\gamma\right)\big)\Big)=2\pi\mathbb{P}\left(\sum_{j=1}^{\infty}-\frac{\mbox{Re}\left(\mbox{Tr}{\rm U}_{N}^{j}\right)}{j}\ge\gamma\log N\right).
\]
The exponential Chebyshev inequality and (\ref{eq: full sum exp mom})
with $\alpha=2\gamma$ show that the latter probability is at most
$cN^{-\gamma^{2}+\frac{\varepsilon}{2}}$ for any $\varepsilon>0$ and large enough $N$, so that
\[
\text{for all  $\varepsilon>0$,} \ \ \mathbb{P}\left(\mbox{Leb}\big(\mathcal{L}_{N}\left(\gamma\big)\right)\ge N^{-\gamma^{2}+\varepsilon}\right)\le cN^{-\frac{\varepsilon}{2}}\to0,\mbox{ as }N\to\infty\ .
\]
This gives the upper bound of \eqref{eq: high points}.

The proof of the lower bound is very similar to the proof of Proposition \ref{prop: lb trunc disc},
which gave a lower bound for the maximum of the truncated sum.
Fix a $\gamma\in\left(0,1\right)$ and an $\varepsilon>0$
and recall the coarse increments $Y_{m}\left(h\right),m=1,\ldots,K,$ from  \eqref{eqn: Y},
as well as the event $J_{x}\left(h\right)$ from \eqref{eqn: Z}. Here we will use
\[
x=\frac{\gamma}{K}\left(1+\frac{\varepsilon}{3}\right)\log N,
\]
and apply the second moment method to the measure of the set 
\[
\mathcal{L}_{N}^{K}=\left\{ h\in\left[0,2\pi\right]:J_{x}\left(h\right)\mbox{ occurs}\right\} .
\]
Note that if $h\in\mathcal{L}_{N}^{K}$ and $K$ is large enough depending
on $\varepsilon$, then
\begin{equation}
\sum_{N^{1/K}\le j<N^{1-1/K}}-\frac{\mbox{Re}\left(\mbox{Tr}{\rm U}_{N}^{j}\right)}{j}\ge\frac{K-2}{K}\gamma\left(1+\frac{\varepsilon}{3}\right)\log N\ge\gamma\left(1+\frac{\varepsilon}{4}\right)\log N.\label{eq: sum}
\end{equation}
Consider the  subsets
\[
\mathcal{A}=\left\{ h\in\left[0,2\pi\right]:\sum_{j\ge N^{1-1/K}}-\frac{\mbox{Re}\left(\mbox{Tr}{\rm U}_{N}^{j}\right)}{j}\le-\frac{\varepsilon}{8}\log N\right\} ,
\]
and
\[
\mathcal{B}=\left\{ h\in\left[0,2\pi\right]:\sum_{j<N^{1/K}}-\frac{\mbox{Re}\left(\mbox{Tr}{\rm U}_{N}^{j}\right)}{j}\le-\frac{\varepsilon}{8}\log N\right\} .
\]
Because of (\ref{eq: sum}) we have (for $K$ large enough) 
\[
\mathcal{L}_{N}^{K}\subset\mathcal{L}_{N}\left(\gamma\right)\cup\mathcal{A}\cup\mathcal{B},
\]
so that
\[
\mbox{Leb}\left(\mathcal{L}_{N}\left(\gamma\right)\right)\ge\mbox{Leb}\left(\mathcal{L}_{N}^{K}\right)-\mbox{Leb}\left(\mathcal{A}\right)-\mbox{Leb}\left(\mathcal{B}\right).
\]
The bounds \eqref{eq: one point trunc tail bound} (with $\delta = 1-\log N/K$) and \eqref{eq: tail bound for tail} together with Fubini's theorem show that $\mathbb{E}\left(\mbox{Leb}\left(\mathcal{A}\right)\right)\le cN^{-cK\varepsilon^{2}}$
and $\mathbb{E}\left(\mbox{Leb}\left(\mathcal{B}\right)\right)\le cN^{-cK\varepsilon^{2}}$,
so that if $K$ is chosen large enough depending on $\varepsilon$,
\[
\lim_{N\to\infty}\mathbb{P}\left(\mbox{Leb}\left(\mathcal{A}\right)+\mbox{Leb}\left(\mathcal{B}\right)\le N^{-\gamma^{2}-\varepsilon}\right)=1.
\]
It thus suffices to show that for large enough $K$,
\begin{equation}
\lim_{N\to\infty}\mathbb{P}\left(\mbox{Leb}\left(\mathcal{L}_{N}^{K}\right)\ge2N^{-\gamma^{2}-\varepsilon}\right)=1.\label{eq: suffices to show high points}
\end{equation}
To apply the second moment method to $\mbox{Leb}\left(\mathcal{L}_{N}^{K}\right)$,
we first note that by \eqref{eq: one point precise bound} the first moment satisfies
\begin{equation}
\mathbb{E}\left(\mbox{Leb}\left(\mathcal{L}_{N}^{K}\right)\right) \ge 2\pi\left(1+{\rm o}_{K}\left(1\right)\right)e^{-\sum_{m=2}^{K-1}\frac{x^{2}}{2\sigma_{m}^{2}}}\prod_{m=2}^{K-1}\eta_{0,\sigma_{m}^{2}}\left(e^{-\frac{xy}{\sigma_{m}^{2}}}\1_{[0,\infty)}\left(y\right)\right).\label{eq: high point fm}
\end{equation}
As in the last inequality of \eqref{eq: sum}, the right-hand side is at least $N^{-\frac{K-2}{K}\gamma\left(1+\frac{\varepsilon}{3}\right)^{2}}c\left(\log N\right)^{-\left(K-2\right)}\ge N^{-\gamma^{2}-\frac{\varepsilon}{2}}$,
where the last inequality follows for $K$ large enough depending on $\varepsilon$. 
Therefore,

\[
\mathbb{E}\left(\mbox{Leb}\left(\mathcal{L}_{N}^{K}\right)\right)\ge N^{-\gamma^{2}-\frac{\varepsilon}{2}}\ .
\]
Using the Paley-Zygmund inequality as for the maximum, this proves that
\begin{equation}
 \label{eq: Chebyshev}
\mathbb{P}\left(\mbox{Leb}\left(\mathcal{L}_{N}^{K}\right)\ge N^{-\gamma^{2}-\varepsilon}\right)\geq 
\mathbb{P}\left(\mbox{Leb}\left(\mathcal{L}_{N}^{K}\right)\ge N^{-\frac{\varepsilon}{2}}\ \mathbb{E}\left(\mbox{Leb}\left(\mathcal{L}_{N}^{K}\right)\right)\right)\geq 
(1-N^{-\frac{\varepsilon}{2}})^2\frac{\mathbb{E}\left(\mbox{Leb}\left(\mathcal{L}_{N}^{K}\right)^{2}\right)}{\mathbb{E}\left(\mbox{Leb}\left(\mathcal{L}_{N}^{K}\right)\right)^{2}}\ .
\end{equation}
To obtain the lower bound of \eqref{eq: high points} it only remains to show that for all $K\ge1$, 
\begin{equation}
\mathbb{E}\left(\mbox{Leb}\left(\mathcal{L}_{N}^{K}\right)^{2}\right)=\left(1+{\rm o}_{K}\left(1\right)\right)\mathbb{E}\left(\mbox{Leb}\left(\mathcal{L}_{N}^{K}\right)\right)^{2} \ .
\label{eq: high point second moment}
\end{equation}
To prove \eqref{eq: high point second moment}, we write the second
moment as
\[
\mathbb{E}\left(\mbox{Leb}\left(\mathcal{L}_{N}^{K}\right)^{2}\right)=\int_{\left[0,2\pi\right]^{2}}\mathbb{P}\left(J_{x}\left(h_{1}\right)\cap J_{x}\left(h_{2}\right)\right){\rm d}h_{1}{\rm d}h_{2},
\]
and split the integral in analogy with \eqref{eqn: sum split}:
\[
\begin{array}{l}
\underset{\left({\rm I}\right)}{\underbrace{\int_{h_{1}\wedge h_{2}\le\frac{1}{2K}\log N}(\cdot) \ {\rm d}h_{1}{\rm d}h_{2}}}+\underset{\left({\rm II}\right)}{\underbrace{\int_{\frac{1}{2K}\log N<h_{1}\wedge h_{2}\le\frac{1}{2K}\log N}(\cdot) \ {\rm d}h_{1}{\rm d}h_{2}}}\\
+\underset{\left({\rm III}\right)}{\underbrace{\sum_{j=2}^{K-1}\int_{\frac{j-1}{K}\log N<h_{1}\wedge h_{2}\le\frac{j}{K}\log N}(\cdot) \ {\rm d}h_{1}{\rm d}h_{2}}}+\underset{\left({\rm IV}\right)}{\underbrace{\int_{h_{1}\wedge h_{2}\ge\frac{K-1}{K}\log N}(\cdot) \ {\rm d}h_{1}{\rm d}h_{2}}}\ .
\end{array}
\]
The bound \eqref{eq: two point precise bound} provides a uniform bound on the integrand in $\left({\rm I}\right)$,
showing that 
\[
\left({\rm I}\right)\le\left(2\pi\right)^{2}\left(1+{\rm o}_{K}\left(1\right)\right)  e^{-\sum_{m=2}^{K-1}\frac{x^2}{\sigma_{m}^{2}}}\left(\prod_{m=2}^{K-1}\eta_{0,\sigma_{m}^{2}}\left(e^{-\frac{xy}{\sigma_{m}^{2}}}\1_{[0,\infty)}\left(y\right)\right)\right)^{2}=\left(1+{\rm o}_{K}\left(1\right)\right)\mathbb{E}\left(\mbox{Leb}\left(\mathcal{L}_{N}^{K}\right)\right)^{2},
\]
where the equality follows by (\ref{eq: high point fm}). By \eqref{eq: two point decoupling rough} the integrand in $\left({\rm II}\right)$
is uniformly bounded by $e^{-\sum_{m=2}^{K-1}\frac{x^{2}}{\sigma_{m}^{2}}}\le c\mathbb{E}\left(\mbox{Leb}\left(\mathcal{L}_{N}^{K}\right)\right)^{2}\left(\log N\right)^{\frac{K-2}{2}}$,
and since the measure of the set integrated over is at most $\left(2\pi\right)^{2}N^{-\frac{1}{2K}}$
the integral $\left({\rm II}\right)$ is ${\rm o}\left(\mathbb{E}\left(\mbox{Leb}\left(\mathcal{L}_{N}^{K}\right)\right)^{2}\right)$.
Similarily, the measure of the set integrated over in the $j-$th term
of $\left({\rm III}\right)$ is most $\left(2\pi\right)^{2}N^{-\frac{j-1}{K}}$.
Therefore by bounding the
integrand using \eqref{eq: two point rough 2}, we see similarly to \eqref{eq: third} that the integral $\left({\rm III}\right)$ is ${\rm o}\left(\mathbb{E}\left(\mbox{Leb}\left(\mathcal{L}_{N}^{K}\right)\right)^{2}\right)$.
Lastly $\left({\rm IV}\right)$ can be bounded by 
\[
\int_{h_{1}\wedge h_{2}\ge\frac{K-1}{K}\log N}\mathbb{P}\left(J_{x}\left(h_{1}\right)\right){\rm d}h_{1}{\rm d}h_{2}=\mathbb{E}\left(\mbox{Leb}\left(\mathcal{L}_{N}^{K}\right)\right)\left(2\pi\right)^{2}N^{-\frac{K-1}{K}},
\]
which is also ${\rm o}\left(\mathbb{E}\left(\mbox{Leb}\left(\mathcal{L}_{N}^{K}\right)\right)^{2}\right)$,
see (\ref{eq: high point fm}). This proves (\ref{eq: high point second moment}),
and therefore also the lower bound of \eqref{eq: high points}.
\end{proof}
We can now derive Corollary \ref{cor: free energy} about the free energy.
\begin{proof}[Proof of Corollary \ref{cor: free energy}]
The proof uses Laplace's method. Let $\varepsilon>0$ and fix $M\in \N$. We define levels
$$
\gamma_j=(1+\varepsilon)\frac{j}{M} \qquad j=0,\dots, M\ ,
$$
and the event
$$
E=\bigcap_{j=1}^{M-1} \left\{ N^{-\gamma_j^2-\varepsilon}\leq \mbox{Leb}\left(\mathcal{L}_{N}\left(\gamma_j\right)\right)\leq N^{-\gamma_j^2 + \varepsilon}\right\} 
\cap \{\mbox{Leb}\left(\mathcal{L}_{N}\left(\gamma_M\right)\right)=0\}\ .
$$
Theorems \ref{thm: main} and \ref{thm: high points} imply that $\P( E^c)\to 0$ as $N\to \infty$. Therefore it suffices to prove the result on the event $ E$. 
The integrals $\frac{1}{2\pi}\int_0^{2\pi} |{\rm P}_N(h)|^\beta \rd h$ can be split in terms of the subsets
$\{h\in [0,2\pi]:  |{\rm P}_N(h)| < 1 \}$
and
$\{h\in [0,2\pi]: N^{\gamma_{j-1}} \leq |{\rm P}_N(h)| < N^{\gamma_j} \}$.
We get the upper bound
\begin{equation}
\label{eqn: free upper}
\begin{aligned}
\frac{1}{2\pi}\int_0^{2\pi} |{\rm P}_N(h)|^\beta \rd h
&\leq
\frac{1}{2\pi}\int_0^{2\pi} |{\rm P}_N(h)|^\beta \1_{\{ |{\rm P}_N(h)| <1 \}}\rd h
+
\sum_{j=1}^M \frac{1}{2\pi}\int_0^{2\pi} |{\rm P}_N(h)|^\beta \1_{\{ N^{\gamma_{j-1}}\leq |{\rm P}_N(h)| < N^{\gamma_j} \}}\rd h \\
&\leq
1 + \sum_{j=1}^M N^{\beta \gamma_j} \mbox{Leb}\left(\mathcal{L}_{N}\left(\gamma_{j-1}\right)\right)
\leq 1+\sum_{j=1}^M N^{\beta \gamma_j-\gamma^2_{j-1}+\varepsilon}\ ,
\end{aligned}
\end{equation}
where the last inequality holds on the event $E$. Similarly we have on this event that
\begin{equation}
\label{eqn: free lower}
\frac{1}{2\pi}\int_0^{2\pi} |{\rm P}_N(h)|^\beta \rd
\ge
\sum_{j=1}^M N^{\beta \gamma_{j-1}} \mbox{Leb}\left(\mathcal{L}_{N}\left(\gamma_{j}\right)\right)
\ge
\sum_{j=1}^M N^{\beta \gamma_{j-1}-\gamma^2_{j}-\varepsilon}\ .
\end{equation}
Equations \eqref{eqn: free upper} and \eqref{eqn: free lower} imply for $M$ fixed
$$
\max_{j=1,\dots, M}\left\{ 1+\beta\gamma_{j-1}-\gamma^2_{j}-\varepsilon\right\}+\oo(1)\leq \frac{1}{\log N}\log \left(\frac{N}{2\pi}\int_0^{2\pi} |{\rm P}_N(h)|^\beta \rd h\right)\leq \max_{j=1,\dots, M}\left\{ 1+\beta\gamma_{j}-\gamma^2_{j-1}+\varepsilon\right\} +\oo(1)\ .
$$
After taking the limit $N\to\infty$, $M\to \infty$, and finally $\varepsilon \to 0$, we get
$$
\lim_{M\to\infty} \frac{1}{\log N}\log \left(\frac{N}{2\pi}\int_0^{2\pi} |{\rm P}_N(h)|^\beta \rd h\right)
=\max_{\gamma\in[0,1]}\{1+\beta\gamma-\gamma^2\} =
\begin{cases}
1+\frac{\beta^2}{4} \ & \text{ if $\beta <2$}\\
\beta \ & \text{ if $\beta \geq 2$}\\
\end{cases}
\ ,
$$
which proves the corollary.
\end{proof}

\noindent Up to Lemma \ref{lem: Full sum exp mom} (and the other Gaussian estimates from the previous sections), we have now completed the proof of
Theorem \ref{thm: high points} and Corollary \ref{cor: free energy}.
\section[Estimates on increments and tails]{Estimates on increments and tails}
\label{sec: RH}

This
section
proves two important estimates.
\begin{enumerate}[(i)]
\item Proposition \ref{prop:tail} 
(which is a reformulation of Lemma \ref{lem: mgf ub tail})
justifies the approximation of the characteristic polynomial by partial sums, by 
bounding the contribution of high powers in the Fourier expansion.
We  follow the proof of asymptotics of 
Toeplitz determinants when the symbols have a Fisher-Hartwig singularity as given in \cite{MarMcLSaf2006,DeiItsKra2011,DeiItsKra2014}. In particular we rely on the Riemann-Hilbert problem developped 
in these works. We will also make use of a key differential identity from \cite{DeiItsKra2014}. 
\item Proposition \ref{thm:FourierRH} gives appropriate asymptotics for the
 joint Laplace and Fourier transforms of
sums involving only traces of small powers, at possibly different evaluation points $h$.
 It will be used to prove the previously stated bounds
Lemmas \ref{lem: gaussian UB}, \ref{lem: two point fourier bound} and \ref{lem: exp moment continuity}
involving the truncated sum of traces and the increments $Y_m(h)$.
The proof of (ii) is easier than that of (i): here the symbols have no singularity
\end{enumerate}

There are
several
technical differences with \cite{MarMcLSaf2006,DeiItsKra2011,DeiItsKra2014}. 
First, we only need to consider one Fisher-Hartwig singularity (at $z=1$), which simplifies the analysis. 
Second, our external potential $V$ depends on $N$ and it is close to singular: it corresponds to a  singularity smoothed on the mesoscopic scale $N^{-1+\delta}$, $\delta>0$ arbitrarily small.  Consequently, 
the contour of the Riemann-Hilbert problem is $N$-dependent. Third, a small modification of the Riemann-Hilbert problem considered in \cite{MarMcLSaf2006,DeiItsKra2011,DeiItsKra2014}
will be necessary to obtain better error terms (see (\ref{eqn:conjugacy})).

\subsection{Setting.\ } \label{subsec:Set}
The main task of this section is the following exponential moment estimate.

\begin{prop}
\label{prop:tail}
For any fixed (small) $\delta \in (0,1)$ and (large) $\alpha$, for large enough $N$ we have
\begin{equation}\label{eq: mgf ub tail}
  \E\left( \exp\left(2\alpha \sum_{j \ge N^{1-\delta} } -\frac{\Re(\Tr {\rm U}_N^j)}{j} \right) \right)
 = \exp\left( (1+\oo(1))\alpha^2 \delta \log N \right),
\end{equation}
\end{prop}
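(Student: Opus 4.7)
My plan is to proceed in three stages: first convert the exponential moment into a Toeplitz determinant, then extract its asymptotics via a Riemann--Hilbert steepest descent analysis of Fisher--Hartwig type, and finally identify the leading coefficient.

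\textbf{Stage 1: reduction to a Toeplitz determinant.} The starting observation is the Fourier identity
\begin{equation*}
-\sum_{j \ge N^{1-\delta}} \frac{\cos(j\theta)}{j} = \log|1-e^{\ii\theta}| + V(\theta), \qquad V(\theta) = \sum_{1\le j < N^{1-\delta}} \frac{\cos(j\theta)}{j},
\end{equation*}
valid for $\theta \notin 2\pi\Z$. Summing over the eigenangles $\theta_1,\dots,\theta_N$ of ${\rm U}_N$ expresses the tail sum as $\sum_k (\log|1-e^{\ii\theta_k}| + V(\theta_k))$, so Heine's formula \eqref{eqn:Heine} gives
\begin{equation*}
\E\!\left( e^{2\alpha \sum_{j\ge N^{1-\delta}} -\Re(\Tr {\rm U}_N^j)/j} \right)
= \E\!\left(\prod_{k=1}^N f(e^{\ii\theta_k})\right) = D_N(f),
\end{equation*}
where $f(z) = |1-z|^{2\alpha} e^{2\alpha V(z)}$ is a symbol with a single Fisher--Hartwig singularity at $z=1$ of exponent $\alpha$ and an $N$-dependent smooth factor $e^{2\alpha V}$.

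\textbf{Stage 2: Riemann--Hilbert asymptotics.} The plan is to follow the steepest descent scheme of Deift--Its--Krasovsky \cite{DeiItsKra2011,DeiItsKra2014} for Toeplitz determinants with Fisher--Hartwig singularities. This involves setting up the standard $2\times2$ RH problem for the orthogonal polynomials on the unit circle with weight $f$, performing the sequence of transformations $Y \mapsto T \mapsto S \mapsto R$, constructing a global parametrix solving the normalized problem with weight having only the FH singularity, and a local parametrix at $z=1$ built from confluent hypergeometric functions as in \cite{DeiItsKra2011}. The key technical input will be the differential identity for $\partial_\alpha \log D_N$ from \cite{DeiItsKra2014}, which expresses the logarithmic derivative of the determinant in terms of the solution of the RH problem and thus reduces the asymptotics of $\log D_N$ to an integration in the parameter $\alpha$.

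\textbf{Main obstacle.} The novelty over the DIK setting is that $V$ depends on $N$ and converges to the singular function $-\log|1-z|$ on the mesoscopic scale $N^{-1+\delta}$; thus the analytic extension of $e^{2\alpha V}$ to a neighborhood of the unit circle has singularities at distance $N^{-1+\delta}$ from $z=1$. The standard jump-contour deformation and the size of the lens surrounding the contour must therefore be chosen $N$-dependent, with the lens thickness shrinking faster than $N^{-1+\delta}$ so that the potential remains controlled on the deformed contour. The resulting estimates on the error matrix $R$ will be $o(1)$ but with a worse rate than in the fixed-potential case; obtaining quantitative bounds uniform in $N$ is the technical heart of the argument. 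A small conjugation trick (see \eqref{eqn:conjugacy}) is used to make the error terms in the local parametrix match the global one.

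\textbf{Stage 3: extraction of the leading order.} Once the RH asymptotics produce the standard Fisher--Hartwig formula of the form
\begin{equation*}
\log D_N(f) = N V_0 + \sum_{k\ge 1} k\, \widehat{(2\alpha V)}_k\, \widehat{(2\alpha V)}_{-k} + \alpha^2 \log N - 2\alpha \sum_{k\ge 1}\widehat{(2\alpha V)}_k + \OO(1),
\end{equation*}
one plugs in the explicit Fourier coefficients $\widehat{(2\alpha V)}_{\pm k} = \alpha/k$ for $1\le k < N^{1-\delta}$ and $0$ otherwise. Then $V_0=0$, the Szeg\H{o} sum equals $\alpha^2 \sum_{1\le k < N^{1-\delta}} 1/k = \alpha^2(1-\delta)\log N + \OO(1)$, and the linear term $-2\alpha \sum_k \alpha/k = -2\alpha^2(1-\delta)\log N + \OO(1)$ combines with the $\alpha^2\log N$ Fisher--Hartwig contribution. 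A careful accounting (consistent with the Gaussian heuristic $\mathrm{Var}(T_N) = \frac{\delta}{2}\log N + \OO(1)$ obtained from \eqref{eqn: diaconis-shah}) yields exactly $\alpha^2 \delta \log N + \oo(\log N)$, which is the claimed asymptotic.
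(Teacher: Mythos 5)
Your proposal is correct and follows essentially the same route as the paper: convert the exponential moment via Heine's formula to the Toeplitz determinant $D_N(|1-z|^{2\alpha}e^{2\alpha V})$ with a single Fisher--Hartwig singularity and an $N$-dependent analytic potential, carry out the Deift--Its--Krasovsky Riemann--Hilbert steepest descent with contours shrinking to the unit circle at scale $N^{-1+\delta}$, and extract the leading term $\alpha^2\delta\log N$ from a DIK-type differential identity. The only deviation is cosmetic --- you propose integrating the $\alpha$-differential identity, whereas the paper interpolates in an auxiliary parameter $t$ that turns on the potential $V$ so that the $t=0$ endpoint is the pure Fisher--Hartwig symbol whose determinant is a Selberg integral --- and one small imprecision: $e^{2\alpha V}$ has no actual singularities (it is entire on $\mathbb{C}\setminus\{0\}$); the contour must stay within $\sim N^{-1+\delta}$ of the circle because $|V(z)|$ grows like $N^{1-\delta}\,\bigl||z|-1\bigr|$ there, not because of a singularity. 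The Stage~3 bookkeeping correctly yields $\alpha^2\delta\log N$.
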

\noindent Note that this implies Lemma \ref{lem: mgf ub tail} by rotational invariance. For the proof, we first introduce the following notations.
Let $V$ be analytic in a neighborhood of the unit circle, $\mathscr{C}$. We will actually consider
\begin{equation}\label{eqn:Uext}
V(z)=\lambda\sum_{j=1}^{N^{1-\delta}}\frac{z^j+z^{-j}}{j},\ {\rm i.e.}\ V_j=\frac{\lambda}{|j|}\1_{|j|\leq N^{1-\delta}},
\end{equation}
for $0\leq\lambda\leq\alpha$,
but
most
of subsections \ref{subsec:Set} and \ref{subsec:RH} is independent of this specific form of $V$.
We also define
\begin{align}
&e^{V^{(t)}(z)}=1-t+t e^{V(z)},\ 0\leq t\leq 1\label{eqn:Vt}\\
&f^{(t)}(z)=e^{V^{(t)}(z)}|z-1|^{2\alpha}\ .
\label{eqn:interpol}
\end{align}
Note that  $V^{(1)}(z)=V(z)$ and $V^{(t)}(z)$ is clearly well defined for $|z|=1$, because $1-t+t e^{V(z)}>0$.  With these definitions in mind, instead of (\ref{eq: mgf ub tail}) we will prove the equivalent form (for $\lambda=\alpha$)
\begin{equation}\label{eqn:equivalent}
\E\left(\prod_{k=1}^N f^{(1)}(e^{\ii\theta_k})\right)=\exp\left( (1+\oo(1))\alpha^2 \delta \log N \right)\ .
\end{equation}
For the proof, we will use a Riemann-Hilbert approach and therefore need the following lemma about an analytic extension of $V^{(t)}$.

\begin{lem}\label{lem:analytic}
There exists $\epsilon>0$ small enough (depending on the fixed constants $\delta$ and $\alpha$) such that for any $0\leq t\leq 1$, $V^{(t)}$ admits an analytic extension to
$||z|-1|<\epsilon N^{-1+\delta}$.
\end{lem}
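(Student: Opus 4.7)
The plan is to exploit the fact that $V$ is a Laurent polynomial in $z$, hence entire on $\mathbb{C}\setminus\{0\}$, so the only difficulty is to define a single-valued analytic branch of $\log(1-t+te^{V(z)})$ on the annulus $\{||z|-1|<\epsilon N^{-1+\delta}\}$. I would set
\[
V^{(t)}(z) := \mathrm{Log}\bigl(1-t+te^{V(z)}\bigr),
\]
using the principal branch of the logarithm, and verify that its argument remains in the open right half-plane, which is a simply connected region avoiding $0$ on which $\mathrm{Log}$ is analytic. Since $V$ and $\exp$ are analytic, this composition is then analytic, and on $|z|=1$ it reduces to the definition \eqref{eqn:Vt} because $V$ is real-valued there.

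The key estimate is a control of $\mathrm{Im}\,V(z)$ uniformly on the annulus. For $z=(1+s)e^{\ii\theta}$ with $|s|\le \epsilon N^{-1+\delta}$,
\[
\mathrm{Im}\,V(z)=\lambda\sum_{j=1}^{N^{1-\delta}}\frac{(1+s)^{j}-(1+s)^{-j}}{j}\sin(j\theta).
\]
For the range of $s$ and $j$ under consideration, $|sj|\le\epsilon$, and a Taylor expansion gives $|(1+s)^{\pm j}-1|\le C|s|j$, hence $|(1+s)^{j}-(1+s)^{-j}|\le C|s|j$. Using the trivial bound $|\sin(j\theta)|\le 1$, this yields
\[
|\mathrm{Im}\,V(z)|\le C\lambda\,|s|\sum_{j=1}^{N^{1-\delta}}1\le C\lambda\,|s|\,N^{1-\delta}\le C\alpha\,\epsilon,
\]
uniformly in $\theta$ and $t$.

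Choosing $\epsilon=\epsilon(\alpha,\delta)$ small enough that $C\alpha\epsilon\le \pi/4$, the image of $V$ on the annulus is contained in the horizontal strip $\{|\mathrm{Im}\,w|\le\pi/4\}$. Then $e^{V(z)}$ lies in the open sector $\{w:|\arg w|\le\pi/4\}$, which is convex and contains $1$, so the convex combination $1-t+t e^{V(z)}$ also lies in this sector for every $t\in[0,1]$. In particular, it stays in $\mathbb{C}\setminus(-\infty,0]$ and is bounded away from $0$, making the composition with the principal logarithm analytic on the annulus.

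The main (and only real) obstacle is the matching of scales: the number of Fourier modes $N^{1-\delta}$ cancels precisely against the radial width $\epsilon N^{-1+\delta}$ to produce a bound independent of $N$. This is why the statement allows only the mesoscopic neighborhood of scale $N^{-1+\delta}$ and not a larger one, and it explains why one cannot strengthen the conclusion using a cruder bound on $(1+s)^j-(1+s)^{-j}$; the estimate above is essentially tight at worst $\theta$.
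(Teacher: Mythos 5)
Your proof is correct and follows essentially the same strategy as the paper: show that $\mathrm{Im}\,V$ stays small (of order $\alpha\epsilon$) on the annulus of width $\epsilon N^{-1+\delta}$, so $1-t+te^{V(z)}$ stays in a sector avoiding $0$ and the principal logarithm provides the analytic extension. The paper reaches the same smallness bound slightly more briefly via $\sup|V'(z)|=\OO(N^{1-\delta})$ together with $V$ being real on $|z|=1$, whereas you estimate $\mathrm{Im}\,V$ directly term-by-term; these are equivalent.
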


\begin{proof}
We only need to prove 
\begin{equation}\label{eqn:nonZero}
1-t+t e^{V(z)}\neq 0
\end{equation}
for any $||z|-1|<\epsilon N^{-1+\delta}$, $0\leq t\leq 1$.
Note that $\sup_{||z|-1|<N^{-1+\delta}} |V'(z)|=\OO(N^{1-\delta})$ and $V(z)\in\mathbb{R}$ if $|z|=1$, so that for small enough $\epsilon$ we have
\begin{equation}\label{eqn:ImV0}
\sup_{||z|-1|<\epsilon N^{-1+\delta}} |{\rm Im} V(z)|<1/10.
\end{equation}
The above condition clearly implies (\ref{eqn:nonZero}).
\end{proof}

\noindent In the following, we will abbreviate $\kappa=\epsilon N^{-1+\delta}$, with $\e$ so that the above lemma holds.
Define the Wiener-Hopf factorization of $e^{V^{(t)}}$:
\begin{align}
b^{(t)}_{+}(z)&=\exp\left(-V_0+\int_{\mathscr{C}}\frac{1}{2\pi\ii}\frac{V^{(t)}(s)}{s-z}\rd s\right)\ {\rm for}\ |z|<1,\label{b+}\\
b^{(t)}_{-}(z)&=\exp\left(\frac{1}{2\pi\ii}\int_{\mathscr{C}}\frac{V^{(t)}(s)}{s-z}\rd s\right)\ {\rm for}\ |z|>1,\label{b-}
\end{align}
where $\mathscr{C}$ is the unit circle.
Then 
\begin{equation}\label{eqn:b+b-}
b^{(t)}_{+}(z)=e^{\sum_{k=1}^\infty V^{(t)}_k z^k},
b^{(t)}_{-}(z)= e^{\sum_{k=-\infty}^{-1} V^{(t)}_k z^k}
\end{equation}
where $V^{(t)}_k=\frac{1}{2\pi}\int_0^{2\pi}V^{(t)}(e^{\ii\theta})e^{-\ii k\theta}\rd \theta$. By Lemma \ref{lem:analytic}, both functions 
can be slightly extended analytically: $b^{(t)}_{+}$ to $|z|<1+\kappa$ and $b^{(t)}_{-}$ to $|z|>1-\kappa$.
In the domain $||z|-1|<\kappa$, they satisfy the following properties:
\begin{align}
&e^{V^{(t)}(z)}=b^{(t)}_{+}(z)e^{V^{(t)}_0}b^{(t)}_{-}(z),\label{elem1}\\
&b^{(t)}_{+}(z)=\overline{b^{(t)}_{-}(\bar z^{-1})}\label{elem2}.
\end{align}

The proof of (\ref{eqn:equivalent}) proceeds by interpolation through the parameter $t$. 
Indeed, writing 
\begin{equation}\label{eqn:DN}
{\rm D}_N(f^{(t)})=\E\left(\prod_{k=1}^N f^{(t)}(e^{\ii\theta_k})\right),
\end{equation}
a formula from \cite{DeiItsKra2014} gives
$$
\log {\rm D}_N(f^{(1)})-\log{\rm D}_N(f^{(0)})
=
N V_0+\sum_{k=1}^\infty k V_kV_{-k}-\alpha \log (b^{(1)}_{+}(1)b^{(1)}_{-}(1))
+\int_0^1 E(t) \rd t\ ,
$$
where the error term $E(t)$ can be expressed from the solution of a Riemann-Hilbert problem, see Corollary \ref{cor:fundamentalerror}. Hence the main part of the analysis consists in bounding this solution, which is performed in the next two subsections.
The proof is then concluded, up to the initial value $\log{\rm D}_N(f^{(0)})$ in the interpolation.
This term is a Selberg integral, and its asymptotics are given in Lemma \ref{lem:Selberg}.

\subsection{The Riemann-Hilbert problem.\ }\label{subsec:RH}Consider the Szeg{\H o} function
\begin{equation}\label{eqn:Szego}
\exp(g(z))=\exp \left( \frac{1}{2\pi \ii}\int_{\mathscr{C}}\frac{\ln f^{(t)}(s)}{s-z}\rd s \right)
=
\left\{
\begin{array}{ll}
e^{V^{(t)}_0}b^{(t)}_+(z)(z-1)^{\alpha}e^{-\ii\pi\alpha}&{\rm if}\ |z|<1,\\
b^{(t)}_-(z)^{-1}(z-1)^{-\alpha}z^{\alpha}&{\rm if}\ |z|>1.\\
\end{array}
\right.
\end{equation}
In the above definition and for further use, the cut of $(z-1)^\alpha$ goes from $1$ to $\infty$ along $\theta=0$. We fix the branches by $0<\arg (z-1)<2\pi$, and for $z^{\alpha/2}$, $0<\arg (z-1)<2\pi$ as well.
Let 
$$
\zeta=N\log z,
$$
where $\log x>0$ for $x>1$,  and has a cut on the negative half of the real axis. We define the  analytic continuation of the function
$$
h_\alpha(z)=|z-1|^\alpha
$$
through $|z-1|^\alpha=(z-1)^{\alpha/2}(z^{-1}-1)^{\alpha/2}=\frac{(z-1)^\alpha}{z^{\alpha/2}e^{\ii\pi\alpha/2}}$, in a neighborhood of the open arc
$\{|z|=1,|z|\neq 1\}$. The factor $e^{\ii\pi\alpha/2}$ is chosen so that $h_\alpha(z)$ has null argument on the unit circle.
Moreover, let
\begin{equation}\label{eqn:F}
F(z)=\left\{
\begin{array}{ll}
e^{\frac{V^{(t)}(z)}{2}}h_\alpha(z)e^{-\ii\pi\alpha}&{\rm if}\ \xi\in{\rm I}, {\rm II}, {\rm V}, {\rm VI},\\
e^{\frac{V^{(t)}(z)}{2}}h_\alpha(z)e^{\ii\pi\alpha}&{\rm if}\ \xi\in{\rm III}, {\rm IV}, {\rm VII}, {\rm VIII},
\end{array}
\right.
\end{equation}
where we used the definitions of Figure \ref{fig:octants}, and the conventions for cuts and branches were explained previously.

Let $\psi(a,b,x)$ be the confluent hypergeometric function of the second kind. Define $\Psi$ to be the analytic function such that for any $\zeta\in {\rm I}$ (see Figure \ref{fig:octants}) we have
\begin{equation}\label{eqn:Psi}
\Psi(\zeta)=
\left(
\begin{array}{ll}
\zeta^\alpha\psi(\alpha,2\alpha+1,\zeta)e^{\ii\pi\alpha}e^{-\zeta/2}&
-\zeta^\alpha\psi(\alpha+1,2\alpha+1, e^{-\ii\pi}\nc\zeta)e^{\ii\pi\alpha}e^{\zeta/2}\alpha\\
-\zeta^{-\alpha}\psi(-\alpha+1,-2\alpha+1,\zeta)e^{-3\ii\pi\alpha}e^{-\zeta/2}\alpha&
\zeta^{-\alpha}\psi(-\alpha,-2\alpha+1, e^{-\ii\pi}\nc\zeta)e^{-\ii\pi\alpha}e^{\zeta/2}\\
\end{array}
\right)
\end{equation}
and
\begin{equation*}\displaywidth=\parshapelength\numexpr\prevgraf+2\relax
\Psi_+(\zeta)=
\Psi_-(\zeta)
K(\zeta)
\end{equation*}
\setlength{\columnsep}{30pt}
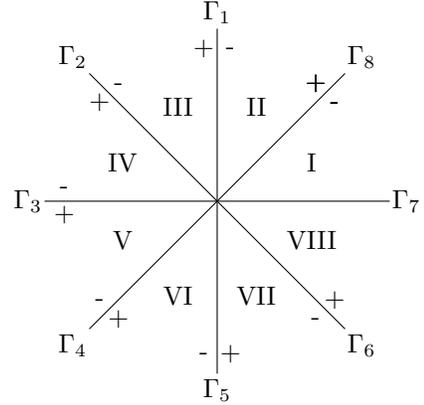
\begin{wrapfigure}[9]{r}{.4\linewidth}
\vspace{-1.5cm}
\begin{center}
\begin{tikzpicture}[xscale=0.017,yscale=0.017]
\clip (-160,-160) rectangle (170,160); 

\node at (113,113) {$\Gamma_8$};
\node at (-113,113) {$\Gamma_2$};
\node at (-113,-113) {$\Gamma_4$};
\node at (113,-113) {$\Gamma_6$};
\node at (0,148) {$\Gamma_1$};
\node at (-148,0) {$\Gamma_3$};
\node at (0,-148) {$\Gamma_5$};
\node at (148,0) {$\Gamma_7$};

\node at (22.5: 80){I};
\node at (67.5: 80){II};
\node at (112.5: 80){III};
\node at (157.5: 80){IV};
\node at (202.5: 80){V};
\node at (247.5: 80){VI};
\node at (-67.5: 80){VII};
\node at (-22.5: 80){VIII};

\node at (40: 120){-};
\node at (50: 120){+};

\node at (85: 120){-};
\node at (95: 120){+};

\node at (130: 120){-};
\node at (140: 120){+};

\node at (175: 120){-};
\node at (185: 120){+};

\node at (220: 120){-};
\node at (230: 120){+};

\node at (265: 120){-};
\node at (275: 120){+};

\node at (310: 120){-};
\node at (320: 120){+};

\node at (40: 120){-};
\node at (50: 120){+};

\node at (40: 120){-};
\node at (50: 120){+};

\draw (-135,0) -- (135,0);
\draw (0,-135) -- (0,135);
\draw (-100,-100) -- (100,100);
\draw (100,-100) -- (-100,100);

\end{tikzpicture}
\end{center}
\vspace{-0.7cm}
\caption{Auxiliary contours in the variable $\zeta$ around $0$ (i.e. a neighborhood of the singularity $z=1$)}\label{fig:octants}
\end{wrapfigure}
where
$$\displaywidth=\parshapelength\numexpr\prevgraf+2\relax
\begin{array}{ll}
K(\zeta)=\left(
\begin{array}{cc}
0&1\\
-1&0
\end{array}
\right)&{\rm if}\ \zeta\in \Gamma_1\cup\Gamma_5\\
K(\zeta)=\left(
\begin{array}{cc}
e^{\ii\pi\alpha}&0\\
0&e^{-\ii\pi\alpha}
\end{array}
\right)&{\rm if}\ \zeta\in \Gamma_3\cup\Gamma_7\\
K(\zeta)=\left(
\begin{array}{cc}
1&0\\
e^{\ii\pi2\alpha}&1
\end{array}
\right)&{\rm if}\ \zeta\in \Gamma_4\cup\Gamma_8\\
K(\zeta)=\left(
\begin{array}{cc}
1&0\\
e^{-\ii\pi2\alpha}&1
\end{array}
\right)&{\rm if}\ \zeta\in \Gamma_2\cup\Gamma_6\ .
\end{array}
$$

Let $\sigma_3=\left(\begin{array}{cc}1&0\\0&-1\end{array}\right)$ and denote $z^{\sigma_3}=
\left(\begin{array}{cc}z&0\\0&z^{-1}\end{array}\right)$. Define
$$
\begin{array}{ll}
N(z)=e^{g(z)\sigma_3}&{\rm if}\ |z|>1\\
N(z)=e^{g(z)\sigma_3}\left(
\begin{array}{cc}
0&1\\
-1&0
\end{array}
\right)&{\rm if}\ |z|<1\ .
\end{array}
$$

\setlength{\columnsep}{30pt}
\begin{wrapfigure}[10]{r}{.4\linewidth}
\vspace{-1.5cm}
\begin{center}
\begin{tikzpicture}[xscale=0.02,yscale=0.02]
\clip (-135,-135) rectangle (170,135); 
\fill [color=black] (0,0) circle (3);

\draw [very thick] (0,0) circle (130);
\draw [very thick] (0,0) circle (70);

\fill [color=white] (100,0) circle (50);
\draw [very thick] (100,0) circle (50);
\fill [color=black] (100,0) circle (3);
\draw [dotted,thick] (0,0) circle (100);
\node at (-30,0) {$z=0$};
\node at (85,0) {$1$};
\node at (114,25) {U};
\node at (-36,-36) {$\Sigma_{\rm out}''$};
\node at (-103,-103) {$\Sigma_{\rm out}$};
\node at (42,-42) {$+$};
\node at (56,-56) {$-$};
\node at (84,-84) {$+$};
\node at (101,-101) {$-$};
\node at (126,-26) {$-$};
\node at (142,-42) {$+$};

\end{tikzpicture}
\end{center}
\vspace{-0.7cm}
\caption{Contour $\Gamma$ for the $R$-Riemann-Hilbert problem}
\label{fig: gamma}
\end{wrapfigure}
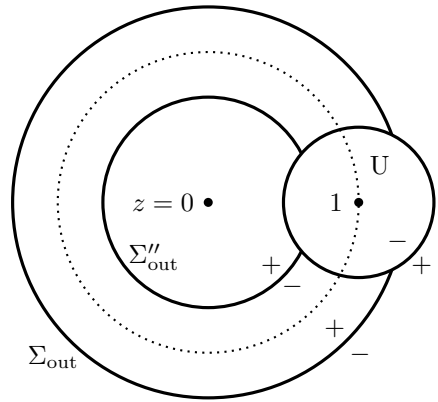

We will also need the notation
\begin{equation}\label{eqn:E}
\begin{array}{ll}
E(z)=N(z)F(z)^{\sigma_3}\left(
\begin{array}{cc}
e^{-\ii\pi\alpha}&0\\
0&e^{2\ii\pi\alpha}
\end{array}
\right)&{\rm if}\ z\in {\rm I,\ II}\\
E(z)=N(z)F(z)^{\sigma_3}\left(
\begin{array}{cc}
e^{-\ii\pi2\alpha}&0\\
0&e^{\ii\pi3\alpha}
\end{array}
\right)&{\rm if}\ z\in {\rm III,\ IV}\\
E(z)=N(z)F(z)^{\sigma_3}\left(
\begin{array}{cc}
0&e^{\ii\pi3\alpha}\\
-e^{-\ii\pi2\alpha}&0
\end{array}\right)&{\rm if}\ z\in {\rm V,\ VI}\\
E(z)=N(z)F(z)^{\sigma_3}\left(
\begin{array}{cc}
0&e^{\ii\pi2\alpha}\\
-e^{-\ii\pi\alpha}&0
\end{array}
\right)&{\rm if}\ z\in {\rm VII,\ VIII}\ .
\end{array}
\end{equation}

Finally, consider the jump matrix
\begin{equation}\label{eqn:matrixM}
M(z)=E(z)\Psi(z)F(z)^{-\sigma_3}z^{\pm N\sigma_3}N(z)^{-1}
\end{equation}
where the plus sign is taken for $|z|<1$, and minus for $|z|>1$.

We define the contour $\Gamma$ in $\mathbb{C}$ as follows (see Figure \ref{fig: gamma}): 
it consists in the boundary $\partial {\rm U}$ of a disk ${\rm U}$ centered at 1 with radius $\kappa$, the arc of circle $\Sigma_{\rm out}$ (resp. $\Sigma_{\rm out}''$) centered at $0$ with radius $1+(2/3) \kappa$ (resp. $1-(2/3) \kappa$),
outside ${\rm U}$ with extremities on ${\rm U}$.

Consider the following Riemann-Hilbert problem for the $2\times 2$ matrix valued function $R$.
\begin{enumerate}
\item $R$ is analytic for $z\in\mathbb{C}\backslash \Gamma$.
\item The boundary values of $R$ are related by the jump condition
\begin{equation*}\displaywidth=\parshapelength\numexpr\prevgraf+2\relax
\begin{array}{ll}
R_+(z)=R_-(z)N(z)\left(\begin{array}{cc}1&0\\f^{(t)}(z)^{-1}z^{-N}&1\end{array}\right)N(z)^{-1}&{\rm if\ } z\in\Sigma_{\rm out}\\
R_+(z)=R_-(z)N(z)\left(\begin{array}{cc}1&0\\f^{(t)}(z)^{-1}z^{N}&1\end{array}\right)N(z)^{-1}&{\rm if\ } z\in\Sigma''_{\rm out}\\
R_+(z)=R_-(z)M(z),&{\rm if\ } z\in \partial {\rm U}\ .
\end{array}
\end{equation*}
\item $R(z)={\rm Id}+{\rm O}(1/z)$ as $z\to\infty$.
\end{enumerate}

\begin{wrapfigure}[14]{r}{.6\linewidth}
\vspace{-1cm}
\begin{center}
\tikzset{->-/.style=
{decoration={
markings,
mark=at position #1 with
{\arrow{>}}}, postaction=
{decorate}
}
}
\tikzset{-<-/.style=
{decoration={
markings,
mark=at position #1 with
{\arrow{<}}}, postaction=
{decorate}
}
}
\begin{tikzpicture}[xscale=0.04,yscale=0.04]
\clip (20,-75) rectangle (170,75);

\draw [very thick] (0,0) circle (130);
\draw [very thick] (0,0) circle (70);
\draw [very thick] (0,0) circle (100);

\fill [color=white] (100,0) circle (50);
\draw [very thick] (100,0) circle (50);
\node at (40,0) {$\mathscr{C}''_1$};
\node at (160,0) {$\mathscr{C}_1$};

\node at (65,50) {$\mathscr{C}''_2$};
\node at (65,-48) {$\mathscr{C}''_2$};

\node at (106,57) {$\mathscr{C}_2$};
\node at (106,-57) {$\mathscr{C}_2$};

\node at (47,-45) {$+$};
\node at (54,-52) {$-$};

\node at (73,-61) {$+$};
\node at (81,-67) {$-$};

\node at (77,49) {$+$};
\node at (79,41) {$-$};

\node at (77,-49) {$+$};
\node at (81,-41) {$-$};

\node at (94,-55) {$+$};
\node at (94,-47) {$-$};

\node at (106,-66) {$+$};
\node at (115,-73) {$-$};
\node at (142,-15) {$-$};
\node at (48,-20) {$+$};
\node at (58,-15) {$-$};
\node at (152,-20) {$+$};
\fill [color=black] (100,0) circle (2);

\draw[very thick,->-=.5] ([shift=(-30:50)]100,0) arc (-30:30:50);
\draw[very thick,->-=.5] ([shift=(210:50)]100,0) arc (210:150:50);
\draw[very thick,->-=.5] ([shift=(80:50)]100,0) arc (80:95:50);
\draw[very thick,->-=.5] ([shift=(-95:50)]100,0) arc (-95:-80:50);
\draw[very thick,->-=.5] ([shift=(130:50)]100,0) arc (130:115:50);
\draw[very thick,->-=.5] ([shift=(-115:50)]100,0) arc (-115:-130:50);

\draw[very thick,->-=.5] ([shift=(45:70)]0,0) arc (45:60:70);
\draw[very thick,->-=.5] ([shift=(35:100)]0,0) arc (35:40:100);
\draw[very thick,->-=.5] ([shift=(20:130)]0,0) arc (20:35:130);

\draw[very thick,->-=.5] ([shift=(-60:70)]0,0) arc (-60:-45:70);
\draw[very thick,->-=.5] ([shift=(-40:100)]0,0) arc (-40:-30:100);
\draw[very thick,->-=.5] ([shift=(-30:130)]0,0) arc (-30:-20:130);

\node at (34,52) {$\Sigma_{\rm out}''$};
\node at (64,66) {$\Sigma_{\rm out}'$};
\node at (124,68) {$\Sigma_{\rm out}$};

\draw[very thick,->-=.5] plot [smooth, tension=1] coordinates { (100,0) (115,21)(121.5,46) };
\draw[very thick,-<-=.5] plot [smooth, tension=1] coordinates { (100,0) (115,-21)(121.5,-46) };

\draw[very thick,->-=.5] plot [smooth, tension=1] coordinates { (100,0) (85,20)(62,33) };
\draw[very thick,-<-=.5] plot [smooth, tension=1] coordinates { (100,0) (85,-20)(62,-33) };

\node at (127,24) {$\Sigma_{\rm in}$};
\node at (127,-24) {$\Sigma_{\rm in}$};

\node at (75,16) {$\Sigma''_{\rm in}$};
\node at (77,-11) {$\Sigma''_{\rm in}$};

\node at (96,15) {$-$};
\node at (88,9) {$+$};

\node at (96,-15) {$-$};
\node at (88,-9) {$+$};

\node at (107,16) {$+$};
\node at (115,11) {$-$};

\node at (107,-16) {$+$};
\node at (115,-11) {$-$};

\end{tikzpicture}
\end{center}
\vspace{-0.6cm}
\caption{Contours definition and orientation in Proposition \ref{prop:fundamentalerror}}
\label{fig:orientation}
\end{wrapfigure}

It was proved in \cite{DeiItsKra2011} that there exists a unique solution to this Riemann-Hilbert problem and the solution satisfies
\begin{equation}\label{eqn:det1}\displaywidth=\parshapelength\numexpr\prevgraf+2\relax
\det R(z)=1.
\end{equation}

The following decomposes the ratio ${\rm D}_N(f^{(1)})/{\rm D}_N(f^{(0)})$ into the main contribution and some error term. It is just a restatement of a differential identity from \cite{DeiItsKra2014}, keeping all error terms explicit.
We denote $\dot f=\partial f/\partial t$ and $f'=\partial f/\partial z$.

\begin{prop}\label{prop:fundamentalerror}
We have
\begin{equation}\label{eqn:diffid}
\log {\rm D}_N(f^{(1)})-\log{\rm D}_N(f^{(0)})
=
N V_0+\sum_{k=1}^\infty k V_kV_{-k}-\alpha \log (b^1_+(1)b^1_-(1))
+\int_0^1 E(t) \rd t.
\end{equation}
The error term $E(t)$ is defined as (see Figure \ref{fig:orientation} for the definition and orientation of the contours)
\begin{align*}
E(t)=&-\int_{\mathscr{C}_1}(R_{11}R_{22}'-R_{12}'R_{21})_-\frac{\dot f^{(t)}}{f^{(t)}}\frac{\rd z}{\ii 2\pi }
+\int_{\mathscr{C}_1''}(R_{11}'R_{22}-R_{12}R_{21}')_-\frac{\dot f^{(t)}}{f^{(t)}}\frac{\rd z}{\ii2\pi }\\
&-\int_{\mathscr{C}_2}(R_{11}R_{22}'-R_{12}'R_{21})_+\frac{\dot f^{(t)}}{f^{(t)}}\frac{\rd z}{\ii 2\pi }
+\int_{\mathscr{C}''_2}(R_{11}'R_{22}-R_{12}R_{21}')_+\frac{\dot f^{(t)}}{f^{(t)}}\frac{\rd z}{\ii2\pi }\\
&+\int_{\Sigma_{\rm out}} z^{-N} \frac{e^{-2g}}{f^{(t)}}\left(R_{22}'R_{12}-R_{12}'R_{22}\right)_+\frac{\dot f^{(t)}}{f^{(t)}}\frac{\rd z}{\ii2\pi }
+\int_{\Sigma''_{\rm out}}z^{N} \frac{e^{2g}}{f^{(t)}}\left(R_{21}'R_{11}-R_{11}'R_{21}\right)_-\frac{\dot f^{(t)}}{f^{(t)}}\frac{\rd z}{\ii2\pi }\\
&+\int_{\Sigma'_{\rm out}}\left((R_{11}'R_{22}-R_{12}R_{21}')-(R_{11}R_{22}'-R_{12}'R_{21})\right)\frac{\dot f^{(t)}}{f^{(t)}}\frac{\rd z}{\ii 2\pi}\ .
\end{align*}
\end{prop}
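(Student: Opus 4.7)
The plan is to obtain \eqref{eqn:diffid} as an integrated form of a $t$-differential identity, following closely the derivation of \cite{DeiItsKra2014} but retaining every error contribution explicitly rather than absorbing it into a $o(1)$ remainder. The starting point is Heine's formula \eqref{eqn:Heine}, which gives $D_N(f^{(t)}) = \det((\hat f^{(t)}_{i-j}))$, together with the well-known representation of $\partial_t \log D_N(f^{(t)})$ via the reproducing kernel of the orthogonal polynomials on the unit circle with weight $f^{(t)}$. This kernel can be written in terms of the canonical $2\times2$ solution $Y(z;t)$ of the OPUC Riemann--Hilbert problem, so that $\partial_t \log D_N$ becomes a contour integral on $\mathscr C$ of $\dot f^{(t)}/f^{(t)}$ against a bilinear expression in $Y$ and $Y'$.

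Next I would unfold the standard Deift--Zhou steepest descent chain $Y \to T \to S \to R$ underlying the RH problem defined in Section \ref{subsec:RH}. This expresses $Y$ as a product of (i) the outer/global parametrix $N(z)$ built from the Szeg\H o function \eqref{eqn:Szego}, (ii) the local confluent hypergeometric parametrix $\Psi$ at the singularity $z=1$ through the matrix $M$ of \eqref{eqn:matrixM}, and (iii) the small-norm correction $R$. Inserting this product into the bilinear differential identity and expanding yields two kinds of terms: pieces depending only on $N(z)$ and $\Psi$, and pieces involving $R$ or $R'$ supported on the arcs $\mathscr{C}_1,\mathscr{C}_1'',\mathscr{C}_2,\mathscr{C}_2'',\Sigma_{\rm out},\Sigma_{\rm out}'',\Sigma_{\rm out}'$ of Figure \ref{fig:orientation}.

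Integrating the first kind in $t$ from $0$ to $1$ produces the three explicit contributions on the right-hand side of \eqref{eqn:diffid}: the $NV_0$ term comes from the $z^{\pm N\sigma_3}$ factor combined with the average of $V^{(t)}$; the quadratic $\sum_{k\ge1} k V_k V_{-k}$ term comes from the Wiener--Hopf factors $b^{(t)}_\pm$ defined in \eqref{b+}--\eqref{b-} through the standard Szeg\H o identity $\sum k V_k V_{-k}=\frac{1}{2\pi\ii}\oint b_+ b_-' \,{\rd z}/{z}$ applied at $t=1$; and $-\alpha\log(b^{(1)}_+(1)b^{(1)}_-(1))$ comes from the $\alpha$-dependent asymptotic expansion of $\Psi$ near $z=1$. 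The second kind, after using $\det R\equiv 1$ (see \eqref{eqn:det1}) to eliminate cross terms and reading off the boundary values $(\cdot)_\pm$ dictated by the orientations in Figure \ref{fig:orientation}, is by definition $E(t)$.

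The substantive step is purely bookkeeping: tracking signs, jump orientations, and which of $R_+$ or $R_-$ appears on each contour, while verifying that every sub-leading piece of the expansion of $\Psi$ and $N$ matches a term of $E(t)$ so that nothing is discarded. The key point, which distinguishes this restatement from the original \cite{DeiItsKra2014}, is that our $V$ depends on $N$ and the contour radius $\kappa=\epsilon N^{-1+\delta}$ shrinks with $N$; keeping $E(t)$ fully explicit here is essential, because its quantitative bound in the following subsections relies on $R$, $R'$ evaluated on this $N$-dependent contour. No new analytic input is required beyond Lemma \ref{lem:analytic}, which guarantees the analyticity of $V^{(t)}$ in the strip $||z|-1|<\kappa$ uniformly in $t$ and thus legitimizes the Deift--Zhou construction throughout the interpolation.
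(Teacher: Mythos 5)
Your sketch follows essentially the same route the paper takes: start from the Deift--Its--Krasovsky $t$-differential identity for $\partial_t\log{\rm D}_N(f^{(t)})$, substitute the steepest-descent decomposition $S=RN$, separate the resulting contour integrals into a leading part (producing $NV_0$, $\sum_k kV_kV_{-k}$, and $-\alpha\log(b^1_+(1)b^1_-(1))$) and a remainder $E(t)$ supported on $\mathscr C_1,\mathscr C_1'',\mathscr C_2,\mathscr C_2'',\Sigma_{\rm out},\Sigma_{\rm out}'',\Sigma_{\rm out}'$, then integrate in $t$. The one cosmetic slip is the attribution of the $-\alpha\log(b^1_+(1)b^1_-(1))$ term: in the paper it arises from the $g'$ integral over $\mathscr C_{1\pm\kappa}$ via the $(z-1)^{\pm\alpha}$ part of the Szeg\H o function, not from the asymptotics of the local parametrix $\Psi$, but this does not affect the soundness of your outline.
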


\begin{proof}
Define 
$$
S(z)=R(z)N(z),\ I=(S_{22}'S_{12}-S_{12}'S_{22})/f^{(t)},\ J=S_{22}'S_{11}-S_{12}'S_{21}. 
$$
Simple calculations give 
$$
\begin{array}{ll}
I=\frac{e^{-2g}}{f^{(t)}}(R_{22}'R_{12}-R_{22}R_{12}')&{\rm for}\ |z|>1\\
I=\frac{e^{2g}}{f^{(t)}}(R_{11}R_{21}'-R_{11}'R_{21})&{\rm for}\ |z|<1\\
J=-g'+(R_{11}R_{22}'-R_{12}'R_{21})&{\rm for}\ |z|>1\\
J=g'+(R_{11}'R_{22}-R_{12}R_{21}')&{\rm for}\ |z|<1\\
\end{array}
$$
From \cite[equations (5.69), (5.70), (5.71)]{DeiItsKra2014} we have
\begin{equation}\label{eqn:partial}
\frac{\partial}{\partial t}\log D_N(f^{(t)})=
N\int_{\mathscr{C}}\frac{\dot f^{(t)}}{f^{(t)}}\frac{\rd z}{\ii 2\pi z}
+\int_{\Sigma}(-J_++z^{-N} I_+)\frac{\dot f^{(t)}}{f^{(t)}}\frac{\rd z}{\ii 2\pi}
+\int_{\Sigma''}(J_-+z^{N} I_-)\frac{\dot f^{(t)}}{f^{(t)}}\frac{\rd z}{\ii 2\pi},
\end{equation}
where $\Sigma=\Sigma_{\rm out}\cup\Sigma_{\rm in}$, $\Sigma''=\Sigma''_{\rm out}\cup\Sigma''_{\rm in}$
We first consider the contribution from $\Sigma_{\rm in}$ and $\Sigma''_{\rm in}$.
From \cite[equations (5.73), (5.77)]{DeiItsKra2014},
\begin{equation}\label{eqn:ugly1}
\int_{\Sigma''_{\rm in}}(J_-+z^{N} I_-)\frac{\dot f^{(t)}}{f^{(t)}}\frac{\rd z}{\ii 2\pi}
=
\int_{\mathscr{C}''_1}J\frac{\dot f^{(t)}}{f^{(t)}}\frac{\rd z}{\ii 2\pi}
=
\int_{\mathscr{C}''_1}g'\frac{\dot f^{(t)}}{f^{(t)}}\frac{\rd z}{\ii 2\pi}
+
\int_{\mathscr{C}''_1}\left(R_{11}'R_{22}-R_{12}R_{21}'\right)_-\frac{\dot f^{(t)}}{f^{(t)}}\frac{\rd z}{\ii 2\pi}.
\end{equation}
In the same way we have
\begin{equation}\label{eqn:ugly2}
\int_{\Sigma_{\rm in}}(-J_++z^{-N} I_+)\frac{\dot f^{(t)}}{f^{(t)}}\frac{\rd z}{\ii 2\pi}
=
\int_{\mathscr{C}_1}g'\frac{\dot f^{(t)}}{f^{(t)}}\frac{\rd z}{\ii 2\pi}
-
\int_{\mathscr{C}_1}\left(R_{11}'R_{22}'-R_{12}'R_{21}'\right)_-\frac{\dot f^{(t)}}{f^{(t)}}\frac{\rd z}{\ii 2\pi}.
\end{equation}

Concerning the contribution from $\Sigma_{\rm out}$ and $\Sigma''_{\rm out}$ in (\ref{eqn:partial}), we first 
note that
\begin{align}\label{eqn:ugly3}
&\int_{\Sigma_{\rm out}''}z^{-N} I_+\frac{\dot f^{(t)}}{f^{(t)}}\frac{\rd z}{\ii 2\pi}
=
\int_{\Sigma_{\rm out}''}z^{-N} \frac{e^{-2g}}{f^{(t)}}(R_{22}'R_{12}-R_{22}R_{12}')\frac{\dot f^{(t)}}{f^{(t)}}\frac{\rd z}{\ii 2\pi},\\
&\int_{\Sigma_{\rm out}}z^{N} I_-\frac{\dot f^{(t)}}{f^{(t)}}\frac{\rd z}{\ii 2\pi}\label{eqn:ugly4}
=
\int_{\Sigma_{\rm out}''}z^{N} \frac{e^{2g}}{f^{(t)}}(R_{11}R_{21}'-R_{11}'R_{11})\frac{\dot f^{(t)}}{f^{(t)}}\frac{\rd z}{\ii 2\pi}.
\end{align}
Finally, by a contour deformation, 
\begin{align}
&\int_{\Sigma_{\rm out}}(-J_+)\frac{\dot f^{(t)}}{f^{(t)}}\frac{\rd z}{\ii 2\pi}+\int_{\Sigma''_{\rm out}}(J_-)\frac{\dot f^{(t)}}{f^{(t)}}\frac{\rd z}{\ii 2\pi}\notag\\
=&
\int_{\Sigma'_{\rm out}}(J_+-J_-)\frac{\dot f^{(t)}}{f^{(t)}}\frac{\rd z}{\ii 2\pi}
-
\int_{\mathscr{C}_2}J_+\frac{\dot f^{(t)}}{f^{(t)}}\frac{\rd z}{\ii 2\pi}
+\notag
\int_{\mathscr{C}_2''}J_+\frac{\dot f^{(t)}}{f^{(t)}}\frac{\rd z}{\ii 2\pi}\\
=&\int_{\Sigma'_{\rm out}}(g'_++g'_-)\frac{\dot f^{(t)}}{f^{(t)}}\frac{\rd z}{\ii 2\pi}
-
\int_{\mathscr{C}_2\cup \mathscr{C}_2''}g'\frac{\dot f^{(t)}}{f^{(t)}}\frac{\rd z}{\ii 2\pi}
+\notag
\int_{\Sigma'_{\rm out}}\left((R_{11}'R_{22}-R_{12}R_{21}')-(R_{11}R_{22}'-R_{12}'R_{21})\right)\frac{\dot f^{(t)}}{f^{(t)}}\frac{\rd z}{\ii 2\pi}\notag \\
&-\int_{\mathscr{C}_2}\left(R_{11}R_{22}'-R_{12}'R_{21}\right)\frac{\dot f^{(t)}}{f^{(t)}}\frac{\rd z}{\ii 2\pi}
+\int_{\mathscr{C}'_2}\left(R_{11}'R_{22}-R_{12}R_{21}'\right)\frac{\dot f^{(t)}}{f^{(t)}}\frac{\rd z}{\ii 2\pi}\ .
\label{eqn:ugly5}
\end{align}
Injecting the estimates (\ref{eqn:ugly1}), (\ref{eqn:ugly2}), (\ref{eqn:ugly3}), (\ref{eqn:ugly4}), (\ref{eqn:ugly5}) into the integrated form of (\ref{eqn:partial}), we obtain
\begin{align*}
&\log {\rm D}_N(f^{(1)})-\log{\rm D}_N(f^{(0)})\\
=&
N\int_0^1\int_{\mathscr{C}}\frac{\dot f^{(t)}}{f^{(t)}}\frac{\rd z}{\ii 2\pi}\rd t+
\int_0^1\left(\int_{\mathscr{C}_1\cup \mathscr{C}_1''\cup \mathscr{C}_2\cup \mathscr{C}_2''}g'\frac{\dot f^{(t)}}{f^{(t)}}\frac{\rd z}{\ii 2\pi}\right)\rd t
+
\int_0^1\left(\int_{\Sigma'_{\rm out}}(g'_++g'_-)\frac{\dot f^{(t)}}{f^{(t)}}\frac{\rd z}{\ii 2\pi}\right)\rd t
+\int_0^1 E(t) \rd t\\
=& N V_0+\int_0^1\left(\int_{\mathscr{C}_{1-\kappa}\cup \mathscr{C}_{1+\kappa}}g'\frac{\dot f^{(t)}}{f^{(t)}}\frac{\rd z}{\ii 2\pi}\right)\rd t
+\int_0^1 E(t) \rd t,
\end{align*}
where the last equation holds by contour deformation, and we denoted $\mathscr{C}_r$ the circle entered at $0$ with radius $r$.
Finally, it was proved in \cite[equations (5.81) to (5.93)]{DeiItsKra2014} that
the above double integral coincides with $\sum_{k=1}^\infty k V_kV_{-k}-\alpha \log (b^0_+(1)b^0_-(1))$. This concludes the proof.
\end{proof}

Let
\begin{equation}\label{eqn:conjugacy}
X(z)=
\left(
\begin{array}{cc}
1&0\\
0&e^{V^{(t)}_0}
\end{array}
\right)
R(z)
\left(
\begin{array}{cc}
1&0\\
0&e^{-V^{(t)}_0}
\end{array}
\right)
\end{equation}
As we will see in the following corollary, this conjugacy establishes symmetry between $|z|<1$ and $|z|>1$. This symmetry was initially broken in (\ref{eqn:Szego}). This small adjustment will be important to us to optimize error terms, as mentionned after Corollary \ref{cor:fundamentalerror}.

The matrix $X$ satisfies the following Riemann-Hilbert problem:
\begin{enumerate}
\item $X$ is analytic for $z\in\mathbb{C}\backslash \Gamma$.
\item The boundary values of $R$ are related by the jump condition
$$
X_+(z)=X_-(z)Q(z)
$$
where the jump matrix $Q$ is given by
\begin{equation*}\displaywidth=\parshapelength\numexpr\prevgraf+2\relax
\begin{array}{ll}
Q(z)=\left(
\begin{array}{cc}
1&0\\
0&e^{V^{(t)}_0}
\end{array}
\right)N(z)\left(\begin{array}{cc}1&0\\f^{(t)}(z)^{-1}z^{-N}&1\end{array}\right)N(z)^{-1}\left(
\begin{array}{cc}
1&0\\
0&e^{-V^{(t)}_0}
\end{array}
\right)&{\rm if\ } z\in\Sigma_{\rm out}\\
Q(z)=\left(
\begin{array}{cc}
1&0\\
0&e^{V^{(t)}_0}
\end{array}
\right)N(z)\left(\begin{array}{cc}1&0\\f^{(t)}(z)^{-1}z^{N}&1\end{array}\right)N(z)^{-1}\left(
\begin{array}{cc}
1&0\\
0&e^{-V^{(t)}_0}
\end{array}
\right)&{\rm if\ } z\in\Sigma''_{\rm out}\\
Q(z)=\left(
\begin{array}{cc}
1&0\\
0&e^{V^{(t)}_0}
\end{array}
\right)M(z)\left(
\begin{array}{cc}
1&0\\
0&e^{-V^{(t)}_0}
\end{array}
\right),&{\rm if\ } z\in \partial {\rm U}
\end{array}
\end{equation*}
\item $X(z)={\rm Id}+{\rm O}(1/z)$ as $z\to\infty$.
\end{enumerate}

\vspace{0.3cm}

Proposition \ref{prop:fundamentalerror} can be written in terms of $X$ as follows.\\

\begin{cor}\label{cor:fundamentalerror}
The identity (\ref{eqn:diffid}) holds
with the error term expressed as
\begin{align*}
E(t)=&-\int_{\mathscr{C}_1}(X_{11}X_{22}'-X_{12}'X_{21})_-\frac{\dot f^{(t)}}{f^{(t)}}\frac{\rd z}{\ii 2\pi }
+\int_{\mathscr{C}_1''}(X_{11}'X_{22}-X_{12}X_{21}')_-\frac{\dot f^{(t)}}{f^{(t)}}\frac{\rd z}{\ii2\pi }\\
&-\int_{\mathscr{C}_2}(X_{11}X_{22}'-X_{12}'X_{21})_+\frac{\dot f^{(t)}}{f^{(t)}}\frac{\rd z}{\ii 2\pi }
+\int_{\mathscr{C}''_2}(X_{11}'X_{22}-X_{12}X_{21}')_+\frac{\dot f^{(t)}}{f^{(t)}}\frac{\rd z}{\ii2\pi }\\
&+\int_{\Sigma_{\rm out}} z^{-N} \frac{e^{-2g+V^{(t)}_0}}{f^{(t)}}\left(X_{22}'X_{12}-X_{12}'X_{22}\right)_+\frac{\dot f^{(t)}}{f^{(t)}}\frac{\rd z}{\ii2\pi }
+\int_{\Sigma''_{\rm out}}z^{N} \frac{e^{2g-V^{(t)}_0}}{f^{(t)}}\left(X_{21}'X_{11}-X_{11}'X_{21}\right)_-\frac{\dot f^{(t)}}{f^{(t)}}\frac{\rd z}{\ii2\pi }\\
&+\int_{\Sigma'_{\rm out}}\left((X_{11}'X_{22}-X_{12}X_{21}')-(X_{11}X_{22}'-X_{12}'X_{21})\right)\frac{\dot f^{(t)}}{f^{(t)}}\frac{\rd z}{\ii 2\pi}\ .
\end{align*}
\end{cor}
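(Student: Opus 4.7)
The plan is to deduce Corollary \ref{cor:fundamentalerror} directly from Proposition \ref{prop:fundamentalerror} by verifying that the conjugacy (\ref{eqn:conjugacy}) leaves each integrand in the error-term formula invariant. Since the differential identity (\ref{eqn:diffid}) itself is unchanged, the only thing to check is that the seven integrals defining $E(t)$ yield the same value whether one writes them in terms of $R$ or in terms of $X$.

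The key computational observation is that the conjugacy acts only on the off-diagonal entries of $R$: from (\ref{eqn:conjugacy}) one reads off $X_{11}=R_{11}$, $X_{22}=R_{22}$, $X_{12}=e^{-V^{(t)}_0}R_{12}$, $X_{21}=e^{V^{(t)}_0}R_{21}$, and the same factors appear in the corresponding derivatives $X_{ij}'$. The first step of the proof is then a line-by-line substitution. For the bilinear forms $X_{11}X_{22}'-X_{12}'X_{21}$ and $X_{11}'X_{22}-X_{12}X_{21}'$ appearing on the circles $\mathscr{C}_1,\mathscr{C}_1'',\mathscr{C}_2,\mathscr{C}_2''$ (and likewise in the $\Sigma'_{\rm out}$ integrand), the two off-diagonal factors multiply to $e^{V^{(t)}_0}e^{-V^{(t)}_0}=1$, so these quantities are literally equal to the corresponding $R$-expressions, and the integrals agree with those of Proposition \ref{prop:fundamentalerror}.

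The remaining two integrands, on $\Sigma_{\rm out}$ and $\Sigma''_{\rm out}$, involve only off-diagonal entries: $X_{22}'X_{12}-X_{12}'X_{22}=e^{-V^{(t)}_0}(R_{22}'R_{12}-R_{12}'R_{22})$ and $X_{21}'X_{11}-X_{11}'X_{21}=e^{V^{(t)}_0}(R_{21}'R_{11}-R_{11}'R_{21})$. The extra scalar factors $e^{\mp V^{(t)}_0}$ are precisely absorbed by the prefactors $e^{-2g+V^{(t)}_0}/f^{(t)}$ and $e^{2g-V^{(t)}_0}/f^{(t)}$ in the $X$-formulation, reproducing the prefactors $e^{-2g}/f^{(t)}$ and $e^{2g}/f^{(t)}$ of Proposition \ref{prop:fundamentalerror}. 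Thus the two error-term formulas coincide integrand-by-integrand.

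I don't expect any genuine obstacle here: the whole statement is a cosmetic reformulation of Proposition \ref{prop:fundamentalerror} that symmetrizes the roles of the interior and exterior of the unit disk via the conjugacy, and the verification is a short matrix computation. The only point that warrants a brief comment is that the conjugacy also preserves the normalization $X(z)=\mathrm{Id}+\OO(1/z)$ at infinity (since the conjugating matrices are constant and $\mathrm{diag}(1,e^{V^{(t)}_0})\mathrm{Id}\,\mathrm{diag}(1,e^{-V^{(t)}_0})=\mathrm{Id}$) and the determinant $\det X=\det R=1$, so that $X$ does solve a Riemann--Hilbert problem with the stated jump matrix $Q$, and the subsequent applications of the corollary are justified.
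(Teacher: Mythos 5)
Your proof is correct and matches the intended (implicit) proof in the paper: since $V^{(t)}_0$ is a constant, the conjugacy (\ref{eqn:conjugacy}) scales off-diagonal entries of $R$ and $R'$ by $e^{\mp V^{(t)}_0}$ while leaving diagonal entries unchanged, so each bilinear form is either unchanged or picks up exactly the factor $e^{\mp V^{(t)}_0}$ absorbed by the modified prefactors on $\Sigma_{\rm out}$ and $\Sigma''_{\rm out}$. The paper gives no separate proof of the corollary precisely because this is the straightforward substitution you carried out.
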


One advantage of this writing of $E$ is on $\Sigma_{\rm out}$ and $\Sigma''_{\rm out}$: the terms
$e^{-2g+V^{(t)}_0}/f^{(t)}$ and $e^{2g-V^{(t)}_0}/f^{(t)}$ have smaller order than the corresponding terms in Proposition \ref{prop:fundamentalerror}.
As we will see in the next subsection, the conjugacy (\ref{eqn:conjugacy}) also gives better bounds on the jump matrix $Q-{\rm Id}$ than on $M-{\rm Id}$.

Before performing these estimates, we will use the following rewriting of $|e^{-2g+V^{(t)}_0}/f^{(t)}|$ (when $|z|>1$) and 
$|e^{2g-V^{(t)}_0}/f^{(t)}|$ (when $|z|<1$).

\begin{lem}\label{eqn:rewriting}
We have
\begin{align*}
&\left|\frac{e^{-2g(z)+V^{(t)}_0}}{f^{(t)}(z)}\right|=\left|\frac{b^{(t)}_-(z)}{b^{(t)}_-(\bar z^{-1})}z^{-2\alpha}\right|,\ \mbox{for}\ 1<|z|<1+\kappa,\\
&\left|\frac{e^{2g(z)-V^{(t)}_0}}{f^{(t)}(z)}\right|=\left|\frac{b^{(t)}_-(\bar z^{-1})}{b^{(t)}_-(z)}\right|\ \mbox{for}\ 1-\kappa<|z|<1.
\end{align*}
\end{lem}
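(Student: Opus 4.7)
The plan is to verify the two identities by direct algebraic manipulation, substituting the explicit formulas for the Szegő function $g$ given in \eqref{eqn:Szego} together with the Wiener-Hopf factorization \eqref{elem1}, and then observing that all phase factors disappear upon taking absolute values. The only nontrivial step is the passage from $b^{(t)}_+$ to a value of $b^{(t)}_-$, which will be done via the symmetry \eqref{elem2}.

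For the first identity, consider $1<|z|<1+\kappa$. From the second line of \eqref{eqn:Szego}, we have $e^{-2g(z)} = b^{(t)}_-(z)^2 (z-1)^{-2\alpha} z^{2\alpha}$... wait, let me recompute: $e^{g(z)}=b^{(t)}_{-}(z)^{-1}(z-1)^{-\alpha}z^{\alpha}$, so $e^{-2g(z)} = b^{(t)}_{-}(z)^{2}(z-1)^{2\alpha}z^{-2\alpha}$. Dividing by $f^{(t)}(z)=e^{V^{(t)}(z)}|z-1|^{2\alpha}$ and using \eqref{elem1} to write $e^{V^{(t)}(z)} = b^{(t)}_+(z) e^{V^{(t)}_0} b^{(t)}_-(z)$ yields
\[
\frac{e^{-2g(z)+V^{(t)}_0}}{f^{(t)}(z)} = \frac{b^{(t)}_-(z)}{b^{(t)}_+(z)}\cdot\frac{(z-1)^{2\alpha}}{|z-1|^{2\alpha}}\cdot z^{-2\alpha}.
\]
Taking absolute values, the factor $(z-1)^{2\alpha}/|z-1|^{2\alpha}$ has modulus one, and by \eqref{elem2} we have $|b^{(t)}_+(z)|=|b^{(t)}_-(\bar z^{-1})|$, which is legitimate because $\bar z^{-1}$ lies in the annulus $1-\kappa<|\bar z^{-1}|<1$ where $b^{(t)}_-$ has been analytically extended. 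This gives exactly the first claimed identity.

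For the second identity, with $1-\kappa<|z|<1$, the first line of \eqref{eqn:Szego} gives $e^{2g(z)}=e^{2V^{(t)}_0}b^{(t)}_+(z)^2(z-1)^{2\alpha}e^{-2\ii\pi\alpha}$. Dividing by $f^{(t)}(z)$ and again invoking \eqref{elem1} yields
\[
\frac{e^{2g(z)-V^{(t)}_0}}{f^{(t)}(z)} = \frac{b^{(t)}_+(z)}{b^{(t)}_-(z)}\cdot\frac{(z-1)^{2\alpha}}{|z-1|^{2\alpha}}\cdot e^{-2\ii\pi\alpha}.
\]
Taking moduli kills the phase factors, and \eqref{elem2} converts $|b^{(t)}_+(z)|$ to $|b^{(t)}_-(\bar z^{-1})|$ as before, yielding the second identity.

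There is no serious obstacle here — the only subtlety is the routine bookkeeping on branches of $(z-1)^\alpha$ and $z^\alpha$ (all absorbed into phases that vanish in the modulus) and verifying that $\bar z^{-1}$ lies in the domain of analyticity of $b^{(t)}_-$, which follows from Lemma \ref{lem:analytic} and the restriction $||z|-1|<\kappa$.
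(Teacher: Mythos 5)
Your proof is correct and takes exactly the route the paper intends: the paper's own proof consists of the single sentence that the lemma is an ``elementary combination'' of \eqref{eqn:interpol}, \eqref{b+}, \eqref{b-}, \eqref{elem1}, \eqref{elem2}, \eqref{eqn:Szego}, and you simply carry out that combination explicitly, including the correct use of \eqref{elem2} to convert $|b^{(t)}_+(z)|$ into $|b^{(t)}_-(\bar z^{-1})|$ and the observation that $\bar z^{-1}$ lies in the domain of analyticity of $b^{(t)}_-$.
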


\begin{proof}
This is an elementary combination of equations (\ref{eqn:interpol}), (\ref{b+}), (\ref{b-}), (\ref{elem1}), (\ref{elem2}), (\ref{eqn:Szego}).
\end{proof}

\subsection{Asymptotic analysis of the Riemann-Hilbert problem.\ } \label{subsec:anaRH}
The proof of Proposition \ref{prop:tail}
relies on bounding the error estimate in Corollary \ref{cor:fundamentalerror}. For this, we will show that the matrix $X(z)$ is close to the constant ${\rm Id}$, by first proving that the jump matrix $Q(z)$ is approximately {\rm Id}.
Before that, we need to prove that terms appearing in the previous Lemma \ref{eqn:rewriting} are close to 1.

\begin{lem}\label{lem:estlem}
There is a $C>0$ such that for any  $||z|-1|<\kappa$ we have
\begin{align*}
&C^{-1}<\left|\frac{e^{-2g(z)+V^{(t)}_0}}{f^{(t)}(z)}\right|<C\ \mbox{if}\ |z|>1,\\
&C^{-1}<\left|\frac{e^{2g(z)-V^{(t)}_0}}{f^{(t)}(z)}\right|<C\ \mbox{if}\ |z|<1.
\end{align*}
\end{lem}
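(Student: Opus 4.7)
Here is how I would structure the proof.

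\medskip

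\noindent\textbf{Plan.} By Lemma~\ref{eqn:rewriting}, both inequalities reduce to bounding $|b^{(t)}_-(z)/b^{(t)}_-(\bar z^{-1})|$ above and below by a universal constant. Indeed, the remaining factor $|z^{-2\alpha}|=|z|^{-2\alpha}$ lies in $[(1+\kappa)^{-2\alpha},(1-\kappa)^{-2\alpha}]$, which is $O(1)$ since $\kappa=\epsilon N^{-1+\delta}\to 0$; moreover the $|z|<1$ case is the reciprocal of the $|z|>1$ case after the substitution $z\mapsto \bar z^{-1}$, so it suffices to treat one case.

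The first step is to propagate the Wiener--Hopf symmetry (\ref{elem2}) from the unit circle to the full annulus $||z|-1|<\kappa$. Since $\overline{b^{(t)}_-(\bar z^{-1})}$ is holomorphic in $z$ on $|z|<1+\kappa$ (being the conjugate of a function of $\bar z$ alone) and agrees with the holomorphic function $b^{(t)}_+(z)$ on $|z|=1$, the identity principle gives $b^{(t)}_+(z)=\overline{b^{(t)}_-(\bar z^{-1})}$ throughout the annulus. Taking moduli yields $|b^{(t)}_-(\bar z^{-1})|=|b^{(t)}_+(z)|$, so the task reduces to estimating
\begin{equation*}
h(z):=\log\bigl|b^{(t)}_-(z)\bigr|-\log\bigl|b^{(t)}_+(z)\bigr|,
\end{equation*}
which is harmonic on the annulus and vanishes on $|z|=1$.

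Using the normalizations $b^{(t)}_-(\infty)=1$ and $b^{(t)}_+(0)=1$ together with the reality and evenness of $V^{(t)}$ on the unit circle (inherited from $V_j=V_{-j}=\lambda/|j|$), one has the Laurent representations
\begin{equation*}
\log\bigl|b^{(t)}_-(z)\bigr|=\sum_{n\geq 1}V^{(t)}_n r^{-n}\cos(n\theta),\qquad \log\bigl|b^{(t)}_+(z)\bigr|=\sum_{n\geq 1}V^{(t)}_n r^{n}\cos(n\theta),
\end{equation*}
valid respectively on $|z|>1-\kappa$ and $|z|<1+\kappa$, where $V^{(t)}_n\in\mathbb{R}$ are the Fourier coefficients of $V^{(t)}(e^{i\theta})$. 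Subtracting, I obtain the explicit formula
\begin{equation*}
h(re^{i\theta})=-\sum_{n\geq 1}(r^n-r^{-n})\,V^{(t)}_n\cos(n\theta).
\end{equation*}

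The main obstacle is that $V^{(t)}$ itself can be as large as $C\log N$ near $\theta=0$, so a crude Cauchy-estimate bound $|V^{(t)}_n|\leq C\log N$ gives an estimate on $h$ that blows up. The crucial input is a sharper bound
\begin{equation*}
|V^{(t)}_n|\leq \frac{C\lambda}{n} \quad \text{for } 1\leq n\leq N^{1-\delta},\qquad V^{(t)}_n=0 \text{ for } n>N^{1-\delta},
\end{equation*}
uniformly in $t\in[0,1]$. This is immediate at $t=1$ from the explicit expression $V^{(1)}_n=\lambda/n$ given by (\ref{eqn:Uext}), and extends to general $t$ by analyzing $V^{(t)}=\log(1-t+te^V)$ and noting that the ``singular part'' of $V^{(t)}$ (the part responsible for the slow $1/n$ decay of Fourier coefficients) is proportional to $V$ itself in the region near $\theta=0$ where $V$ is large, while $V^{(t)}$ is smooth and rapidly-decaying-Fourier elsewhere. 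Combining this bound with $|r^n-r^{-n}|\leq 2\sinh(n\kappa)\leq 2en\kappa$ (valid for $n\kappa\leq 1$, hence for all $n\leq N^{1-\delta}/\epsilon$), each term of the series is bounded by $2eC\lambda\kappa$, and summing over $n\leq N^{1-\delta}$ gives
\begin{equation*}
|h(z)|\leq 2eC\lambda\kappa\cdot N^{1-\delta}=2eC\lambda\epsilon=O(1),
\end{equation*}
uniformly in $N$ and $t$. Exponentiating yields the desired two-sided bound on $|b^{(t)}_-(z)/b^{(t)}_-(\bar z^{-1})|$, which completes the proof.
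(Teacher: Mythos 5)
Your reduction to bounding $h(z)=\log|b^{(t)}_-(z)|-\log|b^{(t)}_+(z)|$ and the observation that $h$ is harmonic on the annulus and vanishes on $|z|=1$ are sound, and the structure via the Laurent expansion is a legitimate alternative to the paper's contour-integral approach. However, the crucial input — the uniform Fourier coefficient bound on $V^{(t)}$ — is not justified, and part of it is simply false.

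Specifically, the assertion that $V^{(t)}_n=0$ for $n>N^{1-\delta}$ holds only at $t=0$ and $t=1$; for $0<t<1$, $V^{(t)}=\log(1-t+te^{V})$ is \emph{not} a Laurent polynomial, so its Fourier support is all of $\mathbb{Z}$. This is not a cosmetic issue: the factor $|r^n-r^{-n}|\le 2\sinh(n\kappa)$ grows exponentially once $n\kappa\gtrsim 1$, i.e. once $n\gtrsim N^{1-\delta}$, so the tail $\sum_{n>N^{1-\delta}}|r^n-r^{-n}|\,|V^{(t)}_n|$ requires genuine exponential decay of $V^{(t)}_n$ at precisely the rate $(1+\kappa)^{-n}$ to survive, and that decay rate must be tracked with a constant. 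Your argument addresses neither this tail nor the claimed $C\lambda/n$ bound for $n\le N^{1-\delta}$ beyond a heuristic about a ``singular part proportional to $V$'' — which, because $V^{(t)}-V$ and $V^{(t)}-tV$ are both only bounded, not smooth, with worst behavior concentrated near $|\theta|\approx 1$ rather than $\theta\approx 0$, is not obviously convertible into a Fourier coefficient estimate uniformly in $t$.

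The paper avoids this problem entirely: it never needs Fourier decay of $V^{(t)}$. It represents $\log b^{(t)}_-(z)-\log b^{(t)}_-(\bar z^{-1})$ as a contour integral, first replaces $V^{(t)}(s)$ by its radial projection $V^{(t)}(s/r)$ at a cost controlled by the Cauchy estimate $\sup_{||s|-1|<\kappa}|(V^{(t)})'(s)|\lesssim N^{1-\delta}$ (equation \eqref{eqn:ugly3rd}), then subtracts the constant $V^{(t)}(z_0)$ (which drops out of the contour integral) so that only the \emph{local} variation of $V^{(t)}$ near $z_0$ matters. That local variation is controlled by the pointwise bound $|V^{(t)}(s/r)-V^{(t)}(z_0)|\le 1+|V(s/r)-V(z_0)|$, which follows directly from the structure $e^{V^{(t)}}=1-t+te^{V}$, and by the Abel-summation estimate $V(z)=-\lambda\log\max(|1-z|,\kappa)+\OO(1)$ of \eqref{eqn:Abel}. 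These are pointwise inequalities in $s$ and $t$; they transfer into the contour integral without any Fourier-side control on $V^{(t)}$. To make your Laurent approach rigorous you would in effect need to re-prove those pointwise estimates and then push them through to a uniform coefficient bound including the $n>N^{1-\delta}$ regime, which is more work than the paper's route, not less.
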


\begin{proof}
Assume that $|z|>1$. From Lemma \ref{eqn:rewriting}, we need to estimate $\left|\frac{b^{(t)}_-(z)}{b^{(t)}_-(\bar z^{-1})}z^{-2\alpha}\right|$. Clearly, $\log |z^{-2\alpha}|<\alpha N^{-1+\delta}=\oo(1)$.
Moreover, let $\mathscr{C}'$ be the circle centered at $0$ with radius $r=1+\kappa$. Then by analyticity we can write
\begin{equation}\label{eqn:ugly1st}
\log b^{(t)}_-(z)-\log b^{(t)}_-(\bar z^{-1})=\frac{1}{2\pi\ii}\int_{\mathscr{C}'}\left(\frac{V^{(t)}(s)}{s-z}-\frac{V^{(t)}(s)}{s-\bar z^{-1}}\right)\rd s.
\end{equation}
We denote $s= re^{\ii\theta}$. Note that
\begin{equation}\label{eqn:ugly2nd}
\int_{\mathscr{C}'}|V^{(t)}(r e^{\ii\theta})-V^{(t)}(e^{\ii\theta})|\frac{|z-\bar z^{-1}|}{|z-e^{\ii\theta}||\bar z^{-1}-e^{\ii\theta}|}  \rd\theta
\leq C \kappa\sup_{||s|-1|<\kappa}|{V^{(t)}}'(s)|.
\end{equation}

Moreover, 
using (\ref{eqn:Vt}) and (\ref{eqn:ImV0}), we have 
\begin{equation}\label{eqn:ugly3rd}
\sup_{||s|-1|<\kappa}|{V^{(t)}}'(s)|\leq C  N^{1-\delta}.
\end{equation}
Using (\ref{eqn:ugly1st}), (\ref{eqn:ugly2nd}), and (\ref{eqn:ugly3rd}), we proved
$$
\log b^{(t)}_-(z)-\log b^{(t)}_-(\bar z^{-1})
=\frac{1}{2\pi\ii}\int_{\mathscr{C}'}\left(\frac{V^{(t)}(s/r)}{s-z}-\frac{V^{(t)}(s/r)}{s-\bar z^{-1}}\right)\rd s
+\OO(1).
$$
Define $z_0=z/|z|$. Then from the above equation we also have
$$
\log b^{(t)}_-(z)-\log b^{(t)}_-(\bar z^{-1})
=\frac{1}{2\pi\ii}\int_{\mathscr{C}'}\left(\frac{V^{(t)}(s/r)-V^{(t)}(z_0)}{s-z}-\frac{V^{(t)}(s/r)-V^{(t)}(z_0)}{s-\bar z^{-1}}\right)\rd s
+\OO(1),
$$
so that the result will be proved if
\begin{equation}\label{eqn:enough}
\int_{\mathscr{C}'}|V^{(t)}(s/r)-V^{(t)}(z_0)|\frac{|z-\bar z^{-1}|}{|s-z|^2}\rd s
=\OO(1).
\end{equation}
Note that from (\ref{eqn:Vt}) we have
$$
e^{V^{(t)}(s/r)-V^{(t)}(z_0)}=\frac{1-t+te^{V(s/r)}}{1-t+te^{V(z_0)}},
$$
so that
$$
e^{V^{(t)}(s/r)-V^{(t)}(z_0)}\leq C+C e^{V(s/r)-V(z_0)}.
$$
Here we used that $(a+b)/(c+d)<\max(a/c,b/d)$ for positive numbers, and that on the unit circle 
$V$ has real values. This implies that
$$
|V^{(t)}(s/r)-V^{(t)}(z_0)|<1+|V(s/r)-V(z_0)|.
$$
Moreover, 
Abel summation gives
\begin{equation}\label{eqn:Abel}
V_0(z)=-\lambda\log\max(|1-z|,\kappa)+\OO(1)\ \ \text{uniformly in $||z|-1|<\kappa$.}
\end{equation}
Both previous equations give (let $z_0=e^{\ii\theta_0}$)
$$
\int_{\mathscr{C}'}|V^{(t)}(s/r)-V^{(t)}(z_0)|\frac{|z-\bar z^{-1}|}{|s-z|^2}\rd s\\
\leq
C \int_{\mathscr{C}'}(1+|\log\max(\theta,\kappa)-\log\max(\theta_0,\kappa) |)\frac{\kappa}{|e^{\ii\theta}-z|^2}\rd \theta+\OO(1)=\OO(1).
$$
This proves (\ref{eqn:enough}) and therefore the lemma for $|z|>1$.
The proof for $|z|<1$ is similar.
\end{proof}

\begin{lem}\label{lem:jumpestimate}
Entries of the jump matrix satisfy the bounds
$$
\begin{array}{ll}
Q(z)-{\rm Id}={\rm O}\left(N^{-\delta}\right)&{\rm uniformly\ in\ } z\in\partial {\rm U},\\
Q(z)-{\rm Id}={\rm O}\left(e^{-cN^{\delta}}\right)&{\rm uniformly\ in\ } z\in\Sigma_{\rm out}\cup \Sigma''_{\rm out}.
\end{array}
$$
\end{lem}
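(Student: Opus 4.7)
The plan is to treat the three pieces of the contour $\Gamma = \partial U \cup \Sigma_{\rm out} \cup \Sigma''_{\rm out}$ separately, since the jump matrix $Q$ has a qualitatively different structure on each.

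On the outer arcs $\Sigma_{\rm out}$ and $\Sigma''_{\rm out}$, I would start from the explicit form of $N(z)$ and compute the conjugation directly. On $\Sigma_{\rm out}$ where $N(z) = e^{g(z)\sigma_3}$ for $|z|>1$, the conjugation of the lower-triangular factor $I + f^{(t)}(z)^{-1}z^{-N} E_{21}$ by $N(z)$ and then by $\mathrm{diag}(1, e^{V_0^{(t)}})$ yields
\[
Q(z) - I = \begin{pmatrix} 0 & 0 \\ e^{V_0^{(t)} - 2g(z)}\, f^{(t)}(z)^{-1}\, z^{-N} & 0 \end{pmatrix}.
\]
By Lemma~\ref{lem:estlem} the prefactor $|e^{V_0^{(t)}-2g(z)}/f^{(t)}(z)|$ is $O(1)$, and since $|z| = 1 + \tfrac{2}{3}\kappa$ on $\Sigma_{\rm out}$ with $\kappa = \epsilon N^{-1+\delta}$, one has $|z^{-N}| \le e^{-cN\kappa} = e^{-c\epsilon N^\delta}$. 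The argument on $\Sigma''_{\rm out}$ is symmetric: using the extra off-diagonal factor in $N(z)$ for $|z|<1$, the conjugation this time produces an upper-triangular jump with $(1,2)$-entry $-e^{2g(z)-V_0^{(t)}} f^{(t)}(z)^{-1} z^N$, and the second estimate of Lemma~\ref{lem:estlem} combined with $|z^N| \le e^{-c\epsilon N^\delta}$ closes the case.

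For the boundary $\partial U$, the relevant jump is the conjugation of $M(z) = E(z)\Psi(\zeta) F(z)^{-\sigma_3} z^{\pm N\sigma_3} N(z)^{-1}$, where $\zeta = N\log z$ satisfies $|\zeta| \ge cN\kappa = c\epsilon N^\delta$ on $\partial U$. The key input is the classical large-$|\zeta|$ asymptotics of the confluent hypergeometric function,
\[
\psi(a,b,\zeta) = \zeta^{-a}\bigl(1 + O(|\zeta|^{-1})\bigr),
\]
uniform in the sectors delimited by $\Gamma_1,\dots,\Gamma_8$. Plugging this into the definition (\ref{eqn:Psi}) yields $\Psi(\zeta) = P_\infty(\zeta)\bigl(I + O(|\zeta|^{-1})\bigr)$ for an explicit diagonal/antidiagonal factor $P_\infty$ built out of $\zeta^{\pm\alpha}$ and $e^{\pm\zeta/2}$. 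The dressing matrix $E(z)$ in (\ref{eqn:E}) is constructed precisely so that $E(z) P_\infty(\zeta) F(z)^{-\sigma_3} z^{\pm N\sigma_3} N(z)^{-1} = I$, leaving
\[
M(z) - I \;=\; E(z) P_\infty(\zeta) \cdot O(|\zeta|^{-1}) \cdot P_\infty(\zeta)^{-1} E(z)^{-1} \;=\; O(N^{-\delta}).
\]

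The main obstacle lies in this last step: since our $V$ is $N$-dependent with a near-singular smoothing scale $\kappa$, the factors $F(z)$, $N(z)$ and the prefactor $P_\infty(\zeta)$ could individually grow with $N$ and threaten to spoil the $O(|\zeta|^{-1})$ bound after conjugation. The role of the adjustment (\ref{eqn:conjugacy})---passing from $R$ to $X$ by conjugation with $\mathrm{diag}(1, e^{V_0^{(t)}})$---is precisely to restore the symmetry between $|z|<1$ and $|z|>1$ broken by the Szeg\H{o} function (\ref{eqn:Szego}), thereby balancing the off-diagonal entries of $M - I$ so that every entry of $Q - I$ is uniformly of order $N^{-\delta}$ on $\partial U$. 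To verify this I would expand $Q$ entry-by-entry, track the factors of $z^{\pm\alpha}$, $e^{\pm N\log z/2}$ and $b_\pm^{(t)}(z)$ using the Wiener--Hopf identities (\ref{elem1})--(\ref{elem2}), and appeal to the uniform estimate $\log b_-^{(t)}(z) - \log b_-^{(t)}(\bar z^{-1}) = O(1)$ already established in the proof of Lemma~\ref{lem:estlem}.
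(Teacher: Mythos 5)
Your proposal follows the paper's route closely. The outer-arc computation is exactly the paper's: the jump is strictly triangular with $(2,1)$- or $(1,2)$-entry $e^{\mp 2g + V_0^{(t)}}f^{(t)}(z)^{-1} z^{\mp N}$, Lemma~\ref{lem:estlem} bounds the prefactor by a constant, and $|z^{\mp N}| \le e^{-cN^\delta}$ on $\Sigma_{\rm out}\cup\Sigma_{\rm out}''$ gives the second estimate. On $\partial {\rm U}$ you correctly identify both ingredients --- the confluent-hypergeometric asymptotics giving the $\mathrm{O}(|\zeta|^{-1})$ error with $|\zeta|\asymp N^\delta$, and the role of the conjugation (\ref{eqn:conjugacy}) together with Lemma~\ref{lem:estlem} in controlling the surviving off-diagonal prefactors.

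The one substantive gap is the step you yourself flag as an entry-by-entry expansion to be carried out. Your intermediate assertion that $M(z)-{\rm Id} = E(z)P_\infty(\zeta)\cdot \mathrm{O}(|\zeta|^{-1})\cdot P_\infty(\zeta)^{-1}E(z)^{-1} = \mathrm{O}(N^{-\delta})$ cannot be read off from boundedness of the conjugating matrix: since $E(z)P_\infty(\zeta) = N(z)z^{\mp N\sigma_3}F(z)^{\sigma_3}$, its entries can be as large as $e^{cN^\delta}$ on $\partial{\rm U}$. What actually happens is that these potential blow-ups cancel so as to leave off-diagonal entries with prefactor exactly $e^{\mp 2g+V_0^{(t)}}/f^{(t)}$ and its reciprocal, and only then does Lemma~\ref{lem:estlem} apply. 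The paper obtains this explicit form of $Q-{\rm Id}$ by citing the asymptotic expansion of \cite[Proposition~3]{MarMcLSaf2006} and checking that it extends to $N$-dependent $V$ under precisely the two conditions you name (fixed $\alpha$, plus Lemma~\ref{lem:estlem}). So your plan is sound and the checkpoints you single out are the right ones, but the deferred computation is the heart of the lemma rather than a routine verification; you would either need to reproduce the calculation of \cite{MarMcLSaf2006} or, as the paper does, import it.
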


\begin{proof}
We first consider the jump matrix on $\partial U$. It was proved in \cite[Proposition 3]{MarMcLSaf2006} that
the jump matrix of the $R$-Riemann-Hilbert problem satisfies the asymptotic expansion (notice that we changed the notations from \cite{MarMcLSaf2006} to our setting), in the case $\delta=1$, i.e. when $V$ and the contour do not depend on $N$: if $|z|>1$,
$$
M(z)={\rm Id}+\sum_{k=1}^\infty\frac{\ii ^k}{2^{k+1}\zeta^k}
\left(
\begin{array}{cc}
s_{\alpha,k}&(-1)^k \left(\frac{e^{-2g(z)}}{f^{(t)}(z)}\right)^{-1} t_{\alpha,k}\\
\frac{e^{-2g(z)}}{f^{(t)}(z)}t_{\alpha,k}&(-1)^ks_{\alpha,k}
\end{array}
\right)
$$
meaning that the remainder associated with partial sums does not exceed the first neglected term in absolute value. Here, the coefficients are
$
s_{\beta,k}=(\alpha+\frac{1}{2},k)+(\alpha-\frac{1}{2},k)$, 
$t_{\beta,k}=(\alpha+\frac{1}{2},k)-(\alpha-\frac{1}{2},k)$
where
$
(\nu,k)=\frac{(4\nu^2-1)(4\nu^2-9)\dots(4\nu^2-(2k-1)^2)}{2^{2k}k!}$. 
The above expansion is equivalent (by simple conjugacy) to
\begin{equation}\label{eqn:Q}
Q(z)={\rm Id}+\sum_{k=1}^\infty\frac{\ii ^k}{2^{k+1}\zeta^k}
\left(
\begin{array}{cc}
s_{\alpha,k}&(-1)^k \left(\frac{e^{-2g+V^{(t)}_0}}{f^{(t)}}\right)^{-1} t_{\alpha,k}\\
\frac{e^{-2g+V^{(t)}_0}}{f^{(t)}}t_{\alpha,k}&(-1)^ks_{\alpha,k}
\end{array}
\right).
\end{equation}
If we assume  $|z-1|\asymp \kappa$ (hence $|\zeta|\asymp N^{\delta}$), and that $V$ depends on $N$ (through (\ref{eqn:Uext}) and (\ref{eqn:Vt})), this expansion still holds: the proof in
 \cite{MarMcLSaf2006} only requires $(1)$ known asymptotics of confluent hyperbolic fuctions (these still hold in our context as $\alpha$ is a fixed parameter independent of $N$, like in  \cite{MarMcLSaf2006}) and (2) that the above coefficient $\frac{e^{-2g+V^{(t)}_0}}{f^{(t)}}$ is of order one (this property holds as proved in Lemma \ref{lem:estlem}).

Thus (\ref{eqn:Q}) holds in our regime of interest and at first order, this approximation is
$
Q(z)={\rm Id}+{\rm O}\left(N^{-\delta}\right),
$
as expected. This approximation still holds when $z\in\partial{\rm U}$ but $|z|<1$, by just changing the coefficient 
$e^{-2g+V^{(t)}_0}/f^{(t)}$ by
$e^{2g-V^{(t)}_0}/f^{(t)}$ in the reasoning.
 
On $\Sigma_{\rm out}$, the result follows from
\begin{equation}\label{eqn:QbounOut}
Q(z)-{\rm Id}=
\left(
\begin{array}{cc}
0&0\\
\frac{e^{-2g+V^{(t)}_0}}{f^{(t)}}z^{-N}&0
\end{array}
\right)
=\OO(z^{-N})=\OO(e^{-c N^\delta}).
\end{equation}
A similar calculation gives the estimate on $\Sigma''_{\rm out}$.
\end{proof}

\begin{prop}\label{prop:Qestimate} 
The matrix $X$ satisfies the bounds
\begin{equation}\label{eqn:1stX}
\begin{array}{ll}
X(z)_{\pm}-{\rm Id}={\rm O}\left(\frac{1}{N|z-1|}\right)&{\rm uniformly\ in\ } z\in\Gamma.
\end{array}
\end{equation}
The matrix $X'=\frac{\partial}{\partial z}X$ satisfies the bounds
\begin{equation}\label{eqn:2ndX}
\begin{array}{ll}
X'(z)_{\pm}={\rm O}\left(\frac{N^{-\delta}}{|z-1|}\right)&{\rm uniformly\ in\ } z\in\Gamma.
\end{array}
\end{equation}
The same bounds hold for $X(z)$ and $X'(z)$ away from $\Gamma$ uniformly in $||z|-1|<\kappa$.
\end{prop}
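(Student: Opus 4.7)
The proof proceeds by the standard small-norm Riemann--Hilbert analysis. By Lemma \ref{lem:jumpestimate}, the jump matrix satisfies $\|Q-\mathrm{Id}\|_{L^\infty(\partial {\rm U})} = O(N^{-\delta})$ and $\|Q-\mathrm{Id}\|_{L^\infty(\Sigma_{\rm out}\cup\Sigma_{\rm out}'')} = O(e^{-cN^\delta})$. Rewriting the jump $X_+ = X_- Q$ as $X_+ - X_- = X_-(Q-\mathrm{Id})$ and applying the Plemelj--Sokhotski formula together with the normalization $X(\infty)=\mathrm{Id}$, one obtains the singular integral equation
\[
X_-(z) = \mathrm{Id} + \mathcal{C}_-\!\left[X_-(Q-\mathrm{Id})\right]\!(z), \qquad z \in \Gamma,
\]
where $\mathcal{C}_-$ is the minus boundary value of the Cauchy operator on $L^2(\Gamma)$. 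The operator norm $\|\mathcal{C}_-\|_{L^2\to L^2}$ is bounded uniformly in $N$ after rescaling the $N$-dependent contour $\partial {\rm U}$ to unit size (the family of rescaled contours is Lipschitz with uniform constants). Hence the map $f \mapsto \mathcal{C}_-[f(Q-\mathrm{Id})]$ is a contraction on $L^2(\Gamma)$ for large $N$, the Neumann series converges, and one obtains $\|X_- - \mathrm{Id}\|_{L^2} = O(\|Q-\mathrm{Id}\|_{L^2})$. A bootstrapping using the smoothness of $Q$ upgrades this to the uniform estimate $\|X_- - \mathrm{Id}\|_{L^\infty(\partial {\rm U})} = O(N^{-\delta})$.

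For points off $\Gamma$, the Cauchy representation
\[
X(z) - \mathrm{Id} = \frac{1}{2\pi \ii}\int_\Gamma \frac{X_-(s)(Q(s)-\mathrm{Id})}{s-z}\,\rd s
\]
together with the exponential bound on $\Sigma_{\rm out}\cup\Sigma_{\rm out}''$ reduces everything to the integral over $\partial {\rm U}$, whose arclength is $O(\kappa)=O(N^{-1+\delta})$ and whose integrand is $O(N^{-\delta})$ in magnitude. For $z$ at distance $\gtrsim \kappa$ from $\partial {\rm U}$ we have $|s-z|^{-1}\le C/|z-1|$, yielding
\[
|X(z)-\mathrm{Id}| \le C\cdot\frac{\kappa\, N^{-\delta}}{|z-1|} = \frac{C}{N|z-1|}.
\]
For $z\in \partial{\rm U}$ (so $|z-1|=\kappa$), the bound $O(1/(N|z-1|))=O(N^{-\delta})$ follows from substituting the leading order of the explicit expansion \eqref{eqn:Q}, which gives $Q-\mathrm{Id}=O(1/\zeta)=O(1/(N|z-1|))$, into the integral equation for $X_-$.

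For the derivative bound, differentiate the Cauchy representation:
\[
X'(z) = \frac{1}{2\pi \ii}\int_\Gamma \frac{X_-(s)(Q(s)-\mathrm{Id})}{(s-z)^2}\,\rd s,
\]
so that for $z$ at distance $\gtrsim \kappa$ from $\partial {\rm U}$,
\[
|X'(z)| \le C\cdot\frac{\kappa\, N^{-\delta}}{|z-1|^2} = \frac{C\, N^{-\delta}}{|z-1|}\cdot\frac{\kappa}{|z-1|} \le \frac{C\, N^{-\delta}}{|z-1|},
\]
since $\kappa\le|z-1|$ in this regime. For $z$ in the interior of ${\rm U}$ (where $X$ is analytic, as $\Gamma$ has no jumps there) one applies Cauchy's formula on a circle of radius $\kappa/2$ around $z$ inside ${\rm U}$, using the already established $L^\infty$-bound for $X$ on $\partial {\rm U}$. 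The derivative bounds on the $\pm$ boundary values on $\Gamma$ itself follow by approaching $\Gamma$ from either side and using the smoothness of $Q$ to differentiate under the integral sign.

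The main obstacle is the matching of the refined bound $O(1/(N|z-1|))$ on $\partial {\rm U}$, where a purely $L^\infty$ bound $\|Q-\mathrm{Id}\|_\infty = O(N^{-\delta})$ coincides only coarsely with the claimed bound (since $1/(N\kappa)=\epsilon^{-1}N^{-\delta}$). A clean proof requires exploiting the Cauchy-kernel structure of the first-order term in \eqref{eqn:Q} to evaluate $\mathcal{C}_-[Q-\mathrm{Id}]$ explicitly through residue calculus, rather than merely bounding it. The companion issue is ensuring the $L^2\to L^\infty$ bootstrap is uniform despite the shrinking contour; this is handled by the aforementioned scaling argument that reduces the analysis to a model contour of unit size.
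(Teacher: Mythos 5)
Your proposal follows the same small-norm Riemann--Hilbert strategy as the paper, and the broad architecture (uniform $L^2$-bound on the Cauchy operator via rescaling, Neumann series for the singular integral equation, Cauchy representation of $X-\mathrm{Id}$, separation of the $\partial{\rm U}$ and $\Sigma_{\rm out}\cup\Sigma_{\rm out}''$ contributions, Cauchy's formula for the derivative) matches. Two points deserve correction.

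First, your ``bootstrapping using the smoothness of $Q$'' to upgrade $\|X_--\mathrm{Id}\|_{L^2}=O(\|Q-\mathrm{Id}\|_{L^2})$ to an $L^\infty$ bound is asserted but not justified, and it is the load-bearing step in your derivation of the pointwise estimate. The paper sidesteps this entirely: writing $\mu_X=X_--\mathrm{Id}=(1-\mathrm C_{\Delta_X})^{-1}(\mathrm C_-\Delta_X)$ and $X-\mathrm{Id}=\mathrm C(\Delta_X)+\mathrm C(\mu_X\Delta_X)$, it bounds $\mathrm C(\Delta_X)(z)$ pointwise via the $L^\infty$ bound on $\Delta_X=Q-\mathrm{Id}$, and bounds the second term by Cauchy--Schwarz, $|\mathrm C(\mu_X\Delta_X)(z)|\lesssim\|\mu_X\|_{L^2}\bigl(\int_\Gamma |\Delta_X(\xi)|^2|z-\xi|^{-2}|\rd\xi|\bigr)^{1/2}$, so that only the $L^2$ norm of $\mu_X$ is ever needed. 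If you keep your representation $X(z)-\mathrm{Id}=\frac{1}{2\pi\ii}\int_\Gamma\frac{X_-(Q-\mathrm{Id})}{s-z}\rd s$, split $X_-=\mathrm{Id}+(X_--\mathrm{Id})$ and estimate the two pieces separately in the same way; do not rely on an $L^\infty$ bound for $X_--\mathrm{Id}$.

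Second, the ``main obstacle'' you flag in your final paragraph is not an obstacle. On $\partial{\rm U}$ one has $|z-1|=\kappa=\epsilon N^{-1+\delta}$, so $1/(N|z-1|)=\epsilon^{-1}N^{-\delta}$; since $\epsilon$ is a fixed constant (depending only on the fixed $\alpha,\delta$), the claimed bound $O(1/(N|z-1|))$ on $\partial{\rm U}$ is exactly $O(N^{-\delta})$, and the crude $L^\infty$ estimate already delivers it. No residue calculus or exact evaluation of $\mathcal C_-[Q-\mathrm{Id}]$ is needed. Lastly, passing from estimates at $\mathrm{dist}(z,\Gamma)\asymp\kappa$ to estimates on the $\pm$ boundary values is not a matter of ``approaching from either side''; it is the contour-deformation argument (the paper cites \cite[Cor.~7.77]{DeiKriMcLVenZho1999}), which uses that $Q$ is analytic across $\Gamma$ so the contour may be deformed slightly, converting a boundary-value estimate into an interior one. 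That step should be spelled out rather than left implicit.
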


\begin{proof}
Consider the Cauchy operator
$$
{\rm C} f(z)=\frac{1}{2\pi\ii}\int_{\Gamma}\frac{f(\xi)}{\xi-z}\rd \xi,\ z\in\mathbb{C}\backslash \Gamma,
$$
where the orientation of inegration is clockwise on $\Sigma_{\rm out}$ and $\Sigma_{\rm out}''$ and clockwise on 
{\rm U}. For $\xi\in \Gamma$, let ${\rm C}_ f(\xi)=\lim_{z\to\xi^-} {\rm C} f(z)$. It is well known (see e.g. \cite{Ste1970}) that this limit exists in ${\rm L}^2(\Gamma)$ and 
\begin{equation}\label{eqn:boundedinL2}
\|C_-f\|_{{\rm L}^2(\Gamma)}\leq c_\Gamma \|f\|_{{\rm L}^2(\Gamma)}.
\end{equation}
For our contour, the constant $c_\Gamma$ can actually be chosen uniform in $N$ as shown by a simple scaling argument (the radii of our circles do not matter).

Let $\Delta_X(z)=Q(z)-{\rm Id}$. From Lemma $\ref{lem:jumpestimate}$,
$\|\Delta_X\|_{L^\infty(\Gamma)}=\OO(N^{-\delta}+e^{-cN^\delta})$ converges to $0$. Together with 
(\ref{eqn:boundedinL2}) it implies that
$$
{\rm C}_{\Delta_X}:f\mapsto C_-(f \Delta_X)
$$
is a bounded operator from ${\rm L}^2(\Gamma)$ to itself with norm $\|{\rm C}_{\Delta_X}\|\to 0$. 
Hence 
$1-{\rm C}_{\Delta_X}$ has an inverse for large enough $N$, and we have
\begin{equation}\label{eqn:usefulineq}
\|(1-{\rm C}_{\Delta_X})^{-1}f\|_{{\rm L}^2(\Gamma)}\leq C 
\|f\|_{{\rm L}^2(\Gamma)},
\end{equation}
still for large enough $N$.
Let
$$
\mu_X=(1-{\rm C}_{\Delta_X})^{-1}({\rm C}_- \Delta_X)\in {\rm L}^2(\Gamma).
$$
It is well known (see e.g. \cite[Theorem 7.8]{DeiKriMcLVenZho1999}) that for any $z\not\in\Gamma$
\begin{equation}\label{eqn:X}
X(z)={\rm Id}+{\rm C}(\Delta_X+\mu_X\Delta_X)(z).
\end{equation}
We first bounds similar to (\ref{eqn:1stX}) and (\ref{eqn:2ndX}) under a stronger assumption on $z$, namely ${\rm dist}(z,\Gamma)= N^{-1+\delta}/100$. To bound the first term in (\ref{eqn:X}), note that
\begin{equation}\label{eqn:113}
|{\rm C}\Delta_X(z)|\lesssim
\int_{\partial {\rm U}}\frac{|\Delta_X(\xi)|}{|z-\xi|} |\rd\xi|+\int_{\Sigma_{\rm out}\cup \Sigma_{\rm out}''}\frac{|\Delta_X(\xi)|}{|z-\xi|} |\rd\xi|
\lesssim
\frac{1}{N|z-1|}
+
e^{-cN^\delta}.
\end{equation}
where we used Lemma \ref{lem:jumpestimate} to bound the jump matrix $\Delta_X$. Concerning the other term from 
(\ref{eqn:X}),  the Schwarz inequality yields
\begin{equation}\label{eqn:mu1}
|{\rm C}(\mu_X\Delta_X)(z)|
\lesssim
\|\mu_X\|_{{\rm L}^2(\partial {\rm U})}
\left(\int_{\partial {\rm U}}\frac{|\Delta_X(\xi)|^2}{|z-\xi|^2}|\rd \xi|\right)^{1/2}
+
\|\mu_X\|_{{\rm L}^2(\Sigma_{\rm out}\cup\Sigma_{\rm out}'')}
\left(\int_{\Sigma_{\rm out}\cup\Sigma_{\rm out}''}\frac{|\Delta_X(\xi)|^2}{|z-\xi|^2}|\rd \xi|\right)^{1/2}.
\end{equation}
From (\ref{eqn:usefulineq}), we have
\begin{multline}\label{eqn:mu2}
\|\mu_X\|_{{\rm L}^2(\partial {\rm U})}\lesssim \|C_-\Delta_X \|_{{\rm L}^2(\partial {\rm U})}
\lesssim
\|C_-(\Delta_X\1_{\partial{\rm U}}) \|_{{\rm L}^2(\partial {\rm U})}
+
\|C_-(\Delta_X\1_{\Sigma_{\rm out}\cup\Sigma_{\rm out}''}) \|_{{\rm L}^2(\partial {\rm U})}\\
\lesssim
\|\Delta_X \|_{{\rm L}^2(\partial {\rm U})}
+
\|\Delta_X\|_{{\rm L}^\infty(\Sigma_{\rm out}\cup\Sigma_{\rm out}'')}
\left(\int_{{\partial {\rm U}}} |\log {\rm dist}(\xi,\Sigma_{\rm out}\cup\Sigma_{\rm out}'')|^2|\rd \xi|\right)^{1/2}\\
\lesssim N^{-\delta}N^{-(1-\delta)/2}+e^{-cN^\delta}
\end{multline}
where we used (\ref{eqn:boundedinL2}) for the third inequality.
In the same manner we have
\begin{equation}\label{eqn:mu3}
\|\mu_X\|_{{\rm L}^2(\Sigma_{\rm out}\cup\Sigma_{\rm out}'')}
\lesssim
e^{-N^\delta}+N^{-\delta}N^{-(1-\delta)/2}\log N.
\end{equation}
Equations (\ref{eqn:mu1}), (\ref{eqn:mu2}) and (\ref{eqn:mu3}) imply that
\begin{equation}\label{eqn:110}
|{\rm C}(\mu_X\Delta_X)(z)|\lesssim \frac{N^{-\delta}}{N|z-1|}+e^{-c N^{\delta}}.
\end{equation}
Equations  (\ref{eqn:113}) and (\ref{eqn:110})  conclude the proof of (\ref{eqn:1stX}) when $z$ is far enough from $\Gamma$.
As ${\rm dist}(z,\Gamma)=N^{-1+\delta}/100$, the estimate (\ref{eqn:2ndX}) follows from (\ref{eqn:1stX}) by 
Cauchy's integral formula.

Remarkably, these estimates also hold up to $z\in\Gamma_\pm$ thanks to the classical contour deformation argument 
as explained in the proof of \cite[Corollary 7.77]{DeiKriMcLVenZho1999}. This concludes the proof.
\end{proof}

\begin{cor}\label{cor:boundE}
We have
$$
\int_0^1 E(t)\rd t={\rm O}\left(N^{-\delta}(\log N)^3\right).
$$
\end{cor}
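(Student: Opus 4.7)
The plan is to estimate each of the seven integrals in the formula for $E(t)$ from Corollary~\ref{cor:fundamentalerror} using the pointwise bounds of Proposition~\ref{prop:Qestimate}, and then integrate over $t\in[0,1]$. Since the factor $|z-1|^{2\alpha}$ in $f^{(t)}$ is independent of $t$, we have $\dot f^{(t)}/f^{(t)} = \dot V^{(t)}(z)$, and each integrand factorizes as a bilinear combination of entries of $X$ and $X'$, multiplied by $\dot V^{(t)}(z)$ and, on $\Sigma_{\rm out}$ and $\Sigma''_{\rm out}$, by an additional Szeg\H{o}-type ratio. Using $X=\mathrm{Id}+\OO(1/(N|z-1|))$ and $X'=\OO(N^{-\delta}/|z-1|)$ uniformly on $\Gamma$ (and uniformly in $t$, as the jump estimates in Lemma~\ref{lem:jumpestimate} depend on $V^{(t)}$ only through quantities such as $|\mathrm{Im}\,V|<1/10$ and the Szeg\H{o} ratios, which are uniform in $t$), each bilinear combination of type $X_{11}X_{22}'-X_{12}'X_{21}$ is $\OO(N^{-\delta}/|z-1|)$.

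The two contours $\Sigma_{\rm out}$ and $\Sigma''_{\rm out}$ are the easiest: the extra factor $z^{\mp N} e^{\mp 2g\pm V_0^{(t)}}/f^{(t)}$ carries exponential decay, since by Lemma~\ref{lem:estlem} the Szeg\H{o} ratio is $\OO(1)$, while $|z|^{\mp N}=\exp(\mp c\kappa N)=\exp(\mp cN^{\delta})$ on those arcs. Hence these contributions to $E(t)$ are $\OO(e^{-cN^{\delta}})$ uniformly in $t$ and are negligible after integration.

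For the remaining contours $\mathscr{C}_1,\mathscr{C}_1'',\mathscr{C}_2,\mathscr{C}_2''$ and $\Sigma'_{\rm out}$, I plan to apply Fubini's theorem to interchange the $t$- and $z$-integrations, relying on the key auxiliary estimate
\begin{equation*}
\int_0^1 |\dot V^{(t)}(z)|\, \rd t \le C\log N,
\end{equation*}
uniformly for $z$ on these contours. This should follow from $\dot V^{(t)}=(e^V-1)/(1-t+te^V)$ together with the bound $|1-t+te^V|\ge c(1-t+t|e^V|)$ (a consequence of $|\mathrm{Im}\,V|<1/10$ from Lemma~\ref{lem:analytic}): splitting the $t$-integral at $t_0=1/(1+|e^V|)$ yields $C(1+\log^+|e^V|)\le C(1+|V(z)|)$, and $|V(z)|=\OO(\log N)$ on the contours by the Abel summation estimate~\eqref{eqn:Abel}. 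Combined with $\int_{\text{contour}}|\rd z|/|z-1|=\OO(\log N)$ (which holds since $|z-1|\ge c\kappa$ and the arcs have length $\OO(1)$), Fubini yields $\int_0^1 |I_i(t)|\,\rd t \le C\,N^{-\delta}(\log N)^2$ for each such term, and summing over the seven terms gives the stated bound $\OO(N^{-\delta}(\log N)^3)$, with one logarithm to spare.

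The main technical obstacle is the $L^1$-in-$t$ estimate on $\dot V^{(t)}(z)$, since pointwise $|\dot V^{(t)}(z)|$ can grow like $1/t$ when $|e^{V(z)}|$ is large. Controlling the singularity at $t=0$ through the splitting above is what pins down the $\log N$ factor, and checking that this argument interacts correctly with Fubini (i.e., that the bound on $X$ does not itself degrade in $t$) is the central consistency that must be verified. Once these are in place, the rest of the proof is a routine estimation term by term.
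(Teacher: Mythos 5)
Your proposal is correct and follows the paper's argument in its essentials: use Proposition~\ref{prop:Qestimate} for $X$, $X'$ uniformly in $t$, Lemma~\ref{lem:estlem} for the Szeg\H{o}-type ratios, apply Fubini, and close with an $L^1$-in-$t$ bound on $\dot f^{(t)}/f^{(t)}=\dot V^{(t)}=(e^V-1)/(1-t+te^V)$. Your treatment of the latter via $|1-t+te^V|\ge \cos(1/10)\,(1-t+t|e^V|)$, giving $\int_0^1|\dot V^{(t)}(z)|\,\rd t\le C(1+|V(z)|)\le C\log N$, is cleaner and sharper than the paper's (which splits at $t=e^{-(\log N)^2}$ and settles for $(\log N)^2$), and your use of the exponential decay of $z^{\mp N}$ on $\Sigma_{\rm out},\Sigma''_{\rm out}$ is a minor shortcut the paper does not bother with; both refinements overachieve the target $N^{-\delta}(\log N)^3$, as you note.
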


\begin{proof}
By Lemma \ref{lem:estlem}, $|e^{-2g+V^{(t)}_0}/f^{(t)}|=|e^{2g-V^{(t)}_0}/f^{(t)}|=\OO(1)$ in Corollary \ref{cor:fundamentalerror}. Moreover, $\dot f^{(t)}/f^{(t)}$ has a constant sign for fixed $z$ and $\int_0^1 \dot f^{(t)}/f^{(t)}\rd t=V(z)$. These observations injected in Corollary \ref{cor:fundamentalerror} give
\begin{multline*}
\left|\int_0^1E(t)\rd t\right|\lesssim
\sup_{0\leq t\leq 1}\left(\|X\|_{{\rm L}^\infty(\partial{\rm U})} \|X'\|_{{\rm L}^\infty(\partial{\rm U})}N^{-1+\delta}
+
\int_{\Sigma_{\rm out}\cup\Sigma'_{\rm out}\cup\Sigma''_{\rm out}}
|X(z)||X'(z)||\rd z|\right)\sup_{z\in\Gamma}\int_0^1\left|\frac{\dot f^{(t)}}{f^{(t)}}\right|\\
\lesssim
N^{-\delta}(\log N) \sup_{z\in\Gamma}\int_0^1\left|\frac{\dot f^{(t)}}{f^{(t)}}\right|,
\end{multline*}
where we used Proposition \ref{prop:Qestimate}. Moreover, from (\ref{eqn:ugly3rd}) and (\ref{eqn:Abel})
\begin{equation}\label{eqn:Abel2}
V(z)=-\lambda\log|1-z|+\OO(1)
\end{equation}
uniformly in $\Gamma$, so that
if $t\leq e^{-(\log N)^2}$ we have $|\dot f^{(t)}/f^{(t)}|\leq C N^{\lambda}$. If $t>e^{-(\log N)^2}$ we use (\ref{eqn:ImV0})
to conclude that
$$
\left|\frac{\dot f^{(t)}}{f^{(t)}}\right|\leq C\left|\frac{1+e^{V}}{t e^{V}}\right|\leq \frac{C}{t}.
$$
All together, we obtain that for any $||z|-1|<\epsilon N^{-1+\delta}$ we have
$$
\int_0^1\left|\frac{\dot f^{(t)}}{f^{(t)}}(z)\right|\rd t\leq \int_0^{e^{-(\log N)^2}}N^\delta+C\int_{e^{-(\log N)^2}}^1\frac{\rd t}{t}\leq C (\log N)^2,
$$
which concludes the proof.
\end{proof}

\subsection{Proof of Proposition \ref{prop:tail}.\ }\label{subsec:proof}
We choose $\lambda=\alpha$. Note that 
\begin{align*}
V_0&=0,\\
\sum_{k\geq 1}k V_k V_{-k}&=\alpha^2(1-\delta)\log N+{\rm O}(1),\\
\log(b^1_+(1)b^1_-(1))&=2\alpha(1-\delta)\log N+{\rm O}(1).
\end{align*}
Therefore Corollary \ref{cor:fundamentalerror} and Corollary \ref{cor:boundE} yield
$$
\log D_N(f^{(1)})-\log D_N(f^{(0)})
=
-\alpha^2(1-\delta)\log N +{\rm O}(1).
$$
Together with the following Lemma \ref{lem:Selberg}, this gives
$$
\log D_N(1)=\alpha^2\delta\log N +\OO(1),
$$
which concludes the proof.

\begin{lem}\label{lem:Selberg}
The Toeplitz determinant assocated to the pure Fisher-Hartwig symbol satisfies the estimates
$$
\log D_N(f^{(0)})=\alpha^2\log N +{\rm O}(1).
$$
\end{lem}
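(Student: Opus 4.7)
The plan is to recognize $\log D_N(f^{(0)})$ as a closed-form object via the classical Selberg/Morris integral, and then extract its asymptotics via Stirling's formula. Since $V^{(0)} \equiv 0$ by \eqref{eqn:Vt}, we have $f^{(0)}(z) = |z-1|^{2\alpha}$, so this is precisely the Toeplitz determinant associated to the pure Fisher-Hartwig symbol with a single jump singularity of strength $\alpha$ at $z=1$.

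The first step is to use Heine's identity \eqref{eqn:Heine} together with the Weyl integration formula on $\U(N)$ to rewrite
\begin{equation*}
D_N(f^{(0)}) = \frac{1}{N!}\int_{[0,2\pi]^N}\prod_{k=1}^N |e^{\ii\theta_k}-1|^{2\alpha}\prod_{1\le j<k\le N}|e^{\ii\theta_j}-e^{\ii\theta_k}|^2\prod_{k=1}^N\frac{\rd\theta_k}{2\pi}.
\end{equation*}
This is a special case of the Morris integral (circular analogue of the Selberg integral), which evaluates in closed form to
\begin{equation*}
D_N(f^{(0)}) = \prod_{j=0}^{N-1}\frac{\Gamma(j+1)\,\Gamma(j+1+2\alpha)}{\Gamma(j+1+\alpha)^2}.
\end{equation*}

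The second step is to extract the asymptotics by taking logarithms and performing a second-difference Taylor expansion of $a\mapsto\log\Gamma(j+a)$. Writing $g(a)=\log\Gamma(j+a)$, one has $g(0)-2g(\alpha)+g(2\alpha)=\alpha^2 g''(\xi_j)$ for some $\xi_j\in(0,2\alpha)$, and $g''(\xi_j)=\psi'(j+\xi_j)=\frac{1}{j}+\OO(\frac{1}{j^2})$ uniformly in $j\ge 1$ (with constants depending only on the fixed $\alpha$). Summing over $j=1,\dots,N$ then gives
\begin{equation*}
\log D_N(f^{(0)}) = \sum_{j=1}^N\left(\frac{\alpha^2}{j}+\OO\!\left(\frac{1}{j^2}\right)\right) = \alpha^2\log N + \OO(1),
\end{equation*}
as claimed. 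Alternatively, one could simply invoke the Keating-Snaith/Barnes G-function asymptotic $D_N(|z-1|^{2\alpha}) \sim G(1+\alpha)^2 G(1+2\alpha)^{-1}N^{\alpha^2}$, which immediately yields the bound, but this is overkill since only $\OO(1)$ precision is needed.

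I do not expect a real obstacle here: the identification with the Morris integral is classical, and the Stirling expansion is routine for fixed $\alpha$. The only point worth caution is that $\alpha$ is fixed in $N$, so the implicit constants in $\OO(1/j^2)$ are genuinely uniform and the summation is harmless; no $N$-dependent mesoscopic issue arises, in sharp contrast with the interpolating determinant $D_N(f^{(t)})$ treated by the Riemann-Hilbert analysis in Sections \ref{subsec:RH}-\ref{subsec:anaRH}.
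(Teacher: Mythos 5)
Your proof is correct, and it starts from the same place the paper does: both identify $D_N(f^{(0)})$ with the Keating--Snaith/Selberg closed-form product $\prod_{k=1}^{N}\Gamma(k)\Gamma(k+2\alpha)/\Gamma(k+\alpha)^{2}$. Where you diverge is only in the asymptotic extraction: the paper rewrites this product in terms of the Barnes $G$-function and then invokes the Voros asymptotic expansion of $\log G(1+z)$, whereas you perform a second-difference Taylor expansion $g(0)-2g(\alpha)+g(2\alpha)=\alpha^{2}g''(\xi_j)$ on $g(a)=\log\Gamma(k+a)$ and use $\psi'(k+\xi_j)=k^{-1}+\OO(k^{-2})$ directly. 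Your route is a bit more elementary (no Barnes $G$ expansion needed), while the paper's is more systematic and would also deliver the sharper constant $G(1+\alpha)^2/G(1+2\alpha)$ at no extra cost --- which you correctly note is overkill for $\OO(1)$ precision. One small indexing caveat worth tidying: in the product written with $j=0,\dots,N-1$, the $j=0$ factor $\Gamma(1)\Gamma(1+2\alpha)/\Gamma(1+\alpha)^2$ should be absorbed into the $\OO(1)$ separately before applying the trigamma estimate, since the second-difference bound $g''(\xi_j)\asymp 1/j$ is only uniform for $j\ge 1$; with the paper's indexing ($k$ from $1$ to $N$) this is automatic.
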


\begin{proof}
This is an elementary consequence of Selberg's integral. 
The following exact formula holds \cite{KeaSna2000}:
$$
{\rm D}_N(f^{(0)})
=
\prod_{k=1}^N\frac{\Gamma(k)\Gamma(k+2\alpha)}{\Gamma(k+\alpha)^2}.
$$
Let $G$ be the Barnes function, which satisfies in particular $G(z+1)=\Gamma(z)G(z)$, $G(1)=1$.
Then
\begin{equation}\label{eqn:asympt1}
\log D_N(f^{(0)})=\log G(1+N+2\alpha)+\log G(1+N)-2\log G(1+N+\alpha)-\log G(1+2\alpha)+2\log G(1+\alpha).
\end{equation}
The Barnes function satisfies the asymptotic expansion \cite{Vor1987}
\begin{equation}\label{eqn:asympt2}
\log G(1+z)=\frac{z^2}{2}\log z-\frac{3}{4}z^2+\frac{1}{2}\log(2\pi) z-\frac{1}{12}\log z+\zeta'(-1)+{\rm O}\left(\frac{1}{z^2}\right).
\end{equation}
From (\ref{eqn:asympt1}) and (\ref{eqn:asympt2}) we get
the result of the lemma.
\end{proof}
\noindent Note that Lemma \ref{lem:Selberg} also proves Lemma \ref{lem: Full sum exp mom}.

\subsection{Gaussian approximation of the increments.\ }\label{subsec:Gauss}
We now prove Lemmas \ref{lem: gaussian UB}, \ref{lem: two point fourier bound} and \ref{lem: exp moment continuity}, which will all follow from  the following Proposition \ref{thm:FourierRH}.

Let $m\in\mathbb{N}$ be fixed,  let $\bh=(h_1,\dots,h_m)$ (for any $1\leq k\leq m$, $h_1,\dots,h_m\in[0,2\pi]$), and let  $\ba=(\alpha_1,\dots,\alpha_m)$, 
$\bb=(\beta_1,\dots,\beta_m)$ (for any $1\leq k\leq m$, $0<\alpha_k<\beta_k<1$) have real entries. Let $\bx=(\xi_1,\dots,\xi_m)$ have possibly complex entries.
In this subsection, we change the definition of $V$ and are interested in functions of type
\begin{equation}\label{eqn:fRH}
V(z)=\frac{1}{2}
\sum_{k=1}^m \xi_k \sum_{j=N^{\alpha_k}}^{N^{\beta_k}}\frac{z^j e^{-\ii j h_k}+z^{-j} e^{\ii j h_k}}{j},
\end{equation}
so that $\sum_{ \ell=1}^N V(e^{\ii\theta_\ell}) = \sum_{k=1}^m \xi_k \sum_{j=N^{\alpha_k}}^{N^{\beta_k}}\Re\left(\frac{e^{-\ii j h_k} \Tr(U_N^j)}{j}\right)$.
We will also consider in linear combinations of such functions.

For a general smooth function on the unit circle we define
\begin{equation}\label{eqn:f normRH}
\sigma^2(V)=\sum_{j=1}^\infty j V_jV_{-j}.
\end{equation}
We have the following estimates, for general $V$, not necessarily of type (\ref{eqn:fRH}).

\begin{prop}[Fourier transform asymptotics]\label{thm:FourierRH}
Let $0<\delta<1$ be fixed. Assume that   $V$ is analytic in $||z|-1|<N^{-1+\delta}$ and $|V(z)|<N^{\delta-\e}$ in this domain, for some fixed $\e>0$. Then
\begin{equation}\label{eqn:essgaus}
\E\left(e^{\sum_{\ell=1}^NV(e^{\ii\theta_\ell})}\right)=e^{\sigma^2(V)}\left(1+\OO(e^{-cN^{\delta}})\right).
\end{equation}
\end{prop}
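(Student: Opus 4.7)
\medskip

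\noindent\textit{Proof plan.} The plan is to reuse the Riemann--Hilbert machinery of Sections \ref{subsec:RH}--\ref{subsec:anaRH}, but in the much simpler setting where the symbol has no Fisher--Hartwig singularity. By Heine's formula (\ref{eqn:Heine}) we have $\E(e^{\sum_\ell V(e^{\ii\theta_\ell})}) = {\rm D}_N(e^V)$, so the task is to obtain strong Szeg\H{o} asymptotics ${\rm D}_N(e^V) = e^{N V_0 + \sigma^2(V)}(1+\OO(e^{-cN^\delta}))$ with a quantitative error, where for symbols of the form (\ref{eqn:fRH}) the constant $V_0$ vanishes. The gain over Proposition \ref{prop:tail} is that, absent the factor $|z-1|^{2\alpha}$, no local parametrix around $z=1$ is needed and the contour reduces to two concentric circles.

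First I would interpolate along $V^{(t)} = tV$ (note $e^{V^{(t)}}$ is nonvanishing so this is simpler than (\ref{eqn:Vt})) and construct the Wiener--Hopf factors $b_\pm^{(t)}$ as in (\ref{b+})--(\ref{b-}); by the hypothesis that $V$ is analytic in $\{\, ||z|-1|<N^{-1+\delta}\,\}$ with $|V|<N^{\delta-\varepsilon}$, these factors extend analytically across the circle and satisfy $\log b_\pm^{(t)} = \OO(N^{\delta-\varepsilon})$ there. The Szeg\H{o} function $g$ has the same form as (\ref{eqn:Szego}) but without any $(z-1)^\alpha$ or $z^\alpha$ contribution.

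Next I would set up the global $R$-Riemann--Hilbert problem on $\Sigma_{\rm out}\cup\Sigma_{\rm out}''$, the two circles of radii $1\pm\tfrac{2}{3}N^{-1+\delta}$, with jumps of the triangular form $I+\OO(z^{\mp N}e^{\mp 2g}/f^{(t)})$ as in (\ref{eqn:QbounOut}). On $\Sigma_{\rm out}$ one has $|z^{-N}|\le e^{-cN^\delta}$; combined with the bound $|e^{-2g+V_0^{(t)}}/f^{(t)}|\le e^{CN^{\delta-\varepsilon}}$ (following the proof of Lemma \ref{lem:estlem}, but now without a singularity to balance), the full jump is of size $e^{-cN^\delta}$, since $N^\delta - CN^{\delta-\varepsilon}\ge cN^\delta$. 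The small-norm theorem for Riemann--Hilbert problems then yields, by the same Cauchy-operator argument as in Proposition \ref{prop:Qestimate}, the uniform estimate $X(z)-I,\ X'(z) = \OO(e^{-cN^\delta})$ on and off the contour.

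Finally, a differential identity analogous to Proposition \ref{prop:fundamentalerror} (now with no contribution from the disk $\mathrm U$ or from the $\alpha$-singularity terms) gives
\[
\log {\rm D}_N(e^V) - \log {\rm D}_N(1) \;=\; N V_0 + \sigma^2(V) + \int_0^1 E(t)\,\rd t,
\]
where $E(t)$ is an integral over $\Sigma_{\rm out}\cup\Sigma_{\rm out}''$ of terms that are bilinear in the entries of $X,X'$ and weighted by $\dot f^{(t)}/f^{(t)}$. Since the latter is bounded by $\OO(N^{\delta-\varepsilon})$ uniformly (no singularity to blow it up, as $V$ is bounded there), and $X,X' = \OO(e^{-cN^\delta})$, one obtains $\int_0^1 E(t)\,\rd t = \OO(e^{-cN^\delta})$; using ${\rm D}_N(1)=1$ gives the claim.

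The main obstacle is to verify that the exponent in the jump estimate survives: we need $|z^{\mp N} e^{\mp 2g + V_0^{(t)}}/f^{(t)}|$ on $\Sigma_{\rm out}\cup\Sigma_{\rm out}''$ to be genuinely exponentially small, which is exactly where the strict inequality $\varepsilon>0$ in the hypothesis $|V|<N^{\delta-\varepsilon}$ is used to overcome the $e^{CN^{\delta-\varepsilon}}$ growth of the Wiener--Hopf factors. Once this is in hand, Lemmas \ref{lem: gaussian UB}, \ref{lem: two point fourier bound}, and \ref{lem: exp moment continuity} follow by choosing the appropriate linear combination of truncated power sums in (\ref{eqn:fRH}), checking that $\sigma^2(V)$ matches the Gaussian variance prescribed in each statement.
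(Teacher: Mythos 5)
Your overall plan is sound, and the Riemann--Hilbert analysis you sketch (small-norm argument with jump $\OO(e^{-cN^{\delta}+CN^{\delta-\varepsilon}})=\OO(e^{-cN^\delta})$ on the two circles, bilinear error terms weighted by $\dot f^{(t)}/f^{(t)}$ of polynomial size) matches the paper's steps (a)--(c). The genuine divergence is the interpolation. You take the one-shot exponential path $f^{(t)}=e^{tV}$, observing correctly that it never vanishes; the paper instead uses a \emph{many-step, linear-in-$f$} interpolation $e^{V^{(k,t)}}=(1-t)e^{V^{(k-1)}}+te^{V^{(k)}}$ with $V^{(k)}=\tfrac{k}{N}V$, $k=1,\dots,N$, following Deift--Its--Krasovsky. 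The paper's stated reason for abandoning the Fisher--Hartwig interpolation \eqref{eqn:Vt} is precisely the nonvanishing issue you dodge with $e^{tV}$; but they then choose the piecewise-linear scheme rather than yours because the differential identity they invoke (\cite[eq.\ (5.106)]{DeiItsKra2014}, giving the explicit $V_0$ and $\tfrac{2k-1}{N^2}\sigma^2(V)$ contributions per step, with the error in the form \eqref{eqn:errorK2}) is proved there for interpolations linear in the symbol. Your path $e^{tV}$ is not of that form, so you cannot cite that identity directly: you would need to re-derive it for your $t$-dependence, namely compute $\int_0^1\!\int_{\mathscr{C}_{1\pm\kappa}} g^{(t)}{}'\,\tfrac{\dot f^{(t)}}{f^{(t)}}\,\tfrac{\rd z}{2\pi\ii}\,\rd t$ with $g^{(t)}=tg$ and $\dot f^{(t)}/f^{(t)}=V$ (a doable residue computation giving $NV_0+\sigma^2(V)$, using $\int_0^1 t\,\rd t=\tfrac12$). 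This is not a gap that invalidates your proof, but it is work the paper avoids by citing the existing formula, at the cost of carrying $N$ interpolation steps (harmless, since each step contributes error $\OO(e^{-cN^\delta})$ and the $N$-fold sum is absorbed). A second, minor, difference: the many-step scheme yields the clean bound $|\dot f^{(k,t)}/f^{(k,t)}|\le 1$, whereas your $\dot f^{(t)}/f^{(t)}=V=\OO(N^{\delta-\varepsilon})$; both are polynomial and therefore negligible against the $e^{-cN^\delta}$ factors, as you note.
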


\begin{proof}
We follow the method of Subsections \ref{subsec:Set}, \ref{subsec:RH}, \ref{subsec:anaRH} and \ref{subsec:proof} with a notable difference: there is no singularity at $1$,
that is $\alpha=0$. 
The contour $\Gamma$ of the  $X$-Riemann-Hilbert problem is just 
$\Sigma_{\rm out}\cup\Sigma''_{\rm out}$ (their closed version, i.e. two full circles).

We cannot apply the previous method directly. Our new choice for $V$ has much greater amplitude than (\ref{eqn:Uext}), so if we adopt the interpolation (\ref{eqn:Vt}), there is no not guarantee that
$1-t+t e^{V(z)}\neq 0$ for all $0\leq t\leq 1$ and $||z|-1|<N^{-1+\delta}$. Lemma \ref{lem:analytic} does not hold anymore and we cannot consider any Riemann-Hilbert problem.

To circumvent this problem, we adopt a different interpolation in many steps, following an idea from 
\cite[Section 5.4]{DeiItsKra2014}. Define $N$ functions $(V^{(k)})_{1\leq k\leq N}$ (any number on a polynomial scale greater than $N^\delta$ would actually work) simply defined as $V^{(k)}(z)=\frac{k}{N}V(z)$. 
For fixed $k$, we consider the interpolation
$$
e^{V^{(k,t)}(z)}=(1-t)e^{V^{(k-1)}(z)}+t e^{V^{(k)}(z)}.
$$
Then the analogue of Lemma \ref{lem:analytic} holds: for any $k$ and $t$, $V^{(k,t)}$ admits an analytic continuation to
$||z|-1|<\kappa$, because, on that domain, we have $|{\rm Im} V^{(k-1)}(z)-{\rm Im} V^{(k)}(z)|\leq \frac{1}{N}\sup_{||z|-1|<\kappa}{|V(z)|}\leq N^{-1+\delta}<1/10$.
We therefore can consider the following Riemann-Hilbert problem for the matrix $X$ (we do not mention the dependence of $X$ in $t$ and $k$):
\begin{enumerate}
\item $X$ is analytic for $z\in\mathbb{C}\backslash \Gamma$.
\item The boundary values of $R$ are related by the jump condition
$$
X_+(z)=X_-(z)Q(z)
$$
where the jump matrix $Q$ is given by
\begin{equation*}
\begin{array}{ll}
Q(z)=\left(
\begin{array}{cc}
1&0\\
0&e^{V^{(k,t)}_0}
\end{array}
\right)N(z)\left(\begin{array}{cc}1&0\\f^{(k,t)}(z)^{-1}z^{-N}&1\end{array}\right)N(z)^{-1}\left(
\begin{array}{cc}
1&0\\
0&e^{-V^{(k,t)}_0}
\end{array}
\right)&{\rm if\ } z\in\Sigma_{\rm out},\\
Q(z)=\left(
\begin{array}{cc}
1&0\\
0&e^{V^{(k,t)}_0}
\end{array}
\right)N(z)\left(\begin{array}{cc}1&0\\f^{(k,t)}(z)^{-1}z^{N}&1\end{array}\right)N(z)^{-1}\left(
\begin{array}{cc}
1&0\\
0&e^{-V^{(k,t)}_0}
\end{array}
\right)&{\rm if\ } z\in\Sigma''_{\rm out},
\end{array}
\end{equation*}
where we used the notations
\begin{align*}
f^{(k,t)}(z)&=e^{V^{(k,t)}(z)}\\
N(z)&=\left\{
\begin{array}{ll}
e^{g(z)\sigma_3}&{\rm if}\ |z|>1\\
e^{g(z)\sigma_3}\left(
\begin{array}{cc}
0&1\\
-1&0
\end{array}
\right)&{\rm if}\ |z|<1
\end{array}\right.\\
\exp(g(z))&=\exp\frac{1}{2\pi \ii}\int_{\mathscr{C}}\frac{\ln V^{(k,t)}(s)}{s-z}\rd s
=
\left\{
\begin{array}{ll}
e^{V^{(k,t)}_0}b^{(k,t)}_+(z)&{\rm if}\ |z|<1,\\
b^{(k,t)}_-(z)^{-1}&{\rm if}\ |z|>1.\\
\end{array}
\right.
\end{align*}
\item $X(z)={\rm Id}+{\rm O}(1/z)$ as $z\to\infty$.
\end{enumerate}
Define
\begin{equation}\label{eqn:DNk}
{\rm D}_N(V^{(k})=\E\left(\prod_{j=1}^N e^{V^{(k)}(e^{\ii\theta_j})}\right)\ .
\end{equation}
From \cite[Equation (5.106)]{DeiItsKra2014}, for any $1\leq k\leq N$ we have
\begin{equation}\label{eqn:errorK}
\log {\rm D}_N(f^{(k)})-\log{\rm D}_N(f^{(k-1)})
=
V_0+\frac{2k-1}{N^2}\sum_{k=1}^\infty k V_kV_{-k}
+\int_0^1 E(t) \rd t.
\end{equation}
where
\begin{align}
E(t)=&\int_{\Sigma_{\rm out}} z^{-N} \frac{e^{-2g+V_0^{(k,t)}}}{f^{(k,t)}}\left(X_{22}'X_{12}-X_{12}'X_{22}\right)_+\frac{\dot f^{(k,t)}}{f^{(k,t)}}\frac{\rd z}{\ii2\pi }\notag
+\int_{\Sigma''_{\rm out}}z^{N} \frac{e^{2g-V_0^{(k,t)}}}{f^{(k,t)}}\left(X_{21}'X_{11}-X_{11}'X_{21}\right)_-\frac{\dot f^{(k,t)}}{f^{(k,t)}}\frac{\rd z}{\ii2\pi }\\
&+\int_{\Sigma'_{\rm out}}\left((X_{11}'X_{22}-X_{12}X_{21}')-(X_{11}X_{22}'-X_{12}'X_{21})\right)\frac{\dot f^{(k,t)}}{f^{(k,t)}}\frac{\rd z}{\ii 2\pi}\label{eqn:errorK2}.
\end{align}
The formula (\ref{eqn:errorK2}) comes from \cite[Section 5.4]{DeiItsKra2014}, in the same way as we followed 
\cite[Section 5.3]{DeiItsKra2014} for the proof of Proposition \ref{prop:fundamentalerror} and Corollary \ref{cor:fundamentalerror}. 

Note that $\sum_{k=1}^N(2k-1)/N^2=1$, so that Proposition \ref{thm:FourierRH}  will be proved by summation of (\ref{eqn:errorK}) if, uniformly in $k$ and $t$, we have
\begin{equation}\label{eqn:erorsmall}
|E(t)|\leq e^{-cN^{\delta}}.
\end{equation}
We now bound all terms in (\ref{eqn:fRH}), uniformly in $k$ and $t$.
\begin{enumerate}[(a)]
\item 
Reproducing the reasoning in Lemma \ref{lem:estlem}, we have, on $\Sigma_{\rm out}$, 
$$
\log \left|\frac{e^{-2g+V_0^{(k,t)}}}{f^{(t)}}(z)\right|
\leq
C\int_{\mathscr{C}_{1+2\kappa}}\left(\frac{V^{(k,t)}(s)}{s-z}-\frac{V^{(k,t)}(s)}{s-\bar z^{-1}}\right)\rd s\leq C\|V^{(k,t)}\|_{{\rm L}^{\infty}(\Sigma_{\rm out})}\leq C N^{\delta-\e}.
$$
\item Clearly $|\dot f^{(k,t)}/f^{(k,t)}|<\frac{1}{N}\sup_{||z||-1|<\kappa} |V(z)|\leq 1$.
\item The analogue of Lemma \ref{lem:jumpestimate} now just involves $\Sigma_{\rm out}$ and $\Sigma_{\rm out}''$. By equation (\ref{eqn:QbounOut}), using (a) we now have 
$Q(z)-{\rm Id}=\OO(e^{C N^{\delta-\e}}e^{-c N^\delta})\leq e^{-cN^{\delta}}$. Following the reasoning of Proposition \ref{prop:Qestimate}, this implies that
$$
X(z)_{\pm}-{\rm Id}={\rm O}\left(e^{-cN^{\delta}}\right),X'(z)_{\pm}={\rm O}\left(e^{-cN^{\delta}}\right)
$$
uniformly  in  $z\in\Sigma_{{\rm out}}\cup\Sigma''_{{\rm out}}$,
and the same bounds hold for $X(z)$ and $X'(z)$  uniformly in $||z|-1|<\kappa$.
\end{enumerate}
The estimates (a), (b) and (c) in (\ref{eqn:errorK}) imply the result (\ref{eqn:erorsmall}) and therefore conclude the proof of the proposition.
\end{proof}

The Fourier decomposition of $V$ is 
$$
V(e^{\ii\theta})=\sum_{j\in\mathbb{Z}} V_j e^{\ii j\theta},\ 
V_j = \frac{1}{2} \sum_{ k=1}^m \xi_k \frac{e^{-\ii jh_k}}{|j|}\1_{N^{\alpha_k}\leq |j|\leq N^{\beta_k}}.
$$

\begin{proof}[Proof of Lemma \ref{lem: gaussian UB}]
We apply Proposition \ref{thm:FourierRH}
to
$$
V(z)=\frac{1}{2} \xi\sum_{1 \le j < N^{1-\delta}} -\frac{e^{-\ii jh}z^j+e^{\ii jh}z^{-j}}{j}
$$
We have $|V(z)|<C|\xi|\log N<N^{\delta/2}$ on $||z|-1|<N^{-1+\delta}$. Therefore (\ref{eqn:essgaus}) holds and the proof is concluded by noting that
$2\sigma^2(V)=\xi^2\sigma^2$, where we used both notations (\ref{eqn:f normRH}) and (\ref{eq: exp mom of trunc sum}).
\end{proof}

\begin{proof}[Proof of Lemma \ref{lem: two point fourier bound}]
The proof is identical to the above proof of Lemma \ref{lem: gaussian UB}.
\end{proof}

\begin{proof}[Proof of Lemma \ref{lem: exp moment continuity}]
We apply Proposition \ref{thm:FourierRH}
to
$$
V(z)=\frac{1}{2}\xi \left( \sum_{j=1}^{N^{1-\delta}} -\frac{ ( e^{-\ii jh} z^j+e^{\ii jh} z^{-j} ) }{j} 
        - \sum_{j=1}^{N^{1-\delta}} -\frac{(z^j+z^{-j} )}{j} \right)
$$
As $|\xi h N^{1-\delta}| \le N^{\delta-\e}$, we have $|V(z)|<N^{\delta-\frac{\e}{2}}$ for $||z|-1|<N^{-1+\delta}$. Therefore (\ref{eqn:essgaus}) holds and the proof is concluded by noting that
$\sigma^2(V)/2=2\xi^2(\sigma^2-\rho)$, where we used both notations (\ref{eqn:f normRH}) and (\ref{eqn: Sigma 2}).
\end{proof}

\subsection{Tail estimate for the imaginary part.\ } \label{subsec:Im}
The following is the strict analogue of Proposition \ref{prop:tail}, for the imaginary part of the series.

\begin{prop}
\label{prop:tailIm}
For any fixed (small) $\delta \in (0,1)$ and (large) $\alpha$, for large enough $N$ we have
\begin{equation}\label{eq: mgf ub tail}
  \E\left( \exp\left(2\alpha \sum_{j \ge N^{1-\delta} } \frac{{\rm Im}(\Tr {\rm U}_N^j)}{j} \right) \right)
 = \exp\left( (1+\oo(1))\alpha^2 \delta \log N \right).
\end{equation}
\end{prop}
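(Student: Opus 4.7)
I would follow the proof of Proposition \ref{prop:tail} nearly verbatim, replacing the pure-$\alpha$ Fisher-Hartwig factor $|z-1|^{2\alpha}$ by the pure-$\beta$ (``jump-type'') factor which naturally arises from $\Im\log(1-z)$. Using $\sum_{j\ge 1}\sin(j\theta)/j = (\pi-\theta)/2$ on $(0,2\pi)$, equivalently $\sum_{j\ge 1}\Im(\Tr U_N^j)/j = -\Im\log\det(I-U_N)$, together with Heine's formula, the expectation in Proposition \ref{prop:tailIm} equals $\E\prod_k f^{(1)}(e^{\ii\theta_k})$ (up to a harmless multiplicative constant $e^{\pm N\alpha\pi}$), where
\[
f^{(1)}(e^{\ii\theta}) = e^{V(e^{\ii\theta})}\,e^{-\alpha(\theta-\pi)}, \qquad V(z) = \ii\alpha\!\!\!\sum_{1\le j<N^{1-\delta}}\!\!\!\frac{z^j-z^{-j}}{j},
\]
so that $V(e^{\ii\theta})=-2\alpha\sum\sin(j\theta)/j$ is the imaginary analog of the truncation in \eqref{eqn:Uext}, and $e^{-\alpha(\theta-\pi)}$ is a pure $\beta$-type Fisher-Hartwig symbol at $z=1$, with jump of amplitude $e^{-2\pi\alpha}$.

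\textbf{Execution.} The Riemann-Hilbert problem of Subsections \ref{subsec:Set}--\ref{subsec:anaRH} transfers with three substitutions: (i) in the Szeg\H{o} function \eqref{eqn:Szego} the factor $(z-1)^\alpha$ is replaced by the appropriate branch of $\exp(\ii\alpha\log(1-z))$, encoding a jump rather than a power-law zero; (ii) the local parametrix $\Psi(\zeta)$ in \eqref{eqn:Psi} is replaced by the corresponding pure-$\beta$ model, itself an explicit confluent-hypergeometric matrix (see Section 4 of \cite{DeiItsKra2011}) with the same $\mathrm{Id}+\OO(\zeta^{-1})$ decay at infinity; (iii) the magnitude estimates of Lemma \ref{lem:estlem}, which are the only place where the specific form of the Szeg\H{o} normalization enters the jump analysis, survive because $|(z-1)^\alpha|$ and $|\exp(\ii\alpha\log(1-z))|$ have the same $\OO(1)$ modulus on $\partial\mathrm{U}$, $\Sigma_{\rm out}$ and $\Sigma_{\rm out}''$. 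The small-norm argument (Proposition \ref{prop:Qestimate}) and the differential identity (Corollary \ref{cor:fundamentalerror}) then yield
\[
\log D_N(f^{(1)}) - \log D_N(f^{(0)}) = -\alpha^2(1-\delta)\log N + \OO(1),
\]
from $\sum_k kV_kV_{-k} = \alpha^2(1-\delta)\log N + \OO(1)$ (unchanged from the real case since $V_kV_{-k}=\alpha^2/k^2$ in both settings) combined with the pure-$\beta$ Szeg\H{o} correction. The initial value $D_N(f^{(0)})$ is the pure-$\beta$ Toeplitz determinant $\prod_k\Gamma(k)^2/(\Gamma(k+\ii\alpha)\Gamma(k-\ii\alpha))$ (Basor--Tracy / Keating--Snaith); its Barnes-$G$ expansion via \eqref{eqn:asympt2} gives $\log D_N(f^{(0)}) = \alpha^2\log N + \OO(1)$, matching the variance of $\Im\log\det(I-U_N)$. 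Adding the two lines yields $\log D_N(f^{(1)}) = \alpha^2\delta\log N + \OO(1)$, which is the claim.

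\textbf{Main obstacle.} The genuine new difficulty is the interpolation. Because $V$ is purely imaginary on $\mathscr{C}$, the one-step interpolation $e^{V^{(t)}} = (1-t)+te^V$ of \eqref{eqn:Vt} is \emph{not} zero-free: at $t=1/2$ it vanishes whenever $V(z) = \ii\pi(2k+1)$, and for fixed $\alpha$ and large $N$ such $z$ do exist in $\{||z|-1|<\kappa\}$, so the analog of Lemma \ref{lem:analytic} fails. I would circumvent this exactly as in the proof of Proposition \ref{thm:FourierRH} (Subsection \ref{subsec:Gauss}): break the interpolation into $N$ linear sub-steps $V^{(k)} = (k/N)V$, and interpolate $e^{V^{(k,t)}} = (1-t)e^{V^{(k-1)}} + t\, e^{V^{(k)}}$ between consecutive steps. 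Since $\|V^{(k)}-V^{(k-1)}\|_{L^\infty}=\OO(\log N/N)=\oo(1)$, each sub-step is zero-free and admits an analytic logarithm for large $N$, and the summed $N$ differential identities (with per-step error $\OO(e^{-cN^\delta})$) yield a total error of the same order as in Corollary \ref{cor:boundE}. Verifying the analog of Lemma \ref{lem:jumpestimate} (i.e.~$Q(z)-\mathrm{Id} = \OO(N^{-\delta})$ on $\partial\mathrm{U}$ uniformly in the $N$-dependent potential $V^{(k,t)}$) then reduces to checking that the asymptotics of the relevant confluent hypergeometric functions from \cite{MarMcLSaf2006} remain uniform over bounded $\alpha$, which is routine but tedious.
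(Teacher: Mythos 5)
Your overall plan matches the paper's: replace the power-law Fisher--Hartwig factor by the pure-$\beta$ jump factor $e^{-\alpha(\theta-\pi)}$, take $V(z)=\ii\alpha\sum_{j<N^{1-\delta}}(z^j-z^{-j})/j$, run the same differential identity and Riemann--Hilbert analysis, and close with the Selberg/Barnes-$G$ computation of $D_N(f^{(0)})$, which gives $\alpha^2\log N+\OO(1)$.

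However, the ``main obstacle'' you identify does not exist, and the argument you build around it starts from a false premise. You assert that $V$ is purely imaginary on the unit circle, so that $1-t+te^{V}$ could vanish for some $t\in(0,1)$. In fact $V$ is \emph{real} on $\mathscr{C}$: writing $z=e^{\ii\theta}$ one has $z^j-z^{-j}=2\ii\sin(j\theta)$, hence $V(e^{\ii\theta})=\ii\alpha\cdot 2\ii\sum\sin(j\theta)/j=-2\alpha\sum\sin(j\theta)/j\in\mathbb{R}$. Consequently $1-t+te^{V}>0$ on $\mathscr{C}$ for every $t\in[0,1]$, and the one-step interpolation \eqref{eqn:Vt} of the paper goes through verbatim: the analogue of Lemma \ref{lem:analytic} holds by the same bound on $V'$ (the imaginary part of the analytic continuation remains small on $||z|-1|<\kappa$). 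The paper indeed uses exactly this one-step interpolation. Your $N$-step interpolation would also work, but it is a detour invented to solve a non-problem, and it obscures the fact that the imaginary-part computation is actually structurally simpler (here $V$ is bounded rather than logarithmically divergent near $z=1$).

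A smaller point: you claim the $\OO(1)$ magnitude estimate of Lemma \ref{lem:estlem} survives because $|(z-1)^\alpha|$ and $|\exp(\ii\alpha\log(1-z))|$ have comparable modulus on the contours. This is not quite what happens. The proof of Lemma \ref{lem:estlem} relies on the logarithmic Abel-summation asymptotic \eqref{eqn:Abel}, whose analogue in the imaginary case is of a different nature, and the paper explicitly records that Lemmas \ref{eqn:rewriting} and \ref{lem:estlem} \emph{fail} and are replaced by the weaker bounds $N^{-C}<\left|\frac{e^{\mp 2g\pm V^{(t)}_0}}{f^{(t)}}\right|<N^{C}$. This suffices because the jump on $\Sigma_{\rm out}$ and $\Sigma''_{\rm out}$ carries a factor $z^{\mp N}=\OO(e^{-cN^{\delta}})$, which absorbs any polynomial loss. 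So your conclusion is salvageable, but the justification given is not the right one, and if pressed one would need to produce the polynomial bounds, not the $\OO(1)$ bounds, to justify Proposition \ref{prop:Qestimate} in this setting.
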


\begin{proof}
We follow the ideas from subsections 
\ref{subsec:Set} to \ref{subsec:proof}. The previous definition of $V$ in (\ref{eqn:Uext}) is now replaced with
\begin{equation}\label{eqn:UextIm}
V(z)=\lambda\sum_{j=1}^{N^{1-\delta}}\frac{z^j-z^{-j}}{\ii j},\ {\rm i.e.}\ V_j=\frac{\lambda}{\ii j}\1_{|j|\leq N^{1-\delta}},
\end{equation}
where we will choose $\lambda=-\alpha$.
Denoting $z=e^{\ii \theta}$ with $0\leq \theta<2\pi$, we now have
$$
f^{(t)}(z)=e^{V^{(t)}(z)}e^{-\alpha(\theta-\pi)}.
$$
With the notations in \cite[equation (1.2)]{DeiItsKra2011}, this coincides with the choice $\beta_0=\ii\alpha$,  $\beta_j=0$ for $j\geq 1$ and $\alpha_j\equiv0$. Proposition \ref{prop:fundamentalerror} is now changed into
\begin{equation}\label{eqn:fundamentalerrorIm}
\log {\rm D}_N(f^{(1)})-\log{\rm D}_N(f^{(0)})
=
N V_0+\sum_{k=1}^\infty k V_kV_{-k}+\ii\alpha \log (b^1_+(1)/b^1_-(1))
+\int_0^1 \widetilde E(t) \rd t,
\end{equation}
as shown in \cite[equation (5.93)]{DeiItsKra2014}.
The new error term $\widetilde E(t)$ will be defined and shown to be negligible at the end of this proof. We first estimate all other terms in (\ref{eqn:fundamentalerrorIm}). Clearly, $V_0=0$, $\sum_{k\geq 1}kV_kV_{-k}=\alpha^2(1-\delta)\log N+\OO(1)$, and from  (\ref{eqn:b+b-}) we have 
$\log (b^1_+(1)/b^1_-(1))=\ii 2\alpha(1-\delta)\log N+\OO(1)$. Moreover, from Selberg's integrals it is well known that
(see \cite{KeaSna2000})
$$
{\rm D}_N(f^{(0)})
=
\prod_{k=1}^N\frac{\Gamma(k)^2}{\Gamma(k+\ii\alpha)\Gamma(k-\ii\alpha)}.
$$
With the asymptotic expansion (\ref{eqn:asympt2}), this yields $\log {\rm D}_N(f^{(0)})=\alpha^2\log N+\OO(1)$.
To summarize, we just proved that 
$$
\log {\rm D}_N(f^{(1)})=\alpha^2(1-\delta)\log N+\OO(1)+\int_0^1 \widetilde E(t) \rd t,
$$
so that the proof of Proposition \ref{prop:tailIm} will be complete after bounding $\widetilde E(t)$.

This error term $\widetilde E(t)$ is defined exactly as in Corollary \ref{cor:fundamentalerror} after
%replacement of $X$ with $\widetilde{X}$,
replacing $X$ with $\widetilde{X}$, %*************'
%which is
the latter being %*****************
the solution of the same Riemann-Hilbert problem as $X$ (see before Corollary \ref{cor:fundamentalerror}) except that in the definition of
the jump matrix $M$ (see (\ref{eqn:matrixM})) one needs to be replace
%$\exp(g)$ (from (\ref{eqn:Szego})),  $F$ (from (\ref{eqn:F})), $E$ (from (\ref{eqn:E})), $\Psi$ (from (\ref{eqn:Psi})) as follows:
$\exp(g)$ from (\ref{eqn:Szego}),  $F$ from (\ref{eqn:F}), $E$ from (\ref{eqn:E}) and $\Psi$ from (\ref{eqn:Psi}) as follows:
we now choose $m=0$, $z_0=1$, $\alpha_0=0$ and $\beta_0=\ii\alpha$ in 
the %***********************************
formulas 
\cite[equations (4.9) and (4.10)]{DeiItsKra2011} %(for $\mathcal{D}=\exp(g)$),
for $\mathcal{D}=\exp(g)$, %***********************
%\cite[equations (4.17)]{DeiItsKra2011} (for $F$),
\cite[equation (4.17)]{DeiItsKra2011} for $F$, %**************
\cite[equations (4.47) till (4.50)]{DeiItsKra2011} % (for $E$), 
for $E$, and %*************
\cite[Proposition 4.1 and equations (4.25) till (4.29)]{DeiItsKra2011} %(for $\Psi$).
for $\Psi$. %***********

For the new definition of $\exp(g)$, Lemmas \ref{eqn:rewriting} and \ref{lem:estlem} do not hold anymore, but
the following can be shown easily: for some $C=C(\delta)>0$ we have
$
N^{-C}<\left|\frac{e^{-2g(z)+V^{(t)}_0}}{f^{(t)}(z)}\right|<N^C$ for $ |z|>1$, and
$N^{-C}<\left|\frac{e^{2g(z)-V^{(t)}_0}}{f^{(t)}(z)}\right|<N^C$ for $ |z|<1$.
As a consequence, in the definition $\widetilde E(t)$, the integrals along $\Sigma_{\rm out}$ and $\Sigma''_{\rm out}$ are easily shown to be negligible. 

To control the other terms, we need to bound the solution of the Riemann-Hilbert problem.
Lemma \ref{lem:jumpestimate} still holds for our new jump matrix: this is a simple calculation from
\cite[equation (4.55)]{DeiItsKra2011}. Once this estimate is known, Proposition \ref{prop:Qestimate} and Corollary \ref{cor:boundE} still hold and their proof is unchanged. This concludes the proof of Proposition \ref{prop:tailIm}.
\end{proof}

\begin{bibdiv}
\begin{biblist}

\bib{AddRee08}{collection}{
    AUTHOR = {Addario-Berry, L.},
    author={Reed, B. A.},
     TITLE = {Ballot theorems, old and new},
 BOOKTITLE = {Horizons of combinatorics},
    SERIES = {Bolyai Soc. Math. Stud.},
    VOLUME = {17},
     PAGES = {9--35},
 PUBLISHER = {Springer, Berlin},
      YEAR = {2008}
}

\bib{AddRee09}{article}{
    AUTHOR = {Addario-Berry, L.},
    author={Reed, B. A.},
     TITLE = {Minima in branching random walks},
   JOURNAL = {Ann. Probab.},
    VOLUME = {37},
      YEAR = {2009},
    NUMBER = {3},
     PAGES = {p. 1044--1079}
}

\bib{Aid13}{article}{
    AUTHOR = {A{\"{\i}}d{\'e}kon, E.},
     TITLE = {Convergence in law of the minimum of a branching random walk},
   JOURNAL = {Ann. Probab.},
    VOLUME = {41},
      YEAR = {2013},
    NUMBER = {3A},
     PAGES = {1362--1426}
}

\bib{ArgBelHar15}{article}{
    AUTHOR = {Arguin, L.-P.},
    AUTHOR = {Belius ,D.},
    AUTHOR = {Harper, A.J.},
     TITLE = {Maxima of a randomized Riemann zeta function, and branching random walks},
   JOURNAL = {Preprint arXiv:1506.00629},
      YEAR = {2015},
}

\bib{ArgZin15}{article}{
   author={Arguin, Louis-Pierre},
   author={Zindy, Olivier},
   title={Poisson-Dirichlet statistics for the extremes of the
   two-dimensional discrete Gaussian free field},
   journal={Electron. J. Probab.},
   volume={20},
   date={2015},
   pages={no. 59, 19}
}

\bib{ArgZin14}{article}{
   author={Arguin, Louis-Pierre},
   author={Zindy, Olivier},
   title={Poisson-Dirichlet statistics for the extremes of a log-correlated
   Gaussian field},
   journal={Ann. Appl. Probab.},
   volume={24},
   date={2014},
   number={4},
   pages={1446--1481}
}

\bib{Bac00}{article}{
    AUTHOR = {Bachmann, M.},
     TITLE = {Limit theorems for the minimal position in a branching random
              walk with independent logconcave displacements},
   JOURNAL = {Adv. in Appl. Probab.},
    VOLUME = {32},
      YEAR = {2000},
    NUMBER = {1},
     PAGES = {159--176}
}

\bib{BatRao}{book}{
   author={Bhattacharya, R. N.},
   author={Ranga Rao, R.},
   title={Normal approximation and asymptotic expansions},
   note={Wiley Series in Probability and Mathematical Statistics},
   publisher={John Wiley \& Sons, New York-London-Sydney},
   date={1976}
}

\bib{BelKis14}{article}{
    AUTHOR = {Belius, D.},
    AUTHOR = {Kistler, N.},
     TITLE = {The subleading order of two dimensional cover times},
   JOURNAL = {Preprint arXiv:1405.0888},
      YEAR = {2014}
}

\bib{BenBou2013}{article}{
   author={Ben Arous, G.},
   author={Bourgade, P.},
   title={Extreme gaps between eigenvalues of random matrices},
   journal={Ann. Probab.},
   volume={41},
   pages={2648--2681},
   date={2013}
}

\bib{BisLou13}{article}{
    AUTHOR = {Biskup, M.},
    AUTHOR = {Louidor, O.},
     TITLE = {Extreme local extrema of two-dimensional discrete Gaussian free field},
   JOURNAL = {Preprint arXiv:1306.2602},
      YEAR = {2013}
}

\bib{BolDeuGia01}{article}{
    AUTHOR = {Bolthausen, E.},
   AUTHOR = {Deuschel, J.-D.},
   AUTHOR = {Giacomin,  G.},
     TITLE = {Entropic repulsion and the maximum of the two-dimensional
              harmonic crystal},
   JOURNAL = {Ann. Probab.},
    VOLUME = {29},
      YEAR = {2001},
    NUMBER = {4},
     PAGES = {1670--1692}
}

\bib{BovKur04}{article}{
   author={Bovier, Anton},
   author={Kurkova, Irina},
   title={Derrida's generalized random energy models. II. Models with
   continuous hierarchies},
   language={English, with English and French summaries},
   journal={Ann. Inst. H. Poincar\'e Probab. Statist.},
   volume={40},
   date={2004},
   number={4},
   pages={481--495}
}

%
%\bib{BorOko2000}{article}{
%   author={Borodin, A.},
%   author={Okounkov, A.},
%   title={A Fredholm determinant formula for Toeplitz determinants},
%   journal={Integral Equations Operator Theory},
%   volume={21} 
%   pages={123--125}
%   date={2000}
%}

\bib{BourgadeMesoscopic}{article}{
    AUTHOR = {Bourgade, P.},
     TITLE = {Mesoscopic fluctuations of the zeta zeros},
   JOURNAL = {Probab. Theory Related Fields},
    VOLUME = {148},
      YEAR = {2010},
    NUMBER = {3-4},
     PAGES = {479--500}
}

%
%\bib{BouErdYau2014}{article}{
%   author={Bourgade, P.},
%   author={Erd{\H{o}}s, L.},
%   author={Yau, H.-T.},
%   title={Universality of general $\beta$-ensembles},
%   journal={Duke Math. J.},
%   volume={163},
%   date={2014},
%   number={6},
%   pages={1127--1190}
%}
%
%
%\bib{BottWid2006}{article}{
%   author={Bottcher, A.},
%   author={Widom, H.},
%   title={Szeg{\H o} via Jacobi},
%   journal={Linear Algebra and its applications},
%   volume={419} 
%   date={2006}
%}
%

\bib{Bra78}{article}{
   author={Bramson, M. D.},
   title={Maximal displacement of branching Brownian motion},
   journal={Comm. Pure Appl. Math.},
   volume={31},
   date={1978},
   number={5},
   pages={531--581}
}

\bib{BramDinZei13}{article}{
    AUTHOR = {Bramson, M.},
    AUTHOR = {Ding, J.},
    AUTHOR = {Zeitouni, O.},
     TITLE = {Convergence in law of the maximum of the two-dimensional discrete Gaussian free field},
   JOURNAL = {Preprint arXiv:1301.6669},
      YEAR = {2013}
}

\bib{BraDinZei14}{article}{
  title = {Convergence in law of the maximum of nonlattice branching random walk}, 
AUTHOR = {Bramson, M.},
    AUTHOR = {Ding, J.},
    AUTHOR = {Zeitouni, O.},
  journal = {Preprint arxiv: 1404.3423},
  year = {2014}
}

\bib{BraZei09}{article}{
   author={Bramson, M.},
   author={Zeitouni, O.},
   title={Tightness for a family of recursion equations},
   journal={Ann. Probab.},
   volume={37},
   date={2009},
   number={2},
   pages={615--653},
}

\bib{BramZei12}{article}{
AUTHOR = {Bramson, M.},
    AUTHOR = {Zeitouni, O.},
     TITLE = {Tightness of the recentered maximum of the two-dimensional
              discrete {G}aussian free field},
   JOURNAL = {Comm. Pure Appl. Math.},
    VOLUME = {65},
      YEAR = {2012},
    NUMBER = {1},
     PAGES = {1--20}
}

\bib{CacMalSch2015}{article}{
      author={Cacciapuoti, C.},
      author={Maltsev, A.},
      author={Schlein, B.},
       title={Bounds for the Stieltjes transform and the density of states of Wigner matrices},
        date={2015},
     journal={Probability Theory and Related Fields},
      volume={163},
      number={1},
       pages={1\ndash 59}
}

\bib{CarLed2001}{article}{
  author = {Carpentier, D.},
    author = {Le Doussal, P.},
  title = {Glass transition of a particle in a random potential, front selection in nonlinear renormalization group, and entropic phenomena in Liouville and sinh-Gordon models},
  journal = {Phys. Rev. E},
  volume = {63},
  issue = {2},
  pages = {026110, 33pp.},
%  numpages = {33},
  year = {2001},
  month = {Jan},
 % publisher = {American Physical Society},
 % doi = {10.1103/PhysRevE.63.026110},
 % url = {http://link.aps.org/doi/10.1103/PhysRevE.63.026110}
}

\bib{ClaKra2014}{article}{
AUTHOR = {Claeys, T.},
    AUTHOR = {Krasovsky, I.},
     TITLE = {Toeplitz determinants with merging singularities},
   JOURNAL = {to appear in Duke Math. Journal, Preprint arxiv:1403.3639},
      YEAR = {2014}
}

\bib{Dei1999}{article}{
   author={Deift, P.},
   title={Integrable operators},
   journal={Amer. Math. Soc. Transl.},
   pages={69--84},
   volume={189},
   number={2},
   date={1999}
}

\bib{DeiItsKra2011}{article}{
   author={Deift, P.},
   author={Its, A.},
   author={Krasovsky, I.},
   title={Asymptotics of Toeplitz, Hankel, and Toeplitz+Hankel determinants with Fisher-Hartwig singularities},
   journal={Annals of Mathematics},
   pages={1243--1299},
   volume={174},
   date={2011}
}

\bib{DeiItsKra2014}{article}{
   author={Deift, P.},
   author={Its, A.},
   author={Krasovsky, I.},
   title={On the asymptotics of a Toeplitz determinant with singularities},
   journal={Mathematical Sciences Research Institute Publications},
   volume={65},
   date={2014},
    PAGES = {93--146}
}

\bib{DeiKriMcLVenZho1999}{article}{
   author={Deift, P.},
   author={Kriecherbauer, T.},
   author={McLaughlin, K. T-R},
   author={Venakides, S.},
   author={Zhou, X.},
   title={Strong asymptotics of orthogonal polynomials with respect to
   exponential weights},
   journal={Comm. Pure Appl. Math.},
   volume={52},
   date={1999},
   number={12},
   pages={1491--1552}
}

\bib{DemPerRosZeo04}{article}{
    AUTHOR = {Dembo, A.},
    AUTHOR = {Peres, Y.},
    AUTHOR = {Zeitouni, O.},
     TITLE = {Cover times for {B}rownian motion and random walks in two
              dimensions},
   JOURNAL = {Ann. of Math. (2)},
    VOLUME = {160},
      YEAR = {2004},
    NUMBER = {2},
     PAGES = {433--464}
}

\bib{DerSpo88}{article}{
   author={Derrida, B.},
   author={Spohn, H.},
   title={Polymers on disordered trees, spin glasses, and traveling waves},
   note={New directions in statistical mechanics (Santa Barbara, CA, 1987)},
   journal={J. Statist. Phys.},
   volume={51},
   date={1988},
   number={5-6},
   pages={817--840}
}

\bib{DiaEva01}{article}{
    AUTHOR = {Diaconis, P.},
    AUTHOR = {Evans, S. N.},
     TITLE = {Linear functionals of eigenvalues of random matrices},
   JOURNAL = {Trans. Amer. Math. Soc.},
    VOLUME = {353},
      YEAR = {2001},
    NUMBER = {7},
     PAGES = {2615--2633}
}

\bib{DiaSha1994}{article}{
   author={Diaconis, P.},
   author={Shahshahani, M.},
   title={On the eigenvalues of random matrices},
   note={Studies in applied probability},
   journal={J. Appl. Probab.},
   volume={31A},
   date={1994},
   pages={49--62}
}

\bib{DinRoyZei15}{article}{
  title = {Convergence of the centered maximum of log-correlated Gaussian fields},
  author = {Ding, J.},
  author = {Roy, R.},
  author = {Zeitouni, O.},
  journal = {Preprint arXiv:1503.04588},
  year = {2015}
}

\bib{DinZei14}{article}{
    AUTHOR = {Ding, J.},
    AUTHOR = {Zeitouni, O.},
     TITLE = {Extreme values for two-dimensional discrete {G}aussian free
              field},
   JOURNAL = {Ann. Probab.},
    VOLUME = {42},
      YEAR = {2014},
    NUMBER = {4},
     PAGES = {1480--1515}
}

\bib{ErdYauYin2012Rig}{article}{
      author={Erd{\H{o}}s, L.},
      author={Yau, H.-T.},
      author={Yin, J.},
       title={Rigidity of eigenvalues of generalized {W}igner matrices},
        date={2012},
     journal={Adv. Math.},
      volume={229},
      number={3},
       pages={1435\ndash 1515},
}

\bib{FyoBou08}{article}{
    AUTHOR = {Fyodorov, Y. V.},
    AUTHOR = {Bouchaud, J.-P.},
     TITLE = {Freezing and extreme-value statistics in a random energy model
              with logarithmically correlated potential},
   JOURNAL = {J. Phys. A},
    VOLUME = {41},
      YEAR = {2008},
    NUMBER = {37},
     PAGES = {372001, 12 pp.}
}

\bib{FyoHiaKea12}{article}{
  title = {Freezing Transition, Characteristic Polynomials of Random Matrices, and the {Riemann} Zeta Function},
  author = {Fyodorov, Y. V.},
  author = {Hiary, G. A.},
  author = {Keating, J. P.},
  journal = {Phys. Rev. Lett.},
  volume = {108},
  pages = {170601, 5pp.},
  year = {2012},
  publisher = {American Physical Society}
}

\bib{FyoKea14}{article}{
    AUTHOR = {Fyodorov, Y. V.},
    AUTHOR = {Keating, J. P.},
     TITLE = {Freezing transitions and extreme values: random matrix theory,
              and disordered landscapes},
   JOURNAL = {Phil. Trans. R. Soc. A},
    VOLUME = {372},
      YEAR = {2014},
    %NUMBER = {2007},
     PAGES = {20120503, 32 pp. }
}

Yan Fyodorov, Boris , and Nicholas Simm

\bib{FyoKhoSim16}{article}{
    AUTHOR = {Fyodorov, Y. V.},
    AUTHOR = {Khoruzhenko, B.},
author={Simm, N.},
     TITLE = {Fractional Brownian motion with Hurst index $H=0$ and the Gaussian Unitary Ensemble},
   JOURNAL = {to appear in Annals of Probability},
      YEAR = {2016}
}

\bib{FyoLedRos09}{article}{
    AUTHOR = {Fyodorov, Y. V.},
    AUTHOR = {Le Doussal, P.},
author={Rosso, A.},
     TITLE = {Statistical mechanics of logarithmic {REM}: duality, freezing
              and extreme value statistics of {$1/f$} noises generated by
              {G}aussian free fields},
   JOURNAL = {J. Stat. Mech. Theory Exp.},
      YEAR = {2009},
    NUMBER = {10},
     PAGES = {P10005, 32 pp.}
}

\bib{FyoLedRos12}{article}{
   author={Fyodorov, Y. V.},
   author={Le Doussal, P.},
   author={Rosso, A.},
   title={Counting function fluctuations and extreme value threshold in
   multifractal patterns: the case study of an ideal $1/f$ noise},
   journal={J. Stat. Phys.},
   volume={149},
   date={2012},
   number={5},
   pages={898--920},
}

\bib{FyoSim2015}{article}{
    AUTHOR = {Fyodorov, Y. V.},
    AUTHOR = {Simm, N. J.},
     TITLE = {On the distribution of maximum value of the characteristic polynomial of GUE random matrices},
   JOURNAL = {preprint, arXiv:1503.07110},
      YEAR = {2015}
      }

%
%
%\bib{GerCas1979}{article}{
%   author={Geronimo, J. S.},
%   author={Case, K. M.},
%   title={Scattering theory and polynomials orthogonal on the unit circle},
%   journal={J. Math. Phys.},
%  number={20} 
%   pages={299--310} 
%   date={1979}
%}

\bib{Har13}{article}{
    AUTHOR = {Harper, A. J.},
     TITLE = {A note on the maximum of the Riemann zeta function, and log-correlated random variables},
   JOURNAL = {Preprint arXiv:1304.0677},
      YEAR = {2013}
}

\bib{Hei1878}{article}{
   author={Heine, H.},
   title={Kugelfunktionen},
   journal={Berlin, 1878 and 1881; reprinted, Physica Verlag, W{\"u}rzburg, 1961.}
}

\bib{HugKeaOco2001}{article}{
   author={Hughes, C. P.},
   author={Keating, J.},
   author={O'Connell, N.},
   title={On the Characteristic Polynomial of a Random Unitary Matrix},
   journal={Comm. Math. Phys.},
   number={2},
   pages={429--451},
   date={2001}
}

\bib{KeaSna2000}{article}{
   author={Keating, J.},
   author={Snaith, N.},
   title={Random Matrix Theory and $\zeta(1/2+\ii t)$},
   journal={Comm. Math. Phys.},
   number={1},
   pages={57--89},
   date={2000}
}

\bib{Kis15}{collection}{
    AUTHOR = {Kistler, N.},
     TITLE = {Derrida's Random Energy Models},
 BOOKTITLE = {Correlated Random Systems: Five Different Methods},
    SERIES = {Lecture Notes in Math.},
    VOLUME = {2143},
     PAGES = {71--120},
 PUBLISHER = {Springer, Berlin},
      YEAR = {2015},
}

\bib{Mad14}{article}{
    AUTHOR = {Madaule, T.},
     TITLE = {Maximum of a log-correlated Gaussian field},
   JOURNAL = {Preprint arXiv:1307.1365},
      YEAR = {2014}
}

\bib{MarMcLSaf2006}{article}{
   author={Mart{\'{\i}}nez-Finkelshtein, A.},
   author={McLaughlin, K. T.-R.},
   author={Saff, E. B.},
   title={Asymptotics of orthogonal polynomials with respect to an analytic
   weight with algebraic singularities on the circle},
   journal={Int. Math. Res. Not.},
   date={2006},
   PAGES = {Art. ID 91426, 43 pp.}
}

%
%
%\bib{NIST}{article}{
%   title={Digital Library of Mathematical Functions},
%   author={National Institute of Standards and Technology},
%   journal={http://dlmf.nist.gov}
%}

\bib{RhoVar13}{article}{
author = {Rhodes, R.},
author = {Vargas, V.},
journal = {Probab. Surveys},
pages = {315--392},
publisher = {The Institute of Mathematical Statistics and the Bernoulli Society},
title = {Gaussian multiplicative chaos and applications: A review},
volume = {11},
year = {2014}
}

%
%
%\bib{Sim2004}{article}{
%   author={Simon, B.},
%   title={Orthogonal Polynomials on the Unit Circle, Part 1},
%   journal={AMS Colloquium Publications},
%   volume={54}
%   date={2006}
%}

\bib{Ste1970}{book}{
   author={Stein, E. M.},
   title={Singular integrals and differentiability properties of functions},
   series={Princeton Mathematical Series, No. 30},
   publisher={Princeton University Press, Princeton, N.J.},
   date={1970}
}

\bib{SteSha03}{book}{
    AUTHOR = {Stein, E. M.},
    AUTHOR = {Shakarchi, R.},
     TITLE = {Fourier analysis},
    SERIES = {Princeton Lectures in Analysis},
    VOLUME = {1},
      NOTE = {An introduction},
 PUBLISHER = {Princeton University Press, Princeton, NJ},
      YEAR = {2003},
     PAGES = {xvi+311 pp.}
}

\bib{Vor1987}{article}{
   author={Voros, A.},
   title={Spectral Functions, Special Functions and the Selberg Zeta Function},
   journal={Commun. Math. Phys.},
   volume={110},
   pages={439--465},
   date={1987}
}

\bib{Web15}{article}{
   author={Webb, C.},
   title={The characteristic polynomial of a random unitary matrix and Gaussian multiplicative chaos - The ${\rm L}^2$-phase},
   journal={axXiv:1410.0939},
   volume={},
   date={2014},
   number={},
   pages={}
}

\end{biblist}
\end{bibdiv}

\end{document}